\documentclass[preprint]{imsart}
\RequirePackage[OT1]{fontenc}
\usepackage{mypackage}
\usepackage{mymacros2}
\usepackage{mymacros_3}
\usepackage{mymacros}
\usepackage{myenv, mymacros_brackets}



\doi{10.1214/154957804100000000}
\pubyear{0000}
\volume{0}
\firstpage{1}
\lastpage{1}



%
%

\pagestyle{plain}
\begin{document}

\begin{frontmatter}
\title{Adaptive estimation in symmetric location model under log-concavity constraint}
\runtitle{Location estimation  }

\begin{aug}
\author{\fnms{Nilanjana} \snm{Laha}}

\runauthor{N. Laha}

\affiliation{Department of Biostatistics, Harvard University}
\address{Department of Biostatistics, Harvard University}
\end{aug}	

\begin{abstract}
 We revisit the problem of  estimating  the center of symmetry $\theta$ of an unknown symmetric density $f$.  Although \cite{stone}, \cite{eden}, and \cite{saks}  constructed adaptive estimators of $\th$ in this model, their estimators depend on  external tuning parameters. In an effort to reduce the burden of  tuning parameters,  we impose  an additional restriction of log-concavity on $f$.  We construct  truncated one-step estimators which are adaptive under the log-concavity assumption. Our simulations suggest that the untruncated version of the one step estimator, which is tuning parameter free, is also asymptotically efficient.
 We also study the maximum likelihood estimator (MLE) of $\theta$  in the shape-restricted model. 
\end{abstract}

\begin{keyword}[class=MSC]
\kwd[Primary ]{62G99}
\kwd[; secondary ]{62G20, 62G07}
\end{keyword}

\begin{keyword}
\kwd{log-concave}
\kwd{shape constraint}
\kwd{symmetric location model}
\kwd{one step estimator}
\end{keyword}

\date{\today}
\end{frontmatter}


\section{Introduction}
\label{sec:intro}
\setcounter{page}{3}
\listoftodos
In this paper, we  revisit the symmetric location model with an additional shape-restriction of log-concavity. 
We  let $\mathcal{P}$  denote the class of  all densities  on the real line $\RR$. 
For any $\th\in\RR$, denote by $\mathcal{S}_{\th}$ the class of all  densities  symmetric about $\theta$. Then the symmetric location model $\mathcal P_s$ is given by
\begin{equation}\label{definition: the symmetric model Ps}
\mathcal{P}_s =\bigg\{f\in\mathcal{P}\ \bl\ f(x;\th)=g(x-\th),\ \th\in\RR,\ g\in\mathcal{S}_0,\ \If<\infty\bigg\},
\end{equation}
where $\If$ is the Fisher information for location. 
It is well-established that \citep[][Theorem~3]{huber} $\If$ is finite if and only if $f$ is an absolutely continuous density satisfying
\[\edint\lb\dfrac{f'(x)}{f(x)}\rb^2 f(x)dx<\infty,\]
where $f'$ is an $L_1$-derivative of $f$. Also, in this case, $\If$ takes the form 
\begin{equation*}
\If=\edint\lb\dfrac{f'(x)}{f(x)}\rb^2 f(x)dx.
\end{equation*} 

    Estimation of $\th$ in $\mathcal P_s$
is an old semi-parametric problem, dating back to \cite{stein}. From then on, the problem of estimating $\theta_0$ in $\mathcal P_s$  has been considered by many early authors including, but not limited to, \cite{stone}, \cite{beran}, \cite{saks}, and \cite{eden}. 
   There are two main reasons behind the assumption of symmetry in this model. First, as \citeauthor{stone} has pointed out, if $f$ is totally unrestricted, $\th$ is not identifiable.  Second,
 the definition of location becomes unclear in the absence of symmetry  \citep{unimodal1}. 
 The appeal of the above  model lies in the fact that
 adaptive estimation of $\th$ is possible in this model \citep{stone}.
In other words,  there exist consistent estimators of $\th$ in $\mP_s$, whose asymptotic variance attains the parametric lower bound, which is $\If^{-1}$ in this case.  See Sections $3.2$, $3.3$, and $6.3$ of \cite{jonsemi} for more discussion on adaptive estimation in $\mathcal{P}_s$. 

 There are general classes of nonparametric estimators, which, following some clever reconstruction,  lead to adaptive estimators of $\theta$ in $\mathcal P_s$. Examples include the  one step estimator used by \citeauthor{stone}, and the Hodges-Lehmann rank estimator used by \citeauthor{eden}. \citeauthor{beran} uses a linearized rank estimator introduced by \cite{kraft1970}, where \citeauthor{saks} uses a linear functional of order statistics. All these estimators involve various tuning parameters. The success of these type of nonparametric estimators generally depend crucially on the choice of the tuning parameters.  (cf. \cite{saks}; see also   \cite{park} for a thorough empirical study of similar estimators in  a closely related nonparametric problem, i.e. the two-sample location problem.)  However, no data-dependent method has been prescribed to choose these tuning parameters. Therefore, despite attractive theoretical properties, the implementation of the asymptotically efficient estimators of $\theta_0$ is not straightforward. 
 
 Although the  tuning parameters stemming from different nonparametric approaches appear to be different,  they generally fall in one of the following categories: (a) scaling parameter for approximating  derivatives by quotient (e.g. \citeauthor{beran}, \citeauthor{saks} and  \citeauthor{eden}), (b) bandwidth selection parameter if kernels are used (e.g. \citeauthor{stone}), (c)  the number of basis functions  (e.g. \citeauthor{beran}), (d) parameters arising due to truncation (e.g. the  estimators of  \citeauthor{stone} and \citeauthor{saks}) or data-partitioning (e.g., \citeauthor{eden}). 
 We will elaborate a little bit on the first three type of tuning parameters.   They arise solely because the adaptive estimators of $\theta_0$  require estimating
   $g$, $g'$ (e.g. \citeauthor{stone}, \citeauthor{eden}, and \citeauthor{beran}), and in some cases, higher derivatives (e.g. $g''$, \citeauthor{saks}). In fact, such tuning parameters are unavoidable in nonparametric estimation of the above quantities.  Moreover, \cite{hogg1974} points out that in practice,  nonparametric estimation of such functions  may be too slow. 
  This is precisely  where  semi-parametric models can help because the additional structure  can  be exploited to construct  computationally efficient  estimators of $g$ and $g'$ without using tuning parameters.

    If we impose an additional shape restriction of log-concavity on $g$, for instance,  the task of estimating $g'$   becomes much simpler. The reason is, the class of  log-concave  densities is structurally rich enough  to admit a  maximum likelihood estimator (MLE) \citep{exist, 2009rufi}. Similar results hold for its subclasses, e.g. the class of all symmetric (about the origin) log-concave densities  as well \citep{dosssymmetric}. The log-concave MLE type density estimators allow for computationally efficient estimation of the scores without any tuning parameters. These score estimates can readily be used to construct a  one step estimator.

    We show that under the log-concavity assumption, truncated versions of the above-mentioned one step estimator are adaptive provided the truncation parameter $\eta_n\to 0$ slowly enough.  This truncation parameter is our only tuning parameter,  which also is introduced purely due to technical reasons in the proof. Moreover, we empirically show that 
    the efficiency of our estimators monotonously increases as $\eta_n\to 0$. In fact, the  untruncated one step estimator attains the highest efficiency, and also performs reliably under varied settings.
    Thus, for practical implementation, the proposed estimator of this paper is the untruncated one-step estimator, which is fully tuning parameter free.
     We also touch upon another important tuning parameter free estimator of $\theta_0$, namely the MLE. In particular, we establish its existence under the  shape-constrained model. Our methods can be implemented using the R package \texttt{log.location} which can be accessed at \url{https://github.com/nilanjanalaha/log.location}.

 The imposition of log-concavity on $\mathcal P_s$ may seem forced, but is not at all unnatural.  The class of log-concave densities, $\mathcal{LC}$, is an important subclass of the class of unimodal densities.   Many common symmetric unimodal  densities, e.g. Gaussian, logistic, and Laplace,   are log-concave.  Unimodality is a reasonable assumption  in  context of location estimation of symmetric densities.  As \cite{unimodal1} points out, in practice, multimodal densities generally result from unimodal mixtures. Separate procedures are available for the latter class.   The difficulty with  the  unimodality shape restriction, however, stems from the fact that the corresponding  density-class  is still large, especially it is  not structurally rich enough to admit an MLE \citep{birge1997}. Therefore unlike the log-concavity assumption, the unimodality assumption does not provide  computational advantages. Hence, we impose the assumption of  log-concavity  on $\mP_s$ instead of just unimodality.

Finally, this paper is an attempt towards  bridging the gap between  the symmetric location model and log-concavity.
 Although shape-constrained estimation has a rich history,  so far there has been  little to no use of shape-constraints in  one-sample symmetric location problem. 
In fact, to  the best of our knowledge, \cite{eden} is the only one to incorporate shape-constraints in treating  the problem considered here. Actually \citeauthor{eden} requires $f$ to be log-concave although her paper  does not mention log-concavity.  She requires the function $f'(F^{-1}(u))/f(F^{-1}(u))$ to be non-increasing in $u\in(0,1)$, which is equivalent to  $f$ being log-concave \citep[][Proposition A.1]{bobkov1996}. As made clear by our earlier discussion, \citeauthor{eden} does not use shape-restricted tools  tailored for log-concave densities because they were not available at that time. 
 We also want to mention  \cite{sharmada}, who consider both location  and scale estimation in an  elliptical symmetry model, which albeit bearing some resemblance,  is  different from the model considered in this paper. Also, \cite{sharmada}'s estimation procedure is completely different from ours.  

\subsection{Notation and terminology}
\label{sec: Preliminaries}
For a concave function $\ps:\RR\mapsto\RR$, the domain  $\dom(\ps)$ will be defined as in \citep[][p. 40]{rockafellar}, that is, $\dom(\ps)=\{x\in\RR\ :\ \ps(x)>-\infty\}$. 
  For any concave function $\ps$, we say $x\in\RR$ is a knot of $\ps$, if either $\ps'(x+)\neq\ps'(x-)$, or  $x$ is at the boundary of $\dom(\ps)$. We denote by $\mathcal{K}(\ps)$ the set of the knots of $\ps$. Unless otherwise mentioned, for a real valued function $h$, provided they exist,  $h'$ and $h'(\mathord{\cdot}-)$ will refer to the right and left derivatives of $h$, respectively.   We denote the support of any density $f$ by $\supp(f)=\{x\in\RR\ :\ f(x)>0\}$. 
  
  For a distribution function $F$, we let $J(F)$ denote the set $\{x\ :\ 0<F(x)<1\}$. For a sequence of distribution functions $\{F_n\}_{n\geq 1}$, we say  $F_n$ converges weakly to  $F$, and write $F_n\to_d F$, if  for all bounded continuous functions $h:\RR\mapsto\RR$, we have $\lim\limits_{n\to\infty}\int h dF_n=\int hdF$. 
   For  any real valued function $h:\RR\mapsto\RR$, we let $||h||_k$ denote its $L_k$ norm, i.e.
\[||h||_{k}=\lb\edint |h(x)|^k dx\rb^{1/k},\quad k\geq 1.\]
  For densities $f_1$ and $f_2$, the Hellinger distance $H(f_1,f_2)$ is defined by
\[H^2(f_1,f_2)=\frac{1}{2}\edint (\sqrt f_1(x)-\sqrt f_2(x))^2 dx.\]
We denote the order statistics of a random sample  $(Y_1,\ldots,Y_n)$  by
 $(Y_{(1)},\ldots,Y_{(n)})$.


 As usual, we denote the set of natural numbers by $\mathbb{N}$. We  denote by $C$  an arbitrary constant which may vary from line to line. The expression $x\lesssim y$ will imply that there exists $C>0$ so that $x\leq Cy$.


\subsection{Problem set up}
\label{sec: estimators}
To formalize the set up, first, let us define
\begin{equation}\label{def: class of concave functions}
\mathcal{C}:=\bigg\{\ph:\RR\mapsto[-\infty,\infty)\ \bl\ \ph \text{ is concave, closed, and proper} \bigg\}.
\end{equation}
We let $\mathcal{SC}_{\th}=\mathcal{S}_{\th}\cap\mathcal{C}$ denote the class of all closed and proper concave functions symmetric about $\theta\in\RR$. Here  a proper and closed concave function is as defined in \cite{rockafellar}, page 24 and 50.
Letting $\mathcal{LC}$ denote the class of log-concave densities
\[\mathcal{LC}:= \bigg\{f\in\mathcal{P}\ \bl\ \phi=\log f\in\mathcal{C}\bigg\},\]
we set $\mathcal{SLC}_{\th}= \mathcal{LC}\cap\mathcal{S}_{\th}$.
Suppose we observe $n$  independent and identically distributed (i.i.d.) random variables  $X\equiv X_1,\ldots, X_n$ with density $f_0\equiv g_0(\cdot-\theta_0)\in\mP_0$, where
\begin{equation}
\label{definition: model}
\mP_0=\bigg\{f\in\mP \ \bl\ f(x;\th)=g(x-\th),\ \th\in\RR,\ g\in\mathcal{SLC}_0,\ \mathcal I_g<\infty \bigg\}
\end{equation}
is the symmetric log-concave location model. Our aim is to estimate the location parameter $\theta_0$.

 Let us denote $\ph_0=\log f_0$, and $\ps_0=\log g_0$. We let $F_0$ and $G_0$ be the respective distribution functions of $f_0$ and $g_0$, and denote by $P_0$ the measure corresponding to $F_0$. We  denote the empirical distribution function of the $X_i$'s by $\Fm$, and write $\Pm$ for the corresponding empirical measure.

We use the following convention  throughout the paper while setting notations for the one step estimators and the MLE. We use a \texttt{hat} on the quantities related to the MLE, e.g. the MLE of $\th_0$ and $g_0$ will be denoted by $\hthm$ and $\hgm$, respectively. The similar quantities in the one-step estimator context will use a \texttt{tilde}, e.g. $\hth$, $\hn$ etc. Some quantities like $\hf$, the MLE in $\mathcal{LC}$,   or $\widehat{f}_\th$, the MLE in $\mathcal{SLC}_\theta$, will be introduced in context of the one step estimators, but their notations use the \texttt{hat} instead of the \texttt{tilde} because they are  MLEs.

The article is organized as follows. In Section \ref{sec: one-step}, we introduce the one step estimator, and  discuss its asymptotic properties. In Section~\ref{sec: MLE},  we explore the MLE of $\th_0$ in $\mP_0$. We provide an empirical study  in Section \ref{sec: simulation}.  The proofs are deferred to the appendix.

\section{One step estimator}
\label{sec: one-step}
Let \bth\ be a preliminary estimator of $\th_0$. Had $g_0$ been known, a valid estimator of $\theta_0$ would be readily given by the one step estimator \citep[see p. 71-72 and 392-399 of][]{vdv} 
\begin{equation}\label{def: one step: parametric}
    \bth-\edint\dfrac{{\ps}'_0(x-\bth)}{\I}d\Fm(x).
\end{equation}
In fact, the above estimator is $\sqn$ consistent with asymptotic variance $\I^{-1}$ when $g_0$ is known \citep[cf. Theorem 5.45 of][]{vdv}.
  Suppose $\hn\in\mathcal{S}_0$ is an estimator of $g_0$. Further suppose $\hln=\log \hn$ is left and right differentiable on the support of $\hn$. The latter always holds if $\hn\in\mathcal{LC}$ \citep[Theorem 0.6.3, pp.15, ][]{hiriart2004}.
 Suppose $\hln'$ is the right derivative of $\hln$.  Defining $\hln'$ to be zero outside $\supp(\hn)$, we can define an estimator of $\theta_0$ along the lines of \eqref{def: one step: parametric} as follows:
\begin{equation}\label{def: one-step estimator full}
\hthf=\bth-\edint\dfrac{\tilde{\ps}'_n(x-\bth)}{\hin}d\Fm(x),
\end{equation}
where
\begin{equation}\label{def: untruncated fisher info}
\hin=\edint \tilde{\ps}'_n(x-\bth)^2d\Fm(x)
\end{equation}
is an estimator of the Fisher information $\I$. We will refer to $\hthf$ as the untruncated one step estimator.

The asymptotic behavior of $\hln'$ can be hard to control in the tails, which creates technical difficulties in the asymptotic analysis of $\hthf$. As we already mentioned in the introduction, a common approach to tackle this problem is trimming the extreme observations, which leads to a truncated one step estimator similar to \citeauthor{stone}. 
 
 We let $\eta_n$  denote the truncation parameter, which is usually a small positive fraction.
Denote by $\hH$  the distribution function corresponding to \hn.  Letting $\xin$ be the $(1-\eta_n)$-th quantile of $\hH$, we define the truncated one step estimator as follows: 
 \begin{equation}\label{def: one-step estimator: truncated}
\hth=\bth-\dint_{\bth-\xin}^{\bth+\xin}\dfrac{\hln'(x-\bth)}{\hin(\eta_n)}d\Fm(x).
\end{equation} 
Here $\hin(\eta_n)$ is a truncated version of $\hin$, given by
 \begin{align}\label{27indef}
&\hin(\eta_n)=\dint _{\bth-\xin}^{\bth+\xin}\tilde{\ps}'_n(x-\bth)^2d\Fm(x).
\end{align}
 Note that the symmetry of $\hn$ about $0$ implies that $-\xin=\hH^{-1}(\eta_n)$. Ideally, we should denote the one step estimator in \eqref{def: one-step estimator: truncated} by $\hth(\eta_n)$   but here we suppress the dependence  on $\eta_n$ to avoid cumbersome notation.
 

 $\I$ could also be estimated by a smoother version of $\hin(\eta_n)$, namely,
 \begin{align*}
&\hin^{*}(\eta_n)=\dint _{\bth-\xin}^{\bth+\xin}\tilde{\ps}'_n(x-\bth)^2\hn(x-\bth)dx.
\end{align*}
However, our simulations indicate that the estimator $\hin(\eta)$ yields a more efficient one-step estimator. Therefore,  $\hin(\eta)$ is our preferred estimator for the Fisher information.

 \subsection{Main result}
 \label{sec: one step: main result}
 
 The first main result of this paper implies that if $\eta_n\to 0$ at a sufficiently slow rate, then the truncated one step estimator defined in \eqref{def: one-step estimator: truncated} is adaptive for certain choices of $\hn$. However, we require a technical assumption on $\psi_0$ to prove this theorem. 
\begin{assumption}
 \label{assump: L}
 There exists $\kappa>0$ so that  
 \[|\psi_0'(x)-\psi'_0(y)|\leq \kappa|x-y|\quad\text{ for all }x,y\in\iint(\dom(\psi_0)),\]
 where  $\psi_0'(x)$ (or $\psi_0'(y)$) is either the left or right derivative of $\psi_0$ at $x$ (or $y$).
 \end{assumption}
 Since $\psi_0$ is concave, it is left and right  differentiable at every $x\in\iint(\dom(\psi_0))$ \citep[pp. 15][]{hiriart2004}. If $\psi_0$ is twice differentiable on $\iint(\dom(\psi_0))$, Assumption~\ref{assump: L} interprets as $|\psi_0''|\leq \kappa$.   Assumption~\ref{assump: L} is essentially a smoothness condition, which is not uncommon in the context of log-concave density estimation.
  A similar assumption appears in \cite{2009rufi} (see Theorem 4.1 therein),
 who consider $\psi_0$ to be in a H\"{o}lder class with exponent $\beta\in [1,2]$, which coincides with  Assumption~\ref{assump: L}  if $\beta=2$.
The  H\"{o}lder-smoothness assumption is also used in  \cite{doss2019} (see Theorem 2.1 therein), who generalize \cite{2009rufi}'s Theorem 4.1 to the case of  unimodal log-concave densities.
Such  smoothness assumptions can also be found in the literature related to monotonocity constraints \cite{bodhida2017, ramu2018}.
 Simple algebra shows that common symmetric log-concave densities like Gaussian, Laplace, and Logistic  satisfy Assumption~\ref{assump: L}.
 Later in Section~\ref{sec: simulation}, we consider an example where Assumption~\ref{assump: L} is violated. Whether Assumption~\ref{assump: L} is necessary is unknown to us, although Section~\ref{sec: simulation} hints that the truncated one step estimators may still be adaptive even under the  violation of Assumption~\ref{assump: L}.
 
 Now we state the requirements for  $\hn$. Later in this section, we demonstrate how to build estimators which satisfy such conditions.
\begin{condition}
\label{condition: on hn}
Let $y_n=o_p(1)$ be a random sequence.
The density estimator $\hn$ satisfies the following:
\begin{itemize}[noitemsep,topsep=0pt]
\item[(A)] 
$\|\hn-g_0\|_1\to_p 0$ and  $\sup_{x\in \RR}|\hn(x+y_n)-g_0(x)|\to_p 0$.
\item[(B)] For any $K\subset\iint(\dom(\psi_0))$, we have
 $\sup_{x\in K}|\hln(x+y_n)-\ps_0(x)|\to_p 0.$
\item[(C)]
Suppose $x\in\iint(\dom(\ps_0))$ is a continuity point of $\ps_0'.$  Then
\[\hln'(x+y_n)\to_p\ps_0'(x).\]
\end{itemize}
\end{condition}
Condition~\ref{condition: on hn} (A) implies $H(\hn,g_0)\to_p 0$ because $H(\hn,g_0)\lesssim \sqrt{\|\hn-g_0\|_1}$. However,  
we require stronger control over the rate of decay of the Hellinger error $H(\hn,g_0)$.
\begin{condition}
\label{cond: hellinger rate}
 There exists $p\in(0,1/2]$ so that $H(\hn, g_0)=O_p(n^{-p})$.
\end{condition}
The upper bound of $1/2$ on $p$ is natural because even in the parametric case, the Hellinger error rate is generally not faster than $O_p(n^{-1/2})$. Now we are ready to state our main theorem. The proof of Theorem~\ref{theorem: main: one-step: full} can be found in Appendix~\ref{app: proof of Theorem 1}.


\begin{theorem}\label{theorem: main: one-step: full}
Suppose $f_0\in\mP_0$ satisfies Assumption~\ref{assump: L} and $\bth$ is a $\sqrt{n}$-consistent estimator of $\th_0$.
Let $\hn\in\mathcal{SLC}_0$ be an estimator of $g_0$ which satisfies Conditions~\ref{condition: on hn} and \ref{cond: hellinger rate}.  Suppose  $\eta_n=C n^{-2p'/5}$, where $C>0$ is any constant, and $p'\in(0,p]$, where $p$ is as in Condition~\ref{cond: hellinger rate}. 
Then the  estimator $\hth$  defined in \eqref{def: one-step estimator: truncated} satisfies
\begin{equation*}
\sqn(\hth-\th_0)\to_{d}N(0,\I^{-1}).
\end{equation*}
\end{theorem}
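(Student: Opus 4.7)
The plan is to perform a Newton/one-step expansion around the preliminary estimator $\bth$. Writing
\[\Psi_n(t) := \int_{t-\xin}^{t+\xin}\hln'(x-t)\,d\Fm(x),\]
so that $\hth - \bth = -\Psi_n(\bth)/\hin(\eta_n)$, the goal is to show
\[\sqn(\hth - \th_0) = -\sqn\,\Pm\,\ps_0'(\cdot-\th_0)/\I + o_p(1),\]
after which the central limit theorem gives the conclusion, since $P_0\ps_0'(\cdot-\th_0)=0$ by symmetry of $g_0$ and $P_0\bigl[\ps_0'(\cdot-\th_0)\bigr]^2 = \I$.

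The proof splits naturally into three pieces. First, I would show $\hin(\eta_n)\to_p\I$: the empirical deviation $(\Pm-P_0)\bigl[\hln'(\cdot-\bth)^2\mathbf{1}_{|\cdot-\bth|\leq \xin}\bigr]$ vanishes by a uniform law of large numbers over the (Donsker) class of truncated monotone functions indexed by $\hln'$, while the population part converges to $\int\ps_0'(u)^2 g_0(u)\,du = \I$ by Condition~\ref{condition: on hn}, with Assumption~\ref{assump: L} supplying the dominating bound on $|\hln'|$ over $[-\xin,\xin]$. Next, for the bias term $P_0\bigl[\hln'(X-\bth)\mathbf{1}_{|X-\bth|\leq \xin}\bigr]$, the fact that $\hn\in\mathcal{SLC}_0$ makes $\hln'$ odd about $0$ and $g_0$ is even, so at $t=\th_0$ the integral vanishes by parity. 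A first-order Taylor expansion in $t$, justified by Assumption~\ref{assump: L}, together with the $\sqn$-consistency of $\bth$, then yields
\[P_0\bigl[\hln'(X-\bth)\mathbf{1}_{|X-\bth|\leq \xin}\bigr] = (\bth-\th_0)\I + o_p(n^{-1/2}),\]
where the linear coefficient comes from $-P_0\bigl[\hln''(X-\th_0)\mathbf{1}_{|X-\th_0|\leq \xin}\bigr]$, shown to converge to $\I$ via integration by parts combined with Condition~\ref{condition: on hn}.

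The crux is the empirical-process piece
\[(\Pm-P_0)\bigl[\hln'(X-\bth)\mathbf{1}_{|X-\bth|\leq \xin}\bigr] = (\Pm-P_0)\ps_0'(X-\th_0) + o_p(n^{-1/2}),\]
that is, asymptotic equicontinuity of $\sqn(\Pm-P_0)$ over a random class of truncated monotone functions. Log-concavity is essential here: monotonicity of $\hln'$ furnishes polynomial bracketing entropy for this class, and Condition~\ref{cond: hellinger rate} supplies the $L_2(P_0)$-proximity of $\hln'(\cdot-\bth)\mathbf{1}_{|\cdot-\bth|\leq \xin}$ to $\ps_0'(\cdot-\th_0)$ needed to invoke a standard stochastic-equicontinuity argument. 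The prescribed rate $\eta_n = Cn^{-2p'/5}$ is tuned so that the tail contribution from $|x-\bth|>\xin$ is $o_p(n^{-1/2})$ while the bracketing-entropy integral over $[-\xin,\xin]$ remains controllable; striking this balance, and in particular handling the erratic boundary behavior of the log-concave score estimate $\hln'$ near the random endpoint $\xin$, is the principal technical obstacle. Combining the three pieces produces the displayed expansion and hence $\sqn(\hth-\th_0)\to_d N(0,\I^{-1})$.
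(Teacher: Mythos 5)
Your high-level architecture is essentially the paper's: the paper also splits $-(\hth-\bth)$ into an empirical-process piece handled by bracketing entropy of truncated monotone classes (its $T_{1n}$ and $T_{5n}$), a population bias piece whose zeroth order vanishes by parity and whose linear coefficient is shown to be $\td\,(1+o_p(1))$ after normalizing by $\hin(\eta_n)$ (its $T_{2n}$, $T_{3n}$, $T_{4n}$), and a separate consistency result $\hin(\eta_n)\to_p\I$. So the skeleton is sound and matches.

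The genuine gap is in your bias step, which is exactly where the paper does its heavy lifting. You propose to get the linear coefficient from $-P_0\bigl[\hln''(X-\th_0)\mathbf{1}_{|X-\th_0|\le\xin}\bigr]$ "via integration by parts combined with Condition~\ref{condition: on hn}". But for the estimators the theorem covers ($\widehat g_{\bth}$ and $\hn^{geo,sym}$), $\hln$ is piecewise linear, so $\hln''$ vanishes Lebesgue-a.e. and exists only as a purely atomic negative measure; as written the quantity is either zero or ill-defined. The correct move (the paper's) is to put the derivative on the density: write $g_0(z-\td)-g_0(z)=\int_z^{z-\td}g_0'$, use Fubini, and reduce the problem to showing
\begin{equation*}
\dint_{-\xin}^{\xin}\tp(u)\,\psi_0'(u)\,g_0(u)\,du\ \to_p\ \I
\end{equation*}
(together with a remainder control that does use Assumption~\ref{assump: L}). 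Condition~\ref{condition: on hn}(C) gives only pointwise convergence of $\tp$ at continuity points, which cannot by itself yield convergence of this integral over a region whose endpoints run into the tails; one needs a domination/uniform-integrability argument. This is the content of the paper's key lemma for $T_{2n}$ (proved via Pratt's lemma along a.s.\ subsequences), and it is precisely here that Condition~\ref{cond: hellinger rate} and the prescribed rate $\eta_n=Cn^{-2p'/5}$ enter: Cauchy--Schwarz splits the error into $H(\hn,g_0)=O_p(n^{-p})$ against the lower bound $\inf_{[-\xin,\xin]}\hn\gtrsim\eta_n$ (a consequence of log-concavity, Fact-type bounds $\hn\gtrsim\min(\tilde G_n,1-\tilde G_n)$), plus the consistency of $\int_{-\xin}^{\xin}\tp^2\hn$. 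Your sketch never uses Condition~\ref{cond: hellinger rate} or the $\eta_n$ rate in this step, and you flag the boundary behavior of $\tp$ near $\pm\xin$ as "the principal technical obstacle" without supplying a mechanism to resolve it. A related misattribution: the envelope bound $\sup_{[-\xin,\xin]}|\tp|=O_p(\eta_n^{-1/2})$ needed both for the Donsker step and for $\hin(\eta_n)\to_p\I$ comes from log-concavity of $\hn$ and the truncation level, not from Assumption~\ref{assump: L}, which constrains $\psi_0'$ only. With the bias step reworked along the lines above (and the $L_2(P_0)$-proximity of $\tp(\cdot-\bth)$ to $\phi_0'$ actually derived from the Hellinger rate via difference quotients at scale $\eta_n/\log n$, as the paper does), your plan becomes the paper's proof.
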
 

A couple of remarks  are in order. First, Theorem~\ref{theorem: main: one-step: full} requires $\hn\in\mathcal{SLC}_0$. This automatically rules out most nonparametric density estimators including the symmetrized kernel density estimator of \citeauthor{stone}. 
Second, Theorem~\ref{theorem: main: one-step: full} requires $\bth$ to be  $\sqn$-consistent.  \citeauthor{stone} and \citeauthor{beran} impose  similar conditions on their preliminary estimators. The $Z$-estimator of the shift in the logistic location shift model is $\sqn$-consistent under minimal regularity conditions \citep[cf. Example 5.40 and Theorem $5.23$,][]{vdv}. When $f_0\in\mP_0$,  the sample mean and the sample median  also satisfy  this  requirement.

Now we give example of two $\hn$'s, which  satisfy the conditions of Theorem~\ref{theorem: main: one-step: full}.

\vspace{1em}
\textbf{Partial MLE estimator $\widehat{g}_{\bth}$:}
For any $\theta\in\RR$, the density class $\mathcal{SLC}_{\theta}$ admits an MLE
 \citep[Theorem 2.1(C),][]{dosssymmetric}. When $\th=\bth$, the MLE  in the class $\mathcal{SLC}_{\bth}$ is a legitimate estimator of $f_0$. We denote the corresponding density by $\widehat{f}_{\bth}$. Then the centered density $\widehat{g}_{\bth}=\widehat{f}_{\bth}(\cdot+\bth)$ is a potential choice for $\hn$  because  $\widehat{g}_{\bth}\in\mathcal{SLC}_0$.
 We call this estimator a {\sl Partial MLE estimator} to distinguish it from the traditional MLE of $g_0$, which we will discuss in Section \ref{sec: MLE}.
 From  \cite{dosssymmetric} it follows that $\log\widehat{g}_{\bth}= \widehat{\ps}_{\bth}$ is a piecewise linear concave function with domain $[-a,a]$, where 
$a=|X|_{(n)}$.

\textbf{Geometric mean type symmetrized estimator $\hn^{geo,sym}$:}
We denote by $\hf$ the MLE of $f_0$ among the class of all log-concave densities, which exists by \cite{exist}. The finite sample and asymptotic properties of $\hf$ are well-established \citep{2009rufi, theory}.  In particular,  $\log\hf$ is piecewise linear with domain $[X_{(1)},X_{(n)}]$. However,
the estimator $\hf$ need not be symmetric about any $\theta\in\RR$.
A symmetrized version of $\hf$ is given by
\begin{equation}\label{est 5}
 \hn^{geo,sym}(z):=C_{n}^{geo}\lb \hf(\bth+z)\hf(\bth-z)\rb^{1/2},\quad z\in\RR
 \end{equation}
  where $C_{n}^{geo}$ is a random normalizing constant. Here ``geo" refers to the mode of symmetrization, which is the geometric mean in this case. Since addition preserves concavity, $\log(\hf(\bth+z))+\log(\hf(\bth-z))$ is concave, which entails that $
 \hn^{geo,sym}\in\mathcal{SLC}_0$. 
  The support of $\hn^{geo,sym}$ takes the form $[-a,a]$, where 
  \[a=\min(X_{(n)}-\bth,\bth-X_{(1)}).\] 
 Observe that the support of $\hn^{geo,sym}$  is smaller than that of $\widehat{g}_{\bth}$, and it may also exclude some data points. Simulations suggest that the performance of  $\hn^{geo,sym}$ can suffer, especially in small samples,  due to the exclusion of data points.

Proposition~\ref{prop: partial and geo suffices} states that $\widehat{g}_{\bth}$ and $\hn^{geo,sym}$ satisfy Conditions~\ref{condition: on hn} and  \ref{cond: hellinger rate}, as postulated. The proof of Proposition \ref{prop: partial and geo suffices} can be found in Appendix~\ref{app: proof of proposition 1}.
\begin{proposition}\label{prop: partial and geo suffices}
Suppose $f_0\in\mP_0$. Then $\hn=\widehat{g}_{\bth}$ and $\hn^{geo,sym}$ satisfy Condition~\ref{condition: on hn} and Condition~\ref{cond: hellinger rate} with $p=1/4$ and $2/5$, respectively.
\end{proposition}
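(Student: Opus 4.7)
The plan is to verify each of the three parts of Condition~\ref{condition: on hn} and then the Hellinger rate of Condition~\ref{cond: hellinger rate} for the two candidates $\widehat{g}_{\bth}$ and $\hn^{geo,sym}$ in turn. A useful preliminary reduction is to work with the shifted sample $Y_i := X_i - \bth$, which has density $g_0(\cdot - \delta)$ for $\delta := \theta_0 - \bth = O_p(n^{-1/2})$. Under this reduction $\widehat{g}_{\bth}$ is precisely the symmetric log-concave MLE in $\mathcal{SLC}_0$ based on the $Y_i$'s, while $\hn^{geo,sym}$ is the geometric-mean symmetrization about $0$ of the unconstrained log-concave MLE fitted to the $Y_i$'s.

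For Condition~\ref{condition: on hn}(A), the $L_1$ consistency $\|\hn - g_0\|_1 \to_p 0$ follows from the known global consistency of the log-concave MLE \citep{2009rufi, theory} and of its symmetric variant \citep{dosssymmetric}, combined with the $\sqn$-consistency of $\bth$. The uniform convergence $\sup_x |\hn(x + y_n) - g_0(x)| \to_p 0$ follows similarly, using uniform consistency of the MLE on $\RR$ together with uniform continuity of the log-concave density $g_0$ and $y_n = o_p(1)$. For Condition~\ref{condition: on hn}(B), once the densities converge uniformly, convergence of their logarithms on any compact $K \subset \iint(\dom(\ps_0))$ follows because $g_0$ is bounded away from $0$ on such $K$. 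For Condition~\ref{condition: on hn}(C), pointwise convergence of the right derivatives at continuity points of $\ps_0'$ is a consequence of standard convex analysis: uniform convergence of concave functions on a neighborhood of $x$ forces convergence of one-sided derivatives at $x$ whenever the limit is differentiable there.

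The principal work is Condition~\ref{cond: hellinger rate}. For $\hn^{geo,sym}$ I would start from the known rate $H(\hf, f_0) = O_p(n^{-2/5})$ \citep{theory}. Because $g_0$ is already symmetric, the geometric-mean symmetrization $(\hf(\bth + \cdot)\hf(\bth - \cdot))^{1/2}$ differs from $g_0$ only by the MLE error plus a translation error of size $|\bth - \theta_0| = O_p(n^{-1/2})$, so careful bookkeeping yields $p = 2/5$. For $\widehat{g}_{\bth}$, the data is not symmetric about $\bth$, so we face genuine model misspecification. One decomposes $H(\widehat{g}_{\bth}, g_0) \leq H(\widehat{g}_{\bth}, g^*_\delta) + H(g^*_\delta, g_0)$, where $g^*_\delta \in \mathcal{SLC}_0$ is the Kullback--Leibler projection of $g_0(\cdot - \delta)$ onto $\mathcal{SLC}_0$. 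The bias $H(g^*_\delta, g_0)$ is bounded via $H \leq \sqrt{\mathrm{KL}}$ and a Taylor expansion exploiting symmetry of $g_0$ and Assumption~\ref{assump: L} (giving $O_p(n^{-1/2})$), while the variance term is handled by adapting the bracketing-entropy argument of \cite{dosssymmetric} to the misspecified setting; the weaker rate $p = 1/4$ stated in the proposition serves as a safety margin absorbing the randomness of $\bth$.

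The main obstacle throughout is handling the randomness of $\bth$: one needs the MLE map $\theta \mapsto \widehat{f}_\theta$, and the symmetrization-around-$\theta$ operation, to be sufficiently equicontinuous so that plugging in a $\sqn$-consistent $\bth$ costs only an error of order $n^{-1/2}$. A convenient way to proceed is to restrict $\bth$ to a shrinking compact neighborhood of $\theta_0$, an event of probability tending to one, and then to establish uniform-in-$\theta$ versions of the MLE consistency and rate results on that neighborhood.
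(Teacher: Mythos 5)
Your treatment of Condition~\ref{condition: on hn} and of the geometric-mean rate is essentially in line with the paper: $L_1$ consistency plus the fact that for log-concave densities $L_1$ convergence upgrades to uniform convergence, then uniform convergence of the concave log-densities on compacts and convergence of one-sided derivatives at continuity points (Rockafellar, Theorems 10.8 and 25.7); and for $\hn^{geo,sym}$ the decomposition into the $O_p(n^{-2/5})$ Hellinger error of $\hf$ plus translation errors of order $|\bth-\theta_0|=O_p(n^{-1/2})$ is exactly the paper's bookkeeping. (One caveat even here: since the recentered data $X_i-\bth$ are not exactly symmetric about $0$, the paper does not invoke consistency of the symmetric MLE directly for $\widehat g_{\bth}$, but identifies $\widehat g_{\bth}$ as the log-concave projection of the empirical law of $X_i-\bth$ and uses Wasserstein continuity of that projection, Proposition 6 of \cite{xuhigh}; your appeal to ``known consistency of the symmetric variant plus $\sqrt n$-consistency of $\bth$'' is gesturing at the same continuity but does not supply it.)

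The genuine gap is in Condition~\ref{cond: hellinger rate} for the partial MLE $\widehat g_{\bth}$. Your plan — decompose through the KL projection $g^*_\delta$ of $g_0(\cdot-\delta)$ onto $\mathcal{SLC}_0$ and ``adapt the bracketing-entropy argument of \cite{dosssymmetric} to the misspecified setting'' — leaves the hard step undone: $\delta=\theta_0-\bth$ is random and data-dependent, so $g^*_\delta$ is a random target and the rate theorem of \cite{dosssymmetric} (which assumes correctly specified symmetric data) does not apply; a uniform-in-$\theta$ misspecified rate result would be a new theorem, not an adaptation. Moreover, your remark that $p=1/4$ is a ``safety margin'' indicates the mechanism producing that exponent is missing: if your sketch went through it would give roughly $O_p(n^{-2/5})+O_p(n^{-1/2})$, i.e.\ $p=2/5$, and nothing in it explains $1/4$. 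The paper's route is different and is where the $1/4$ actually comes from: it first shows (its Lemma on projections of symmetrized distribution functions) that the symmetric log-concave MLE about any center equals the unconstrained log-concave projection of the \emph{symmetrized} empirical distribution, and then applies the Hellinger–Wasserstein continuity of the log-concave projection of \cite{barber2020}, yielding
\begin{equation*}
H(\widehat g_{\bth},\widehat g_{\theta_0})\lesssim \bigl(d_W(\mathbb F^{sym}_{n,Z},\mathbb F^{sym}_{n,Z+\theta_0-\bth})\bigr)^{1/2}=O_p\bigl(|\bth-\theta_0|^{1/2}\bigr)=O_p(n^{-1/4}),
\end{equation*}
which, combined with $H(\widehat g_{\theta_0},g_0)=O_p(n^{-2/5})$ from \cite{dosssymmetric}, gives $p=1/4$ as an intrinsic consequence of the square-root loss in the projection-continuity bound. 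To repair your proof you would either need to reproduce this projection-continuity argument or genuinely establish a uniform/misspecified rate result for the symmetric log-concave MLE, neither of which is contained in your sketch.
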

 
The $\hln$ corresponding to $\hn=\widehat{g}_{\bth}$ and $\hn^{geo,sym}$ is non-smooth since $\hln$ is piecewise linear in both cases. Such an estimator may not be the best choice  in small samples. Although a smoothed version of $\hn$ may perform better in small samples,  tuning of the smoothing parameter in a data dependent way may be a non-trivial task. For the log-concave MLE $\hf$, however, \cite{smoothed} construct a well-behaved smoothing parameter in a  completely data-dependent way.  This smoothing parameter is given by 
  \begin{equation}\label{definition bn}
 \widehat{\lambda}_n:=\sqrt{\hs-\ts},
 \end{equation}
 where   $\hs$ is the sample variance and $\ts$ is the variance corresponding to  $\hf$, that is
  \[\hs=\dfrac{1}{n-1}\si (X_i-\bX)^2\quad\text{and}\quad\ts=\edint z^2\hf(z)dz-\lb\edint z\hf(z)dz\rb^2.\]
That the right hand side of \eqref{definition bn} is positive follows from (2.1) of \cite{smoothed}.
 In light of the above, we construct a smooth $\hn$ which is symmetric about zero although it is not log-concave.

 \vspace{1em}
 \textbf{Smoothed symmetrized  estimator $\hn^{sym,sm}$:}
 Let us define the smoothed version of $\hf$ by
 \begin{equation}\label{def: smoothed log-concave MLE}
 \hf^{sm}(z)=\dfrac{1}{\widehat{\lambda}_n}\edint \hf(z-t)\gd(t/\widehat{\lambda}_n)dt,\quad z\in\RR,
 \end{equation}
 where $\gd$ is the standard normal density and $\smbn$ is as defined in \eqref{definition bn}.
We define the smoothed symmetrized  estimator by
   \begin{align}\label{representation of htsm}
\htsm(z)=\dfrac{\hts(\bth+z)+\hts(\bth-z)}{2}.
   \end{align}
   

 It is natural to ask if similar data-dependent smoothing parameters exist for $\widehat{g}_{\bth}$ and $\hn^{geo,sym}$ as well. Although  a quantity analogous to $\widehat{\lambda}_n$ can be defined for these estimators, there is no guarantee that the former will be   positive. Nevertheless, data dependent smoothing of $\widehat{g}_{\bth}$ can be an interesting direction for future research.
 
 It can be shown that $\hn^{sym, sm}$   satisfies Condition~\ref{condition: on hn} and Condition~\ref{cond: hellinger rate} with $p=1/5$.
 Moreover, although  $\hn^{sym, sm}$ is not log-concave, it leads to an 
 adaptive estimator of $\theta_0$ for suitably chosen $\eta_n$. The proof of Theorem~\ref{theorem: main: one-step: hnss} can be found in Appendix~\ref{app: proof of Theorem 2}.
 \begin{theorem}\label{theorem: main: one-step: hnss}
Suppose $f_0\in\mP_0$ satisfies Assumption~\ref{assump: L}, and $\bth$ is a $\sqrt{n}$-consistent estimator of $\th_0$.
Let $\hn=\hnss$ and $\eta_n=C n^{-2p'/5}$, where $C>0$ and  $p'\in(0,1/5]$.
Then
the  estimator $\hth$  defined in \eqref{def: one-step estimator: truncated} satisfies
\begin{equation*}
\sqn(\hth-\th_0)\to_{d}N(0,\I^{-1}).
\end{equation*}
\end{theorem}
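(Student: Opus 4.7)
The plan is to imitate the proof of Theorem~\ref{theorem: main: one-step: full} with $\hn = \hnss$, substituting smoothness of the Gaussian convolution for the log-concavity of $\hn$ wherever the earlier proof invokes it. Since the paper has already announced that $\hnss$ satisfies Conditions~\ref{condition: on hn} and~\ref{cond: hellinger rate} with $p = 1/5$, the task is to trace the proof of Theorem~\ref{theorem: main: one-step: full} and certify that each step remains valid without log-concavity of $\hn$.

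First I would establish the two density-level conditions for $\hnss$. By \cite{theory} the log-concave MLE $\hf$ satisfies $H(\hf, f_0) = O_p(n^{-2/5})$, and the results of \cite{smoothed} govern the Hellinger error of the smoothed version $\hts$. Combining these bounds with the $\sqn$-consistency of $\bth$ and the arithmetic-mean symmetrization in \eqref{representation of htsm}, using convexity of $H^2$ and symmetry of $g_0$ to absorb the averaging, yields Condition~\ref{cond: hellinger rate} at the announced rate $p = 1/5$. The pointwise and local-uniform statements in Condition~\ref{condition: on hn}(A)--(C) follow from the analogous properties of $\hf$ established in \cite{2009rufi} combined with the contraction and smoothing properties of the Gaussian kernel; in particular, Condition~\ref{condition: on hn}(C) is obtained by differentiating $\hts$ under the integral sign against $\gd'$.

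Given these inputs, the asymptotic normality argument closely mirrors that of Theorem~\ref{theorem: main: one-step: full}. Starting from
\[ \sqn(\hth - \th_0) = \sqn(\bth - \th_0) - \dfrac{1}{\hin(\eta_n)}\,\sqn \dint_{\bth - \xin}^{\bth + \xin} \hln'(x - \bth)\, d\Fm(x), \]
I would decompose the inner integral into a main empirical term $\int \ps_0'(x - \th_0)\, d\Fm$, a bias from replacing $\hln'$ by $\ps_0'$ on a compact $K_n \subset \iint(\dom(\ps_0))$, and a tail contribution outside $K_n$. Condition~\ref{condition: on hn}(B)--(C), Assumption~\ref{assump: L} and the $\sqn$-consistency of $\bth$ handle the first two pieces as in Theorem~\ref{theorem: main: one-step: full}; the third piece is absorbed using $\eta_n = C n^{-2p'/5}$ together with Condition~\ref{cond: hellinger rate}, and the restriction $p' \leq 1/5$ is dictated precisely by the Hellinger rate $p = 1/5$. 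A parallel localization gives $\hin(\eta_n) \to_p \I$, and applying the central limit theorem to $\sqn \Pm[\ps_0'(\cdot - \th_0)/\I]$, together with a first-order expansion that absorbs $\sqn(\bth - \th_0)$ via $\edint \ps_0'(x)\, g_0(x)\, dx = 0$, delivers the $N(0, \I^{-1})$ limit.

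The main obstacle is that $\hnss$ is not log-concave, so $\hln'$ is neither monotone nor amenable to the Lipschitz-type tail bounds available under log-concavity. The remedy is to exploit that each summand $\hts(\bth \pm z)$ in \eqref{representation of htsm} is itself a convolution of a log-concave function with a Gaussian, hence $C^\infty$ and log-concave with score of at most linear growth inherited from the Gaussian kernel. This is enough to bound $|\hln'(z)|$ on $[-\xin, \xin]$ and to render the truncation-tail contribution $o_p(n^{-1/2})$. Overcoming this lack of log-concavity of $\hnss$ is the sole place where the argument genuinely departs from the proof of Theorem~\ref{theorem: main: one-step: full}; otherwise, the earlier proof carries over essentially verbatim.
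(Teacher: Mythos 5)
You correctly locate the crux (the loss of log-concavity of $\hnss$) and even the right structural remedy (the representation of $\hnss$ through the two smooth log-concave summands $\hts(\bth\pm\mathord{\cdot})$), but you deploy that remedy only to get a sup-norm bound on $\hln'$ over $[-\xin,\xin]$ and to control the truncation tails, and then assert the rest of the proof of Theorem~\ref{theorem: main: one-step: full} ``carries over essentially verbatim.'' It does not. The step of that proof that genuinely uses log-concavity of $\hn$ is not the tail control but the empirical-process step: there, $\hln'$ restricted to $\mathcal T_n$ is monotone, so $h_n$ sits (with high probability) in a class built from $\mathcal U_n(M_n)$ in \eqref{intheorem: def: U n}, whose bracketing entropy is of order $M_n/\e$, and this is what powers the maximal inequality giving $\sqn T_{1n}=o_p(1)$ and the Glivenko--Cantelli argument for $\hin(\eta_n)\to_p\I$. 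For $\hnss$ the estimated score is not monotone, and a uniform bound $\|\hln'\|_\infty\le M_n$ alone gives no usable entropy bound, so this step collapses unless you supply a replacement. The paper's fix is exactly here: write $\hlnspm(x)=\varrho_n(x)\hlnsp(\bth+x)-(1-\varrho_n(x))\hlnsp(\bth-x)$ as in \eqref{definition of tpz by tpc}, show $\varrho_n$ is differentiable with derivative $O_p(n^{p/5})$ on the truncation region, and bound the bracketing entropy of the resulting class $\mathcal U^{sym}_n$ in \eqref{intheorem: def: Un C sym} (products of bounded-derivative functions and bounded monotone functions, via Corollary 2.7.2 of \cite{wc}); the same substitution is also needed in the $T_{2n}$ step, where the proof of Theorem~\ref{theorem: main: one-step: full} uses monotonicity of $\tp$ in \eqref{inlemma: T2: DCT} to dominate $|\int_t^{t+\td}\tp|/|\td|$. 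None of this appears in your proposal, and it is the new technical content of the theorem.

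A secondary quantitative problem: your bound on the score of $\hts$ via ``linear growth inherited from the Gaussian kernel'' scales like $\widehat{\lambda}_n^{-2}$, and with $\widehat{\lambda}_n=O_p(n^{-1/5})$ this gives an order $n^{2/5}\log n$ bound on $[-\xin,\xin]$, far too large for the entropy-integral and maximal-inequality calculations (which need $M_n=O_p(n^{p/5})$ with $p=1/5$). The paper instead obtains $\sup_{[-\xin,\xin]}|\hlnsp(\bth\pm\mathord{\cdot})|=O_p(\eta_n^{-1/2})$ by the quantile/Fisher-information argument (Lemma~\ref{lemma: hnss: tilde psi prime bound}, the analogue of Lemma~\ref{lemma: tilde psi prime bound}), which also requires extending the lower bound $\hn\gtrsim\min(\tilde G_n,1-\tilde G_n)$ to the non-log-concave mixture (Fact~\ref{fact: f grtr than F} under Condition~\ref{cond: hnss condition}). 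So while your high-level plan points in the right direction, the two places where the argument must actually change — the entropy bound for the non-monotone score class and the sharp bound on the score over the truncation region — are missing or replaced by tools that are too weak.
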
 

\begin{remark}
We suspect that  the rate of  decay of the Hellinger error of the estimators $\widehat{g}_{\bth}$  and $\hnss$ is  faster than our obtained rates, which are $O_p(n^{-1/4})$ and $O_p(n^{-1/5})$, respectively.  Our guess is based on the fact that  the geometric symmetrized estimator $\hn^{geo, sym}$  and the full MLE    in $\mP_0$ (see Theorem~\ref{MLE: Rate results}) are Hellinger consistent at the rate $O_p(n^{-2/5})$. The latter indicates that   $H(\hn,g_0)$ is possibly $O_p(n^{-2/5})$ if  $\hn$ is an equally good estimator of $g_0$. However, the knowledge of $p$ does not contribute much in the tuning of $\eta_n$ for practical implementation.  Therefore, we do not pursue further theoretical investigation on  the best possible rate of $\eta_n$ in this paper.   
\end{remark}

For convenience, we list the key differences among our three main estimators of $g_0$  in Table~\ref{Table: 1}.

 

  \begin{table*}[ht]
\caption{Comparison of different $\hn$'s: here $\hf$ is the log-concave MLE, $\hts$ is the smoothed log-concave MLE as defined in \eqref{def: smoothed log-concave MLE}, and $C_n^{geo} $ is the normalizing constant in \eqref{est 5}.
  }
     \begin{tabular}{@{}llll@{}}
\toprule
 Estimator ($\hn$)   &  $\hn^{sym,sm}$ & $\widehat{g}_{\bth}$ & $\hn^{geo,sym}$ \\ 
 \midrule

Summary &  Smoothed  & Partial MLE & GM type\\
& symmetrized   &   & symmetrized \\
 Formula & $2^{-1}(\hts(\bth+z)$ & $\widehat{g}_{\bth}(z)$ & $C_{n}^{geo}( \hf(\bth+z)$\\ 
of  $\hn(z)$  & $+\hts(\bth-z))$ & & $\times \hf(\bth-z))^{1/2}$\\
 & & &\\
 Log-concave & No & Yes & Yes\\ 
 
 Smooth  & Yes & No  &  No  \\ 
 
 Support & $\RR$ & $[-|X|_{(n)},|X|_{(n)}]$ & $[-a,a],\ a=\min(X_{(n)}-\bth,$ \\
  &  &  & $\bth-X_{(1)})$ \\
   \bottomrule
 \end{tabular} 
  
   \end{table*}\label{Table: 1}
   We close this section with a conjecture. It has previously been mentioned that the lack of control on $\tp$ at the tails make asymptotic analysis of the untruncated estimator difficult. However, we conjecture that the untruncated estimator $\hthf$ is also adaptive, i.e. $\sqn(\hthf-\theta_0)\to_d N(0,\I^{-1})$. Our simulations in Section~\ref{sec: simulation} do not refute this conjecture.


\section{Maximum likelihood estimator (MLE)}
\label{sec: MLE}
In this section, we prove that the MLE of $(\th_0,g_0)$ exists, and explore some of its properties. Before going into further details, we introduce some new terminologies. Recall that by our definition of $\mathcal{SC}_\theta$, the class $\mathcal{SC}_0$ consists of all properd closed concave functions symmetric about the origin.
  For $\ps\in\mathcal{SC}_0$ and $\th\in\RR$, following \cite{dumbreg} and \cite{xuhigh}, we define the criterion function for  maximum likelihood estimation  by
 \begin{equation}\label{criterion function: xu samworth}
 \Psi(\th,\ps,F)=\edint \ps(x-\th)dF(x)-\edint e^{\ps(x-\th)}dx.
 \end{equation}
Following \cite{silverman1982}, we  included a Lagrange term  to get rid of the normalizing constant  involved in density estimation. This is a common device in log-concave density estimation literature \citep[cf.][]{2009rufi, dosssymmetric}.

 We use the notation $\Psi_n(\th,\ps)$ to denote the sample version  $\Psi(\th,\ps,\Fm)$ of $\Psi(\th,\ps,F)$. Thus,
  \begin{equation}\label{criterion function: Doss}
  \Psi_n(\th,\ps)=\edint \ps(x-\th)d\Fm(x)-\edint e^{\ps(x-\th)}dx.
  \end{equation}
Let us denote  the MLE of $(\th_0,g_0)$ by $(\hthm,\hgm)$ when they exist.
We also denote $\hpm=\log\hgm$.  Observe that provided they exist,  $(\hthm,\hpm)$  satisfies
 \[(\hthm,\hpm)=\argmax_{\th\in\RR,\ps\in\mathcal{SC}_0}\Psi_n(\th,\ps).\]
For fixed $\theta\in\RR$, denote by $\widehat \ps_{\th}$ the maximizer of $   \Psi_n(\th,\ps)$ in $\ps\in\mathcal{SC}_0$.  Theorem 2.1(C) of \cite{dosssymmetric} implies the maximizer $\widehat{\ps}_{\th}$ exists, is unique, and that it satisfies
\[\edint e^{\widehat{\ps}_{\th}(x)}dx=1.\]
 It is not hard to see that if the MLE $(\hthm,\hpm)$ exists, then
   \[\hthm=\argmax_{\th\in\RR}\Psi_n(\th,\pst)\quad \text{and}\quad\hpm=\widehat{\ps}_{\hthm}.\]
   Note that $\hgm=e^{\hpm}$ is the MLE of $g_0$,  and $\hgf=\hgm(\mathord{\cdot}-\hthm)$ is the MLE of $f_0$.
Theorem~\ref{theorem: existence} implies that the   the MLE $(\hthm,\hpm)$ exists when $\Fm$ is non-degenerate. The proof of Theorem\ref{theorem: existence} an be found in Appendix~\ref{app: existence theorem}.
\begin{theorem}\label{theorem: existence}
When $\Fm$ is non-degenerate, the MLE $(\widehat{\theta}_n , \widehat{\psi}_{n} )$ of $(\theta_0 , \psi_0 )$ exists. If $\hthm$ is unique, then $\hthm\in[X_{(1)},X_{(n)}]$. Otherwise, we can find at least one $\hthm\in[X_{(1)},X_{(n)}]$.
\end{theorem}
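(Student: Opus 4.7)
The plan is to profile out $\psi$ and reduce the existence question to a one-dimensional optimization. For each $\theta\in\mathbb{R}$, Theorem~2.1(C) of \cite{dosssymmetric} yields the unique maximizer $\widehat\psi_\theta$ of $\psi\mapsto\Psi_n(\theta,\psi)$ over $\mathcal{SC}_0$, normalized so that $\int e^{\widehat\psi_\theta}\,dx=1$. Setting $\widehat g_\theta=e^{\widehat\psi_\theta}$ and
$$L(\theta):=\Psi_n(\theta,\widehat\psi_\theta)=\frac{1}{n}\sum_{i=1}^n\widehat\psi_\theta(X_i-\theta)-1,$$
existence of $(\widehat\theta_n,\widehat\psi_n)$ is equivalent to $L$ attaining its supremum over $\mathbb{R}$. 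Coercivity is the first substantive step: every symmetric log-concave density $g$ with $\int g=1$ is non-increasing on $[0,\infty)$, so $yg(y)\leq\int_0^y g\leq 1/2$, giving the universal pointwise bound $g(y)\leq 1/(2|y|)$. Applied to $\widehat g_\theta$ at $y=X_i-\theta$,
$$L(\theta)\leq -\frac{1}{n}\sum_{i=1}^n\log\bigl(2|X_i-\theta|\bigr)-1\longrightarrow-\infty\quad\text{as }|\theta|\to\infty,$$
while non-degeneracy of $\Fm$ ensures $L$ is not identically $-\infty$ (evaluate, say, against a Gaussian proposal centered at $\bar X$).

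Next I would establish upper semi-continuity of $L$. For $\theta_k\to\theta^*$, the support radii $a_{\theta_k}=\max_i|X_i-\theta_k|$ of $\widehat g_{\theta_k}$ stay bounded, and the modal values $\widehat g_{\theta_k}(0)$ are uniformly controlled by the standard sup-norm estimates for log-concave MLEs \citep{2009rufi,dosssymmetric}. Helly's selection theorem then extracts a subsequential limit $g^*\in\mathcal{SLC}_0$, and passing to the limit in the optimality inequality $\Psi_n(\theta_k,\widehat\psi_{\theta_k})\geq\Psi_n(\theta_k,\psi)$ (for arbitrary $\psi\in\mathcal{SC}_0$) gives $\limsup_k L(\theta_k)\leq\Psi_n(\theta^*,\log g^*)\leq L(\theta^*)$. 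Combined with coercivity, any maximizing sequence is precompact, so a limit point $\widehat\theta_n$ together with $\widehat\psi_n:=\widehat\psi_{\widehat\theta_n}$ constitutes the MLE.

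For the location of $\widehat\theta_n$ I would show $L(X_{(n)})\geq L(\theta)$ for every $\theta>X_{(n)}$, and dually at the left endpoint. Fix such $\theta$, set $b:=\theta-X_{(n)}>0$, $\psi:=\widehat\psi_\theta$, and introduce the symmetrized candidate
$$\psi'(y):=\tfrac{1}{2}\bigl(\psi(y-b)+\psi(-y-b)\bigr),\qquad \psi''(y):=\psi'(y)-\log\textstyle\int e^{\psi'}.$$
Then $\psi''\in\mathcal{SC}_0$ and $\int e^{\psi''}=1$; Cauchy--Schwarz gives $\int e^{\psi'}\leq\sqrt{\int e^{\psi(\cdot-b)}\cdot\int e^{\psi(-\cdot-b)}}=1$, so $\psi''\geq\psi'$ pointwise. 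A direct computation using the symmetry of $\psi$ identifies $\psi'(X_i-X_{(n)})=\tfrac{1}{2}(\psi(X_i-\theta)+\psi(X_i-\theta_0^*))$ with $\theta_0^*:=2X_{(n)}-\theta$, so the desired $L(X_{(n)})\geq L(\theta)$ reduces to $\sum_i\psi(X_i-\theta_0^*)\geq\sum_i\psi(X_i-\theta)$. This follows from the unimodality of $\psi$ together with the elementary identity
$$(X_i-\theta_0^*)^2-(X_i-\theta)^2=4(\theta-X_{(n)})(X_i-X_{(n)})\leq 0\quad\text{for every }i.$$
A mirror argument with $X_{(n)}$ replaced by $X_{(1)}$ handles $\theta<X_{(1)}$. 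If $\widehat\theta_n$ were unique and lay outside $[X_{(1)},X_{(n)}]$, one of the boundary points would be a distinct maximizer, contradicting uniqueness; in the non-unique case the same construction places $X_{(1)}$ or $X_{(n)}$ in the maximizer set.

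I expect the main obstacle to be the upper semi-continuity argument in the second step: obtaining a uniform upper bound on the modal values $\widehat g_{\theta_k}(0)$ as $\theta_k$ varies, and correctly identifying the Helly limit as $\widehat g_{\theta^*}$, both require quoting and slightly adapting the compactness theory for log-concave MLEs. By contrast, the coercivity bound and the Cauchy--Schwarz-based comparison in the location step are short and self-contained once the reflection identity $\theta_0^*=2X_{(n)}-\theta$ is spotted.
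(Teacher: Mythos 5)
Your overall architecture (profile out $\psi$ via Theorem~2.1(C) of Doss--Wellner, prove coercivity, get existence from a compactness/continuity argument, then push the maximizer into the data range by a comparison at the endpoints) is the same as the paper's, and your coercivity bound is literally the paper's: the inequality $2|x|e^{\psi(x)}\leq\int_{-|x|}^{|x|}e^{\psi}\leq 1$, hence $\psi(x)\leq-\log(2|x|)$, is exactly how the paper rules out $|\theta|\to\infty$ (there combined with Fatou, since the paper works with a general $F$ of finite first moment rather than $\Fm$). The two places you diverge are the continuity step and the location step, and the first of these is where your write-up has a real problem. As stated, ``passing to the limit in the optimality inequality $\Psi_n(\theta_k,\widehat\psi_{\theta_k})\geq\Psi_n(\theta_k,\psi)$'' cannot give $\limsup_k L(\theta_k)\leq\Psi_n(\theta^*,\log g^*)$: that inequality only \emph{lower}-bounds $L(\theta_k)$, whereas upper semi-continuity requires an upper bound on the limsup. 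What is actually needed is the Cule--Samworth/D\"umbgen--Rufibach-style argument applied to the Helly limit: a uniform bound on $\sup_x\widehat g_{\theta_k}(x)$ along the (bounded) maximizing sequence, the pointwise-limsup inequality $\limsup_k\widehat\psi_{\theta_k}(X_i-\theta_k)\leq\log g^*(X_i-\theta^*)$ (with care when $X_i-\theta^*$ sits on the boundary of the support of $g^*$), and Fatou to get $\int g^*\leq 1$. None of this is quoted off the shelf; it must be adapted. The paper avoids it entirely by proving full continuity of $\theta\mapsto\sup_{\psi\in\mathcal{SC}_0}\Psi_n(\theta,\psi)$ through the Wasserstein-continuity of the log-concave projection (Proposition~6 of Xu--Samworth, via its Theorem~6.9-of-Villani criterion), which is a one-paragraph argument since $\Fm(\cdot+\theta_k)\to\Fm(\cdot+\theta)$ in $d_W$ when $\theta_k\to\theta$. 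You should either carry out the Helly/Fatou argument in full or simply substitute the projection-continuity lemma.

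Your location argument is correct but heavier than necessary. I checked the reflection identity: with $\theta_0^*=2X_{(n)}-\theta$ one has $(X_i-\theta_0^*)^2-(X_i-\theta)^2=4(\theta-X_{(n)})(X_i-X_{(n)})\leq0$, so $|X_i-\theta_0^*|\leq|X_i-\theta|$, and your Cauchy--Schwarz normalization does yield $L(X_{(n)})\geq L(\theta)$ for $\theta>X_{(n)}$ (non-degeneracy is what keeps the symmetrized candidate proper, since its domain is nonempty exactly when $X_{(1)}<X_{(n)}$). But the symmetrization is not needed: since every $\psi\in\mathcal{SC}_0$ is symmetric and non-increasing in $|y|$, for $\theta>X_{(n)}$ and every $i$ one has $|X_i-X_{(n)}|\leq|X_i-\theta|$, hence $\Psi_n(X_{(n)},\psi)\geq\Psi_n(\theta,\psi)$ for \emph{every} $\psi$, and taking suprema gives monotonicity of the profile criterion on $[X_{(n)},\infty)$ (and dually on $(-\infty,X_{(1)}]$). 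This is precisely the paper's Lemma on the support of the maximizer, and it settles both the unique and non-unique cases of the theorem in two lines.
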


Observe that Theorem~\ref{theorem: existence}  does not claim that \hthm\ is unique. Since $\Psi_n(\th,\ps)$ may not be jointly concave in $\theta$ and $\ps$, existence of a maximizer does not lead automatically to its uniqueness. For a particular choice of $\hthm$ however, the estimator   $\hpm=\widehat{\ps}_{\hthm}$ is unique by Theorem~$2.1(c)$ of \cite{dosssymmetric}. Therefore, if $(\th,\ps_1)$ and $(\th,\ps_2)$ both are MLEs of $(\th_0,\ps_0)$, we must have $\ps_1=\ps_2$.

{Although we can not theoretically prove the uniqueness of $\hthm$, we are yet unaware of any set up which leads to non-unique MLE.}
Moreover, in all our simulations, \hthm\ turned out to be unique, even when the underlying density $f_0$ was skewed or non-log-concave. Considering this fact, in what follows, we  refer to \hthm\ as ``the MLE" instead of ``an MLE". We must remark that even if $\hthm$ is not unique, all our theorems  still hold for each version of \hthm.

 On the other hand, when $\Fm$ is degenerate, Lemma~\ref{lemma: mle does not exist degenerate} entails that the MLE  does not exist. 
However,  for distributions with a density, probability of  $\Fm$ being degenerate is zero. Therefore  we will not worry about this  particular situation. The proof of Lemma~\ref{lemma: mle does not exist degenerate} is given in Appendix~\ref{app: mle lemma}.
 \begin{lemma}\label{lemma: mle does not exist degenerate}
  Suppose $\Fm$ is degenerate, i.e.  $\Fm\{x_0\}=1$ for some $x_0\in\RR$. Then the MLE of $(\theta_0,g_0)$ in $\mathcal P_0$ does not exist.
 \end{lemma}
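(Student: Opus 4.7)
The plan is to show that when $\Fm$ is degenerate the supremum of $\Psi_n(\theta,\psi)$ over $(\theta,\psi)\in\RR\times\mathcal{SC}_0$ equals $+\infty$, which immediately rules out the existence of any maximizing pair.

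First I would specialize \eqref{criterion function: Doss} to the case $\Fm=\delta_{x_0}$. The integral against $\Fm$ collapses to a point evaluation and, by translation invariance of Lebesgue measure, the penalty term is unchanged by shifting $\theta$, so
\[
\Psi_n(\theta,\psi)=\psi(x_0-\theta)-\int_{\RR} e^{\psi(u)}\,du.
\]
Since every $\psi\in\mathcal{SC}_0$ is symmetric about and maximized at $0$, for any fixed $\psi$ the right-hand side is largest when $\theta=x_0$; therefore it is enough to analyse the reduced supremum $\sup_{\psi\in\mathcal{SC}_0}\{\psi(0)-\int e^{\psi}\}$.

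Next I would produce an explicit diverging sequence inside $\mathcal{SC}_0$. For each $b>0$, set
\[
\psi_b(x)=\log b-b|x|.
\]
This function is concave, symmetric about the origin, and finite and continuous on all of $\RR$, so it is closed and proper in the sense of Rockafellar; hence $\psi_b\in\mathcal{SC}_0$. A one-line computation gives $\int_{\RR}e^{\psi_b(u)}\,du=b\int_{\RR}e^{-b|u|}\,du=2$ and $\psi_b(0)=\log b$, so that
\[
\Psi_n(x_0,\psi_b)=\log b-2\longrightarrow +\infty\quad\text{as }b\to\infty,
\]
and the lemma follows.

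The only point requiring care is checking $\psi_b\in\mathcal{SC}_0$, which is transparent for the globally finite tent function above; a compactly supported triangle would work equally well but needs a slightly longer verification. Heuristically, a single repeated observation ``wants'' to be explained by a Dirac mass at $x_0$, which lies outside the density class, and the sequence $\psi_b$ is simply a log-concave Laplace-type approximation of such a Dirac, so no finer estimates are required.
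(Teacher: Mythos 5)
Your proof is correct and follows essentially the same route as the paper: you drive $\Psi_n(x_0,\psi)$ to $+\infty$ along a sequence of symmetric log-concave functions concentrating at $x_0$ (your Laplace-type tents $\psi_b(x)=\log b-b|x|$ versus the paper's shrinking uniform log-densities $\psi_k=\log k$ on $[-1/(2k),1/(2k)]$), and then conclude nonexistence because any proper $\psi\in\mathcal{SC}_0$ yields a value strictly below $+\infty$. The only cosmetic difference is the choice of the diverging sequence; the paper makes the final step explicit via a contradiction with properness of $\hat\psi_n$, which is the same observation underlying your "immediately rules out" remark.
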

   

  The following theorem sheds some light on the structure of $\hpm$.  This theorem is a direct consequence of Theorem~$2.1(c)$ of \cite{dosssymmetric}, and hence we skip the proof.
%
%
\begin{theorem}\label{theorem: structure of the MLE of the density}
Suppose $(\hthm,\hpm)$ is the MLE.
For $\Fm$ non-degenerate,  $\hpm$  is piecewise linear with knots belonging to a subset of the set $\{0,\pm|X_1-\hthm|,\ldots,\pm|X_{n}-\hthm|\}$. Also,  for $x\notin[-|X-\hthm|_{(n)},|X-\hthm|_{(n)}]$, we have $\hpm(x)=-\infty$.
Moreover if $0\notin \{\pm|X_1-\hthm|,\ldots,\pm|X_{n}-\hthm|\}$, then $\hpm'(0\pm)=0.$
\end{theorem}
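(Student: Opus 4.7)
The plan is to recognize that once $\hthm$ is held fixed, the statement reduces to a direct application of Theorem~2.1(c) of \cite{dosssymmetric} to the translated sample $Y_i = X_i - \hthm$. Since $(\hthm,\hpm)$ is the MLE, $\hpm$ must maximize $\psi \mapsto \Psi_n(\hthm,\psi)$ over $\psi\in\mathcal{SC}_0$; that is, $\hpm = \widehat{\psi}_{\hthm}$ in the notation introduced after \eqref{criterion function: Doss}. Performing the change of variable $y = x-\hthm$ in \eqref{criterion function: Doss} shows that this optimization depends on the data only through $Y_1,\dots,Y_n$, so $\hpm$ is exactly the symmetric (about $0$) log-concave MLE computed from the sample $\{Y_i\}_{i=1}^n$.

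With this reduction in hand, I would invoke Theorem~2.1(c) of \cite{dosssymmetric} on the sample $\{Y_i\}$. That theorem asserts that the symmetric log-concave MLE is a piecewise linear concave function, with knots lying in a subset of $\{0,\pm|Y_i|\}_{i=1}^n$, and with effective domain equal to the symmetrized convex hull $[-\max_i|Y_i|,\max_i|Y_i|]$. Substituting $|Y_i| = |X_i - \hthm|$ and noting that $\max_i |X_i - \hthm| = |X-\hthm|_{(n)}$ yields the first two conclusions: the knots of $\hpm$ are contained in $\{0,\pm|X_1-\hthm|,\dots,\pm|X_n-\hthm|\}$, and $\hpm(x) = -\infty$ outside $[-|X-\hthm|_{(n)},|X-\hthm|_{(n)}]$.

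For the derivative condition at the origin, I would combine symmetry with the knot structure. Because $\hpm(x)=\hpm(-x)$ on the whole real line, computing the two one-sided derivatives at $0$ yields $\hpm'(0+) = -\hpm'(0-)$. When $0\notin\{\pm|X_i-\hthm|\}$, the cited characterization leaves the origin as the only possible knot not accounted for by the data, and a second look at Theorem~2.1(c) of \cite{dosssymmetric} rules this out as well, so $\hpm$ is differentiable at $0$ with $\hpm'(0+)=\hpm'(0-)$. Combining the two relations forces $\hpm'(0\pm)=0$, as claimed. The essentially only subtle step is verifying that Doss's characterization applies verbatim once $\hthm$ plays the role of a fixed translation parameter; this is immediate, because the objective $\Psi_n(\hthm,\psi)$ depends on the data only through the shifted sample $\{X_i-\hthm\}_{i=1}^n$, and existence of the maximizer is already supplied by Theorem~\ref{theorem: existence}.
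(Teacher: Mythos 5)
Your proposal is correct and follows essentially the same route as the paper: the paper simply observes that, since $\hpm=\widehat{\psi}_{\hthm}$ is the symmetric log-concave MLE for the shifted sample $X_i-\hthm$, all three assertions are a direct consequence of Theorem~2.1(c) of \cite{dosssymmetric}, and it omits further details. Your additional remarks (the change of variable reducing $\Psi_n(\hthm,\psi)$ to the shifted sample, and the symmetry argument $\hpm'(0+)=-\hpm'(0-)$ forcing $\hpm'(0\pm)=0$ when $0$ is not a knot) are just a fleshed-out version of that same citation-based argument.
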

The MLE can be computed using our R package \texttt{log.location}, which implements a grid search method to optimize $\Psi_n(\theta,\widehat \psi_{\theta})$ in $\theta$. 

\subsection{Asymptotic properties of the MLE}
For $f_0\in\mP_0$,  we  showed that  the one-step estimators are consistent. Theorem~\ref{MLE: Rate results} (A) below shows that the MLE $\hthm$  enjoys similar consistency 
property. In fact, $\hthm$ is strongly consistent for $\th_0$. Part A of Theorem~\ref{MLE: Rate results} also entails that $\hgm$ and $\hgf$ are strongly Hellinger consistent. Part B of Theorem~\ref{MLE: Rate results} concerns the rate of convergences.
 The proof of Theorem~\ref{MLE: Rate results} is delegated to Appendix~\ref{app: rate results}.
\begin{theorem}\label{MLE: Rate results}
Suppose $f_0\in\mP_0$.
   Then the following assertions hold:
   \begin{compactenum}[label=(\Alph*),ref=(\Alph*)]
  \item\label{lemma 2: E} As $n\to\infty$, $\hthm\as \theta_0$, $H(\hgf,f_0)\as 0$, and 
  $H(\hgm,g_0)\as 0$.
  \item\label{lemma 2: F} Furthermore, ${|\hthm-\th_0|=O_p(n^{-2/5})}$, $H(\hgf,f_0)=O_p(n^{-2/5})$, and\\ $H(\hgm,g_0)=O_p(n^{-2/5})$.
   \end{compactenum}
\end{theorem}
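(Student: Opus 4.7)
The plan is to prove Part (A) via a Helly-type subsequence argument based on the basic inequality, then prove Part (B) by combining an empirical-process entropy bound for $H(\hgf, f_0)$ with a moment-based identifiability bound that transfers the rate to $|\hthm - \theta_0|$.

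For Part (A), my starting point is the basic inequality $\Psi_n(\hthm, \hpm) \geq \Psi_n(\theta_0, \psi_0)$, whose right-hand side converges a.s.\ to $\Psi(\theta_0, \psi_0, F_0)$ by the SLLN (the integrand $\psi_0(X - \theta_0)$ is $F_0$-integrable because $\I < \infty$). Theorem~\ref{theorem: existence} confines $\hthm$ to $[X_{(1)}, X_{(n)}]$, and classical moment and tail bounds for log-concave MLEs keep $\{(\hthm, \hpm)\}$ a.s.\ tight. Along any subsequence, Helly's selection principle for concave functions yields a further subsequence with $\hthm \to \theta^\star$ and $\hpm \to \psi^\star$ pointwise on $\iint(\dom(\psi^\star))$; Fatou's lemma then shows $(\theta^\star, \psi^\star)$ maximises $\Psi(\cdot, \cdot, F_0)$. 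A standard maximum-likelihood identifiability argument, combined with the symmetry constraint, identifies the maximiser as $(\theta_0, \psi_0)$. Hellinger consistency $H(\hgf, f_0) \as 0$ follows by Scheff\'e's lemma, and $H(\hgm, g_0) \leq H(\hgf, f_0) + H(f_0, f_0(\cdot - (\hthm - \theta_0)))$ combined with continuity of translation in Hellinger gives $H(\hgm, g_0) \as 0$.

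For the Hellinger rate in Part (B), I apply an entropy-based rate theorem to the class $\mathcal{F}_K = \{g(\cdot - \theta) : g \in \mathcal{SLC}_0, \theta \in K\}$ for a compact neighbourhood $K$ of $\theta_0$; Part (A) guarantees $\hthm \in K$ with probability tending to one. A Kim-Samworth-type bracketing bound $\log N_{[]}(\epsilon, \mathcal{LC}, H) \lesssim \epsilon^{-1/2}$ on log-concave densities with uniform mode and variance control, combined with the $O(\log(1/\epsilon))$ cost of the one-dimensional translation, gives $\log N_{[]}(\epsilon, \mathcal{F}_K, H) \lesssim \epsilon^{-1/2}$. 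The corresponding bracketing integral is of order $\delta^{3/4}$, and solving the standard rate equation yields $H(\hgf, f_0) = O_p(n^{-2/5})$. The rate $H(\hgm, g_0) = O_p(n^{-2/5})$ then follows from the triangle inequality used in Part (A) together with $H(f_0, f_0(\cdot - s)) \lesssim |s|\sqrt{\I}$ for small $|s|$ (differentiability of translation in quadratic mean), once the rate on $|\hthm - \theta_0|$ is established.

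To transfer the Hellinger rate to $\hthm$, I exploit that $\hthm = \int x\, \hgf(x)\, dx$ and $\theta_0 = \int x\, f_0(x)\, dx$ by symmetry about the respective centres (both integrals being finite because log-concavity plus $\I < \infty$ yields bounded second moments). Writing $\hgf - f_0 = (\sqrt{\hgf} - \sqrt{f_0})(\sqrt{\hgf} + \sqrt{f_0})$ and applying Cauchy-Schwarz gives
$$
|\hthm - \theta_0|
= \Bigl| \int x\, (\hgf - f_0)\, dx \Bigr|
\leq 2 \Bigl( \int x^2 \hgf\, dx + \int x^2 f_0\, dx \Bigr)^{1/2} H(\hgf, f_0),
$$
so a uniform-in-$n$ second moment bound on $\hgf$ yields $|\hthm - \theta_0| = O_p(n^{-2/5})$. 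The main obstacle is obtaining the uniform-in-$n$ moment, sup-norm, and support bounds on $\hgf$ that underwrite both the bracketing-entropy application and this Cauchy-Schwarz identifiability bound. These are standard workhorses in log-concave MLE asymptotics but require extra care here because the centre $\hthm$ is itself estimated rather than known, so the bounds must be made uniform over $\theta$ in a compact neighbourhood of $\theta_0$; this is handled by combining the basic inequality with explicit log-concave tail estimates.
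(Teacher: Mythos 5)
Your proposal is correct in outline, but it reaches the theorem by a partly different route than the paper, and the differences cut both ways. For part (A) the paper does not use a Helly/subsequence-compactness argument at all: it first obtains $H(\hgf,f_0)\as 0$ from the consistency criterion in the proof of Theorem 3.1 of \cite{exist}, whose only nontrivial hypothesis---an almost sure, uniform-in-$n$ bound on $\sup_x\log\hgf(x)$---is supplied by Lemma~\ref{lemma: Pal's paper Lemma}; consistency of $\hthm$ then follows from moment convergence, since $\hthm=\int x\,\hgf(x)\,dx$ by symmetry of $\hgm$ and Proposition 2 of \cite{theory} gives exponentially weighted $L_1$ convergence of $\hgf$ to $f_0$. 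Your route can be made rigorous, but the steps you treat as routine are where the real work sits: a.s.\ tightness of $\hthm$ needs a proof, Fatou alone does not control the term $\int\hpm(\cdot-\hthm)\,d\Fm$ (one needs locally uniform convergence of the concave log-densities together with an envelope/tail bound of exactly the kind Lemma~\ref{lemma: Pal's paper Lemma} provides), and the identifiability step (the log-concave projection of $F_0$ is $f_0$ itself, and a density cannot be symmetric about two distinct centres) should be spelled out. For part (B), your entropy step is essentially the paper's: the paper sidesteps your translation class $\mathcal F_K$ by rescaling so that, using part (A), $\hgf$ eventually lies in a fixed class $\mathcal P_{M,0}$ with bracketing entropy of order $\epsilon^{-1/2}$, and then applies Theorems 3.4.1 and 3.4.4 of \cite{wc}. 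Where you genuinely diverge---and gain in economy---is the transfer of the rate to $\hthm$: the identity $\hthm-\th_0=\int x\,(\hgf-f_0)(x)\,dx$, Cauchy--Schwarz, and the a.s.\ boundedness of $\int x^2\hgf(x)\,dx$ (already available from the part (A) machinery) give $|\hthm-\th_0|\lesssim H(\hgf,f_0)=O_p(n^{-2/5})$ in a few lines, after which $H(\hgm,g_0)=O_p(n^{-2/5})$ follows from translation invariance of $H$, the triangle inequality and Fact~\ref{fact: helli fknot}. The paper instead proves the lower bound $2H(\hgf,f_0)^2\geq 2H(\hgm,g_0)^2+\I(\hthm-\th_0)^2/4-o_p(1)\{(\hthm-\th_0)^2+H(\hgm,g_0)^2\}$ through a lengthy analysis of the cross term $T_c$; that argument delivers both rates simultaneously and exhibits the Fisher-information constant, but your moment-based shortcut is valid and materially shorter.
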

The rate of $H(\hgm,g_0)$ as given by Theorem~\ref{MLE: Rate results}  is standard for  log-concave density estimators. The MLEs in  $\mathcal{SLC}_{0}$ and $\mathcal{LC}$ have the same rate of  Hellinger error decay  \citep[see Theorem~4.1(c) of][]{dosssymmetric}. Moreover, this rate 
probably can not be improved by any other  estimator of $g_0$. To see why, first note that Theorem 1 of \cite{dossglobal} proves that the minimax rate of   Hellinger error decay in  $\mathcal{LC}$ is $O_p(n^{-2/5})$.
Remark 4.2 of \cite{dosssymmetric} conjectures that the minimax rate of estimation in the constrained class $\mathcal{SLC}_0$  stays the same. Since estimation of $g_0$ in $\mP_0$ can not be  easier than  estimation in the smaller class $\mathcal{SLC}_0$, it is likely  that the  minimax rate of estimating $g_0$ in $\mP_0$ is also $O_p(n^{-2/5})$.

However, the MLE $\hthm$ probably  convergences to $\th_0$ at a rate faster than  $O_p(n^{-2/5})$. Our simulations suggest that $\hthm$ is $\sqn$-consistent, based on which, we  conjecture that $\hthm$ is also an adaptive estimator of $\theta_0$. In our model, the low dimensional parameter of interest, i.e. $\theta_0$, is bundled with  the infinite dimensional nuisance parameter.  Obtaining the precise rate of convergence for the MLE  in such semiparametric models is typically difficult \citep{murphy}.  Nevertheless, since the MLE is tuning parameter free, finding its exact asymptotic distribution will be an interesting future research direction.



%

\section{Simulation study}\label{sec: simulation}
   
   \begin{figure}[ht]
\includegraphics[width=\textwidth, height=.38\textwidth]{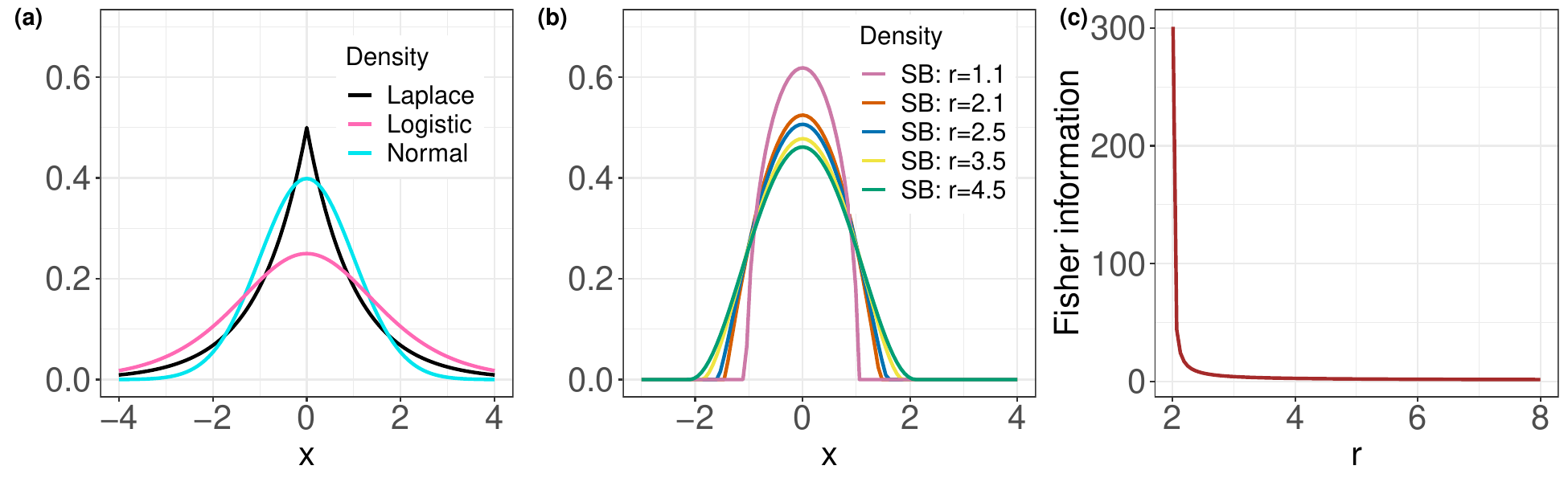}
\caption{
(a) Plot of the standard Laplace, standard normal and standard logistic densities.
(b) Plot of the
 symmetrized beta  density $f_{0,r}$,  defined in \eqref{definition: Symmetrized beta}, for  different values of $r$.
 (c) Plot of Fisher information $\mathcal{I}_{f_{0,r}}$ vs $r$ where $f_{0,r}$ is the symmetrized beta density. }
\label{Figure: comparison of truncated information densities 1}
\end{figure}

   This section compares the efficiency of our estimators and the coverage of the resulting confidence intervals with  that of \citeauthor{stone} and \citeauthor{beran}.
   The general set-up of the simulation is as follows.
   We consider as $g_0$ the standard normal, standard logistic, and standard Laplace density. We also  consider a fourth density, namely the symmetrized beta density,  which is defined as follows:
   \begin{equation}\label{definition: Symmetrized beta}
f_0(x)\equiv f_{0,r}(x)=\dfrac{\Gamma\lb(3+r)/2\rb}{\sqrt{\pi r}\Gamma(1+r/2)}\lb 1-\dfrac{x^2}{r}\rb^{r/2}1_{[-\sqrt{r},\sqrt{r}]}(x),\quad r> 0.
\end{equation}
Here $\Gamma$ is the usual Gamma function.
 It is straightforward to verify that in this case 
 \[\phi_0'(x)=\dfrac{-x}{1-x^2/r}1_{[-\sqrt{r},\sqrt{r}]}(x)\quad\text{and }\quad\phi_0^{\prime\prime}(x)=-\frac{(1+x^2/r)}{(1-x^2/r)^2}1_{[-\sqrt{r},\sqrt{r}]}(x).\]
 Some computation shows  that $r\leq 2$ leads to $\mathcal{I}_{f_{0,r}}=\infty.$ However   for $r>2$, $\mathcal {I}_{f_{0,r}}<\infty$, and
  $f_0\in\mP_0$. This is an example of a case where Assumption~\ref{assump: L} fails to hold because $\phi_0^{\prime\prime}$ is unbounded. We consider the symmetrized beta density with $r=2.1$ and $4.5$.
  
   See  Figure~\ref{Figure: comparison of truncated information densities 1}a and \ref{Figure: comparison of truncated information densities 1}b  for a pictorial representation of the above-mentioned densities. 
 Figure~\ref{Figure: comparison of truncated information densities 1}c  displays the plot of $\mathcal{I}_{f_{0,r}}$ versus $r$ for the symmetrized beta density, which depicts that $\mathcal{I}_{f_{0,r}}$  decreases steeply for $r>2$. This finding is consistent with $\I$ being $\infty$ when $f_0$ is the uniform density on $[-1,1].$

We set $\th_0=0$, and generate $3000$ samples of size $n=30$, $100$, $200$, and $500$ from each of the above-mentioned densities. We define the efficiency of an estimator $\theta_n$ by 
     \begin{equation}\label{Def: efficiency}
   \text{Efficiency}(\th_n)=\dfrac{1/(n\I)}{ Var(\th_n)}.
    \end{equation} 
 In practice, we replace $Var(\th_n)$ by its Monte Carlo estimate.

   \subsubsection*{\textbf{The shape-constrained estimators:}}
    Along with the MLE and the untruncated one step estimator defined in \eqref{def: one-step estimator full}, we consider the truncated one step estimators with truncation level $\eta=10^{-2}$, $10^{-3}$, and $10^{-5}$.  We select the sample mean as the preliminary estimator $\bth$ because it exhibited slightly better overall performance than other potential choices of $\bth$, e.g. the median and the trimmed mean.   We choose the partial MLE estimator and the smoothed symmetrized estimator $\tilde g_n^{sym,sm}$ as the  estimator of $g_0$ because simulations suggest that they perform significantly better than  $\hn^{geo, sym}$.
  
  \subsubsection*{\textbf{Comparators: \citeauthor{stone} and \citeauthor{beran}'s estimators:}}
   As mentioned earlier, \citeauthor{stone}'s estimator is a truncated one step estimator which uses symmetrized Gaussian kernels  to estimate $g_0$. Similar to \citeauthor{stone}, we let
  the corresponding truncation parameter  and the kernel bandwidth parameter  to be $d_n s_n$ and $t_n s_n$, respectively, where $s_n$ is the median absolute deviation (MAD), and $d_n>0$ and $t_n>0$ are  tuning parameters. 
  Following \citeauthor{stone}, we  take the preliminary estimator to be   the sample median.
  
  As previously stated, \citeauthor{beran}'s estimator is a  rank-based estimator which depends on the scores. \citeauthor{beran} uses  Fourier series expansion to estimate the scores, which requires choosing (a) the number of  basis functions ($b_{c,n}$), and (b) a scaling parameter $\rho_n$, which is used to approximate a derivative term by quotients during the estimation of the Fourier coefficients of the score.  This  estimator uses a preliminary estimator of $\theta$, which we take to be the sample median following \citeauthor{beran}'s suggestion.
  In this case, the sum of squares of the estimated Fourier coefficients
  is a  consistent estimator of $\I$ \citep[see  (3.3) of ][]{beran}.
  
  For sample size $n=40$, \citeauthor{stone} uses $d_n=20$ and $t_n=0.60$, but \citeauthor{beran} does not give any demonstration on how to choose the tuning parameters.
  To choose some reliable values for the associated tuning parameters, we start with some pre-selected grids, and employ a grid search procedure (see Appendix~\ref{app: stone} for more details). The selected tuning parameter is the maximizer of the estimated efficiency among the grid, where the   efficiency is estimated using  one hundred Monte Carlo replications. Of course, this procedure requires the knowledge of the unknown distribution, and hence, not implementable in practice. However, our procedure at least guarantees a reliable benchmark  to compare the performance of our estimators. 
  We refer to the resulting tuning parameters as ``optimal" for the sake of simplicity. However, it should be kept in mind that these tuning parameters depend on the chosen grid, and therefore, may be different from the globally optimal tuning parameters if the grid selection is not accurate enough. This could have been overcome by an exhaustive search but that is beyond the scope of the current paper.   
 
  
  For each distribution and each sample size, we construct  two versions of the nonparametric estimators. The first version is based on the aforementioned optimal tuning parameter, and the other version  
  uses  tuning parameters slightly away from the optimal region. For convenience, we will refer to the second set of tuning parameters as ``non-optimal".  See Appendix~\ref{app: stone} for more details on these tuning parameters.
  
  We should mention that neither \citeauthor{stone} nor \citeauthor{beran} construct  confidence intervals. However,   both estimators rely on consistent estimators of $\I$, namely, the estimator $\widehat A_n(r_n,c_n)$   of \citeauthor{stone} (see  (1.10) of \citeauthor{stone}),  and the squared $L_2$ norm of the estimated score in  \citeauthor{beran}.
  We use the above estimators of $\I$  to build the respective confidence  intervals of \citeauthor{stone} and \citeauthor{beran}.
  
  \subsection*{\textbf{Results:}}
  Figure~\ref{fig: IR} implies that  \citeauthor{stone} and \citeauthor{beran}'s estimators have high  efficiency  when they are equipped with the optimal tuning parameters. In fact Stone's estimator has better efficiency than all other estimators in case of logistic and normal distribution. However, even with the optimal tuning parameter, the coverage of \citeauthor{stone}'s confidence interval is quite low (see Figure~\ref{fig: coverage: bs}). The coverage of \citeauthor{beran}'s confidence interval is comparatively better but  still not as good as  the shape-constrained estimators (see Figure~\ref{fig: coverage}). The poor coverage of the nonparametric confidence intervals is probably  due to their smaller width, as shown by Figure~\ref{fig: length}. We suspect that for our tuning parameters, the nonparametric estimators of $\I$  overestimate $\I$, leading to narrow confidence intervals.
  When the tuning parameters are non-optimal, the  nonparametric estimators suffer  in terms of both efficiency and the coverage. This is most evident in large samples because in this case,  their performance does not significantly improve  with the sample size. Figure~\ref{fig: IR} and \ref{fig: coverage} entail that
  all estimators have markedly poor performance in the symmetrized beta case when $r=2.1$.
  
   
   Let us turn our attention to the one step estimators now. Figure~\ref{fig: IR} underscore that
   the efficiency of the one step estimators  monotonously decreases with the truncation, with the highest efficiency being observed  at the truncation level zero. However, the difference becomes smaller as the truncation level decreases. In particular, at  truncation level $10^{-5}$, the difference almost vanishes. The one step estimators with lower truncation level, i.e.  $\eta\leq 0.001$, exhibit satisfactory performance in terms of both efficiency and coverage (see Figure~\ref{fig: IR} and Figure~\ref{fig: coverage}). The  estimators with higher truncation level lag in terms of efficiency as expected, although they  exhibit superior coverage in some cases.
  
   The additional gain in  coverage that sometimes accompany  higher levels of truncation is
   probably due to slightly wider confidence intervals (see Figure~\ref{fig: length}). Wider confidence intervals are expected with high levels of truncation since the length of the confidence intervals, which is a constant multiple of $\hin(\eta)^{-1}$, increases in $\eta$.  However, higher level of truncation may not always lead to a better coverage, especially since high truncation level  can also result in significant loss of efficiency. See for instance the case of symmetrized beta with $r=2.1$, where the one step estimators with truncation $0.01$ lags behind the other one-step estimators   in terms of both efficiency and coverage. This case clearly demonstrates that the one step estimators with higher level of truncation are not always  reliable. In contrast, the one step estimators with low level of truncation, particularly the untruncated one step estimator, always exhibit  satisfactory performance. In view of above, we propose the untruncated estimator for practical implementation.

   Close inspection shows  the  smoothed symmetrized estimators have better overall performance than the partial MLE estimators  with the obvious exception of Laplace distribution, which has a non-smooth density.
  Finally,  we note that the one step estimators with lower truncation level  have better efficiency than the MLE under all distributions except  Laplace. However, when it comes to the coverage of the confidence intervals, the MLE can be competitive with the best one step estimators, especially in small samples.


  

  In summary, the coverage of the nonparametric confidence intervals is not satisfactory for the tuning parameters considered here, and  the efficiency of the nonparametric estimators depends crucially on the tuning parameters.  For some choices of tuning parameter,  these estimators may exhibit excellent efficiency but for other choices, they severely underperform. In contrast, our untruncated one step estimator and the resulting confidence interval perform reasonably well under all scenarios.
  The performance of the untruncated one-step estimator also speaks in favor of our conjecture that it is an adaptive estimator.
  Although we do not show the plots of the mean squared error (MSE) here, they depict the same
  patterns as the efficiency plots in Figure~\ref{fig: IR}.
  
  We close this section with a remark on the necessity of Assumption~\ref{assump: L}. The symmetrized beta distributions do not satisfy Assumption~\ref{assump: L}, but the one step estimators still seem to be efficient when $r=4.5$. Although the one step estimators 
   perform poorly in case of $r=2.1$, they still perform better than \citeauthor{stone} and \citeauthor{beran}'s estimators, whose  asymptotically efficiency under this distribution is theoretically validated. Thus,  our simulations do not refute the possibility that Assumption~\ref{assump: L} might be unnecessary. 
   
  
   
  \begin{figure}[ht]
       \centering
       \begin{subfigure}{\textwidth}
        \includegraphics[width=\textwidth, height=2.8 in]{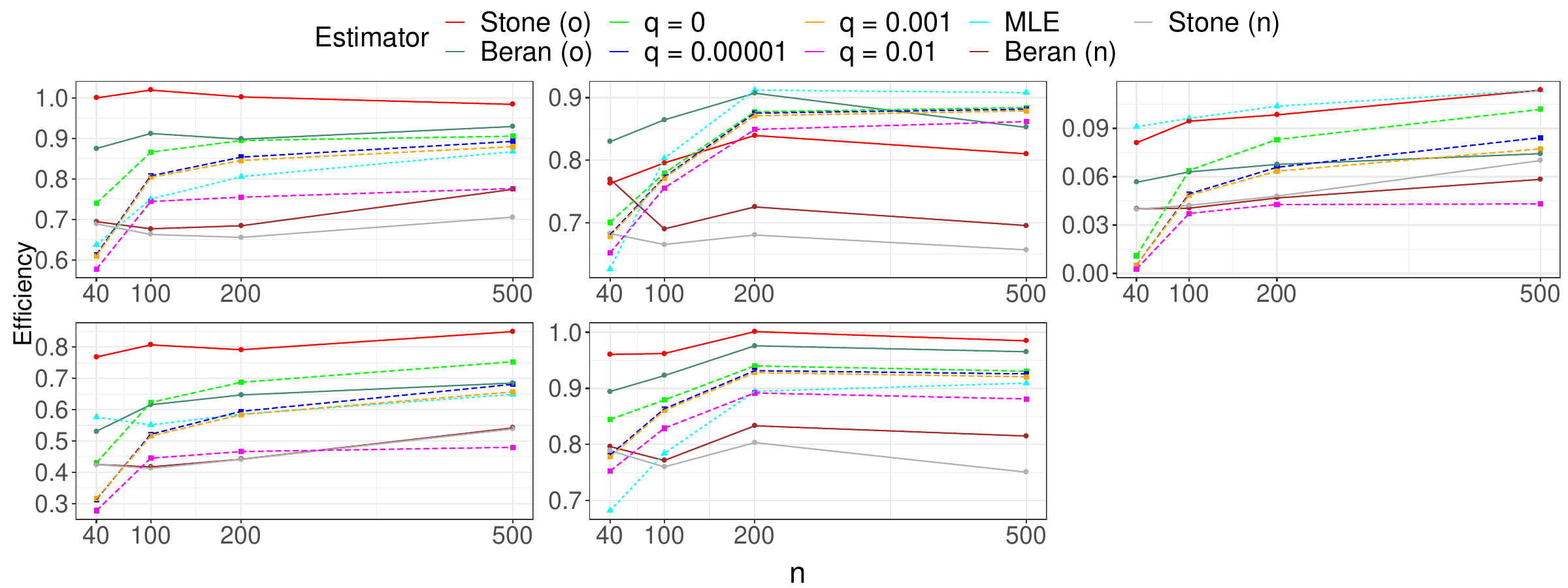}
        \caption{Comparison plot when the one step estimators are the Partial MLE estimator}
       \end{subfigure}\\
       \begin{subfigure}{\textwidth}
        \includegraphics[width=\textwidth, height=2.8 in]{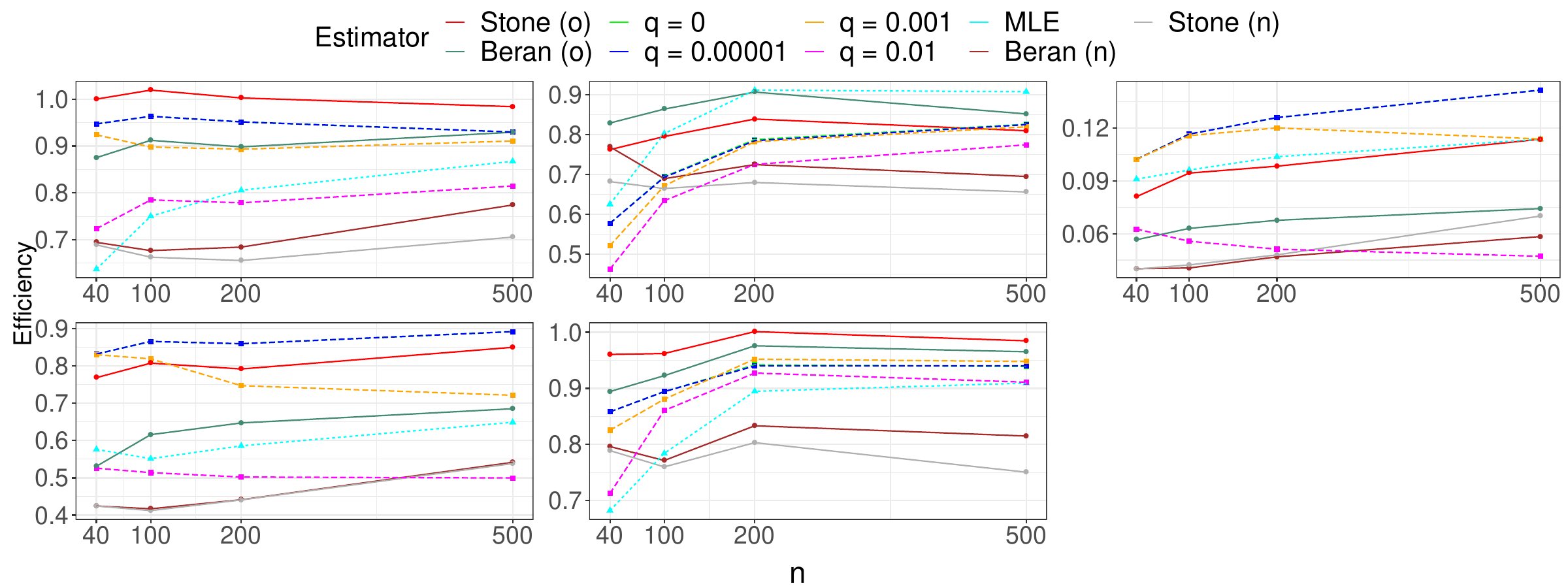}
        \caption{Comparison plot when the one step estimators are the smoothed symmetrized estimator}
       \end{subfigure}
       \caption{comparison of  efficiency: the data-generating distributions are normal (topleft), Laplace (topmiddle), Symmetrized beta with $r=2.1$ (topright), and $r=4.5$ (bottomleft), and logistic (bottommiddle).  For Stone's and Beran's estimators (in solid lines), (o) stands for the optimal tuning parameter, and (n) corresponds to the  non-optimal tuning parameter. Here $q$ stands for the truncation parameter $\eta$ in our one-step estimators (in dashed lines). }
       \label{fig: IR}
   \end{figure}

 \begin{figure}[ht]
       \centering
       \begin{subfigure}{\textwidth}
        \includegraphics[width=\textwidth, height=2.8 in]{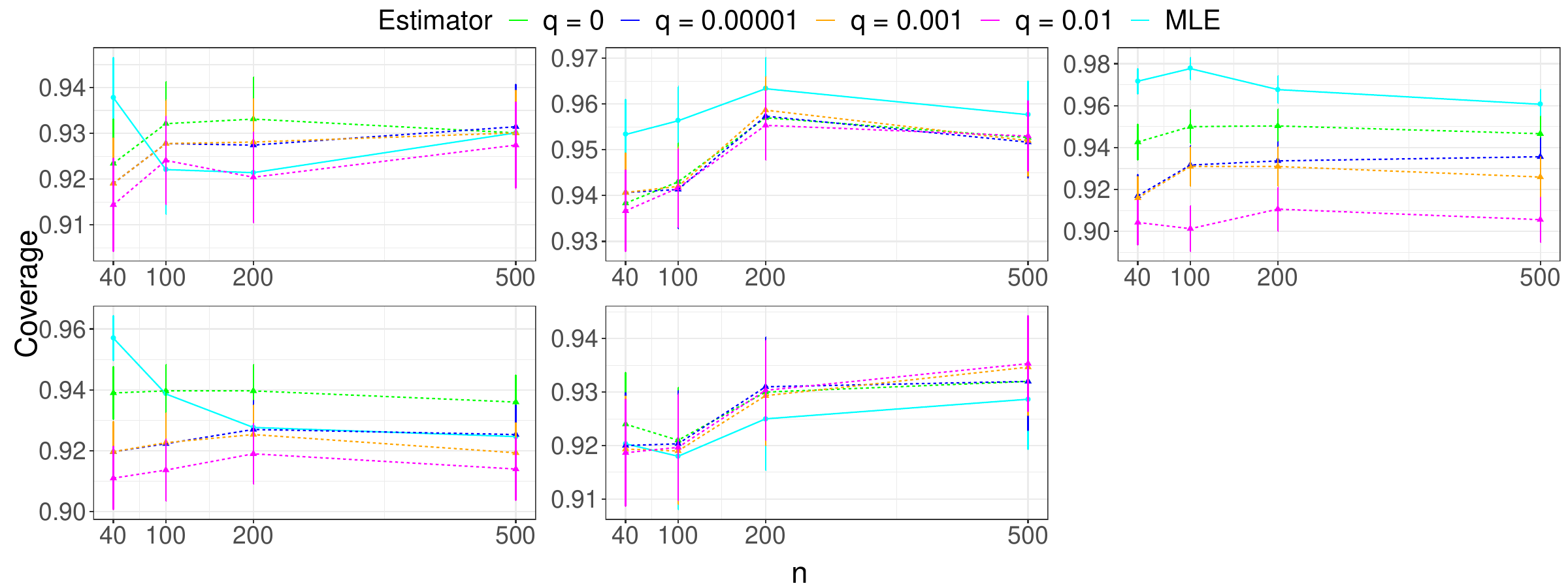}
        \caption{Comparison plot when the one step estimators are the Partial MLE estimator}
       \end{subfigure}\\
       \begin{subfigure}{\textwidth}
        \includegraphics[width=\textwidth, height=2.8 in]{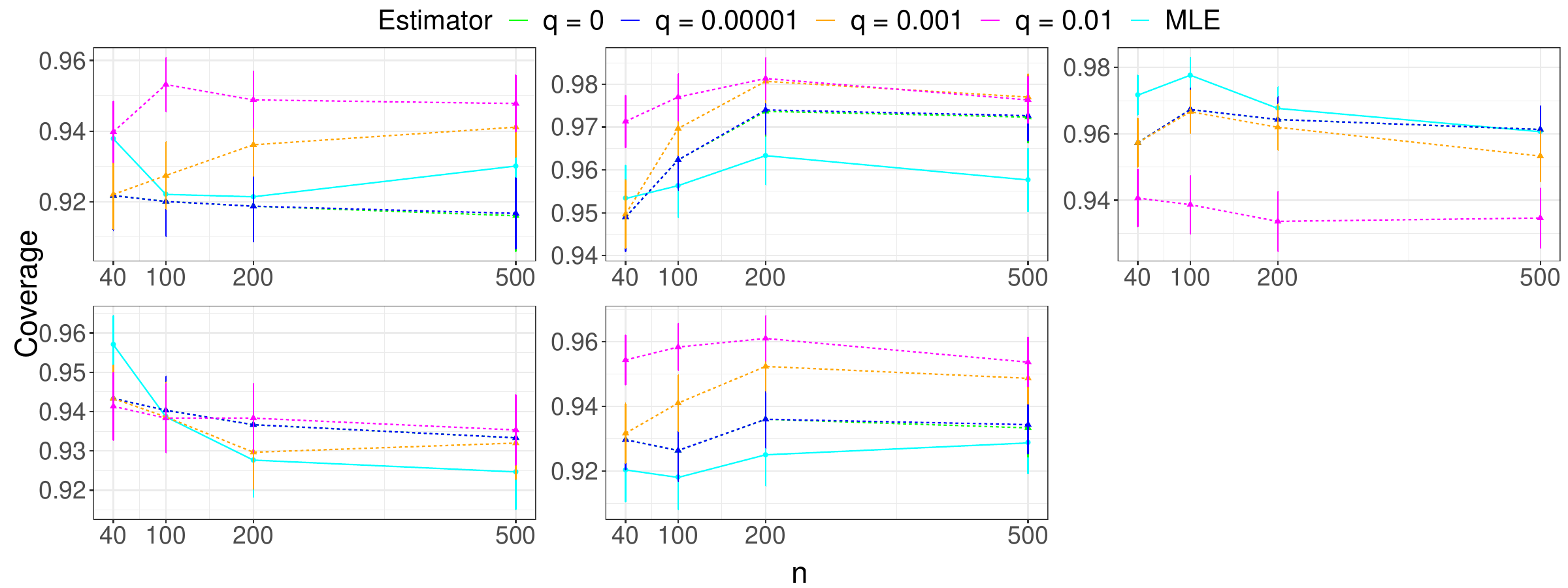}
        \caption{Comparison plot when the one step estimators are the smoothed symmetrized estimator}
       \end{subfigure}
       \caption{Comparison of the coverage of the 95\% confidence intervals:  the data-generating distributions are normal (topleft), Laplace (topmiddle), Symmetrized beta with $r=2.1$ (topright), and $r=4.5$ (bottomleft), and logistic (bottommiddle).
       Here $q$ stands for the truncation parameter $\eta$ in our one-step estimators. The errorbars are given by $\pm 2$ standard deviation.}
       \label{fig: coverage}
   \end{figure}

   \begin{figure}[ht]
       \centering
       \begin{subfigure}{\textwidth}
        \includegraphics[width=\textwidth, height=2.8 in]{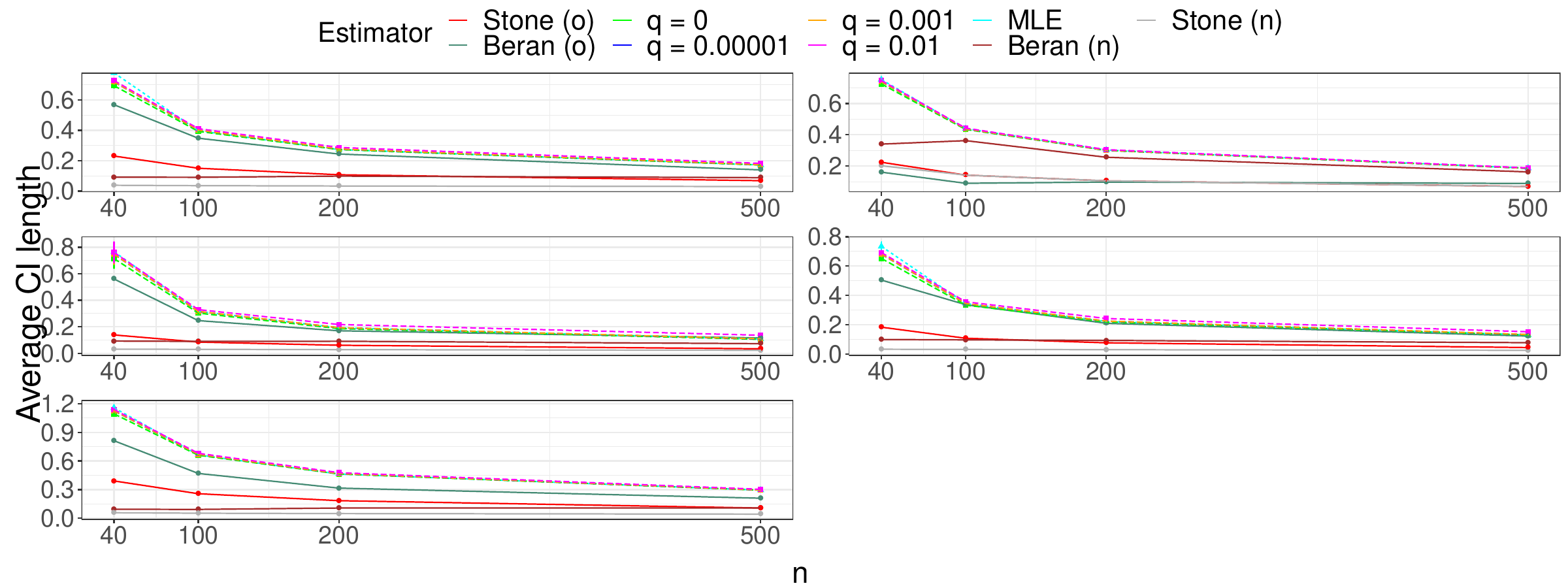}
        \caption{Comparison plot when the one step estimators are the Partial MLE estimator}
       \end{subfigure}\\
       \begin{subfigure}{\textwidth}
        \includegraphics[width=\textwidth, height=2.8 in]{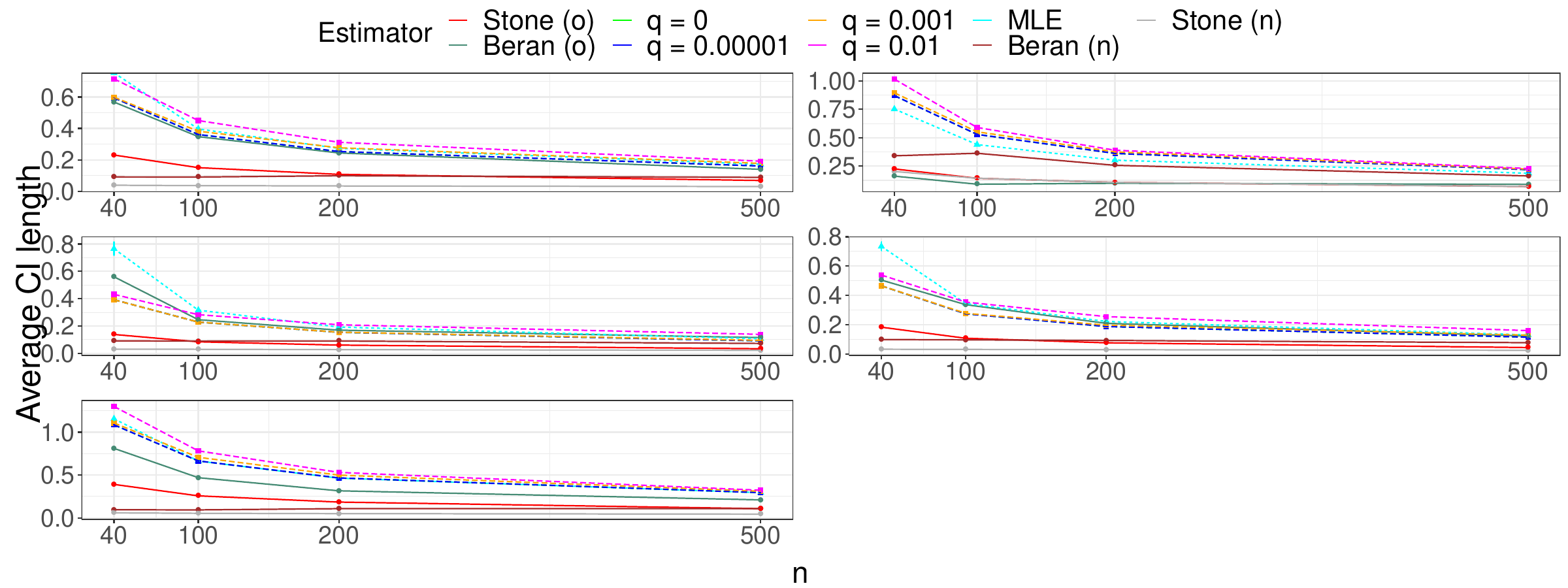}
        \caption{Comparison plot when the one step estimators are the smoothed symmetrized estimator}
       \end{subfigure}
       \caption{Comparison of the average confidence interval length (averaged across the 3000 Monte Carlo samples): the data-generating distributions are normal (topleft), Laplace (topmiddle), Symmetrized beta with $r=2.1$ (topright), and $r=4.5$ (bottomleft), and logistic (bottommiddle).  For Stone's and Beran's estimators (in solid lines), (o) stands for the optimal tuning parameter, and (n) corresponds to the  non-optimal  tuning parameter. Here $q$ stands for the truncation parameter $\eta$ in our one-step estimators (in dashed lines). The errorbars are given by $\pm 2$ standard deviation. }
       \label{fig: length}
   \end{figure}
   
   \begin{figure}[ht]
       \centering
        \includegraphics[width=\textwidth, height=2.8 in]{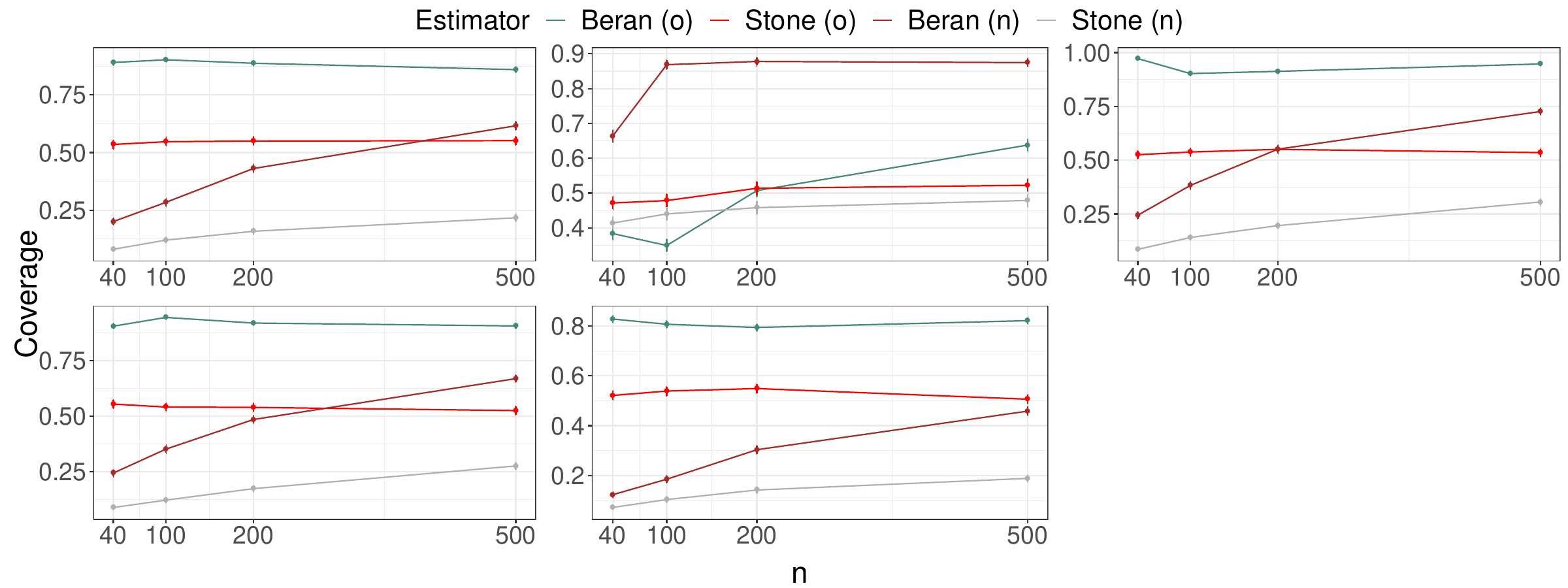}
       \caption{Comparison of the coverage of the 95\% confidence intervals for Beran's and Stone's estimators:  the data-generating distributions are normal (topleft), Laplace (topmiddle), Symmetrized beta with $r=2.1$ (topright), and $r=4.5$ (bottomleft), and logistic (bottommiddle). Here (o) stands for the optimal tuning parameter, and (n) corresponds to the non-optimal  tuning parameter. The errorbars are given by $\pm 2$ standard deviation.}
       \label{fig: coverage: bs}
   \end{figure}

\section{Discussion}
\label{sec: discussion}

   In this paper, we show that under the additional assumption of log-concavity, adaptive estimation of $\theta_0$ is possible with only one tuning parameter. 
   Our simulations suggest that the tuning parameter-free untruncated one step estimator may also be adaptive.  This demonstrates the usefulness of  log-concavity  assumption in semiparametric models in facilitating a simplified  estimation procedure. 
  It is natural to ask what happens if the above shape restriction fails to hold. For functionals of log-concave MLE type estimators, this question can be partially answered building on the log-concave projection theory developed by \cite{dumbreg}, \cite{theory}, \cite{xuhigh}, and \cite{barber2020}. See \cite{laha2019} for discussion of the case when the log-concavity assumption is violated in our model $\mP_0$.
  In particular, it can be shown that, even if $f\notin\mP_0$, as long as $f$ is symmetric about  $\th_0$,  the MLE and the truncated one step estimators are still consistent under mild conditions.

\section*{Acknowledgement}  
The author is  grateful to Jon Wellner for his help.
\bibliographystyle{natbib}
\bibliography{location_estimation}

\FloatBarrier
 \appendix
 \section*{Appendix}
 \setlength{\abovedisplayskip}{4pt}
\setlength{\belowdisplayskip}{4pt}
The appendix is organized as follows.
Appendices~\ref{app: proof of Theorem 1}, \ref{app: proof of proposition 1}, and \ref{app: proof of Theorem 2} contain the proofs for the one step estimators, where Appendices~\ref{app: mle lemma}, \ref{app: existence theorem}, and \ref{app: rate results} contain the proofs for the MLE. The proof of the main theorem is presented first, followed by the auxiliary lemmas required for the proof. Some common technical facts, which are used repeatedly in the proofs, are listed at the end in
Appendix~\ref{sec: technical facts}.
Appendix~\ref{app: stone} contains details on the selected tuning parameters for  \citeauthor{stone} and \citeauthor{beran}'s estimators.

 Before proceeding any further, we introduce some new notations and terminologies.
  For $i=1,\ldots,n$, consider  the pseudo-observations $Z_i=X_i-\theta_0$. Note that, if the $X_i$'s have density $ f_0$, then the $Z_i$'s have density $g_0$, and distribution function $G_0$. We will denote the log-densities corresponding to $\hts$, $\hnss$, $\widehat{g}_{\theta}$ and  ${\hn}^{geo,sym}$ by $\hlns$, $\hln^{sym, sm}$, $\widehat{\psi}_{\theta}$ and $\hln^{geo, sym}$, respectively.
   As usual, $\hlnsp$, $(\hln^{sym, sm})'$, $\widehat{\psi}_{\theta}'$,  and $(\hln^{geo, sym})'$ will denote the corresponding right derivatives. We remark in passing that there is nothing special about the right derivative, and any $L_1$ derivative would have worked. However, we fix one specific version  to avoid future confusion. We denote the distribution functions of $\hts$, $\hnss$, and  ${\hn}^{geo,sym}$ by $\Hsm$, $\Gs$, and $\tilde G_n^{geo, sym}$, respectively. 
 
 The  empirical process of the $X_i$'s will be denoted by $\mathbb G_n=\sqn(\Fm-F_0)$. 
 For any  function $h:\RR\mapsto\RR$, and a measure $Q$ on $\RR$, we write
$Q h:=\int_{\RR} h dQ$ provided $h$ is integrable with respect to $Q$.
 Suppose $\mathcal H$ is a  class of $Q$-measurable functions. We denote by
$ \|Q\|_{\mathcal H}$ the supremum $\sup_{h\in\mathcal H}\abs{Qh}$. 
For the sake of simplicity, we will denote $\td=\theta_0-\bth$ and $\delta_n=\th_0-\hthm$ in our proofs.

For a measure $P$ on $\RR$, we define the $L_{P,k}$ norm of the function $h$ as
\[\|h\|_{P,k}=\lb\edint |h(x)|^k dP(x)\rb^{1/k},\quad k\geq 1.\]
For any class of functions $\mathcal H$, we will denote
\[\|\mathcal H\|_{P,k}=\sup_{h\in\mathcal H}\|h\|_{P,k}.\]
For two distribution functions $F_1$ and $F_2$ with densities $f_1$ and $f_2$, the total variation distance between $F_1$ and $F_2$ is given by
$d_{TV}(F_1,F_2)=\|f_1-f_2\|_1/2$.
   We define the 
Wasserstein distance between two measures $\mu$ and $\nu$ on $\RR$ by
\begin{equation}\label{def: Wasserstein distance}
d_W(\mu,\nu)=\edint|F(x)-G(x)|dx,
\end{equation}
   where $F$ and $G$ are the distribution functions corresponding to $\mu$ and $\nu$ respectively. 
  This representation of $d_W(\mu,\nu)$ follows from  \cite{villani2003}, page $75$. By an abuse of notation, sometime we will denote the above distance by $d_W(F,G)$ as well. Suppose $\e>0$. For any class of functions $\mathcal H$ and a norm $\|\cdot\|$,  the bracketing entropy $N_{[\ ]}(\epsilon, \mathcal H,\| \cdot\|)$ is as in Definition 2.1.6, page 83 of \cite{wc}. The covering number $N(\epsilon, \mathcal H,\| \cdot\|)$ is as defined in page 83 of \cite{wc}.

  For two sets $A$ and $B$, $A\times B$ will represent the Cartesian product. For any set $A\subset \RR$, and $x\in\RR$, we use the usual notation $A+x$ to denote the translated set  $\{y+x\ :\ y\in A\}$. 
   The notation $\overline{A}$ will refer to the closure of the set $A$. For any function $h$,  $1_{[h(x)\leq C]}$ will denote the indicator function of the event $h(x)\leq C$. For any  set $A$, we let $1_A(x)$  be the indicator function of the event $x\in A$. As usual, we denote  by $\varphi$ the standard Gaussian density. 
   
   In some of our proofs, we will replace $\hnss$ by a more general mixture density which satisfies Condition~\ref{cond: hnss condition}.
   
   \begin{condition}[Condition for $\hnss$]\label{cond: hnss condition}
The density $\hn$ is symmetric about zero and satisfies
$\hn(x)=(g_{1n}(x)+g_{2n}(x))/2$, where $g_{1n}$ and $g_{2n}$ are log-concave densities. The densities $\hn$, $g_{1n}$, and $g_{2n}$ satisfy  Conditions~\ref{condition: on hn} and \ref{cond: hellinger rate}. Moreover, $\supp(g_{1n})=\supp(g_{2n})=\RR$, and the $p$ in Condition~\ref{cond: hellinger rate} is the same for $\hn$, $g_{1n}$, and $g_{2n}$.
\end{condition}

  We will later show in Lemma~\ref{lemma: L1 convergence: one step density estimators} that $\hts(\bth\pm\mathord{\cdot})$ and $\hnss$ satisfy Condition~\ref{condition: on hn}, and in Lemma~\ref{lemma: os: helli: hnss}, we will show that these densities satisfy Condition~\ref{cond: hellinger rate} with $p=1/5$. Since $\hts(\bth\pm\mathord{\cdot})$ is the convolution of two log-concave densities, it is log-concave. That  $\hnss$ satisfies Condition~\ref{cond: hnss condition} follows immediately from the above results.

We will frequently use the fact that if $f$ is a log-concave density, then $f>0$ on $J(F)=\{x\in\RR: 0<F(x)<1\}$ \citep[cf. Theorem 1(iv) of][]{dumbgen2017}. Therefore,  $\iint(\dom(\log f))=\iint(\supp(f))=\iint(J(F))$.  As a consequence,  $F^{-1}$ is  strictly increasing, and differentiable with derivative $1/f(F^{-1}(t))$ on $(0,1)$ by Fact~\ref{fact: bobkov big}. Also, a log-concave $f$ is thus  continuous on $\iint(J(F))$ by Theorem 10.1 of \cite{rockafellar}. When $f_0\in\mP_0$, furthermore, $f_0$ and $g_0$ are absolutely continuous  on $\RR$ by Theorem 3 of \cite{huber}.
Now we list below some  useful facts about log-concave densities. 

 \begin{fact}[Lemma 1 of \cite{theory}]
 \label{fact: Lemma 1 of theory paper}
 If $f$ is a univariate log-concave density, then there exists $\alpha>0$ and $\beta\in\RR$ so that  $f(x)\leq e^{-\alpha |x|+\beta} $.
 \end{fact}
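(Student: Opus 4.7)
The plan is to work with $\phi := \log f$, which is proper, concave, and upper semicontinuous, with $\int_{\RR} e^{\phi} = 1$. I would first establish that $f$ is bounded on $\RR$. For any $c > 0$, the super-level set $\{f \geq c\}$ is convex by log-concavity and has Lebesgue measure at most $1/c$ by Markov's inequality, hence is a bounded interval. Combined with the observation that $\phi$ must attain its supremum on $\RR$ (if it did not, a suitable super-level set of $\phi$ would be a nested family of unbounded closed convex sets, each of positive constant height, contradicting integrability of $e^{\phi}$), this forces $\sup f < \infty$.

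Next I would split into cases depending on whether $\dom(\phi) \cap [0, \infty)$ is bounded. If it is bounded by some $R_+ < \infty$, then $f \equiv 0$ on $(R_+, \infty)$ and the right-tail bound is automatic. Otherwise, $\phi(x) \to -\infty$ as $x \to \infty$, since if $\phi$ were bounded below on some right tail then $f$ would be bounded below by a positive constant on an interval of infinite Lebesgue measure, contradicting $\int f = 1$. Consequently I can pick $x_1 < x_2$ in $\iint(\dom(\phi))$ with $\phi(x_1) > \phi(x_2)$; concavity of $\phi$ yields the supporting-line bound
\[
\phi(x) \leq \phi(x_2) + m_+(x - x_2) \qquad \text{for all } x \geq x_2,
\]
with slope $m_+ := (\phi(x_2) - \phi(x_1))/(x_2 - x_1) < 0$. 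Exponentiating gives $f(x) \leq e^{m_+ x + c_+}$ on $[x_2, \infty)$ for some constant $c_+$, and on $[0, x_2]$ the crude bound $f(x) \leq \|f\|_{\infty}$ is already an exponential bound after enlarging the intercept. A symmetric treatment on the left half-line produces analogous constants $m_- > 0$ and $c_-$. Setting $\alpha := \min(-m_+, m_-) > 0$ and choosing $\beta$ large enough to absorb all the residual constants then delivers $f(x) \leq e^{-\alpha|x| + \beta}$ uniformly for $x \in \RR$.

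The only point requiring any care is manufacturing the pair $x_1 < x_2$ with $\phi(x_1) > \phi(x_2)$, equivalently, ensuring the slope $m_+$ is strictly negative. This is exactly where the integrability of $f$ is used: if $\phi$ were non-decreasing on any right tail, then $e^{\phi}$ would fail to integrate over that tail. Apart from this observation, the argument is a routine combination of Markov's inequality (for the boundedness of super-level sets) and the tangent-line property of concave functions, and no deeper shape-constrained machinery is needed.
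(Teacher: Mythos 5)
Your proof is essentially sound, and it is worth noting that the paper itself offers no argument for this statement: Fact~\ref{fact: Lemma 1 of theory paper} is simply quoted as Lemma~1 of \cite{theory}, so what you have written is a self-contained elementary substitute rather than a variant of an in-paper proof. The core of your argument is correct. If $\dom(\phi)\cap[0,\infty)$ is unbounded, convexity of the domain gives $[x_1,\infty)\subset\dom(\phi)$ for any $x_1$ with $\phi(x_1)>-\infty$; if no pair $x_1<x_2$ with $\phi(x_1)>\phi(x_2)$ existed, $\phi$ would be non-decreasing on a right tail and hence bounded below there, contradicting $\int e^{\phi}=1$; and once such a pair exists, the three-chord inequality for concave functions yields $\phi(x)\leq\phi(x_2)+m_+(x-x_2)$ for $x\geq x_2$ with $m_+<0$. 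Together with the mirror-image bound on the left and an enlargement of $\beta$ to absorb the compact middle piece (and the trivial case of a bounded support on either side), this gives exactly $f(x)\leq e^{-\alpha|x|+\beta}$.

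The one step you should repair is the justification of boundedness. The parenthetical claiming that failure of $\phi$ to attain its supremum would produce a nested family of \emph{unbounded} closed convex super-level sets is incoherent: by the Markov bound you state in the same sentence, the sets $\{f\geq c\}$ are bounded intervals, and attainment of the supremum is neither implied by this nor needed for $\sup f<\infty$. Moreover, you only ever use boundedness of $f$ on the compact intervals between the origin and $x_2$ (and its left-hand analogue), and that is automatic for a proper concave $\phi:\RR\mapsto[-\infty,\infty)$: if $\phi(t_n)\to\infty$ with $t_n\to t^*$ in a compact set, pick any $u\neq t^*$ with $\phi(u)>-\infty$ and write the midpoint of $t^*$ and $u$ as a convex combination of $t_n$ and $u$ with weights bounded away from $0$ and $1$; concavity then forces $\phi=+\infty$ at that midpoint, which is impossible. (The same chord argument combined with $\int e^{\phi}=1$ also gives the global bound $\sup f<\infty$, if you prefer to keep that statement.) With that substitution the proof is complete.
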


 \begin{fact}\label{fact: f/F}
 If $f$ is log-concave, then $f/F$ is non-increasing on $J(F)$ and $f/(1-F)$ is non-decreasing on $J(F)$.
 \end{fact}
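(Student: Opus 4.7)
The plan is to derive both monotonicity statements from the stronger assertion that $F$ and $1-F$ are themselves log-concave on $J(F)$. Once that is in hand, $\log F$ is concave on $J(F)$ with right derivative $f/F$, which is therefore nonincreasing; the parallel conclusion for $-\log(1-F)$ gives that $f/(1-F)$ is nondecreasing. Because a log-concave density is continuous on $\iint(\supp(f))$ (cf.\ Theorem~10.1 of \cite{rockafellar}), and $J(F)=\iint(\supp(f))$ when $F$ is continuous, the quotients $f/F$ and $f/(1-F)$ are continuous on $J(F)$ and coincide pointwise there with the one-sided derivatives of $\log F$ and $-\log(1-F)$.

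The main step is thus log-concavity of $F$, which I would obtain by the Pr\'ekopa--Leindler inequality. Fix $a<b$ in $J(F)$, $\lambda\in(0,1)$, and set $c=\lambda a+(1-\lambda)b$. Define the nonnegative functions $u(t)=f(t)\,1_{(-\infty,a]}(t)$, $v(t)=f(t)\,1_{(-\infty,b]}(t)$, and $w(t)=f(t)\,1_{(-\infty,c]}(t)$, whose integrals over $\RR$ are $F(a)$, $F(b)$, and $F(c)$ respectively. The Pr\'ekopa--Leindler hypothesis $w(\lambda s+(1-\lambda)r)\geq u(s)^{\lambda}v(r)^{1-\lambda}$ is automatic whenever $s>a$ or $r>b$ (the right-hand side vanishes); in the remaining case both $s\leq a$ and $r\leq b$, hence $\lambda s+(1-\lambda)r\leq c$, and the inequality reduces to log-concavity of $f$ at the triple $(s,r,\lambda s+(1-\lambda)r)$. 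Pr\'ekopa--Leindler then yields $F(c)\geq F(a)^{\lambda}F(b)^{1-\lambda}$, which is precisely log-concavity of $F$. The argument for $1-F$ is identical after replacing every left half-line $(-\infty,\cdot\,]$ by the corresponding right half-line $[\,\cdot\,,\infty)$.

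An alternative that avoids Pr\'ekopa--Leindler is a one-dimensional direct calculation: the inequality $f(x)F(y)\geq f(y)F(x)$ for $x\leq y$ in $J(F)$ becomes, after the substitution $s\mapsto t-(y-x)$ in the integral representing $F(x)$, the statement
\[\int_{-\infty}^{y}\bigl[f(x)f(t)-f(y)f(t-(y-x))\bigr]\,dt\;\geq\;0,\]
and the integrand is pointwise nonnegative because $\log f(z+u)-\log f(z)=\int_{z}^{z+u}(\log f)'$ is nonincreasing in $z$ for each fixed $u\geq 0$ (here $u=y-x$). I anticipate no real obstacle; the only fine point worth checking is the boundary of $J(F)$, where continuity of $f/F$ on the open interval $J(F)$ lets monotonicity of the right derivative extend across the whole set.
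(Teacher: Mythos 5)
Your proposal is correct, but it does more work than the paper: the paper offers no argument of its own and simply cites Theorem~1 and Corollary~2 of \cite{bagnoli2005}, which assert exactly the log-concavity of $F$ and of $1-F$ that you take as your main intermediate step. So conceptually you follow the same route (monotonicity of $f/F$ and $f/(1-F)$ as the derivatives of $\log F$ and $-\log(1-F)$), but you supply a self-contained proof of the cited input, in two ways. The Pr\'ekopa--Leindler argument is sound, though in one dimension it is heavier machinery than needed; your ``alternative'' direct computation is essentially the standard elementary proof and already suffices: for $x\leq y$ in $J(F)$ one has $f(x)F(y)-f(y)F(x)=\int_{-\infty}^{y}\bigl[f(x)f(t)-f(y)f\bigl(t-(y-x)\bigr)\bigr]dt$, and the integrand is nonnegative by concavity of $\log f$. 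Two small points you should make explicit: (i) in the direct argument the log computation only applies where all four density values are positive; where $f(t)=0$ or $f(t-(y-x))=0$ the pointwise inequality must be checked separately, which is immediate because the support of a log-concave density is an interval (if $f(t-(y-x))=0$ the right side vanishes, and if $f(t)=0$ with $t$ above the support then $f(y)=0$ as well), and $f(x),f(y)>0$ since $f>0$ on $J(F)$; (ii) your worry about the boundary of $J(F)$ is moot, since $J(F)=\{0<F<1\}$ is open by continuity of $F$, and on it $f$ is positive and continuous, so $\log F$ and $\log(1-F)$ are genuinely differentiable there with derivatives $f/F$ and $-f/(1-F)$, and concavity gives the claimed monotonicity on all of $J(F)$.
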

 
 \begin{proof}
This is a well known fact about log-concave densities. See Theorem 1 and Corollary 2 of \cite{bagnoli2005} or \cite{dumbgen2017}.
 \end{proof}
 
 The following two facts will be very useful to lower bound $\hn$ on $[-\xin,\xin]$.
 
  \begin{fact}\label{fact: f grtr than F: base}
 Suppose $\hn$ is a log-concave density satisfying Condition~\ref{condition: on hn}.   Then for any $x\in\RR$, 
 \[\hn(x)\geq \omega_n\min(\tilde{G}_n(x), 1-\tilde{G}_n(x)), \]
 where $\omega_n\geq 0$   satisfies $\omega_n\to_p \omega_0>0$.  Here $\omega_0>0$ is a constant depending only on $g_0$. 
 \end{fact}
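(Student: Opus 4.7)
The plan is to combine the monotonicity of $\hn/\tilde G_n$ and $\hn/(1-\tilde G_n)$ supplied by Fact~\ref{fact: f/F} with the fact that $\hn$ is symmetric about the origin (which holds for the log-concave estimators of interest, $\widehat g_{\bth}$ and $\hn^{geo,sym}$, both of which lie in $\mathcal{SLC}_0$). Symmetry forces $\tilde G_n(0)=1/2$, which lets me anchor the ratios at the origin. The candidate multiplier will be $\omega_n:=2\hn(0)$, with limiting constant $\omega_0:=2g_0(0)$.

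For $x\in J(\tilde G_n)$ with $x\leq 0$, Fact~\ref{fact: f/F} gives $\hn(x)/\tilde G_n(x)\geq \hn(0)/\tilde G_n(0)=2\hn(0)$, and since $\tilde G_n(x)=\min(\tilde G_n(x),1-\tilde G_n(x))$ on this range, the bound $\hn(x)\geq 2\hn(0)\min(\tilde G_n(x),1-\tilde G_n(x))$ follows. A symmetric application using the non-decreasing ratio $\hn/(1-\tilde G_n)$ handles $x\geq 0$. For $x\notin J(\tilde G_n)$, one of $\tilde G_n(x),1-\tilde G_n(x)$ is zero, so the minimum vanishes and the inequality is trivial because $\hn\geq 0$. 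This pointwise argument settles the inequality.

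What remains is to verify $\omega_n\to_p\omega_0$ and $\omega_0>0$. Positivity is immediate: $g_0\in\mathcal{SLC}_0$ is symmetric and unimodal with mode at $0$, so $0$ is interior to $\supp(g_0)$ and $g_0(0)>0$. For the convergence, I would invoke Condition~\ref{condition: on hn}(A) by evaluating its uniform bound at the dummy point $z=-y_n$:
\[
|\hn(0)-g_0(-y_n)|\leq \sup_{z\in\RR}|\hn(z+y_n)-g_0(z)|\to_p 0,
\]
and then use continuity of the log-concave density $g_0$ at $0$ (a consequence of Theorem~10.1 of \cite{rockafellar}) together with $y_n=o_p(1)$ to conclude $g_0(-y_n)\to_p g_0(0)$, hence $\hn(0)\to_p g_0(0)$.

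The only non-mechanical step is converting the uniform, $y_n$-shifted convergence in Condition~\ref{condition: on hn}(A) into pointwise convergence of $\hn$ at the fixed point $0$; the choice $z=-y_n$ inside the supremum handles this cleanly. Everything else is a direct application of Fact~\ref{fact: f/F} combined with the symmetry of $\hn$ about the origin.
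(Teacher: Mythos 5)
Your pointwise inequality and the convergence argument via Condition~\ref{condition: on hn}(A) are fine, but your proof quietly adds a hypothesis that is not in the statement: you anchor both ratios at the origin via $\tilde G_n(0)=1/2$, i.e.\ you assume $\hn$ is symmetric about zero. Fact~\ref{fact: f grtr than F: base} only assumes that $\hn$ is a log-concave density satisfying Condition~\ref{condition: on hn}, and this extra generality is actually used later: in the proof of Fact~\ref{fact: f grtr than F} the base fact is applied to the component densities $g_{1n}$ and $g_{2n}$ of Condition~\ref{cond: hnss condition}, i.e.\ to $\hts(\bth+\mathord{\cdot})$ and $\hts(\bth-\mathord{\cdot})$, which are log-concave and consistent for $g_0$ but are \emph{not} symmetric about the origin. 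For such $\hn$ one has no right to claim $\tilde G_n(0)=1/2$, so your anchoring step breaks down exactly in the case the fact is needed for; your justification (``the estimators of interest lie in $\mathcal{SLC}_0$'') misses this application.

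The repair is to anchor at the median of $\tilde G_n$ rather than at $0$, which is what the paper does: for $x\in J(\tilde G_n)$, monotonicity of $\hn/\tilde G_n$ and $\hn/(1-\tilde G_n)$ (Fact~\ref{fact: f/F}) gives
\[
\hn(x)\;\geq\; 2\,\hn\bigl(\tilde G_n^{-1}(1/2)\bigr)\,\min\bigl(\tilde G_n(x),\,1-\tilde G_n(x)\bigr),
\]
so one takes $\omega_n=2\hn(\tilde G_n^{-1}(1/2))$. The convergence $\omega_n\to_p 2g_0(0)$ then needs one more ingredient than in your write-up: since $\|\hn-g_0\|_1\to_p 0$ implies $\tilde G_n\to G_0$ uniformly, Fact~\ref{fact: consistency of the quantiles} gives $\tilde G_n^{-1}(1/2)\to_p G_0^{-1}(1/2)=0$, and then the shifted uniform convergence in Condition~\ref{condition: on hn}(A), together with continuity and positivity of $g_0$ at $0$, yields $\hn(\tilde G_n^{-1}(1/2))\to_p g_0(0)>0$. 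With that modification your argument coincides with the paper's proof; as written, it only establishes the fact for symmetric $\hn$, which is strictly weaker than what is claimed and required.
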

 
  \begin{proof}
  If $\tilde G_n(x)$ is zero or one, then the statement  trivially holds. Therefore, we assume $x\in J(\tilde G_n)$, i.e. $0<\tilde G_n(x)<1$. For $q\in(0,1/2)$, Fact~\ref{fact: f/F} implies
  \[\hn(\tilde G_n^{-1}(q))\geq 2q\slb\hn(\tilde G_n^{-1}(1/2))\srb\geq 2\min(q,1-q)\hn(\tilde G_n^{-1}(1/2)).\]
 On the other hand, for $q\in(1/2,1)$,  Fact~\ref{fact: f/F} implies 
 \[\hn(\tilde G_n^{-1}(q))\geq 2(1-q)\hn(\tilde G_n^{-1}(1/2))\geq 2\min(q,1-q)\hn(\tilde G_n^{-1}(1/2)).\] Because $\tilde G_n(x)\in(0,1)$, replacing $q$ by $\tilde G_n(x)$  we obtain that
 \begin{align}\label{infact: f grtr than F}
  \hn(x)\stackrel{(a)}{=}\hn(\tilde G_n^{-1}(\tilde G_n(x)))\geq  2\hn(\tilde G_n^{-1}(1/2))\min\slb\tilde G_n(x),1-\tilde G_n(x)\srb.   
 \end{align}
 Here (a) uses the fact that $\hn(x)>0$ which follows since $x\in\iint(J(\tilde G_n))=\iint(\dom(\hln))$. 
 The rest of the proof follows setting $\omega_n=2\hn(\tilde G_n^{-1}(1/2))$, which converges in probability to $\omega_0=2g_0(0)$ by Condition~\ref{condition: on hn} and Fact~\ref{fact: consistency of the quantiles}.
  
\end{proof}

 \begin{fact}\label{fact: f grtr than F}
 Suppose either $\hn$ satisfies  Condition~\ref{cond: hnss condition}, or $\hn$ is a log-concave density satisfying Condition~\ref{condition: on hn}. Then the assertions of Fact~\ref{fact: f grtr than F: base}  hold.
 \end{fact}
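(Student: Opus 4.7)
The plan is to split on the hypothesis. When $\hn$ is log-concave and satisfies Condition~\ref{condition: on hn}, the conclusion is precisely Fact~\ref{fact: f grtr than F: base}, so nothing needs to be added. I therefore focus on the case where $\hn$ satisfies Condition~\ref{cond: hnss condition}, writing $\hn = (g_{1n}+g_{2n})/2$ with $g_{1n},g_{2n}$ log-concave on all of $\RR$, each itself satisfying Conditions~\ref{condition: on hn} and~\ref{cond: hellinger rate}.

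The first step is to apply Fact~\ref{fact: f grtr than F: base} separately to $g_{1n}$ and $g_{2n}$, obtaining
\[ g_{in}(x) \geq \omega_{in}\min(G_{in}(x),1-G_{in}(x)),\qquad \omega_{in}=2g_{in}(m_{in}), \]
with $m_{in}$ the unique median of $g_{in}$. Quantile convergence (uniform convergence of $G_{in}$ to $G_0$, which is strictly increasing at $0$) gives $m_{in}\to_p 0$, and Condition~\ref{condition: on hn}(A) applied to $g_{in}$ with $y_n:=m_{in}$ then yields $\omega_{in}\to_p 2g_0(0)>0$. The main obstacle, as I see it, is that the averaged inequality
\[ \hn(x)\geq \tfrac{1}{2}\bigl[\omega_{1n}\min(G_{1n}(x),1-G_{1n}(x))+\omega_{2n}\min(G_{2n}(x),1-G_{2n}(x))\bigr] \]
does \emph{not} in general dominate $\min(\tilde G_n(x),1-\tilde G_n(x))$: if $G_{1n}(x)\approx 0$ and $G_{2n}(x)\approx 1$, both individual $\min$'s are tiny while $\min(\tilde G_n,1-\tilde G_n)\approx 1/2$.

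My remedy is to partition $\RR$ into a compact central region $[-M,M]$ and its complement, choosing $M\in(0,\sup\supp(g_0))$ arbitrarily so that $[-M,M]\subset\iint(\supp(g_0))$. On $[-M,M]$, the log-concave $g_0$ is continuous and strictly positive, hence attains a strictly positive minimum $c_M>0$, so the uniform bound in Condition~\ref{condition: on hn}(A) gives $\hn(x)\geq c_M/2$ for $n$ large; combined with $\min(\tilde G_n,1-\tilde G_n)\leq 1/2$, this yields $\hn(x)\geq c_M\min(\tilde G_n(x),1-\tilde G_n(x))$ on the central region. For $x>M$, the fact $m_{1n},m_{2n}\to_p 0$ forces $m_{in}<M$ for $n$ large, so $G_{in}(x)>1/2$ for both $i$; the individual bounds then read $g_{in}(x)\geq \omega_{in}(1-G_{in}(x))$, and averaging produces $\hn(x)\geq\min(\omega_{1n},\omega_{2n})(1-\tilde G_n(x))$. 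The symmetry of $\hn$ about $0$ identifies $1-\tilde G_n(x)$ with $\min(\tilde G_n(x),1-\tilde G_n(x))$ for $x>0$, and the region $x<-M$ is handled symmetrically.

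Finally, I would set $\omega_n:=\min(c_M,\omega_{1n},\omega_{2n})$ on the good event (uniform closeness of $\hn$ to $g_0$ within $c_M/2$ together with $|m_{1n}|,|m_{2n}|<M$) and $\omega_n:=0$ on its complement. Because this good event has probability tending to $1$, this produces a random sequence that satisfies the pointwise lower bound for every $x$ and $\omega_n\to_p\min(c_M,2g_0(0))>0$, a positive constant depending only on $g_0$ (through $M$ and $c_M$), as required.
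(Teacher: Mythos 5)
Your proposal is correct, and it takes a genuinely different (and more careful) route than the paper for the mixture case. The paper also applies the single-density bound to each component $g_{1n},g_{2n}$, but then weakens $\min(G_{in},1-G_{in})$ to the product $G_{in}(1-G_{in})$ and passes from the average of $G_{1n}(1-G_{1n})$ and $G_{2n}(1-G_{2n})$ to $\tilde G_n(1-\tilde G_n)$ by asserting convexity of $(x,y)\mapsto xy$; since $u\mapsto u(1-u)$ is concave, the average of the component products equals $\tilde G_n(1-\tilde G_n)-\tfrac14(G_{1n}-G_{2n})^2$, so that averaging step actually runs in the wrong direction — the "obstacle" you isolate is precisely the soft spot in the paper's own argument, and no purely algebraic constant-factor repair exists (take $G_{1n}\approx 0$, $G_{2n}\approx 1$). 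Your fix — a compact central region where uniform convergence of $\hn$ to $g_0$ and $\inf_{[-M,M]}g_0=c_M>0$ make the bound trivial because $\min(\tilde G_n,1-\tilde G_n)\le 1/2$, plus tail regions where both component medians eventually lie on the same side so the component minima orient identically and average to $1-\tilde G_n$ (resp.\ $\tilde G_n$) — is sound: I checked the three cases and the good-event construction, and the resulting $\omega_n\to_p\min(c_M,2g_0(0))>0$ serves every downstream use of the fact, which only needs positivity of the limit. What the paper's intended argument would have bought, had the algebra held, is a cleaner pointwise bound with $\omega_n=b_n/2$ and no case split; what yours buys is an argument that actually closes, at the cost of an explicitly asymptotic, event-based $\omega_n$ and a constant depending on the (arbitrary, $g_0$-determined) choice of $M$. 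Two minor remarks: in the tail cases you do not even need the symmetry of $\hn$, since $1-\tilde G_n(x)\ge\min(\tilde G_n(x),1-\tilde G_n(x))$ holds trivially; and uniqueness of the medians $m_{in}$ is guaranteed because Condition~\ref{cond: hnss condition} gives $\supp(g_{in})=\RR$.
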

 
  \begin{proof}
  If $\hn\in\mathcal{LC}$, the proof follows from Fact~\ref{fact: f grtr than F: base}.
Therefore we consider the case when $\hn$ satisfies Condition~\ref{cond: hnss condition}. Since the component densities $g_{1n}$ and $g_{2n}$ in  Condition~\ref{cond: hnss condition} are log-concave, Fact~\ref{fact: f grtr than F: base} applies to them. Denote by $G_{1n}$ and $G_{2n}$ the corresponding distribution functions. Equation \ref{infact: f grtr than F} in the proof of Fact~\ref{fact: f grtr than F: base} implies
 \[g_{1n}(x)+g_{2n}(x)\geq b_n \lbs  G_{1n}(x)\slb 1- G_{1n}(x)\srb+ G_{2n}(x)\slb 1- G_{2n}(x)\srb\rbs,\]
 where 
 \[b_n=2\min\lb g_{1n}(G_{1n}^{-1}(1/2)),  g_{2n}(G_{2n}^{-1}(1/2))\rb\to_p 2g_0(0)\]
by the fact that $g_{1n}$ and $g_{2n}$ satisfy Condition~\ref{condition: on hn} and Fact~\ref{fact: consistency of the quantiles}.  Fact~\ref{fact: concave-convex}  implies that 
\begin{align*}
 \MoveEqLeft   G_{1n}(x)\slb 1- G_{1n}(x)\srb+ G_{2n}(x)\slb 1- G_{2n}(x)\srb\\
    \geq &\ \min \lb \frac{G_{1n}(x)+G_{2n}(x)}{2}, 1-  \frac{G_{1n}(x)+G_{2n}(x)}{2}\rb\\
    =&\ \min(\tilde G_n(x), 1-\tilde G_n(x)),
\end{align*}
where $\tilde G_n$ is the distribution function corresponding to $\tilde g_n=(g_{1n}+g_{2n})/2$.
Letting  $\omega_n=b_n/2$, we have
 \[\hn(x)\geq \frac{b_n}{2} \min(\tilde G_n(x), 1-\tilde G_n(x)),\]
which completes the proof.
  \end{proof}

 \section{Proof of Theorem \ref{theorem: main: one-step: full}}
 \label{app: proof of Theorem 1}
We first argue that it suffices to prove the theorem only for the case when $\eta_n$ equals $Cn^{-2p/5}$. 
In the latter case, we would show $\sqn(\hth-\th_0)\to_d N(0,\I^{-1})$ for $\eta_n=Cn^{-2p/5}$ when $H(\hn, g_0)=O_p(n^{-p})$. Note that for any $p'\in(0,p]$,  $H(\hn, g_0)=O_p(n^{-p'})$ trivially holds since
$H(\hn, g_0)=O_p(n^{-p})$. Therefore, replacing $p$ by $p'$ in what we  just proved,  $\sqn(\hth-\th_0)\to_d N(0,\I^{-1})$  would follow identically for $\eta_n=Cn^{-2p'/5}$. Thus, it is enough to consider the case when $\eta_n=Cn^{-2p/5}$.

From  \eqref{def: one-step estimator: truncated} we obtain that
\begin{align*}
-(\hth-\bth)= \dint_{\bth-\xin}^{\bth+\xin}\dfrac{\hln'(x-\bth)}{\hin(\eta_n)}d\Fm(x)=\dint_{-\xin}^{\xin}\dfrac{\hln'(z)}{\hin(\eta_n)}d\Fm(z+\bth).
\end{align*}
Denoting $\td=\th_0-\bth$, we observe that the above expression  writes as
\begin{align}\label{theorem: main: one-step: first split}
\MoveEqLeft\underbrace{\dint_{-\xin}^{\xin}\dfrac{\tp(z)-\psp'(z-\td)}{\hin(\eta_n)}d(\Fm(z+\bth)-F_0(z+\bth))}_{T_{1n}}\nonumber\\
&\ +\underbrace{\dint_{-\xin}^{\xin}\dfrac{\tp(z)}{\hin(\eta_n)}\lb f_0(z+\bth)-g_0(z)\rb dz
 }_{T_{2n}}+\underbrace{\dint_{-\xin}^{\xin}\dfrac{\tp(z)-\psp'(z)}{\hin(\eta_n)}g_0(z)dz}_{T_{3n}}\nonumber\\
 &\ +  \underbrace{\dint_{-\xin}^{\xin}\dfrac{\psp'(z)}{\hin(\eta_n)}g_0(z)dz}_{T_{4n}} + \underbrace{\dint_{-\xin}^{\xin}\dfrac{\psp'(z-\td)}{\hin(\eta_n)}d(\Fm(z+\bth)-F_0(z+\bth))}_{T_{5n}}
\end{align}
Observe that  $T_{3n}$ and $T_{4n}$ vanish since $\tp$ and $\psi'_0$ are odd functions while $g_0$ is an even function. 

The proof of Theorem~\ref{theorem: main: one-step: full} has three main steps. The first step uses Donsker Theorem to show that the empirical process term  $T_{1n}$ is $o_p(n^{-1/2})$. The term $T_{2n}$ accounts for the bias due to the use of $\bth$ instead of the true center $\theta_0$ in the construction of the scores. The second step of the proof shows that the order of $T_{2n}$ is same as $\td=\theta_0-\bth$. In particular, we will show that $T_{2n}=-\td(1+o_p(1))$.
   Since $\td=O_p(n^{-1/2})$, the above two steps lead to
  \[\sqn(\bth-\hth)=o_p(1)+\sqn(\bth-\th_0)+\sqn T_{5n}.\]
  The third  step of the proof shows that the term $\sqn T_{5n}$ is asymptotically normal
  with variance $\I^{-1}$.
  A rearrangement of the terms in the above display then establishes the desired asymptotic convergence of $\sqn(\hth-\theta_0)$. The rest of the proof is devoted to proofs of the above-mentioned three steps. 
 
  \subsubsection*{\textbf{First step: asymptotic negligibility of $\sqn T_{1n}$:}}
 First, let us denote $\mathcal T_n= [\bth-\xin,\bth+\xin]$.
  Recall that in Section \ref{sec: Preliminaries} we denoted the empirical process $\sqn(\Fm-F_0)$ by $\mathbb{G}_n$.
   Note that $\sqn T_{1n}$ also writes as
 \begin{align}\label{intheorem: T1n: representation}
 \sqn  T_{1n} = \sqn\dint_{\bth-\xi_n}^{\bth+\xi_n}\dfrac{\tp(x-\bth)-\php'(x)}{\hin(\eta_n)}d(\Fm-F_0)(x)=\edint\dfrac{ h_n(x)}{\hin(\eta_n)}d\mathbb{G}_n(x),
 \end{align}
 where by $h_n$ we denote the function
 \begin{equation}\label{inlemma: def: main: hn}
     h_n(x)=(\tp(x-\bth)-\phi_0'(x))1_{\mathcal T_n}(x),\quad x\in\RR.
 \end{equation}

 Because $\eta_n=O( n^{-2p/5})$, Lemma~\ref{lemma: tilde psi prime bound} implies
 \begin{equation}\label{inlemma: t1: bounding hln prime}
 \sup_{x\in\mathcal T_n}|\hln'(x-\bth)|=O_p(n^{p/5}).    
 \end{equation}
 Thus $\hln'$ restricted to the compact set $\mathcal T_n$ is bounded. We can extend the function $x\mapsto \hln'(x) 1_{\mathcal T_n}(x)$ to $\RR$ in a way such that the resulting function $\widehat u_n$ is still monotone and has the same bound. This can be done by setting $\widehat u_n$ to be $\hln'(\bth-\xin)$ and  $\hln'(\bth+\xin)$ on the intervals  $(-\infty,\bth-\xin]$ and $[\bth+\xin,\infty)$, respectively. Note also that we can replace $\hln'$ by $\widehat u_n$ in the definition of $h_n$, i.e.
 \begin{align}
     \label{inthm: 1: def: hn in }
     h_n(x)=(\widehat u_n(x-\bth)-{\phi}_0'(x))1_{\mathcal T_n}(x).
 \end{align}
  Let us denote  $M_n=Cn^{p/5}$ for some $C>0$ and define
\begin{equation}\label{intheorem: def: U n}
    \mathcal{U}_{n}(M_n)=\bigg\{u:\RR\mapsto[-M_{n},M_{n}]\ \bl\ \ u\text{ is non-increasing}\bigg\}.
\end{equation}
Since $\|\widehat u_n\|_{\infty}=O_p(n^{p/5})$, for sufficiently large $C$,  $\hat u_n(\mathord{\cdot}-\bth)\in \mathcal{U}_n(M_n)$ with high probability.
Now  define the class $\mathcal{H}_n(C)$ by
  \begin{align*}
 \mathcal{H}_n(C)=  \bigg\{h:\RR\mapsto  \RR\ \bl \ &\ h(x)=(u(x)-{\phi}_0'(x))1_{[r_1,r_2]}(x),\ u\in\mathcal{U}_{n}(M_n), \\
 &\
 \|h\|_{P_0,2}\leq Cn^{-2p/5}(\log n)^{3},\ \ \|h\|_\infty\leq M_n,\\
 &\ [r_1,r_2]\subset [\th_0-C\log n,\th_0+C\log n]\cap\iint(\dom(\phi_0)) \bigg \}.
 \end{align*}
 The notation $\mathcal H_n(C)$ does not depend on $M_n$ because $M_n=Cn^{p/5}$ is also a function of $C$.

We want to show that $h_n\in\mathcal H_n(C)$ with high probability for large $n$.
Note that
\[\sup_{x\in\mathcal T_n}|\phi'_0(x)|=\sup_{x\in [-\d-\xi_n,-\d+\xi_n]}|\psi'_0(x)|.\]
 Lemma~\ref{lemma: bound: psi knot prime} in conjunction with the fact that $\eta_n=O(n^{-2p/5})$ implies
 \begin{equation}\label{intheorem: boundedness of psp}
 \sup_{x\in \mathcal T_n}|\phi_0'(x)|=O_p(\log n).
 \end{equation}
 Thus \eqref{inlemma: t1: bounding hln prime} and \eqref{intheorem: boundedness of psp} imply $\|h_n\|_{\infty}=O_p(n^{p/5})$. Lemma~\ref{Lemma: L2 norm of hn} bounds the $L_{P_0,2}$ norm of $h_n$ entailing $\|h_n\|_{P_0,2}=O_p(n^{-2p/5}(\log n)^{3})$.
  Lemma~\ref{lemma: An inclusion} implies, on the other hand,
\[
\lim_{n\to\infty}  P\slb  \mathcal T_n\subset [\th_0-C\log n,\th_0+C\log n]\cap \iint(\dom(\phi_0))\srb= 1.
\]
Therefore, we conclude that given $t>0$, we can choose $C>0$ so large such that $P(h_n\in\mathcal{H}_n(C))>1-t$.



 Theorem~2.7.5 of \cite{wc} (pp. $159$) states that there exists an absolute constant $C'>0$ so that for any $\e>0$ and any probability measure $\RR$ on the real line,  
   \begin{equation}\label{inlemma: finite entropy increasing}
\log N_{[\ ]}(\e,\mathcal{U}_{n}(M_n),L_2(Q))\leq C'M_n\e^{-1}.
   \end{equation}
    On the other hand, using Theorem~2.7.5 of \cite{wc}, it can also be shown that the
 class $\mathcal{F}_I$ of all indicator functions of  the form $1_{[z_1,z_2]}$, where $z_1\leq z_2$ with $z_1,z_2\in\RR$,  satisfies
   \begin{equation}\label{inlemma: finite entropy indicator functions}
 \log N_{[\  ]}(\e,\mathcal{F}_I,L_2(Q))\leq C' 2\e^{-1}.
   \end{equation}
Using \eqref{inlemma: finite entropy increasing} and \eqref{inlemma: finite entropy indicator functions} we derive that
\[\log N_{[\ ]}(\e,\mathcal{H}_n(C),L_2(P_0))\lesssim {M_n}{\e}^{-1}.\]

For $x<1$, the bracketing integral
\begin{align*}
    \mathcal{J}_{[\ ]}(x,\mathcal{H}_n(C),L_2(P_0))=&\ \dint_{0}^{x}\sqrt{1+\log N_{[\ ]}(\e,\mathcal{H}_n(C),L_2(P_0))}d\e\\
    \lesssim &\  2M_n\dint_{0}^{x/M_n}\e^{-1/2}d\e,
\end{align*}
which equals $\sqrt{x M_n}$. Let us also denote $K_n=Cn^{-2p/5}(\log n)^{3}$. Note that \\
 \[\|\mathcal{H}_n(C)\|_{P_0,2}=\sup_{h\in\mathcal{H}_n(C)}\|h\|_{P_0,2}=K_n.\]
Then from Fact~\ref{fact: empirical process of m-p's}
it follows that
\begin{align*}
   E\left[\|\mathbb{G}_n\|_{\mathcal{H}_n(C)}\right]\lesssim &\   \mathcal{J}_{[\ ]}(K_n,\mathcal{H}_n(C),L_2(P_0))\slb 1+\frac{  \mathcal{J}_{[\ ]}(K_n,\mathcal{H}_n(C),L_2(P_0))}{K_n^2\sqn}M_n\srb
   \end{align*}
   which is bounded by a constant multiple of $ \sqrt{K_nM_n}+K_n^{-1}M_n^2n^{-1/2}$. Since $K_n=Cn^{-2p/5}(\log n)^3$ and $M_n=Cn^{p/5}$, 
\begin{equation}
\sqrt{K_n M_n} = \sqrt{C^2 n^{p/5} n^{-2p/5} (\log n)^{3}} = Cn^{-p/10} (\log n)^{3/2} = o(1).
\end{equation}
On the other hand,
\begin{align*}
\frac{M_n^2}{K_n\sqn} = \frac{C^2n^{2p/5}}{Cn^{-2p/5} (\log n)^{3}\sqrt{n}}
= \frac{Cn^{4p/5} n^{-1/2}}{(\log n)^3} 
= \frac{C n^{(8p-5)/10}}{(\log n)^{3}} 
= o(1),
\end{align*}
where the last step follows because $p\leq 1/2 < 5/8$ by Condition \ref{cond: hellinger rate}. Hence, we have shown that 
\[ E\left[\|\mathbb{G}_n\|_{\mathcal{H}_n(C)}\right]\lesssim  \sqrt{K_nM_n}+K_n^{-1}M_n^2n^{-1/2}=o(1).\]
Now fix $t'>0$  and $\xi>0$. We can choose $C$ so large such that $P( h_n\notin \mathcal{H}_n(C))<\xi/2$. Therefore
\begin{align*}
  \MoveEqLeft  P\lb \edint h_n(x)d\mathbb{G}_n(x)>t' \rb\\
  \leq &\  P\lb \edint h_n(x)d\mathbb{G}_n(x)>t' , h_n\in \mathcal{H}_n(C)\rb+P\lb h_n\notin \mathcal{H}_n(C)\rb\\
  \stackrel{(a)}{\leq} &\  E\lbt\sup_{h\in\mathcal{H}_n(C)}\bl\edint h(x)d\mathbb{G}_n(x)\bl\rbt/t'+\xi/2\\
  =&\ o(1)/t'+\xi/2,
\end{align*}
which is less than $\xi$ for sufficiently large $n$. Here (a) follows from
 Markov's inequality. Since $t'$ and $\xi$ are arbitrary, we conclude that $\int h_n d\mathbb{G}_n$ is $o_p(1)$. 
 Finally 
an application of Lemma~\ref{lemma: consistency of FI} leads to $\hin(\eta_n)\to_p\I$, and thus from \eqref{intheorem: T1n: representation},  $\sqn T_{1n}=o_p(1)$ follows.



\subsubsection*{\textbf{Second step: asymptotic limit of $T_{2n}/\td$:}}
Let us define $\mathcal A_n=[-\xin,\xin-\td]$,
Observe that  $T_{2n}/\td$ can be written as
\begin{align}\label{intheorem:t2n:representation}
\dint_{-\xin}^{\xin}\dfrac{\tp(z)}{\hin(\eta_n)}\dfrac{\lb g_0(z-\td)-g_0(z)\rb}{\td} dz\ =&\ \dint_{-\xin}^{\xin}\dfrac{\tp(z)}{\hin(\eta_n)}\dfrac{\dint_{z}^{z-\td}g_0'(t)dt}{\td} dz\\
= &\ -\dint_{\RR}\underbrace{1_{\mathcal A_n}(t)g_0'(t)\dfrac{\dint_{t}^{t+\td}\tp(z)dz}{\td\hin(\eta_n)}}_{b_n(t)}dt,\nn
\end{align}
where the last equality  follows by Fubini's Theorem since $g_0$ is absolutely continuous. 

Note that
\eqref{intheorem:t2n:representation} implies $T_{2n}=-\int_{\RR} b_n(t)dt$. The following lemma, which is proved in Appendix~\ref{sec: key lemma: main}, establishes $T_{2n}\to_p -1$, thus completing the proof of the second step.

\begin{lemma}\label{lemma: main: key lemma for T2}
Under the set up of Theorem~\ref{theorem: main: one-step: full},
$\mathbb Y_n\equiv\edint b_n(t)dt\to_p 1$ where $b_n(t)$ is as defined in \eqref{intheorem:t2n:representation}.
\end{lemma}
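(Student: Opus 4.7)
The plan is to prove $\mathbb Y_n\to_p 1$ by exploiting the identity $g_0'(t)=\psi_0'(t)g_0(t)$ and the normalization $\int\psi_0'(t)^2 g_0(t)\,dt=\I$. First, by Lemma~\ref{lemma: consistency of FI} we have $\hin(\eta_n)\to_p \I$, so it suffices to establish
\[
U_n := \int_{\mathcal A_n} g_0'(t)\cdot\frac{1}{\td}\int_{t}^{t+\td}\tp(z)\,dz\,dt \;\to_p\; \I.
\]
Substituting $z=t+\td s$ in the inner integral, together with $g_0'=\psi_0'g_0$ and Fubini, gives
\[
U_n \;=\; \int_{0}^{1} A_n(s)\,ds,\qquad A_n(s) := \int_{\mathcal A_n}\psi_0'(t)\,\tp(t+\td s)\,g_0(t)\,dt,
\]
and I would aim to show $\sup_{s\in[0,1]}|A_n(s)-\I|=o_p(1)$.

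Second, I would decompose $A_n(s)=B_n+C_n(s)$ where
\[
B_n := \int_{\mathcal A_n}\psi_0'(t)^{2}g_0(t)\,dt,\qquad C_n(s) := \int_{\mathcal A_n}\psi_0'(t)\bigl(\tp(t+\td s)-\psi_0'(t)\bigr)g_0(t)\,dt.
\]
Since $\xi_n$ converges in probability to the right endpoint of $\supp(g_0)$ (by $\eta_n\to 0$ combined with Condition~\ref{condition: on hn}(A) and Fact~\ref{fact: consistency of the quantiles}) and $\td=O_p(n^{-1/2})$, the monotone expansion of $\mathcal A_n$ together with $\I<\infty$ gives $B_n\to_p \I$ by dominated convergence.

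Third, for $C_n(s)$, Cauchy--Schwarz yields
\[
|C_n(s)| \;\leq\; \sqrt{\I}\cdot\Bigl(\int_{\mathcal A_n}\bigl(\tp(t+\td s)-\psi_0'(t)\bigr)^{2}g_0(t)\,dt\Bigr)^{1/2},
\]
so it remains to show the right-hand factor is $o_p(1)$ uniformly in $s\in[0,1]$. Pointwise, $\tp(t+\td s)\to_p \psi_0'(t)$ at every continuity point of $\psi_0'$ by Condition~\ref{condition: on hn}(C) combined with $\td s=O_p(n^{-1/2})$; monotonicity of $\psi_0'$ then gives a.e.\ convergence. The plan is to invoke dominated convergence with an envelope built from (a)~the Lipschitz bound $|\psi_0'(t)|\leq |\psi_0'(0)|+\kappa|t|$ from Assumption~\ref{assump: L}; (b)~the monotonicity of $\tp$ (as the right derivative of the concave $\hln$) combined with Lemma~\ref{lemma: tilde psi prime bound} to bound $|\tp(t+\td s)|$ on $\mathcal A_n$ by its boundary values, of order $n^{p/5}$; and (c)~the exponential tail decay of $g_0$ from Fact~\ref{fact: Lemma 1 of theory paper}, which dominates these boundary values in an integrable fashion once one invokes the slow growth $\xi_n=O_p(\log n)$ from Lemma~\ref{lemma: An inclusion}.

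The hardest step is the third. Pointwise convergence is immediate, but securing a uniform-in-$(n,s)$, $g_0$-integrable envelope for $(\tp(\cdot+\td s)-\psi_0'(\cdot))^{2}$ on the expanding window $\mathcal A_n=[-\xi_n,\xi_n-\td]$ is delicate: $\tp$ may blow up at rate $n^{p/5}$ near the endpoints, and one must balance this possible growth against the exponential decay of $g_0$, which is exactly where Assumption~\ref{assump: L} and the logarithmic growth of $\xi_n$ enter decisively.
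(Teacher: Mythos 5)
Your reduction to $U_n\to_p\I$ (via Lemma~\ref{lemma: consistency of FI}) and the decomposition $A_n(s)=B_n+C_n(s)$ are sound, and steps one and two go through. The gap is in step three. The estimate you need, $\sup_{s\in[0,1]}\int_{\mathcal A_n}\bigl(\tp(t+\td s)-\psi_0'(t)\bigr)^2 g_0(t)\,dt=o_p(1)$, does not follow from pointwise consistency plus the envelope you describe. By monotonicity the only uniform control you have on $\tp$ over the window is the boundary bound $\sup_{\mathcal A_n}|\tp|=O_p(n^{p/5})$ of Lemma~\ref{lemma: tilde psi prime bound}, and as a pointwise envelope that is all you can write down throughout $\mathcal A_n$ — in the interior you have no quantitative bound on $\tp$ beyond pointwise convergence. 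Hence your dominating function is of size $O_p(n^{2p/5})\,g_0(t)$ on a (random, expanding) interval, and its integral is of order $n^{2p/5}\to\infty$: the exponential decay of $g_0$ does not help, because the $n^{2p/5}$ factor multiplies $g_0$ everywhere, including where $g_0$ is of constant order; even at the endpoints the available bounds only give $\tp(\pm\xi_n)^2 g_0(\pm\xi_n)=O_p(\mathrm{polylog}\,n)$, not something small. Since the integrand, the window and the candidate envelope all change with $n$, neither dominated convergence (which needs a fixed integrable dominator) nor Pratt's lemma (which needs a dominating sequence with convergent integrals) applies as stated. Tellingly, your argument never uses the Hellinger rate of Condition~\ref{cond: hellinger rate} except through the sup-norm bound, and that rate is precisely the ingredient that makes the required $L^2$ control possible.

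The missing estimate is exactly what the paper establishes by a quantitative difference-quotient argument: Lemma~\ref{lemma: Laha_Nilanjana 31} converts the Hellinger rate into an $L^2$ rate for $\hln-\psi_0$, and Lemma~\ref{FI Lemma: Lemma 1} turns that into $\int_{-\xi_n}^{\xi_n}(\tp(z)-\psi_0'(z))^2\mu_n(z)\,dz=O_p((\log n)^3 n^{-4p/5})$ for any density $\mu_n$ with $\|\mu_n\|_\infty=O_p(1)$ (in particular $\mu_n=g_0$), using concavity of $\hln$, Assumption~\ref{assump: L}, and the lower bound on $\hn$ near $\pm\xi_n$. If in step three you invoke that lemma and absorb the shift $\td s$ through the Lipschitz property of $\psi_0'$ exactly as in Lemma~\ref{Lemma: L2 norm of hn} (the shift costs only $O_p(\kappa^2\td^2)=O_p(n^{-1})$, uniformly in $s\in[0,1]$), then $|C_n(s)|=o_p(1)$ uniformly in $s$ and your decomposition closes; repaired this way, your route is genuinely different from, and arguably cleaner than, the paper's proof of Lemma~\ref{lemma: main: key lemma for T2}, which passes to almost surely convergent subsequences, splits $|b_n|$ into four pieces involving $\sqrt{\hn}$, controls two of them through the Hellinger error and a lower bound on $\hn$, handles the other two by uniform integrability (Vitali), and concludes with Pratt's lemma. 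As written, however, the dominated-convergence claim in step three is a genuine gap, not a routine omission.
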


\subsubsection*{\textbf{Third step: showing the asymptotic normality of $T_{5n}$:}}
 A change of variable leads to
\begin{align}\label{intheorem: T3n split new}
\sqn T_{5n}=&\ \sqn\int_{\bth-\xin}^{\bth+\xin}\dfrac{\psp'(x-\th_0)}{\hin(\eta_n)}d(\mathbb{F}_n-F_0)(x)\nonumber\nn\\
=&\  \int_{-\infty}^{\infty}\dfrac{\phi'_0(x)}{\hin(\eta_n)}d\mathbb{G}_n(x) -\dint_{C_n} \dfrac{\phi_0'(x)}{\hin(\eta_n)}d\mathbb{G}_n(x),
\end{align}
where 
$C_n=(-\infty,\bth-\xin]\cup[\bth+\xin,\infty]$.
The central limit theorem yields
\[\edint\phi'_0(x)d\mathbb{G}_n(x)=\sum_{i=1}^n\frac{\phi_0'(X_i)-E[\phi_0'(X_i)]}{\sqn}\to_d N(0,\I).\]
Then from Lemma~\ref{lemma: consistency of FI} and  Slutsky's theorem it follows that 
\[ \int_{-\infty}^{\infty}\dfrac{\phi'_0(x)}{\hin(\eta_n)}d\mathbb{G}_n(x)\to_d N(0,\I^{-1}).\]
Thus it suffices to show that
the second term on the right hand side of \eqref{intheorem: T3n split new} is $o_p(1)$. To that end, observe that  $1-1_{C_n}=1_{C_n^c}$ belongs to  the class of all indicator functions of  the form $1_{[z_1,z_2]}$,  where $z_1\leq z_2$ with $z_1,z_2\in\RR$. Since the latter class is Donsker by \eqref{inlemma: finite entropy indicator functions},  Theorem 2.1 of \cite{epindex} entails that  the second term on the right hand side of \eqref{intheorem: T3n split new} is of order $o_p(1)$ provided 
\[\hin(\eta_n)^{-2}\edint 1_{C_n}(x)\phi'_0(x)^2f_0(x)dx\to_p 0.\]
Since $\hin(\eta_n)\to_p\I>0$ by Lemma~\ref{lemma: consistency of FI}, we only need to show that the integral in the last display is $o_p(1)$. Because $\I<\infty$, Fact~\ref{fact: condition for integrability}  implies that given any $\e>0$, there exists $\sigma>0$ so that 
$P_0(\mathcal B)<\sigma$ implies $\int_{\mathcal B}\phi_0'^2(x)f_0(x)dx<\e$ for any $P_0$-measurable set $\mathcal{B}\subset\RR$. Thus the proof follows if we can  show that $\int_{\mathcal C_n}f_0(x)dx=o_p(1)$. To that end, observe that
\begin{align*}
   \dint_{C_n}f_0(x)dx=&\ 1-F_0(\bth+\xin)+F_0(\bth-\xin) \\
   \to_p &\  1-F_0(\th_0+G_0^{-1}(1))+F_0(\th_0+G_0^{-1}(0))
\end{align*}
by continuous mapping theorem because (a) $\bth\to_p\theta_0$, (b) $\xin\to_p G_0^{-1}(1)$ by Lemma~\ref{lemma: xi: xi goes to 1}, and (c) $F_0$ is continuous. Since
 $\theta_0+G_0^{-1}(1)=F_0^{-1}(1)$ and $\theta_0+G_0^{-1}(0)=F_0^{-1}(0)$, the proof follows.

\hfill $\Box$
 \
 
 \subsection{Proof of key lemmas for Theorem~\ref{theorem: main: one-step: full}}
\label{sec: key lemma: main}

\begin{proof}[Proof of Lemma~\ref{lemma: main: key lemma for T2}]
 
Recall that we defined $\mathcal A_n=[-\xin,\xin-\td]$ in the proof of Theorem~\ref{theorem: main: one-step: full}. Let us define
$\mathcal A'_n=[-\xin-2|\td|,\xin+2|\td|]$.
We also denote
\[\mathcal I_{1n}=\dint_{\mathcal A_n}\tilde \psi'_{n_k}(t)^2\tilde g_{n_k}(t)dt\quad\text{and}\quad \mathcal I_{2n}=\dint_{\mathcal A_n+\td}\tilde \psi'_{n_k}(t)^2\tilde g_{n_k}(t)dt.\]
First we will show that it suffices to consider almost sure convergence of $ \mathbb Y_n$ along some suitably chosen subsequence. We claim that given any subsequence of $\{n\}$, we can always obtain a further subsequence $\{n_k\}_{k\geq 1}$ so that  the set
\begin{align}\label{inlemma: def: mathcal M}
    \mathcal M=\lbs &\ \overline \theta_{n_k}\to_k\theta_0,\ \widehat I_{n_k}(\eta_{n_k})\to_k\I,\ \xi_{n_k}\to_k G_0^{-1}(1),\ \omega_{n_k}\to_k \omega_0,\  \xi_{n_k}\delta_{n_k}\to_k 0\nn\\
   &\ \frac{(\log {n_k})^2 H(\tilde g_{n_k},g_0)^2}{\inf_{x\in\mathcal A'_{n_k}}\tilde g_{n_k}(x)}\to_k 0,\ \lim_{k\to\infty}\mathcal I_{in_k }=\I\ \text{for}\ i=1,2,\\
   &\ \|\tilde{g}_{n_k}-g_0\|_\infty\to_k 0,\ \mathcal{A}_{n_k}' \subset \iint(\dom(\psi_0))\text{ for all sufficiently large }n_k \rbs\nn
\end{align}
has probability one, where $\omega_n$ and $\omega_0$ are as in Fact~\ref{fact: f grtr than F}. 
The claim follows directly by  Fact~\ref{fact: convergence in probability to convergence almost surely}  noting
\begin{compactenum}[label=(\alph*)]
    \item $\bth\to_p\th_0$.
    \item  $\hin(\eta_n)\to_p\I$ by Lemma~\ref{lemma: consistency of FI}.
    \item $\xin\to_p G_0^{-1}(1)$ by Lemma~\ref{lemma: xi: xi goes to 1}.
    \item $\omega_n\to_p\omega_0$ by Fact~\ref{fact: f grtr than F}.
    \item $\delta_n=O_p(n^{-1/2})$ and   Lemma~\ref{lemma: bound: xi n} implies that  
\[\xin\leq \frac{-5\log 2+2p\log n}{5w_n}.\]
Because $w_n\to_p w_0>0$ by Fact~\ref{fact: f grtr than F}, it follows that  $\xin=O_p(\log n)$, which implies $\xin\delta_n\to_p 0$.
    \item Suppose $\tilde \xi_n=\tilde G_n^{-1}(1-\eta_n/2)$. Then Lemma~\ref{lemma: hn: lower bound  on hn} implies $\sup_{x\in\mathcal [-\tilde\xi_n,\tilde\xi_n]}(\tilde g_{n_k}(x))^{-1}$ is $O_p(n^{2p/5})$. However, $\mathcal A_n'\subset [-\tilde\xi_n,\tilde\xi_n] $ by Lemma~\ref{lemma: xi: tilde xi n} with probability tending to one since $\td=O_p(n^{-1/2})$. Since $H(\hn,g_0)=O_p(n^{-p})$ and $p\in(0,1)$, it  follows that
    \[\frac{(\log n)^2H(\hn,g_0)^2}{\inf_{x\in\mathcal A_n'}\hn(x)}\to_p 0.\]
    \item Suppose $\tilde \xi_n=(\tilde G_n)^{-1}(1-\eta_n/2)$. 
    Lemma~\ref{lemma: xi: tilde xi n} implies that with probability tending to one, $\xin+2|\td|\leq \tilde\xi_n$, which implies
    \[\mathcal{I}_{1n}\leq \mathcal{I}_{2n}\leq \dint_{-\tilde\xi_n}^{\tilde\xi_n}\tp(z)^2\hn(z)dz\to_p\I,\]
    where the convergence in probability follows 
    from Lemma~\ref{lemma: consistency of FI: 2} noting $\hn\in\mathcal{SLC}_0$.
    On the other hand, Fatou's lemma and Condition~\ref{condition: on hn} indicates that
    \[\liminf_n\mathcal{I}_{2n}\geq \mathcal{I}_{1n}\geq \edint \liminf_{n}\slb 1_{ \mathcal A_n}(z)\tp(z)^2\hn(z)\srb dz=\I.\]
   Therefore, $\mathcal{I}_{1n}, \mathcal{I}_{2n}\to_p\I$.
    \item $\|\hn-g_0\|_\infty\to_p 0$ by Condition~\ref{condition: on hn}.
    \item  Since $\td=O_p(n^{-1/2})$, Lemma~\ref{lemma: An inclusion} yields  $P(\mathcal{A}_{n}' \subset \iint(\dom(\psi_0)))\to_n 1$.
\end{compactenum}
Suppose we can show that as $k\to\infty$,
  $\mathbb Y_{n_k}\to 1$  on $\mathcal  M$. Then it would establish that every subsequence  
of $n$ has a further subsequence $n_k$ along which $\mathbb Y_{n_k}\as  1$. Then  Fact~\ref{fact: Shorack} would yield $\mathbb Y_n\to_p 1$, as desired. 
For the sake of simplicity, we will drop  $k$ from the subscript from  the definitions of  $\mathcal M$ and $\mathbb Y_{n_k}$.

Now we  derive some useful inequalities which hold on $\mathcal M$. Since $\omega_n\to \omega_0$ on  $\mathcal M$, Lemma \ref{lemma: bound: xi n} implies that    there exists $C>0$ so that $\xin\leq C\log n$ for all sufficiently large $n$ on $\M$. 
    Equation~\ref{inlemma: lemma bound psi knot prime}, on the other hand, implies that $|\psi_0'(\xin+|\td|)|$ is of the order of $\xin$. Therefore for large enough $C$,
\begin{equation}\label{intheorem: bound on psi knot prime}
    \limsup_n\sup_{t\in\mathcal A_n'}\psi_0'(t)=|\psi_0'(\xin+|\td|)|\leq C\log n\quad \text{on }\M.
\end{equation}
Here the monotonicity of $\ps_0'$ was used to obtain the last equality. 
Also note that because $\mathcal A_n'\subset \iint(\dom(\psi_0))$ for all sufficiently large $n$ on $\M$, 
we can apply Lemma~\ref{lemma: g/g(-)}
on $g_0$ to obtain
\begin{align}\label{intheorem: bound on g/g(-)}
    \limsup_n\sup_{t\in\mathcal A_n}\frac{g_0(t)}{g_0(t+\td)}\leq  \limsup_n e^{O(|\td|\xi_n)}\stackrel{(a)}{=} 1\quad \text{on }\M,
\end{align}
where (a) follows because $\td\xin\to_n 0$ on $\M$.

 Next we will establish the pointwise convergence of $b_n(t)$ on $\M$. Since $\|\hn-g_0\|_\infty\to 0$ on $\M$,  Lemma~\ref{Prop: the L1 convergence of the density estimators of one-step estimators: model: strong}(B) holds on $\M$.
 Using Lemma~\ref{Prop: the L1 convergence of the density estimators of one-step estimators: model: strong}(B) and the mean value Theorem,  we can show that on $\M$, $(\hln(t+\td)-\hln(t))/\td\to \psi'_0(t)$ for any $t\in\iint(\dom(\psi_0))$ that is a continuity point of $\psi_0'$. Because $\psi_0$ is concave, $\psi_0'$ is continuous Lebesgue almost everywhere on $\iint(\dom(\psi_0))$ \citep[see Corollary 25.5.1 and Theorem 25.5 of ][]{rockafellar}.
 Also noting $\widehat{\mathcal I}_{n}(\eta_{n})\to_n\I$ on $\M$,  we obtain that
\[g_0'(t)\dfrac{\dint_{t}^{t+\td}\tp(z)dz}{\td\hin(\eta_n)}\to_n \frac{g_0'(t)\psi_0'(t)}{\I},\quad \text{Lebesgue a.e. } t\in \iint(\dom(\psi_0))\ \text{on }\M.\]
Since  $\mathcal A_n\subset\iint(\dom(\psi_0))$ for sufficiently large $n$, and $\xin\to G^{-1}_0(1)$ on $\M$, it follows that $1_{\mathcal A_n}(t)$ converges to $1_{\iint(\dom(\psi_0))}(t)$ pointwise on $\M$ as well. Noting   $g_0'(t)=\psi_0'(t)g_0(t)$ for all  $t\in\iint(\dom(\psi_0))$, we then obtain that on $\M$,
\begin{align}\label{intheorem: T2: pointwise in dct}
    b_n(t)=1_{\mathcal A_n}(t)g_0'(t)\dfrac{\dint_{t}^{t+\td}\tp(z)dz}{\td\hin(\eta_n)}\to_n 1_{\iint(\dom(\psi_0))}(t)\frac{\psi_0'(t)^2g_0(t)}{\I}
\end{align}
for all $t\in\RR$ except a set of Lebesgue measure zero.
The concavity of $\hln$ implies that its right derivative
$\tp$ is non-increasing. Hence, for any $t\in\iint(\dom(\psi_0))$, we have,
\begin{equation}\label{inlemma: T2: DCT}
\min\bigg\{\tp(t),\tp(t+\td)\bigg\}\leq \dfrac{\dint_{t}^{t+\td}\tp(z)dz}{\td}\leq \max\bigg\{\tp(t),\tp(t+\td)\bigg\},
\end{equation}
\begin{flalign}\label{intheorem: tp}
\text{yielding} &&  \dfrac{\bl\dint_{t}^{t+\td}\tp(z)dz\bl}{|\td|} &\leq |\tp(t)|+|\tp(t+\td)|.\quad\quad\quad\quad
\end{flalign}
Using \eqref{intheorem: tp}, we can bound $|b_n(t)|$ noting
\begin{align*}
    |b_n(t)|\leq &\ \underbrace{1[t\in\mathcal A_n]|\psi_0'(t)|g_0(t) \frac{|\tp(t)|}{\hin(\eta_n)}}_{b_{1n}(t)} + \underbrace{1[t\in\mathcal A_n]|\psi_0'(t)|g_0(t) \frac{|\tp(t+\td)|}{\hin(\eta_n)}}_{b_{2n}(t)}.
\end{align*}
Now defining
\[T_{21,n}(t)=1[t\in\mathcal A_n]|\psi_0'(t)|\sqrt{g_0(t)}\lb\sqrt{g_0(t)}-\sqrt{\hn(t)}\rb \frac{|\tp(t)|}{\hin(\eta_n)},\]
\[T_{22,n}(t)=1[t\in\mathcal A_n]|\psi_0'(t)|\sqrt{g_0(t)} \frac{|\tp(t)|\sqrt{\hn(t)}}{\hin(\eta_n)},\]
\[T_{23,n}(t)=1[t\in\mathcal A_n]|\psi_0'(t)|\frac{{g_0(t)}}{\sqrt{g_0(t+\td)}}\lb\sqrt{g_0(t+\td)}-\sqrt{\hn(t+\td)}\rb \frac{|\tp(t+\td)|}{\hin(\eta_n)},\]
\[T_{24,n}(t)=1[t\in\mathcal A_n]|\psi_0'(t)|\frac{{g_0(t)}}{\sqrt{g_0(t+\td)}} \frac{|\tp(t+\td)|\sqrt{\hn(t+\td)}}{\hin(\eta_n)},\]
we note that
\[b_{1n}(t)= T_{21,n}(t)+T_{22,n}(t)\quad\text{and}\quad b_{2n}(t)= T_{23,n}(t)+T_{24,n}(t).\]
Thus we can upper bound $|b_n(t)|$ by $c_n(t)$ where
\[c_n(t)=T_{21,n}(t)+T_{22,n}(t)+T_{23,n}(t)+T_{24,n}(t).\]

Our aim is to apply Fact~\ref{fact: pratt's lemma} (Pratt's Lemma) with $a_n=0$ to prove the current Lemma. 
To this end, we first show that the following assertions hold on $\M$:
\begin{itemize}
  \item[A1.]   $T_{21,n}\to_n 0$ and $T_{23,n}\to_n 0$ Lebesgue almost everywhere on $\RR$.
 \item[A2.] $\int_{\RR} T_{21,n}(t)dt\to_n 0$ and $\int_{\RR} T_{23,n}(t)dt\to_n 0$.
  \item[A3.] There are   functions $t_{22}:\RR\mapsto\RR$ and $t_{24}:\RR\mapsto\RR$ so that $T_{22}\to_n t_{22}$ and $T_{24}\to_n t_{24}(t)$ Lebesgue almost everywhere on $\RR$.
    \item[A4.] The functions $t_{22}$ and $t_{24}$ in A3 are integrable. Moreover, $\int_{\RR} T_{22,n}(t)dt\to_n \int_{\RR} t_{22}(t) dt $ and $\int_{\RR} T_{24,n}(t)dt\to_n \int_{\RR} t_{24}(t) dt $.
    \end{itemize}
    
 Let us denote $c(t)=t_{22}(t)+t_{24}(t)$. Then
A1-A4 imply that on $\M$, $c_n(t)\to_n c(t) $ Lebesgue almost everywhere, $c$ is integrable, and 
$\int_{\RR} c_n(t)dt \to_n \int_{\RR} c(t)dt $. Since $|b_n(t)|\leq c_n(t)$ and \eqref{intheorem: T2: pointwise in dct} holds,  Pratt's Lemma (see Fact~\ref{fact: pratt's lemma}) yields
\[\edint b_n(t)dt\to_n \dint_{G_0^{-1}(0)}^{G_0^{-1}(1)}\frac{g_0'(t)^2\psi_0'(t)dt}{\I}=1\quad \text{on }\M ,\]
which completes the proof of Lemma~\ref{lemma: main: key lemma for T2}.
 
\subsubsection*{\textbf{Proof of A1 and A3:}}
Since $\|\hn-g_0\|_\infty\to_n 0$ on $\mathcal M$, Lemma~\ref{Prop: the L1 convergence of the density estimators of one-step estimators: model: strong}  implies that on $\M$, the functions $\hn,\hn(\mathord{\cdot}+\td)$ converge  pointwise to $ g_0$, and $\tp,\tp(\mathord{\cdot}+\td)$ converge to $\psi_0'$ Lebesgue almost everywhere on $\iint(\dom(\psi_0))$. Continuity of $g_0$ implies $g_0(t+\td)\to_n g_0(t)$ for all $t\in\RR$.  Using the above, it can be shown that 
\[ T_{21,n}, T_{23,n}\to_n 0,\quad T_{22,n},T_{24,n}\to_n 1_{\iint(\dom(\psi_0))}{\psi_0'^2g_0}/{\I} \quad \text{a.e. Lebesgue}\ \text{on }\M.\]
 
\subsubsection*{\textbf{Proof of A2:}}
Using Cauchy-Schwarz inequality, the bound on $g_0$ from Fact~\ref{fact: Lemma 1 of theory paper}, and the bound on $\psi_0'$ from \eqref{intheorem: bound on psi knot prime}, we can show that there exists $C>0$ such that the following holds for all sufficiently large $n$ on $\M$:
\begin{align*}
\MoveEqLeft \edint |T_{21,n}(t)|dt\
 \\
 \leq &\ C \log n\left(\dint_{-\xin-\td}^{\xin}\frac{\slb\sqrt{g_0(t)}-\sqrt{\hn(t)}\srb^2}{{\hn(t)}} \right)^{1/2}\lb \dint_{\mathcal A_n}\frac{|\tp(t)|^2\hn(t) dt}{\hin(\eta_n)^2}\rb^{1/2}\\
 \leq &\ C \log n\frac{H(\hn, g_0)}{\slb \inf_{x\in\mathcal A'_n} \hn(x) \srb^{1/2}}\frac{\sqrt{\mathcal I_{1n}}}{\hin(\eta_n)},
\end{align*}
 which approaches  zero as $n\to\infty$  because 
 \begin{equation}\label{inlemma: key lemma: A2 proof}
     \frac{ C(\log n) H(g_0,\hn)}{ \slb\inf_{x\in\mathcal A_n'}\hn(x)\srb^{1/2}}\to_n 0,\quad \mathcal I_{1n},\ \hin(\eta_n)\to_n \I\quad\text{on }\M.
 \end{equation}
 by \eqref{inlemma: def: mathcal M}.
The proof for $T_{23,n}$ is similar. An application of the Cauchy-Schwarz inequality, the bound on $\psi_0'$ by \eqref{intheorem: bound on psi knot prime} and the bound in \eqref{intheorem: bound on g/g(-)} imply that the following holds for all large $n$ on $\M$:
\begin{align*}
\edint T_{23,n}(t)dt\leq &\ 
  \frac{ C(\log n)H(g_0,\hn)}{ \slb\inf_{x\in\mathcal A_n'}\hn(x)\srb^{1/2}}\frac{\lb\dint_{-\xin+\td}^{\xin}\tp(t)^2\hn(t)dt\rb^{1/2}}{\hin(\eta_n)}\\
 =&\ \frac{ C(\log n) H(g_0,\hn)}{ \slb\inf_{x\in\mathcal A_n'}\hn(x)\srb^{1/2}}\frac{\sqrt{\mathcal I_{2n}}}{\hin(\eta_n)},
\end{align*}
which converges to zero  by \eqref{inlemma: key lemma: A2 proof} and the fact that $\mathcal{I}_{2n}\to_n\I$ on $\M$,
thus completing the proof of A2.


\subsubsection*{\textbf{Proof of A4:}}
Let us define
\[\mathcal T_{22,n}(t)= |T_{22,n}(t)|/g_0(t)\quad\text{ and }\quad \mathcal T_{24,n}(t)= |T_{24,n}(t)|/g_0(t).\]

We will show that on $\mathcal M$,
for each $\e>0$, there exists $\sigma>0$ so that the following bounds are true for any $G_0$-measurable set $\mathcal B\subset\RR$ satisfying $\int_{\mathcal B} g_0(t)dt<\sigma$:
\begin{align}\label{intheorem: UI: step 1}
    \limsup_n\dint_{\mathcal B} \mathcal T_{22,n}(t)g_0(t)dt<\e\quad\text{and}\quad \limsup_n\dint_{\mathcal B} \mathcal T_{24,n}(t)g_0(t)dt<\e.
\end{align}
Next, we will show that
\begin{align}\label{intheorem: UI: step 2}
 \limsup_n\edint \mathcal T_{22,n}(t)g_0(t)dt<\infty\ \text{and}\  \limsup_n\edint \mathcal T_{24,n}(t)g_0(t)dt<\infty.
\end{align}
If \eqref{intheorem: UI: step 1} and \eqref{intheorem: UI: step 2} hold,  Fact~\ref{fact: condition for UI}
underscores that  the sequences $(\mathcal T_{22,n})_{n\geq 1}$ and $(\mathcal T_{24,n})_{n\geq 1}$ are uniformly integrable with respect to the measure  induced by $G_0$. Then A4 follows from A3 and Theorem 16.13 (pp. 220) of \cite{billingsley} (Vitali convergence Theorem).
 Thus it suffices to show that \eqref{intheorem: UI: step 1} and \eqref{intheorem: UI: step 2} hold.
 
 Note that since $\I<\infty$, by Fact~\ref{fact: condition for integrability}, given any $\e>0$, we can choose $\sigma>0$ so that for any $G_0$-measurable set $\mathcal B\subset\RR$ satisfying $\int_{\mathcal B}g_0(t)dt<\sigma$, the integral
 $\int_{\mathcal B}\psi_0(x)^2g_0(x)<\e^2\I=\e'$ (say).
 It will soon be clear why this choice of $\e'$ works. Using the Cauchy-Schwarz inequality in the third step, we calculate
 \begin{align*}
 \dint_{\mathcal B}\mathcal T_{22,n}(t)g_0(t)dt=  &\  \dint_{\mathcal B}T_{22,n}(t)dt= \dint_{\mathcal B\cap \mathcal A_n}|\psi_0'(t)|\sqrt{g_0(t)}\frac{|\tp(t)|\sqrt{\hn(t)}}{\hin(\eta_n)}\\
     \leq &\ \lb\dint_{\mathcal B}\psi_0'(t)^2g_0(t)dt \rb^{1/2}\lb\dint_{\mathcal A_n}\frac{\tp(t)^2\hn(t)dt}{\hin(\eta_n)^2}\rb^{1/2},
 \end{align*}
which is bounded by $\sqrt{\e'\mathcal{I}_{1n}}/\hin(\eta_n)$. Noting $\mathcal{I}_{1n}\to_n \I$ and  $\hin(\eta_n)\to_n\I$ on $\M$, we obtain
\[\limsup_n \dint_{\mathcal B}\mathcal T_{22,n}(t)g_0(t)dt\leq \sqrt{\e'/\I}=\e\quad\text{on }\M.\]
 Letting $\mathcal B=\RR$, and repeating the above steps, we can show that
  \[ \limsup_n\int_{\RR}\mathcal{T}_{22,n}(t)g_0(t)dt<1\quad\text{ on }\M.\]
  For $\mathcal T_{24,n}$, the Cauchy-Schwarz inequality yields 
\begin{align}\label{inlemma: bound: T24}
     \dint_{\mathcal B}\mathcal{T}_{24,n}(t)g_0(t)dt\leq &\ \lb\dint_{\mathcal B\cap \mathcal{A}_n} \frac{g_0(t)^2\psi_0'(t)^2}{g_0(t+\td)}dt\rb^{1/2}
      \lb \dint_{-\xin+\td}^{\xin}\frac{\tp(t)^2\hn(t) dt}{\hin(\eta_n)^2}\rb^{1/2}\nn\\
      \stackrel{(a)}{\leq} &\  
    \sqrt{\e'}\sup_{t\in\mathcal A_n}\lb\frac{g_0(t)}{g_0(t+\td)}\rb^{1/2}\frac{\mathcal{I}_{2n}^{1/2}}{\hin(\eta_n)}.
\end{align}
Here (a) follows because $\int_{\mathcal B}\psi'_0(t)^2g_0(t)dt<\e'$.
The fact that $\mathcal{I}_{1n}$, $\hin(\eta_n)\to_n\I$ on $\M$, in conjunction with the bound in \eqref{intheorem: bound on g/g(-)},  implies
 \[\limsup_n  \dint_{\mathcal B}\mathcal{T}_{24,n}(t)g_0(t)dt\leq \sqrt{\e'/\I}=\e\quad\text{on }\M.\]
 Thus \eqref{intheorem: UI: step 1}  is proved. Letting $\mathcal B=\RR$ leads to  $ \limsup_n\int_{\RR}\mathcal{T}_{24,n}(t)g_0(t)dt<1$ on $\M$, thus finishing the proof of \eqref{intheorem: UI: step 2}.

\end{proof}

 \subsection{Auxilliary lemmas for the proof of Theorem~\ref{theorem: main: one-step: full}}
 \subsubsection{\textbf{Lemmas on $\xi_n$:}}
 \label{sec: lemmas on xi n}
  Unless otherwise mentioned, for all the lemmas on $\xi_n$,  $\xi_n$ will denote $\tilde G_n^{-1}(1-\eta_n)$, where the choice of $\hn$ should be clear from the context.
  
 \begin{lemma}
 \label{lemma: xi: xi is in dop psi knot}
 Suppose $\hn\in\mathcal{S}_0$ satisfies Conditions~\ref{condition: on hn} and \ref{cond: hellinger rate}. 
 Let $\eta_n=C n^{-2p/5}$, where $C>0$ and $p$ is as in Condition~\ref{cond: hellinger rate}.
  Then for $\xi_n=\tilde G_n^{-1}(1-\eta_n)$, we have
 \[P\slb [-\xi_n,\xi_n]\subset \mathrm{int}(\dom(\psi_0))\srb\to 1.\]
 \end{lemma}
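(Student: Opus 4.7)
The plan is as follows. Write $M_0 := G_0^{-1}(1) = \sup \supp(g_0) \in (0, \infty]$. Since $g_0 \in \mathcal{SLC}_0$, its support is a symmetric interval and $g_0 > 0$ on $(-M_0, M_0)$, so $\iint(\dom(\psi_0)) = (-M_0, M_0)$ (interpreted as $\RR$ when $M_0 = \infty$). Because $\hn$ is symmetric about zero, $-\xi_n = \tilde G_n^{-1}(\eta_n)$, and an identical argument handles the two endpoints of $[-\xi_n, \xi_n]$. It therefore suffices to show that $P(\xi_n < M_0) \to 1$ and appeal to symmetry for the left endpoint.

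The case $M_0 = \infty$ is immediate, since for any $\eta_n > 0$ the $(1-\eta_n)$-quantile of the probability distribution $\tilde G_n$ is finite. Assume now $M_0 < \infty$. The strategy is to turn the event $\{\xi_n \geq M_0\}$ into a lower bound on $H(\hn, g_0)$ that contradicts the rate postulated in Condition~\ref{cond: hellinger rate}. On $\{\xi_n \geq M_0\}$, the definition of the $(1-\eta_n)$-quantile together with right-continuity of $\tilde G_n$ yields $\tilde G_n(M_0-) \leq 1 - \eta_n$, so $\hn$ places mass at least $\eta_n$ on $[M_0, \infty)$. Since $g_0 = 0$ almost everywhere on $[M_0, \infty)$, this gives
\[
\|\hn - g_0\|_1 \;\geq\; \int_{[M_0, \infty)} \hn(x)\, dx \;\geq\; \eta_n.
\]
Applying the standard inequality $\|\hn - g_0\|_1 \leq 2\sqrt{2}\, H(\hn, g_0)$ then forces $H(\hn, g_0) \geq \eta_n / (2\sqrt{2}) = C' n^{-2p/5}$.

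By Condition~\ref{cond: hellinger rate}, $H(\hn, g_0) = O_p(n^{-p})$, and since $p > 2p/5$ for $p \in (0, 1)$, we have $n^{-p} = o(n^{-2p/5})$, so $P(H(\hn, g_0) \geq C' n^{-2p/5}) \to 0$. Therefore $P(\xi_n \geq M_0) \to 0$, and combining with the mirror bound for $-\xi_n$ completes the proof. The only technical care is bookkeeping at the start: verifying from log-concavity of $g_0$ that $\iint(\dom(\psi_0))$ equals the open symmetric interval $(-M_0, M_0)$ with right endpoint $G_0^{-1}(1)$, and translating the quantile event $\{\xi_n \geq M_0\}$ into a mass bound on $[M_0, \infty)$ via right-continuity of $\tilde G_n$. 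Condition~\ref{condition: on hn} is not needed for this argument; only the Hellinger rate of Condition~\ref{cond: hellinger rate} and the symmetry $\hn \in \mathcal{S}_0$ drive the contradiction.
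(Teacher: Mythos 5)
Your proof is correct and rests on essentially the same mechanism as the paper's: the total-variation/Hellinger inequality together with the scale separation $\eta_n = Cn^{-2p/5} \gg n^{-p}$ from Condition~\ref{cond: hellinger rate}, plus the quantile identity $\tilde G_n(\tilde G_n^{-1}(q)) \geq q$ and symmetry of the interval $\iint(\dom(\psi_0))$. The only (cosmetic) difference is that the paper directly lower-bounds $G_0(-\xi_n)$ by $\eta_n - O_p(n^{-p})$, whereas you argue by contradiction at the support endpoint $G_0^{-1}(1)$, showing the event $\{\xi_n \geq G_0^{-1}(1)\}$ would force $\hn$ to put mass $\eta_n$ outside $\supp(g_0)$ and hence a Hellinger error of order $\eta_n$.
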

 
 \begin{proof}[Proof of Lemma~\ref{lemma: xi: xi is in dop psi knot}]
  Using Fact~\ref{fact: dTV and hellinger} in step (a) we obtain that
  \[|G_0(-\xin)-\tilde G_n(-\xin)|\leq d_{TV}(G_0,\tilde G_n)\stackrel{(a)}{\leq} \sqrt{2}H(\hn,g_0)=O_p(n^{-p})\]
  by Condition~\ref{cond: hellinger rate}.
  Therefore $G_0(-\xin)\geq \tilde G_n(-\xin)+O_p(n^{-p})\geq \eta_n+O_p(n^{-p})$ because $F(F^{-1}(q))\geq q$ for any distribution function $F$, and $q\in(0,1)$. Since $\eta_n=Cn ^{-2p/5}\gg n^{-p}$, it follows that $P(G_0(-\xin)\geq \eta_n/2)\to 1$. Thus $P(-\xin\in \iint(dom(\psi_0)))\to_n 1 $. Since $\psi_0\in \mathcal {SC}_0$,  $\iint(\dom(\psi_0))$ is an interval of the form $(-a,a)$ for some $a>0$. Noting $-\xin\in(-a,a)$ implies $[-\xin,\xin]\subset(-a,a)$, the proof follows.

 \end{proof}
 
 \begin{lemma}
 \label{lemma: xi: xi goes to 1}
 Consider the set up of Lemma~\ref{lemma: xi: xi is in dop psi knot}. Then $\xi_n\to_p G_0^{-1}(1)$ as $\eta_n\to 0$.
 \end{lemma}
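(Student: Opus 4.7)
The plan is to reduce the statement to a uniform control of $\tilde G_n - G_0$, and then to a routine quantile-continuity argument, with a case split according to whether $G_0$ has bounded support.

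First I would establish the global uniform bound
\[\sup_{x\in\RR}|\tilde G_n(x)-G_0(x)|\;\leq\;d_{TV}(\tilde G_n,G_0)\;\leq\;\sqrt 2\,H(\tilde g_n,g_0)\;=\;O_p(n^{-p}),\]
using Fact~\ref{fact: dTV and hellinger} together with Condition~\ref{cond: hellinger rate}. Since $\eta_n=Cn^{-2p/5}$, we have the crucial comparison $n^{-p}=o(\eta_n)$, i.e.\ the random fluctuation of $\tilde G_n$ is of a smaller order than the quantile level at which we invert.

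Next I would split into two cases according to the value of $a:=G_0^{-1}(1)=\sup\{x:G_0(x)<1\}\in(0,\infty]$.

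Case 1 ($a<\infty$). Fix $\varepsilon>0$. For the lower bound on $\xi_n$, note $G_0(a-\varepsilon)=1-\delta$ for some $\delta>0$; hence on the event $\{\|\tilde G_n-G_0\|_\infty<\delta/2\}$ we have $\tilde G_n(a-\varepsilon)<1-\delta/2<1-\eta_n$ for all large $n$, which forces $\xi_n\geq a-\varepsilon$. For the upper bound, $G_0(a+\varepsilon)=1$, so the uniform bound gives $\tilde G_n(a+\varepsilon)\geq 1-O_p(n^{-p})$; since $n^{-p}=o(\eta_n)$, this exceeds $1-\eta_n$ with probability tending to one, yielding $\xi_n\leq a+\varepsilon$.

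Case 2 ($a=\infty$). Fix any $M>0$. Then $G_0(M)=1-\delta$ for some $\delta>0$, and the uniform bound gives $\tilde G_n(M)\leq 1-\delta+o_p(1)<1-\eta_n$ for all sufficiently large $n$, hence $\xi_n>M$ with probability tending to one. This shows $\xi_n\to_p\infty=G_0^{-1}(1)$.

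Combining the two cases yields $\xi_n\to_p G_0^{-1}(1)$. There is no serious obstacle here — the entire proof rests on the single quantitative observation that $\|\tilde G_n-G_0\|_\infty$ decays at rate $O_p(n^{-p})$, which is strictly faster than the chosen rate $\eta_n=Cn^{-2p/5}$; the only mild care required is the case distinction at $a=\infty$, where one argues via lower bounds on $\xi_n$ only.
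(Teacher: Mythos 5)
Your proof is correct, and it takes a somewhat different route from the paper's. The paper argues in two pieces: the upper bound $P(\xi_n\leq G_0^{-1}(1))\to 1$ is simply quoted from Lemma~\ref{lemma: xi: xi is in dop psi knot}, and the lower bound is obtained by contradiction, using that $1-\eta_n\geq 2t$ eventually so $\xi_n\geq \tilde G_n^{-1}(2t)$, together with the quantile-consistency result (Fact~\ref{fact: consistency of the quantiles} plus Condition~\ref{condition: on hn}) and strict monotonicity of $G_0^{-1}$ coming from positivity of the log-concave $g_0$ on $J(G_0)$. You instead work entirely on the distribution-function scale: the bound $\|\tilde G_n-G_0\|_\infty\leq d_{TV}(\tilde G_n,G_0)\leq\sqrt 2 H(\hn,g_0)=O_p(n^{-p})$ (Fact~\ref{fact: dTV and hellinger} and Condition~\ref{cond: hellinger rate}) plus monotonicity of $\tilde G_n$ gives both the lower bound $\xi_n\geq G_0^{-1}(1)-\e$ (which only needs uniform consistency and $\eta_n\to0$) and the upper bound $\xi_n\leq G_0^{-1}(1)+\e$ (which needs the rate comparison $n^{-p}=o(\eta_n)$), with an explicit case split for $G_0^{-1}(1)=\infty$. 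What your version buys is a self-contained, quantitative argument that avoids the quantile-consistency fact and handles the unbounded-support case explicitly (the paper treats it implicitly through the contradiction argument); what it costs is that your upper-bound step essentially re-derives the content of Lemma~\ref{lemma: xi: xi is in dop psi knot} (where the comparison $\eta_n\gg n^{-p}$ is exactly the ingredient used there), which the paper simply cites. Both proofs ultimately rest on the same two facts—closeness of $\tilde G_n$ to $G_0$ and the slow decay of $\eta_n$—so the difference is one of packaging rather than substance, but your packaging is sound.
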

 
 \begin{proof}[Proof of Lemma~\ref{lemma: xi: xi goes to 1}]
 \textcolor{red}{This has been changed.}


  Suppose, if possible, 
 $\xi_n\to_p G_0^{-1}(1)$ does not hold. We will consider two cases then: (a) $G_0^{-1}(1)<\infty$ and (b) $G_0^{-1}(1)=\infty$.

 \paragraph{Case (a):}
 Since  $\xi_n\to_p G_0^{-1}(1)$ does not hold, we can find an $\epsilon>0$ and a subsequence  $\{n_k\}\subset \{n\}$ so that $\liminf_k P(|\xi_{n_k}-G_0^{-1}(1)|>\epsilon)>0$. To avoid cumbersome notation, we will denote $\xi_{n_k}$ by $\xi_n$ from now on. Since Lemma~\ref{lemma: xi: xi is in dop psi knot} implies  $P(\xi_n\leq G_0^{-1}(1))\to 1$, it follows that $\liminf_n P(\xi_{n}<G_0^{-1}(1)-\epsilon)>0$. Now we show that there exists some $t\in[1/2,,1)$ such that $G_0^{-1}(t)=G_0^{-1}(1)-\epsilon$.

 Let us denote $t=G_0(G_0^{-1}(1)-\e)$. 
 Since $g_0$ is log-concave, it is positive on $J(G_0)$. Therefore, if $t,t'\in J(G_0)$ satisfies $t<t'$, then $G_0(t)<G_0(t')$.   Note that $\epsilon$ can be chosen small enough so that $G_0^{-1}(1)-\e\in(G_0^{-1}(1/2),G_0^{-1}(1))$,
  which implies $t=G_0(G_0^{-1}(1)-\e)>G_0(G_0^{-1}(1/2))\geq 1/2$.  By our choice of $\epsilon$, the number  $G_0^{-1}(1)-\e\in [G_0^{-1}(1/2),G_0^{-1}(1)) $. Because $G_0$ is strictly increasing on the latter set, it can be seen that  $t=G_0(G_0^{-1}(1)-\e)<1$. Therefore, $t\in[1/2,1)$. Finally,  
  \[G_0^{-1}(t)=G_0^{-1}\left(G_0(G_0^{-1}(1)-\e)\right)=G_0^{-1}(1)-\e,\]
  where the last step follows because $G_0^{-1}(1)-\e\in [G_0^{-1}(1/2),G_0^{-1}(1)) $ implies we can find a neighborhood of $G_0^{-1}(1)-\e$ where $G_0$ is strictly increasing. Therefore, we have
 proved that  there exists $t\in[1/2,1)$ so that  $G_0^{-1}(t)=G_0^{-1}(1)-\epsilon$, which yields 
 \[\liminf_n P(\xi_n< G_0^{-1}(t))>0.\]


 %

 However, because $\eta_n\to 0$,  $1-\eta_n\geq (1+t)/2$ for sufficiently large $n$, which yields
 $\xi_n\geq\tilde G_n^{-1}((1+t)/2)$.  Now by Fact~\ref{fact: consistency of the quantiles} and Condition~\ref{condition: on hn}, $\tilde G_n^{-1}((1+t)/2)\to_pG_0^{-1}((1+t)/2)$. However, $G_0^{-1}((1+t)/2)>G_0^{-1}(t)$, where the strict inequality follows because  $g_0$ being log-concave, is positive on $J(G_0)$, indicating $G_0^{-1}$ is strictly increasing on $(0,1)$. Thus it follows that $P(\xi_n\geq G_0^{-1}(t))\to 1$.  Therefore the proof follows by contradiction. 

 \paragraph{Case (b)} Since  $\xi_n\to_p \infty$ does not hold,  there exists $M>0$ and a subsequence $n_k$ so that $\liminf_k P(\xi_{n_k}<M)>0$. To avoid cumbersome notation, we will denote $\xi_{n_k}$ by $\xi_n$ from now on. Note that $\xi_{n}<M$ implies $\tilde G_n(\xi_{n})\leq \tilde G_n(M)$. On one hand,  $\tilde G_n(\xi_{n})=1-\eta_n$ by Lemma A.3.5 of \cite{bobkovbig} because $\tilde G_n$ is continuous on $J(\tilde G_n)$.  On the other hand, $\tilde G_n(M)\to_p G_0(M)$ by Condition~\ref{condition: on hn}. Note that since $G_0^{-1}(1)=\infty$, $G_0(M)<1$. Specifically,  there exists $z<1$ so that $G_0(M)<z$. Therefore, we have shown that   $P(1-\eta_n \leq z)\to_n 1$.
 However, the above can not hold since $\eta_n\to 0$. Thus, there is no $M$ so that $\liminf_n  P(\xi_{n}<M)>0$ holds. We have come to a contradiction again, which concludes our proof.
 
 \end{proof}
 
  \begin{lemma}\label{lemma: bound: xi n}
 Suppose either $\hn\in\mathcal{SLC}_0$ is a density  satisfying Condition~\ref{condition: on hn} and Condition~\ref{cond: hellinger rate}, or $\hn$ satisfies Condition~\ref{cond: hnss condition}.  Let $\eta_n=Cn^{-2p/5}$ where $p$ is as in Condition~\ref{cond: hellinger rate} (or  Condition~\ref{cond: hnss condition}) and $C>0$. Then
 \[\xi_n\leq\frac{-\log 2+2p(\log n)/5}{\omega_n}\]
 where $\omega_n$ is as in Fact~\ref{fact: f grtr than F}. In fact,
 $|\xi_n|=O_p(\log n)$.
 \end{lemma}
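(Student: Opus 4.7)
The plan is to convert the pointwise lower bound on $\hn$ provided by Fact~\ref{fact: f grtr than F} into a differential inequality for the survival function $1-\tilde G_n$, and then integrate.

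First I would use the symmetry of $\hn$ about zero, which gives $\tilde G_n(0)=1/2$, to observe that for every $x\geq 0$ we have $1-\tilde G_n(x)\leq 1/2\leq \tilde G_n(x)$, so the minimum appearing in Fact~\ref{fact: f grtr than F} is simply $1-\tilde G_n(x)$, yielding
\[\hn(x)\;\geq\;\omega_n\bigl(1-\tilde G_n(x)\bigr)\quad\text{for all }x\geq 0.\]
Since $\hn$ is a density, $\tilde G_n$ is absolutely continuous with derivative $\hn$ almost everywhere. Hence on the interval $[0,\xi_n]$, on which $1-\tilde G_n$ stays strictly positive (because $1-\tilde G_n(\xi_n)=\eta_n>0$), the map $t\mapsto \log(1-\tilde G_n(t))$ is absolutely continuous with a.e.\ derivative $-\hn(t)/(1-\tilde G_n(t))\leq -\omega_n$.

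Integrating this inequality from $0$ to $\xi_n$ and using $1-\tilde G_n(0)=1/2$ gives
\[\log \eta_n-\log\tfrac{1}{2}\;\leq\;-\omega_n\,\xi_n,\]
which rearranges to
\[\omega_n\,\xi_n\;\leq\;-\log 2-\log\eta_n\;=\;-\log 2-\log C+\tfrac{2p}{5}\log n.\]
For $C\geq 1$ this is exactly the bound stated in the lemma; for $C<1$ it is off by an additive constant $-(\log C)/\omega_n$, which is still harmless for the second conclusion. The assertion $|\xi_n|=O_p(\log n)$ then follows immediately: $\xi_n\geq 0$ since $\eta_n<1/2$ eventually and $\tilde G_n(0)=1/2$, so $|\xi_n|=\xi_n$, while $\omega_n\to_p\omega_0>0$ by Fact~\ref{fact: f grtr than F} ensures $1/\omega_n=O_p(1)$, and $\log n$ absorbs the remaining terms.

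The main point that needs care is that the argument must cover both cases allowed in the hypothesis: the log-concave $\mathcal{SLC}_0$ case (where $1-\tilde G_n$ is itself log-concave, so convexity of $-\log(1-\tilde G_n)$ would offer an alternative route) and the more delicate mixture setting of Condition~\ref{cond: hnss condition}, in which $\hn$ itself need not be log-concave. The differential-inequality approach sketched above is attractive precisely because it treats both cases uniformly: the only input is the pointwise bound $\hn(x)\geq \omega_n(1-\tilde G_n(x))$, and Fact~\ref{fact: f grtr than F} already delivers this inequality in both regimes with a common $\omega_n\to_p\omega_0>0$. There is therefore no real obstacle beyond invoking Fact~\ref{fact: f grtr than F} and carrying out the one-line Gronwall-type integration.
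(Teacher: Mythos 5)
Your proof is correct and is essentially the paper's argument: both rest on the lower bound $\hn(x)\geq \omega_n\min(\tilde G_n(x),1-\tilde G_n(x))$ from Fact~\ref{fact: f grtr than F} together with the symmetry $\tilde G_n(0)=1/2$, and the resulting logarithmic bound on $\xi_n$. The only cosmetic difference is that the paper integrates $1/\hn(\tilde G_n^{-1}(z))$ on the quantile scale via Fact~\ref{fact: bobkov big}, whereas you integrate the derivative bound for $\log(1-\tilde G_n)$ on the $x$-scale — the same computation after a change of variables (and your remark about the constant $\log C$ only makes explicit a harmless slack the paper glosses over).
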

 
 \begin{proof}[Proof of Lemma~\ref{lemma: bound: xi n}]
 Observe  that if  $\hn\in\mathcal{SLC}_0$, then $\hn(\tilde G_n^{-1}(z))>0$ for $z\in(0,1)$. If $\hn$ satisfies Condition~\ref{cond: hnss condition}, then also the above holds because by Condition~\ref{cond: hnss condition}, $\hn>0$ on $\RR$. Since $\hn$ is symmetric about zero,  $0=\tilde G_n^{-1}(1/2)$.  Noting $\xi_n=-\tilde{G}_n^{-1}(\eta_n)$,  we therefore derive that
 \begin{align*}
     \xi_n=\tilde{G}_n^{-1}(1/2)-\tilde{G}_n^{-1}(\eta_n)=\dint_{\eta_n}^{1/2}\frac{dz}{\hn(\tilde G_n^{-1}(z))}\leq \dint_{\eta_n}^{1/2}\frac{dz}{\omega_n\tilde G_n(\tilde G_n^{-1}(z))},
 \end{align*}
 where $\omega_n$ is as in Fact~\ref{fact: f grtr than F}.
 Because $\hn>0$ on $J(\tilde G_n)$, it follows that  $\tilde G_n$ is continuous on $J(\tilde G_n)$. Therefore, we have
 $\tilde G_n(\tilde G_n^{-1}(z))=z$, implying \[\xi_n\leq \frac{\log(1/2)-\log(\eta_n)}{\omega_n}=\frac{-\log 2+2p(\log n)/5}{\omega_n}.\]
 Since $\omega_n\to_p\omega_0>0$ by Fact~\ref{fact: f grtr than F}, the proof follows.
 \end{proof}
 
  \begin{lemma}\label{lemma: xi: tilde xi n}
 Consider the set up of Lemma~\ref{lemma: bound: xi n}. Let $\tilde \xi_n=\tilde G_n^{-1}(\eta_n/2)$.
 Suppose $y_n$ is a sequence of non-negative random variables so that $P(y_n< \eta_n/(2g_0(0)))\to 1$.  Then 
  \begin{align}\label{inlemma: yn set inclusion}
   P([-\xi_n-y_n,\xi_n+y_n]\subset [-\tilde{\xi}_n,\tilde{\xi}_n])\to 1.  
 \end{align}
 \end{lemma}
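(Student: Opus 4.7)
\textbf{Proof proposal for Lemma~\ref{lemma: xi: tilde xi n}.}

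The plan is to use symmetry of $\hn$ about $0$ to reduce the desired inclusion to the single inequality $\xi_n+y_n\le |\tilde \xi_n|$ (where $|\tilde\xi_n|=\tilde G_n^{-1}(1-\eta_n/2)$ by symmetry), and then to control this by a direct integration argument against $\tilde G_n$. Since $\tilde G_n$ is non-decreasing, it is equivalent to show that with probability tending to one,
\[
\tilde G_n(\xi_n+y_n)\ \le\ 1-\eta_n/2.
\]
Writing $\tilde G_n(\xi_n+y_n)=\tilde G_n(\xi_n)+\int_{\xi_n}^{\xi_n+y_n}\hn(x)\,dx=(1-\eta_n)+\int_{\xi_n}^{\xi_n+y_n}\hn(x)\,dx$, it suffices to show that $\int_{\xi_n}^{\xi_n+y_n}\hn(x)\,dx\le \eta_n/2$ with high probability. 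I would bound this integral by $y_n\cdot \sup_{x\in[\xi_n,\xi_n+y_n]}\hn(x)$, and then control the supremum via the uniform consistency $\|\hn-g_0\|_\infty\to_p 0$ from Condition~\ref{condition: on hn}(A):
\[
\sup_{x\in[\xi_n,\xi_n+y_n]}\hn(x)\ \le\ \sup_{x\in[\xi_n,\xi_n+y_n]}g_0(x)+\|\hn-g_0\|_\infty\ =\ g_0(\xi_n)+o_p(1),
\]
where the last equality uses that $g_0$ is symmetric log-concave, hence non-increasing on $[0,\infty)$, and that $\xi_n>0$ with probability tending to $1$ (since $\xi_n\to_p G_0^{-1}(1)>0$ by Lemma~\ref{lemma: xi: xi goes to 1}).

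The key remaining step is to convert $g_0(\xi_n)+o_p(1)$ into something strictly smaller than $g_0(0)$. Here I would choose a deterministic $R\in(0,G_0^{-1}(1))$ with $g_0(R)<g_0(0)$. Such an $R$ exists: if not, then by symmetry and monotonicity $g_0$ would be constantly equal to $g_0(0)$ on $[-G_0^{-1}(1),G_0^{-1}(1)]$, i.e. $g_0$ would be uniform on its support, which is ruled out by Huber's theorem since $f_0\in\mathcal{P}_0$ requires $g_0$ to be absolutely continuous with finite Fisher information. Once $R$ is fixed, Lemma~\ref{lemma: xi: xi goes to 1} implies $\xi_n>R$ with probability tending to $1$, and monotonicity of $g_0$ on $[0,\infty)$ gives $g_0(\xi_n)\le g_0(R)$. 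Combining with the hypothesis $P(y_n<\eta_n/(2g_0(0)))\to 1$, on an event of probability tending to one,
\[
\int_{\xi_n}^{\xi_n+y_n}\hn(x)\,dx\ \le\ y_n\bigl(g_0(R)+o_p(1)\bigr)\ <\ \frac{\eta_n}{2g_0(0)}\bigl(g_0(R)+o_p(1)\bigr)\ =\ \frac{\eta_n}{2}\Bigl(\tfrac{g_0(R)}{g_0(0)}+o_p(1)\Bigr),
\]
which is strictly less than $\eta_n/2$ eventually because $g_0(R)/g_0(0)<1$. Hence $\tilde G_n(\xi_n+y_n)<1-\eta_n/2$ with probability tending to $1$, which yields $\xi_n+y_n\le |\tilde\xi_n|$; the symmetric argument at $-\xi_n-y_n$ is identical, completing the inclusion \eqref{inlemma: yn set inclusion}.

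The main obstacle I anticipate is simply the verification that there exists $R\in(0,G_0^{-1}(1))$ with $g_0(R)<g_0(0)$; this is essentially a structural fact excluding the uniform case. The argument above does not require $\hn$ to be log-concave (only symmetric and uniformly close to $g_0$), so it covers both regimes of the lemma—$\hn\in\mathcal{SLC}_0$ and $\hn$ satisfying Condition~\ref{cond: hnss condition}—in a single sweep, the monotonicity being inherited from $g_0$ rather than from $\hn$ itself.
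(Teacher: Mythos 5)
Your proof is correct, and it takes a genuinely different route from the paper's. The paper argues on the quantile side: since $\hn>0$ on $J(\tilde G_n)$, the mean value theorem gives $\tilde G_n^{-1}(\eta_n)-\tilde G_n^{-1}(\eta_n/2)=\eta_n/\bigl(2\hn(\tilde G_n^{-1}(t))\bigr)\ge \eta_n/(2\|\hn\|_\infty)$ for some $t\in[\eta_n/2,\eta_n]$, then invokes $\|\hn\|_\infty\to_p g_0(0)$ from Condition~\ref{condition: on hn} to conclude that the quantile gap eventually dominates $y_n$, finishing by symmetry exactly as you do. You instead work on the cdf side, bounding $\tilde G_n(\xi_n+y_n)-(1-\eta_n)\le y_n\sup_{x\in[\xi_n,\xi_n+y_n]}\hn(x)$, and you replace the paper's global bound $\|\hn\|_\infty$ by a local one, $g_0(\xi_n)+o_p(1)\le g_0(R)+o_p(1)$ with $g_0(R)<g_0(0)$; your justification for the existence of such an $R$ (a symmetric log-concave $g_0$ flat on its whole support would be uniform, hence have infinite Fisher information, or non-integrable if the support is unbounded) is sound. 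The cost is this small structural digression; the gain is a strict margin $g_0(R)/g_0(0)<1$, which makes the comparison with the hypothesis $P\bigl(y_n<\eta_n/(2g_0(0))\bigr)\to 1$ airtight even though $\|\hn\|_\infty$ is only consistent for $g_0(0)$ and may exceed it — a point the paper's displayed claim that the gap-to-$\eta_n$ ratio is eventually at least $1/(2g_0(0))$ treats somewhat loosely. As you note, your argument uses only symmetry and uniform consistency of $\hn$ together with monotonicity of $g_0$ (not of $\hn$), so it covers the $\mathcal{SLC}_0$ case and the Condition~\ref{cond: hnss condition} case in one sweep, just as the paper's proof does via Fact~\ref{fact: f grtr than F}-free reasoning here.
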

 
 \begin{proof}[Proof of Lemma~\ref{lemma: xi: tilde xi n}]
 Under our set up, $\hn$ is positive on the set $J(\tilde G_n)$. Therefore  the function $\tilde G_n^{-1}$ is continuous on $(0,1)$. Hence the mean value theorem implies 
 \[\tilde G_n^{-1}(\eta_n)-\tilde G_n^{-1}(\eta_n/2)= \frac{\eta_n}{2\hn(\tilde G_n^{-1}(t))}\geq \frac{\eta_n}{2\|\hn\|_\infty}\]
 for some $t\in[\eta_n/2,\eta_n]$. 
Condition~\ref{condition: on hn}  implies that $\|\hn\|_\infty\to_p\|g_0\|_\infty= g_0(0)$.
 Therefore, as $n\to\infty$,
  \[P\lb \liminf_n \frac{\tilde G^{-1}_n(\eta_n)-\tilde G^{-1}_n(\eta_n/2)}{\eta_n}\geq \frac{1}{2g_0(0)}\rb\to 1.\] 
  Hence if $y_n< \eta_n/(2g_0(0))$, then 
$ \tilde{G}^{-1}_n(\eta_n)-y_n \geq  \tilde{G}^{-1}_n(\eta_n/2)$ with probability tending to one.
Since $\hn$ is symmetric about zero, we obtain that
$ \tilde{G}^{-1}_n(1-\eta_n)+y_n < \tilde{G}^{-1}_n(1-\eta_n/2)$
with probability tending to one.
  Since $\xi_n=\tilde{G}^{-1}_n(1-\eta_n)$ and $\tilde{\xi}_n=\tilde{G}^{-1}_n(1-\eta_n/2)$, the proof  follows.
 \end{proof}
 
 \begin{lemma}\label{lemma: An inclusion}
  Consider  the set up of Lemma~\ref{lemma: bound: xi n}.  Then for $y_n=o_p(\eta_n)$, we have
   \[P\slb[-\xin-|y_n|,\xin+|y_n|]\subset \mathrm{int}(\dom(\psi_0))\srb\to 1.\]
 \end{lemma}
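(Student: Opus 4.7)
\medskip

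\noindent\textbf{Proof plan for Lemma~\ref{lemma: An inclusion}.}
The plan is to upgrade Lemma~\ref{lemma: xi: xi is in dop psi knot} by the amount $|y_n|$ using the fact that $g_0$ is bounded and that the slack between $\eta_n$ and the Hellinger rate $O_p(n^{-p})$ is of strictly larger order than $|y_n|$. Since $g_0$ is log-concave and symmetric about $0$, we have $\iint(\dom(\psi_0))=(G_0^{-1}(0),G_0^{-1}(1))$, so the claim is equivalent to showing
\[
P\slb -\xin-|y_n|>G_0^{-1}(0)\srb\to 1,\qquad P\slb \xin+|y_n|<G_0^{-1}(1)\srb\to 1.
\]
By symmetry of $g_0$ and $\hn$ about $0$ it is enough to prove the first statement.

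First I would reproduce the key estimate from the proof of Lemma~\ref{lemma: xi: xi is in dop psi knot}: by Fact~\ref{fact: dTV and hellinger} and Condition~\ref{cond: hellinger rate},
\[
|G_0(-\xin)-\tilde G_n(-\xin)|\ \le\ \sqrt{2}\,H(\hn,g_0)\ =\ O_p(n^{-p}),
\]
and since $\tilde G_n(-\xin)\ge \eta_n$ with $\eta_n=Cn^{-2p/5}\gg n^{-p}$ (because $p\in(0,1)$ forces $2p/5<p$), we obtain $G_0(-\xin)\ge \eta_n/2$ with probability tending to $1$.

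Next, because $g_0$ is log-concave, it is bounded (indeed $\|g_0\|_\infty=g_0(0)<\infty$ by symmetry and unimodality), so for every $y\in\RR$,
\[
G_0(-\xin-|y_n|)\ \ge\ G_0(-\xin)-|y_n|\,g_0(0).
\]
Combining this with the previous lower bound gives, on an event of probability tending to $1$,
\[
G_0(-\xin-|y_n|)\ \ge\ \frac{\eta_n}{2}-|y_n|\,g_0(0).
\]
Since $y_n=o_p(\eta_n)$, the right-hand side is positive with probability tending to $1$, hence $-\xin-|y_n|>G_0^{-1}(0)$ with probability tending to $1$. By the symmetric argument $\xin+|y_n|<G_0^{-1}(1)$ with probability tending to $1$, and the conclusion follows.

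There is no serious obstacle: the proof is a short quantitative refinement of Lemma~\ref{lemma: xi: xi is in dop psi knot} that trades the Hellinger slack against the perturbation $|y_n|$. The only point to be careful about is the case $G_0^{-1}(1)=\infty$, where the argument is trivial since $\xin=O_p(\log n)$ by Lemma~\ref{lemma: bound: xi n} and $|y_n|=o_p(1)$, so $\xin+|y_n|<\infty$ automatically; in the case $G_0^{-1}(1)<\infty$ the bounded-density argument above is what is needed.
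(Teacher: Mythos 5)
Your proof is correct, but it takes a different route from the paper's. The paper proves this lemma in two steps through an auxiliary quantile: it sets $\tilde\xi_n=\tilde G_n^{-1}(1-\eta_n/2)$, applies Lemma~\ref{lemma: xi: xi is in dop psi knot} at the level $\eta_n/2$ to get $P([-\tilde\xi_n,\tilde\xi_n]\subset\iint(\dom(\psi_0)))\to 1$, and then invokes Lemma~\ref{lemma: xi: tilde xi n}, which shows via the mean value theorem applied to $\tilde G_n^{-1}$ (and $\|\hn\|_\infty\to_p g_0(0)$) that $[-\xin-y_n,\xin+y_n]\subset[-\tilde\xi_n,\tilde\xi_n]$ with probability tending to one whenever $y_n<\eta_n/(2g_0(0))$ eventually, which is guaranteed by $y_n=o_p(\eta_n)$. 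You instead buffer on the $G_0$ scale: you re-derive the estimate $P(G_0(-\xin)\geq\eta_n/2)\to 1$ from the proof of Lemma~\ref{lemma: xi: xi is in dop psi knot} and then use the Lipschitz bound $G_0(-\xin-|y_n|)\geq G_0(-\xin)-|y_n|\,g_0(0)$, so that $y_n=o_p(\eta_n)$ keeps $G_0(-\xin-|y_n|)$ strictly positive, forcing $-\xin-|y_n|>G_0^{-1}(0)$, with the other endpoint handled by symmetry. Both arguments trade the $o_p(\eta_n)$ perturbation against the $\eta_n$-order slack left over from the Hellinger rate $O_p(n^{-p})\ll\eta_n=Cn^{-2p/5}$; yours is somewhat more direct and self-contained since it avoids the intermediate quantile $\tilde\xi_n$ altogether, while the paper's detour through Lemma~\ref{lemma: xi: tilde xi n} pays off because the containment $[-\xin-y_n,\xin+y_n]\subset[-\tilde\xi_n,\tilde\xi_n]$ is reused elsewhere (e.g.\ in Lemmas~\ref{lemma: main: key lemma for T2} and \ref{lemma: tilde psi prime bound}). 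Your side remarks are also sound: the case $G_0^{-1}(1)=\infty$ is trivial, and the Lipschitz constant is indeed $g_0(0)$ since a symmetric log-concave density is unimodal with mode at zero.
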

 
 \begin{proof}[Proof of Lemma~\ref{lemma: An inclusion}]
 Letting $\tilde{\xi}_n=-\tilde G_n^{-1}(\eta_n/2)$, and applying Lemma~\ref{lemma: xi: xi is in dop psi knot}, we obtain $P([-\tilde{\xi}_n,\tilde{\xi}_n]\subset \iint(\dom(\psi_0)))\to 1$. Then the result follows from Lemma~\ref{lemma: xi: tilde xi n}.
 \end{proof}

 \subsubsection{\textbf{lemmas on  $\hn$ and $g_0$:}}
 \begin{lemma}
 \label{lemma: hn: lower bound  on hn}
 Suppose $\xin=\tilde G^{-1}_n(1-\eta_n)$ where $\hn\in\mathcal {SLC}_0$ satisfies Condition~\ref{condition: on hn}. Then  \begin{itemize}[topsep=0pt]
     \item [A.] $\sup_{x\in[-\xi_n,\xi_n]}\hn(x)=O_p(1)$ and $\sup_{x\in[-\xi_n,\xi_n]}\hn(x)^{-1}=O_p(\eta^{-1}_n)$.
     \item[B.] $\hln=\log\hn$ satisfies $\sup_{x\in[-\xi_n,\xi_n]}\hln(x)=O_p(1)$. For $\eta_n=Cn^{-2p/5}$ with $p\in(0,1)$ and $C>0$, we have $\sup_{x\in[-\xi_n,\xi_n]}(-\hln(x))=O_p(\log n)$.
 \end{itemize}
 \end{lemma}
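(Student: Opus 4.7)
The proof should be a straightforward consequence of the symmetric log-concave structure of $\hn$ together with the uniform consistency in Condition~\ref{condition: on hn}(A) and the lower-bound Fact~\ref{fact: f grtr than F}. I would split it into an upper bound and a lower bound on $\hn$ over $[-\xi_n,\xi_n]$, and then exponentiate / take logarithms to deduce the $\hln$ statements.

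For the upper bound in part A, since $\hn\in\mathcal{SLC}_0$ is log-concave and symmetric about $0$, its unique mode is at $0$, so $\sup_{x\in\mathbb{R}}\hn(x)=\hn(0)$. Applying Condition~\ref{condition: on hn}(A) with the (allowable) choice $y_n\equiv 0$ gives $\sup_{x}|\hn(x)-g_0(x)|\to_p 0$; in particular $\hn(0)\to_p g_0(0)<\infty$, which yields $\sup_{x\in[-\xi_n,\xi_n]}\hn(x)=O_p(1)$. Taking logs, $\sup_{x\in[-\xi_n,\xi_n]}\hln(x)\le \log\hn(0)=O_p(1)$, which is the first half of part B.

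For the lower bound in part A, again by symmetry and unimodality the infimum over $[-\xi_n,\xi_n]$ is attained at the endpoints: $\inf_{x\in[-\xi_n,\xi_n]}\hn(x)=\hn(\xi_n)$. Since $\hn\in\mathcal{SLC}_0$, $\hn>0$ on $J(\tilde G_n)$, so $\tilde G_n$ is continuous there and $\tilde G_n(\xi_n)=1-\eta_n$ exactly. Invoking Fact~\ref{fact: f grtr than F},
\[
\hn(\xi_n)\ \ge\ \omega_n\min(\tilde G_n(\xi_n),1-\tilde G_n(\xi_n))\ =\ \omega_n\,\eta_n
\]
for all $n$ large enough that $\eta_n<1/2$. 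Because $\omega_n\to_p\omega_0>0$ (Fact~\ref{fact: f grtr than F}), dividing gives $\sup_{x\in[-\xi_n,\xi_n]}\hn(x)^{-1}\le (\omega_n\eta_n)^{-1}=O_p(\eta_n^{-1})$, establishing the second assertion of part A.

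For the remaining $O_p(\log n)$ bound in part B, I would just take $-\log$ of the lower bound just obtained: on the event that $\omega_n>0$,
\[
\sup_{x\in[-\xi_n,\xi_n]}(-\hln(x))\ \le\ -\log(\omega_n\eta_n)\ =\ -\log\omega_n-\log\eta_n.
\]
Plugging in $\eta_n=Cn^{-2p/5}$ produces $-\log\eta_n=(2p/5)\log n-\log C$, while $-\log\omega_n=O_p(1)$ by Fact~\ref{fact: f grtr than F}. Hence the right-hand side is $O_p(\log n)$, completing part B. I do not expect any real obstacle: the only subtle point is ensuring the identity $\tilde G_n(\xi_n)=1-\eta_n$, which requires continuity of $\tilde G_n$ at $\xi_n$, and this is exactly what strict positivity of a log-concave density on its support provides.
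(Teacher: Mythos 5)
Your proof is correct and follows essentially the same route as the paper: the upper bound comes from the uniform consistency in Condition~\ref{condition: on hn}(A) together with log-concavity, the lower bound $\hn\geq\omega_n\eta_n$ on $[-\xi_n,\xi_n]$ comes from Fact~\ref{fact: f grtr than F} plus monotonicity of $\tilde G_n$ and the quantile identity at $\pm\xi_n$, and part B is obtained by taking logarithms exactly as the paper does.
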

 
 \begin{proof}[Proof of Lemma~\ref{lemma: hn: lower bound  on hn}]
 The upper bound on $\hn$ follows from Fact~\ref{fact: Lemma 1 of theory paper} and  Condition~\ref{condition: on hn}. For the upper bound on $\hn^{-1}$, note that  Fact~\ref{fact: f grtr than F} implies that 
 \begin{align*}
    \hn(x)\geq &\  w_n \min(\tilde G_n(x), 1-\tilde G_n(x)), \quad\text{for all }x\in\RR.
 \end{align*}
 Since $\tilde G_n$ is a non-decreasing and $1-\tilde G_n$ is a non-increasing function, any $x\in[-\xin,\xin]$ satisfies
\[\hn(x)\geq {\omega_n}{\min(\tilde G_n(-\xin),1-\tilde G_n(\xin))}=\omega_n\eta_n\]
 because $\tilde G_n(-\xin)=\eta_n$.
 Since the random variable $\omega_n\to_p\omega_0>0$ by Fact~\ref{fact: f grtr than F},
   part A of the current lemma follows.
 Part B follows directly from Part A.
 \end{proof}

 \begin{lemma}\label{Prop: the L1 convergence of the density estimators of one-step estimators: model: strong}
Assume $f_0\in\mathcal{P}_0$. Suppose  $\{\hn\}_{n\geq 1}$ is a sequence of log-concave densities satisfying $\|\hn-g_0\|_\infty\to_n 0$.   Then the following hold for any $y_n\to_n 0$:
\begin{itemize}[topsep=0.5 pt]
\item[(A)] Let $\hln=\log \hn$. Then
 $\hln(\mathord{\cdot} +y_n)\to_n \ps_0$  everywhere on $\iint(\dom(\psi_0))$.
\item[(B)]
$\hln'(\mathord{\cdot} +y_n)\to_n\ps_0'$  Lebesgue almost everywhere on $\iint(\dom(\psi_0))$. In particular, if $x$ is a continuity point of $\ps_0'$, then $\hln'(x +y_n)\to_n\ps_0'$. 
\end{itemize}
\end{lemma}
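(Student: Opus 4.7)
I would prove Part (A) from the sup-norm convergence $\|\hn - g_0\|_\infty \to 0$, the continuity of the log-concave density $g_0$ on $\iint(\dom(\psi_0))$, and continuity of the logarithm, and then derive Part (B) from Part (A) via the standard secant-slope comparisons for concave functions.

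For Part (A), fix $x \in \iint(\dom(\psi_0))$. Then $g_0(x) > 0$ and $g_0$ is continuous at $x$, since a univariate log-concave density is continuous on the interior of its support (Theorem 10.1 of \cite{rockafellar}). Because $y_n \to 0$,
\[
|\hn(x + y_n) - g_0(x)| \le \|\hn - g_0\|_\infty + |g_0(x + y_n) - g_0(x)| \longrightarrow 0,
\]
so $\hn(x + y_n) \ge g_0(x)/2 > 0$ for all sufficiently large $n$. Continuity of $\log$ at $g_0(x)$ then yields $\hln(x + y_n) \to \log g_0(x) = \psi_0(x)$, proving Part (A).

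For Part (B), let $x \in \iint(\dom(\psi_0))$ be a continuity point of $\psi_0'$ and choose $t > 0$ small enough that $[x - t, x + t] \subset \iint(\dom(\psi_0))$. Applying Part (A) at $x - t$, $x$, and $x + t$, for all sufficiently large $n$ the three points $x - t + y_n$, $x + y_n$, $x + t + y_n$ satisfy $\hn > 0$ and hence lie in $\iint(\dom(\hln))$. The monotonicity of secant slopes for the concave function $\hln$ then gives
\[
\frac{\hln(x + t + y_n) - \hln(x + y_n)}{t} \le \hln'\bigl((x + y_n)+\bigr) \le \frac{\hln(x + y_n) - \hln(x - t + y_n)}{t}.
\]
Passing to the limit in $n$ via Part (A),
\[
\frac{\psi_0(x + t) - \psi_0(x)}{t} \le \liminf_n \hln'(x + y_n) \le \limsup_n \hln'(x + y_n) \le \frac{\psi_0(x) - \psi_0(x - t)}{t}.
\]
Letting $t \downarrow 0$, the outer bounds converge to $\psi_0'(x+)$ and $\psi_0'(x-)$, and these both equal $\psi_0'(x)$ because $\psi_0'$ is continuous at $x$; hence $\hln'(x + y_n) \to \psi_0'(x)$ at every continuity point of $\psi_0'$. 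Since $\psi_0'$ is non-increasing on $\iint(\dom(\psi_0))$, its set of discontinuities is at most countable, so this convergence holds Lebesgue almost everywhere, which is Part (B). The only step requiring care is verifying that $x - t + y_n$, $x + y_n$, $x + t + y_n$ all lie in $\iint(\dom(\hln))$ for large $n$, which is precisely why Part (A) is proved first and used as a bridge to Part (B).
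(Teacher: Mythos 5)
Your proposal is correct and follows essentially the same route as the paper: Part (A) is proved identically (sup-norm convergence plus continuity of $g_0$, then take logarithms), and Part (B) rests on the same fact that pointwise convergence of concave functions forces convergence of their one-sided derivatives at continuity points of $\psi_0'$, with a.e. convergence following since a concave function's derivative has at most countably many discontinuities. The only difference is cosmetic: the paper cites Theorem 25.7 (and Theorem 25.5, Corollary 25.5.1) of Rockafellar for the derivative-convergence step, whereas you prove it by hand with the secant-slope sandwich, which is a valid, self-contained substitute.
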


\begin{proof}[Proof of Lemma~\ref{Prop: the L1 convergence of the density estimators of one-step estimators: model: strong}]
By our assumptions on $g_0$,  $\mathcal I_{g_0}<\infty$. Therefore, $g_0$ is absolutely continuous \citep[Theorem 3,][]{huber}.
 Hence, $\sup_{x\in \RR}|\hn(x+y_n)-g_0(x)|\to_n 0$. Since for each  $x\in\iint(\dom(\psi_0))$, there exists an open neighborhood around $x$ where $|\psi_0|<\infty$, $\hln(x+y_n)\to_n \psi_0(x)$ for each $x\in\iint(\dom(\psi_0))$. Therefore part (A) follows. For part (B), first note that if $x\in\iint(\dom(\psi_0))$ is a continuity point of $\psi_0'$, then $\tp(x+y_n)\to_n \tp(x)$  by Theorem 25.7 of \cite{rockafellar}. Now since $\psi_0$ is concave, $\psi_0$ is continuously differentiable at $x$ if it is differentiable at  $x$ \citep[][Corollary 25.5.1]{rockafellar}.
 However, a concave  $\psi_0$ is differentiable Lebesgue almost everywhere on $\dom(\phi_0)$ \citep[][Theorem 25.5]{rockafellar}. Therefore, the lemma follows.
 \end{proof}
 
 \subsubsection{\textbf{Lemmas on $g_0$:}}
 \begin{lemma}
 \label{lemma: g:  bound on psi}
 Suppose $g_0$ satisfies Assumption~\ref{assump: L}. Let $\kappa$ be as in Assumption~\ref{assump: L}. Then for any $x\in\dom(\psi_0)$,  $\psi_0$ satisfies
 \[|\psi_0(x)|\leq |\psi_0(0)|+\kappa x^2.\]
 In particular, if  $\eta_n=Cn^{-p}$ for $p\in(0,1)$ and $C>0$,   then under the set up of Lemma~\ref{lemma: bound: xi n}, $\xin=\tilde G_n^{-1}(1-\eta_n)$ satisfies
 \[\sup_{x\in[-\xi_n,\xi_n]}|\psi_0(x)|=O_p((\log n)^2).\]
 \end{lemma}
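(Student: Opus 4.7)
The plan is to first prove the pointwise bound $|\psi_0(x)|\le|\psi_0(0)|+\kappa x^2$ by integrating $\psi_0'$, and then to plug in the bound $\xi_n=O_p(\log n)$ already supplied by Lemma~\ref{lemma: bound: xi n}.

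First I would observe that because $\psi_0\in\mathcal{SC}_0$ is symmetric about $0$, the one-sided derivatives at the origin satisfy $\psi_0'(0+)=-\psi_0'(0-)$. Assumption~\ref{assump: L} forces $\psi_0'$ to be Lipschitz, and hence continuous, on $\iint(\dom(\psi_0))$ (which is an open interval containing $0$ since $\psi_0$ is a symmetric log-density with $0<g_0(0)<\infty$), so the two one-sided derivatives must agree and equal $0$. Taking $y=0$ in Assumption~\ref{assump: L} then yields $|\psi_0'(t)|\le\kappa|t|$ for every $t\in\iint(\dom(\psi_0))$. Because $\psi_0'$ is bounded on any compact subinterval of $\iint(\dom(\psi_0))$, the function $\psi_0$ is absolutely continuous there, so for $x\in\iint(\dom(\psi_0))$,
\[
\psi_0(x)-\psi_0(0)=\int_0^{x}\psi_0'(t)\,dt,
\]
whence $|\psi_0(x)-\psi_0(0)|\le\int_0^{|x|}\kappa t\,dt=\kappa x^2/2\le\kappa x^2$. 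If $x$ is a boundary point of $\dom(\psi_0)$ with $\psi_0(x)>-\infty$, the same bound follows by taking a limit from the interior: a closed proper concave function is continuous on its effective domain along any sequence converging from within the domain, so the bound transfers to the boundary.

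For the ``in particular'' statement, Lemma~\ref{lemma: bound: xi n} gives $\xi_n\le(-\log 2+2p(\log n)/5)/\omega_n$ with $\omega_n\to_p\omega_0>0$, so $\xi_n=O_p(\log n)$. Lemma~\ref{lemma: xi: xi is in dop psi knot} furnishes the event $\mathcal{E}_n=\{[-\xi_n,\xi_n]\subset\iint(\dom(\psi_0))\}$ with $P(\mathcal{E}_n)\to 1$. On $\mathcal{E}_n$, the pointwise bound applies at every $x\in[-\xi_n,\xi_n]$, giving
\[
\sup_{x\in[-\xi_n,\xi_n]}|\psi_0(x)|\le|\psi_0(0)|+\kappa\xi_n^2=O_p((\log n)^2),
\]
as required.

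The only mild subtlety is the identification $\psi_0'(0+)=0$: without the Lipschitz condition of Assumption~\ref{assump: L}, symmetry alone would only give $\psi_0'(0+)=-\psi_0'(0-)$, allowing a kink at $0$. Under Assumption~\ref{assump: L}, however, $\psi_0'$ is continuous, which pins the common value of the two one-sided derivatives to $0$. Everything else is a routine integration and a direct invocation of the two earlier lemmas on $\xi_n$.
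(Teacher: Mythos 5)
Your proof is correct, but your route to the pointwise bound differs from the paper's. The paper never establishes differentiability at the origin: it applies the subgradient inequality at $x\ge 0$, namely $\psi_0(x)\geq\psi_0(0)+x\,\psi_0'(x-)$, bounds $\psi_0'(x-)\geq\psi_0'(0-)-\kappa x$ via Assumption~\ref{assump: L}, and discards the linear term using only $\psi_0'(0-)\geq 0$ (which follows since $0$ is the mode); symmetry and $\psi_0(x)\le\psi_0(0)$ then give $|\psi_0(x)|\leq|\psi_0(0)|+\kappa x^2$. You instead first argue that the Lipschitz condition forces $\psi_0'(0+)=\psi_0'(0-)=0$ and then integrate $|\psi_0'(t)|\leq\kappa|t|$, which buys the sharper constant $\kappa x^2/2$ and an explicit treatment of boundary points of $\dom(\psi_0)$ via upper semicontinuity of the closed concave $\psi_0$ — a case the paper's proof glosses over (harmlessly, since the stochastic part only uses interior points). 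The one step you state rather than justify is that continuity of a one-sided derivative forces the two one-sided derivatives to coincide; for a concave function this is the standard fact that $\psi_0'(x-)=\lim_{y\uparrow x}\psi_0'(y+)$ (Rockafellar, Theorem 24.1), so your conclusion stands, but the paper's argument avoids needing it at all. Your handling of the ``in particular'' part — $\xi_n=O_p(\log n)$ from Lemma~\ref{lemma: bound: xi n} together with $P([-\xi_n,\xi_n]\subset\iint(\dom(\psi_0)))\to1$ from Lemma~\ref{lemma: xi: xi is in dop psi knot} — is exactly the paper's.
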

 
 \begin{proof}[Proof of Lemma~\ref{lemma: g:  bound on psi}]
 Because $\psi_0\in\mathcal{SC}_0$, zero is the mode of $\psi_0$. Therefore, the upper bound on $\psi_0$ follows since $\psi_0(x)<\psi_0(0)$.
For the lower bound, first note that the  concavity of $\psi_0$  indicates that if $x\geq 0$ and $x\in\dom(\psi_0)$, then
 \[\psi_0(x)\geq \psi_0(0)+\psi'_0(x-)x.\]
 By our notation, $\psi_0'(x+)=\psi_0'(x)$. 
 Noting Assumption~\ref{assump: L} implies $\psi_0'(x-)\geq \psi_0'(0-)-\kappa x$, we derive
 \[\psi_0(x)\geq \psi_0(0)+\psi_0'(0-)x-\kappa x^2.\]
Since $\psi_0'(0-)\geq 0$, the above yields $\psi_0(x)\geq \psi_0(0)-\kappa x^2$ for all $x\geq 0$.
Since $\psi_0$ is symmetric about zero, we derive that $\psi_0(x)\geq \psi_0(0)-\kappa x^2$ for all $x\in\RR$. In conjunction with the fact that $\psi_0(x)\leq \psi_0(0)$, the latter implies $|\psi_0(x)|\leq |\psi_0(0)|+\kappa x^2$ for all $x\in\RR$. Since  $P([-\xi_n,\xi_n]\subset\dom(\psi_0))\to 1$ by Lemma~\ref{lemma: xi: xi is in dop psi knot}, the rest of the proof follows noting 
 $\xi_n=O_p(\log n)$ for $\eta_n=n^{-2p/5}$ by Lemma~\ref{lemma: bound: xi n}.
 \end{proof}
 
 \begin{lemma}\label{lemma: bound: psi knot prime}
 Suppose $\psi_0\in\mathcal{SC}_0$ satisfies Assumption~\ref{assump: L}. Further suppose   $\eta_n$ is as in Lemma~\ref{lemma: g:  bound on psi} and $y_n>0$ satisfies $y_n=o_p(\eta_n)$. Then
 \[\sup_{x\in[-\xi_n-y_n,\xi_n+y_n]}|\psi_0'(x)|=O_p(-\log(\eta_n)).\]
 \end{lemma}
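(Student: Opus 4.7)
The strategy is to combine Assumption~\ref{assump: L}, which gives a Lipschitz bound on any directional derivative of $\psi_0$ throughout $\iint(\dom(\psi_0))$, with the previously established bound $\xi_n = O_p(-\log\eta_n)$ coming from Lemma~\ref{lemma: bound: xi n}. The only delicate point is ensuring that the interval $[-\xi_n-y_n,\xi_n+y_n]$ lies in $\iint(\dom(\psi_0))$ so that Assumption~\ref{assump: L} is applicable pointwise.

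First, I would invoke Lemma~\ref{lemma: An inclusion} (which is directly applicable since $y_n = o_p(\eta_n)$) to conclude that
\[
P\slb [-\xi_n - y_n,\ \xi_n + y_n] \subset \iint(\dom(\psi_0)) \srb \to 1.
\]
On this event every $x$ in the interval lies in the interior of the domain, so Assumption~\ref{assump: L} gives
\[
|\psi_0'(x)| \leq |\psi_0'(0+)| + \kappa\, |x| \leq |\psi_0'(0+)| + \kappa\, (\xi_n + y_n).
\]
Note that $|\psi_0'(0+)|$ is a finite deterministic constant, since $0 \in \iint(\dom(\psi_0))$ (as $g_0\in\mathcal{SLC}_0$ has $0$ in the interior of its support), and concavity of $\psi_0$ ensures one-sided derivatives exist there.

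Second, Lemma~\ref{lemma: bound: xi n} yields the deterministic-looking bound
\[
\xi_n \leq \frac{-\log 2 + (2p/5)\log n}{\omega_n},
\]
with $\omega_n \to_p \omega_0 > 0$. Since $-\log \eta_n = -\log C + (2p/5)\log n$, this shows $\xi_n = O_p(-\log \eta_n)$. Combined with $y_n = o_p(\eta_n) = o_p(1) = o_p(-\log\eta_n)$, we obtain
\[
\sup_{x \in [-\xi_n - y_n,\, \xi_n + y_n]} |\psi_0'(x)| \leq |\psi_0'(0+)| + \kappa(\xi_n + y_n) = O_p(-\log\eta_n),
\]
on an event of probability tending to one, which gives the claim.

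I do not anticipate any serious obstacle: the argument is essentially a Lipschitz-plus-quantile-bound calculation. The only subtle point is that Assumption~\ref{assump: L} only controls $\psi_0'$ within the interior of the domain, so one must verify the set inclusion before applying the Lipschitz inequality. This is precisely what Lemma~\ref{lemma: An inclusion} supplies, and the hypothesis $y_n = o_p(\eta_n)$ in the statement is there exactly to allow that lemma to be invoked.
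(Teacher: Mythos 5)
Your proposal is correct and follows essentially the same route as the paper: invoke Lemma~\ref{lemma: An inclusion} (using $y_n=o_p(\eta_n)$) to get the interval inside $\iint(\dom(\psi_0))$ with probability tending to one, apply the Lipschitz bound of Assumption~\ref{assump: L} anchored at $0$, and conclude with the bound $\xi_n=O_p(-\log\eta_n)$ from Lemma~\ref{lemma: bound: xi n} together with $y_n=o_p(1)$. The only cosmetic difference is that the paper first reduces the supremum to the endpoint via monotonicity of $\psi_0'$, whereas you bound $|\psi_0'(x)|$ pointwise; both are valid uses of the same ingredients.
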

 
 \begin{proof}[Proof of Lemma~\ref{lemma: bound: psi knot prime}]
 Since $\psi_0\in\mathcal{SC}_0$, $\psi_0'$ attains its maxima on any interval at the endpoints.
  Lemma~\ref{lemma: An inclusion} implies $[-\xi_n-y_n,\xi_n+y_n]\subset\dom(\psi_0)$ with probability approaching one. Therefore
 Assumption~\ref{assump: L} implies
 \begin{equation}\label{inlemma: lemma bound psi knot prime}
     \psi_0'(-\xin-y_n)\leq |\psi_0'(0)|+\kappa(|\xin|+y_n).
 \end{equation}
  Rest of the proof follows from Lemma~\ref{lemma: bound: xi n} and the fact that $y_n=o_p(1)$.
 \end{proof}

 \begin{lemma}\label{lemma: g/g(-)}
 Under the set up of Theorem~\ref{theorem: main: one-step: full}, there exists $C>0$ so that if $b>0$ satisfies $[-b-|\td|,b+|\td|]\subset\iint(\dom(\psi_0))$, then
 \[\sup_{t\in[-b,b]}\frac{g_0(t)}{g_0(t+\td)}\leq  e^{|\td|(C+\kappa b+\kappa|\td|)}.\]
 \end{lemma}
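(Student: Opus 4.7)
The plan is to take logarithms and reduce the bound to controlling the increment of $\psi_0$ on the interval in question. Since $g_0 = e^{\psi_0}$ and the assumption guarantees that $[-b-|\td|, b+|\td|] \subset \iint(\dom(\psi_0))$, on this interval $\psi_0$ is finite, concave, hence (locally) Lipschitz, and in particular absolutely continuous. So for any $t \in [-b,b]$ one can write
\[
\psi_0(t) - \psi_0(t+\td) \;=\; -\int_{t}^{t+\td} \psi_0'(s)\,ds,
\]
where $\psi_0'$ denotes the right derivative (any $L_1$ derivative works).

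The next step is to uniformly bound $|\psi_0'(s)|$ for $s$ ranging in $[-b-|\td|, b+|\td|]$. Assumption~\ref{assump: L} gives $|\psi_0'(s) - \psi_0'(0)| \leq \kappa |s|$ for all $s \in \iint(\dom(\psi_0))$, so
\[
|\psi_0'(s)| \;\leq\; |\psi_0'(0)| + \kappa|s| \;\leq\; |\psi_0'(0)| + \kappa(b+|\td|).
\]
Setting $C = |\psi_0'(0)|$ (which is finite since $0 \in \iint(\dom(\psi_0))$, being the mode of the symmetric log-concave density $g_0$), we obtain $\sup_{s \in [-b-|\td|,b+|\td|]} |\psi_0'(s)| \leq C + \kappa b + \kappa|\td|$.

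Combining the two displays gives
\[
\bigl|\psi_0(t) - \psi_0(t+\td)\bigr| \;\leq\; |\td|\bigl(C + \kappa b + \kappa|\td|\bigr),
\]
and exponentiating yields the claimed bound on $g_0(t)/g_0(t+\td)$, uniformly in $t \in [-b,b]$. There is no real obstacle here — the lemma is essentially a bookkeeping consequence of Assumption~\ref{assump: L} together with the absolute continuity of a concave function on the interior of its domain; the only mild care needed is in choosing one-sided derivatives consistently (the value $\psi_0'(0+)$ and $\psi_0'(0-)$ have the same absolute value by the symmetry of $\psi_0$, so either works as $C$).
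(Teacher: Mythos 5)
Your proof is correct and follows essentially the same route as the paper: both bound $\log\bigl(g_0(t)/g_0(t+\td)\bigr)=\psi_0(t)-\psi_0(t+\td)$ by $|\td|$ times a uniform bound on $|\psi_0'|$ over $[-b-|\td|,b+|\td|]$, obtained from Assumption~\ref{assump: L} via comparison with $\psi_0'$ at the origin. The only cosmetic difference is that you express the increment as an integral of the derivative (absolute continuity of the concave $\psi_0$ on the interior of its domain), whereas the paper uses the concavity sandwich bound $|\psi_0(t)-\psi_0(t+\td)|\leq|\td|\max\{|\psi_0'(t)|,|\psi_0'(t+\td)|\}$; these are interchangeable.
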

 
 \begin{proof}[Proof of Lemma~\ref{lemma: g/g(-)}]
Recalling that we use $\psi_0'$ to denote the right derivative of $\psi_0$, we obtain
\begin{align*}
   \frac{g_0(t)}{g_0(t+\td)}= &\ \exp(\psi_0(t)-\psi_0(t+\td))\\
   \stackrel{(a)}{\leq} &\ \exp(|\td|\max\{|\psi_0'(t)|,|\psi_0'(t+\td)|\})
\end{align*}
where (a) follows from \eqref{inlemma: T2: DCT}.
If $t,\ t-\td\in \iint(\dom(\psi_0))$, by
 Assumption~\ref{assump: L}, it also holds that  $|\psi_0'(t)|\leq \psi_0'(0-)+\kappa|t|$ and $|\psi_0'(t+\td)|\leq \psi_0'(0-)+\kappa|t|+\kappa|\td|$. Thus for $C=\psi'_0(0-)$, we obtain that
 \[\frac{g_0(t)}{g_0(t+\td)}\leq e^{|\td|(C+\kappa |t|+\kappa|\td|)},\]
 from which, the result follows.
 
 \end{proof}
 \subsubsection{\textbf{Lemmas on $\hln$:}}
 \begin{lemma}\label{lemma: Laha_Nilanjana 31}
Suppose $\hn$ satisfies Condition~\ref{condition: on hn} and Condition~\ref{cond: hellinger rate}  with $p\in(0,1)$. Further suppose $a_n$ and $\hn$ satisfies
\begin{equation}\label{inlemma: statement: os: nilanjana 31}
   \sup_{x\in[-a_n, a_n]} (|\psi_0(x)|+|\hln(x)|)=O_p((\log n)^2). \end{equation}
Then
\begin{align*}
    \dint_{-a_n}^{a_n} (\hln(x)-\psi_0(x))^2 g_0(x)dx=&O_p((\log n)^4n^{-2p}),\\
     \dint_{-a_n}^{a_n} (\hln(x)-\psi_0(x))^2 \hn(x)dx=&O_p((\log n)^4n^{-2p}).
\end{align*}
In particular, if  $\hn\in\mathcal{SLC}_0$, then \eqref{inlemma: statement: os: nilanjana 31} holds with $a_n=\xin(\tilde G_n)=\tilde G_n^{-1}(1-\eta_n)$, where $\eta_n=Cn^{-2p/5}$ for some $C>0$. 
 \end{lemma}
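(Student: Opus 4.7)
The plan is to bound $(\hln-\psi_0)^2$ pointwise in terms of the Hellinger integrand $(\sqrt{\hn}-\sqrt{g_0})^2$, with a polylogarithmic prefactor coming from the sup-bound on $|\hln|+|\psi_0|$, and then integrate to invoke Condition~\ref{cond: hellinger rate}.

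Set $h=\hln-\psi_0$. The identity $(\sqrt{\hn}-\sqrt{g_0})^2 = g_0\,(e^{h/2}-1)^2$ together with the elementary inequality $(e^y-1)^2 \ge c_M\,y^2$ for $|y|\le M$ (valid with $c_M$ of order $M^{-2}$ as $M\to\infty$, since $(e^y-1)/y\ge(1-e^{-M})/M$ on $[-M,0]$ and $\ge 1$ on $[0,M]$) yields the pointwise comparison
\[g_0\,h^2 \;\lesssim\; M_n^2\,(\sqrt{\hn}-\sqrt{g_0})^2 \qquad\text{on }[-a_n,a_n],\]
where $M_n := \sup_{[-a_n,a_n]}(|\psi_0|+|\hln|) = O_p((\log n)^2)$ by hypothesis. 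Integrating and using Condition~\ref{cond: hellinger rate} (so that $\int(\sqrt{\hn}-\sqrt{g_0})^2\,dx = 2H^2(\hn,g_0)=O_p(n^{-2p})$) gives the first bound. For the second, rewrite $(\sqrt{\hn}-\sqrt{g_0})^2 = \hn\,(e^{-h/2}-1)^2$ and repeat; the argument is completely symmetric in the roles of $\hn$ and $g_0$.

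For the ``in particular'' assertion with $\hn\in\mathcal{SLC}_0$ and $a_n=\xi_n$, Lemma~\ref{lemma: g:  bound on psi} supplies $\sup_{[-\xi_n,\xi_n]}|\psi_0|=O_p((\log n)^2)$ via Assumption~\ref{assump: L}, while Lemma~\ref{lemma: hn: lower bound  on hn}(B) supplies $\sup_{[-\xi_n,\xi_n]}|\hln|=O_p(\log n)$ for $\eta_n=Cn^{-2p/5}$ with $p\in(0,1)$; adding yields the required $O_p((\log n)^2)$ hypothesis.

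The main obstacle is matching the precise $(\log n)^2$ exponent claimed, since the naive application of the pointwise comparison above produces an $M_n^2=(\log n)^4$ prefactor. Recovering the sharper exponent calls for splitting $[-a_n,a_n]$ into $\{h\ge 0\}$ and $\{h<0\}$: on the former, $(e^{h/2}-1)^2\ge h^2/4$ gives a log-free estimate $\int_{\{h\ge 0\}} h^2 g_0\,dx = O_p(n^{-2p})$; on the latter, where $\hn$ is much smaller than $g_0$, $g_0$ itself can be controlled directly against $(\sqrt{\hn}-\sqrt{g_0})^2$ (since $(1-e^{h/2})^2$ is then bounded below by a constant), which should trim an additional $(\log n)^2$ factor from the crude estimate.
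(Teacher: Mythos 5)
Your opening computation is correct but, as you yourself note, it only delivers the weaker bound $O_p((\log n)^4 n^{-2p})$: writing $h=\hln-\psi_0$ and $M_n=\sup_{[-a_n,a_n]}(|\psi_0|+|\hln|)=O_p((\log n)^2)$, the pointwise comparison $g_0h^2\lesssim M_n^2(\sqrt{\hn}-\sqrt{g_0})^2$ necessarily carries the factor $M_n^2=(\log n)^4$ (your treatment of the ``in particular'' part is exactly the paper's: quadratic growth of $\psi_0$ from Assumption~\ref{assump: L} plus the lower bound on $\hn$ over $[-\xi_n,\xi_n]$). The genuine gap is in your final paragraph. On $\{h<0\}$ the quantity $(1-e^{h/2})^2$ is \emph{not} bounded below by a constant; it vanishes quadratically as $h\uparrow 0$, so you must split further into $\{-c\le h<0\}$ and $\{h<-c\}$. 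The first region is indeed log-free, but on the second the only estimate your method provides is $h^2g_0\lesssim M_n^2(\sqrt{g_0}-\sqrt{\hn})^2$, which reproduces, rather than trims, the $(\log n)^4$ prefactor. So the proposed refinement does not close the gap between $(\log n)^4$ and the stated $(\log n)^2$.

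The paper's proof diverges from yours at precisely this point. From $(\sqrt x-\sqrt y)^2=\min(x,y)\bigl(e^{|\log x-\log y|/2}-1\bigr)^2$ it keeps \emph{two} lower bounds, $(e^z-1)^2\ge z^2/4$ and $(e^z-1)^2\ge z^4/64$, giving the log-free estimates $\int_{-a_n}^{a_n}\min(\hn,g_0)\,h^2\lesssim H^2(\hn,g_0)$ and $\int_{-a_n}^{a_n}\min(\hn,g_0)\,h^4\lesssim H^2(\hn,g_0)$. It then writes $g_0=\min(g_0,\hn)+(g_0-\hn)1_{[\hn<g_0]}$ and decomposes $(g_0-\hn)1_{[\hn<g_0]}=(\sqrt{g_0}-\sqrt{\hn})^2 1_{[\hn<g_0]}+2\sqrt{\hn}(\sqrt{g_0}-\sqrt{\hn})1_{[\hn<g_0]}$; the cross term --- exactly your problematic ``$\hn$ much smaller than $g_0$'' contribution --- is handled by Cauchy--Schwarz against the quartic bound and comes out $O_p(n^{-2p})$ with no logarithms, while only the term $\int h^2(\sqrt{g_0}-\sqrt{\hn})^2$ picks up a sup-norm factor. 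If you want to repair your argument, import that quartic-moment/Cauchy--Schwarz device. Be aware, though, that even the paper's remaining term is honestly bounded by $\sup|h|^2\cdot H^2(\hn,g_0)=O_p((\log n)^4n^{-2p})$ under the stated hypothesis, so the $(\log n)^2$ in the lemma appears generous under either route; the extra polylogarithmic factor is harmless in the downstream applications, but as a proof of the lemma \emph{as stated} your argument is incomplete without either the sharper decomposition or a restatement with a larger power of $\log n$.
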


 \begin{proof}[Proof of Lemma~\ref{lemma: Laha_Nilanjana 31}]
 We first invoke an algebraic fact. For  any $x,y>0$,
 \[(\sqrt x-\sqrt y)^2=\min(x,y)\lb \sqrt{\frac{\max(x,y)}{\min(x,y)}}-1\rb^2=\min(x,y)\slb e^{|\log x-\log y|/2}-1\srb^2.\]
 Since  for any $z>0$, $z$ and $z^2/2$ are bounded above by $e^z-1$, it follows that
 \[\slb e^{|\log x-\log y|/2}-1\srb^2\geq (\log x-\log y)^2/4, (\log x-\log y)^4/8^2.\]
 Thus
 \begin{align}\label{inlemma: Nilanjana 31}
    4\dint_{-a_n}^{a_n}(\sqrt{\hn(x)}-\sqrt{g_0(x)})^2dx\geq &\  \dint_{-a_n}^{a_n}\min(\hn(x), g_0(x))(\hln(x)-\psi_0(x))^2dx\nn\\ 
    8^2\dint_{-a_n}^{a_n}(\sqrt{\hn(x)}-\sqrt{g_0(x)})^2dx\geq &\  \dint_{-a_n}^{a_n}\min(\hn(x), g_0(x))(\hln(x)-\psi_0(x))^4dx.
 \end{align}

Therefore,
 \begin{align*}
 \MoveEqLeft \dint_{-a_n}^{a_n}(\hln(x)-\psi_0(x))^2g_0(x)dx \\
 = &\ \dint_{-a_n}^{a_n}(\hln(x)-\psi_0(x))^2\slb g_0(x)-\min(g_0(x),\hn(x))\srb dx\\
 &\ +\dint_{-a_n}^{a_n}(\hln(x)-\psi_0(x))^2\min(g_0(x),\hn(x)) dx\\
 \stackrel{(a)}{=}&\  \dint_{-a_n}^{a_n}(\hln(x)-\psi_0(x))^2( g_0(x)-\hn(x))1_{[\hn<g_0]} dx+O_p(n^{-2p})\\
=&\  \underbrace{\dint_{-a_n}^{a_n}(\hln(x)-\psi_0(x))^2( \sqrt{g_0(x)}-\sqrt{\hn(x)})^21_{[\hn<g_0]}dx}_{T_1}\\
 -2&\ \underbrace{\dint_{-a_n}^{a_n}(\hln(x)-\psi_0(x))^2 \sqrt{\hn(x)}(\sqrt{\hn(x)}-\sqrt{g_0(x)})1_{[\hn<g_0]}dx}_{T_2}+O_p(n^{-2p})
 \end{align*}
 where (a) follows from \eqref{inlemma: Nilanjana 31} and Condition~\ref{cond: hellinger rate}. 
We can upper bound $|\psi_0(x)-\hln(x)|$ noting
\[\sup_{x\in[-a_n,a_n]}|\psi_0(x)-\hln(x)|\leq \sup_{x\in[-a_n, a_n]} (|\psi_0(x)|+|\hln(x)|)=O_p((\log n)^2) \]
by \eqref{inlemma: statement: os: nilanjana 31}.
Therefore $T_1\leq O_p((\log n)^4)H(\hn,g_0)^2 $, which is $O_p((\log n)^4 n^{-2p})$. On the other hand, noting $T_2$ can be written as 
\begin{align*}
    T_2=&\ \dint_{-a_n}^{a_n}(\hln(x)-\psi_0(x))^2 \sqrt{\min(\hn(x),g_0(x))}(\sqrt{\hn(x)}-\sqrt{g_0(x)})1_{[\hn<g_0]}dx,
\end{align*}
by an application of the Cauchy-Schwarz inequality, we derive 
\[|T_2|\leq \lb \dint_{-a_n}^{a_n}(\hln(x)-\psi_0(x))^4 \min(\hn(x),g_0(x))dx\rb^{1/2}H(\hn,g_0),\]
which, by \eqref{inlemma: Nilanjana 31} and Condition~\ref{cond: hellinger rate}, is $O_p(n^{-2p})$, thus completing the proof of the first part.

It remains to show that \eqref{inlemma: statement: os: nilanjana 31} holds when $\hn\in\mathcal{SLC}_0$ and $a_n=\tilde G_n^{-1}(1-\eta_n)$. 
 Lemma~\ref{lemma: g:  bound on psi} entails that this $a_n$ satisfies
 \begin{align}\label{inlemma: os: Nilanjana 31}
     \sup_{x\in[-a_n, a_n]} |\psi_0(x)|=O_p((\log n)^2).
 \end{align}
The proof of the current lemma then follows noting  Lemma~\ref{lemma: hn: lower bound  on hn} implies
\[\sup_{x\in[-\xin,\xin]}|\hln(x)| =O_p(\log n).\]

 \end{proof}
  \subsubsection{\textbf{Lemmas on  $\hln'$:}}
  \begin{lemma}\label{FI Lemma: Lemma 1}
 Let $\rho_n=\eta_n/\log n$.  Suppose $\hn$ is a log-concave density satisfying  Condition~\ref{condition: on hn} and Condition~\ref{cond: hellinger rate}. Let $a_n$ be a positive sequence  satisfying \eqref{inlemma: statement: os: nilanjana 31} such that $a_n=O_p(\log n)$,
 \begin{equation}\label{inlemma: statement: os: suport inclusion}
 P\slb [-a_n-\rho_n,a_n+\rho_n]\subset \iint(\dom(\psi_0))\cap\iint(\dom(\hln))\srb\to 1,
 \end{equation}
 \begin{flalign}\label{inlemma: statement: os: FI: an xin}
 P(\tilde G_n(-a_n)>\eta_n/4, 1-\tilde G_n(a_n)>\eta_n/4)\to 1. 
\end{flalign}
 Then 
 \[ \dint_{-a_n}^{a_n}\slb \tp(z)-\psi_0'(z)\srb^2dz=O_p( (\log n)^6n^{-4p/5}),\]
  \[\dint_{-a_n}^{a_n} \slb \tp(z)-\psi'_0(z)\srb^2 \mu_n(z)dz=O_p( (\log n)^6n^{-4p/5})),\]
 for any density $\mu_n$ such that $\|\mu_n\|_\infty=O_p(1)$, where $p$ is as in Condition~\ref{cond: hellinger rate}.  In particular, the lemma holds if
  $\hn\in\mathcal{SLC}_0$ and $a_n=\xin(\tilde G_n)=\tilde G_n^{-1}(1-\eta_n)$, where $\eta_n=Cn^{-2p/5}$ for some $C>0$.
 \end{lemma}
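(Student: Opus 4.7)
The plan is to reduce $\int(\tp-\psi_0')^2\,dz$ to an integral of $h^2$, where $h=\hln-\psi_0$, by a concavity-plus-Lipschitz finite-difference argument, and then apply Lemma~\ref{lemma: Laha_Nilanjana 31} to control $\int h^2 g_0$. First, for any $\delta>0$ for which both $\hln$ and $\psi_0$ are defined on $[z-\delta,z+\delta]$, the right-derivative characterization for concave functions gives
\[\frac{\hln(z+\delta)-\hln(z)}{\delta}\le\tp(z)\le\frac{\hln(z)-\hln(z-\delta)}{\delta},\]
and analogously for $\psi_0'$. Invoking Assumption~\ref{assump: L} (inherited from the ambient theorem), the finite difference $(\psi_0(z)-\psi_0(z-\delta))/\delta$ differs from $\psi_0'(z)$ by at most $\kappa\delta$ (and similarly on the right side). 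Subtracting the two chains of inequalities yields the pointwise bound
\[|\tp(z)-\psi_0'(z)|\le\frac{|h(z)-h(z-\delta)|+|h(z)-h(z+\delta)|}{\delta}+\kappa\delta.\]
Squaring, using $(A+B)^2\le 2A^2+2B^2$, and integrating over $[-a_n,a_n]$ with a translation in the $h$-integrals, we obtain
\[\int_{-a_n}^{a_n}(\tp-\psi_0')^2\,dz\le\frac{C}{\delta^2}\int_{-a_n-\delta}^{a_n+\delta}h^2(z)\,dz+2\kappa^2 a_n\delta^2.\]

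The next step is to pass from the weighted $L^2(g_0)$ bound supplied by Lemma~\ref{lemma: Laha_Nilanjana 31} to an unweighted bound on the slightly enlarged interval. Applied to $g_0$, Fact~\ref{fact: f grtr than F} gives $g_0(z)\gtrsim\min(G_0(z),1-G_0(z))$; combined with assumption \eqref{inlemma: statement: os: FI: an xin} and the total variation estimate $\|G_0-\tilde G_n\|_\infty\le\sqrt{2}\,H(\hn,g_0)=O_p(n^{-p})=o_p(\eta_n)$, this produces
\[\inf_{|z|\le a_n+\rho_n}g_0(z)\gtrsim_p\eta_n.\]
Restricting to $\delta\le\rho_n$, the support hypothesis \eqref{inlemma: statement: os: suport inclusion} guarantees $h$ is finite on $[-a_n-\delta,a_n+\delta]$. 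We apply Lemma~\ref{lemma: Laha_Nilanjana 31} on $[-a_n-\rho_n,a_n+\rho_n]$; the required sup condition \eqref{inlemma: statement: os: nilanjana 31} is verified from Lemma~\ref{lemma: g:  bound on psi} (giving $\sup|\psi_0|=O_p((\log n)^2)$ because $a_n+\rho_n=O_p(\log n)$) and a log-concave lower bound on $\hn$ at the boundary (giving $\sup|\hln|=O_p(\log n)$). This yields $\int_{-a_n-\delta}^{a_n+\delta}h^2 g_0=O_p((\log n)^2 n^{-2p})$, hence
\[\int_{-a_n-\delta}^{a_n+\delta}h^2\,dz=O_p\bigl((\log n)^2 n^{-2p}/\eta_n\bigr)=O_p\bigl((\log n)^2 n^{-8p/5}\bigr).\]

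Choosing $\delta=\rho_n=\eta_n/\log n$ (the largest admissible value given \eqref{inlemma: statement: os: suport inclusion}) collapses the two terms to $O_p((\log n)^{\alpha} n^{-4p/5})$ for a polynomial-in-$\log n$ factor absorbed by $(\log n)^3$, establishing the first claim. The weighted version follows immediately from $\int_{-a_n}^{a_n}(\tp-\psi_0')^2\mu_n\le\|\mu_n\|_\infty\int_{-a_n}^{a_n}(\tp-\psi_0')^2$ since $\|\mu_n\|_\infty=O_p(1)$. For the \emph{in particular} statement, with $a_n=\xi_n$ and $\hn\in\mathcal{SLC}_0$, we verify the four hypotheses: \eqref{inlemma: statement: os: suport inclusion} follows from Lemma~\ref{lemma: An inclusion} applied with $y_n=\rho_n=o(\eta_n)$; $a_n=O_p(\log n)$ follows from Lemma~\ref{lemma: bound: xi n}; \eqref{inlemma: statement: os: FI: an xin} is immediate since $\tilde G_n(-\xi_n)=\eta_n>\eta_n/4$; and \eqref{inlemma: statement: os: nilanjana 31} is the last assertion of Lemma~\ref{lemma: Laha_Nilanjana 31} itself. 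The main technical obstacle is the bias–variance tradeoff in $\delta$: the cap $\delta\le\rho_n$ prevents the optimal choice and is what forces the polylogarithmic loss, while care is needed to extend the $L^2(g_0)$ bound of Lemma~\ref{lemma: Laha_Nilanjana 31} to the enlarged interval so that the translated $h$-integrals are legitimate.
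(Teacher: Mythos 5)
Your proposal is correct and follows essentially the same route as the paper's proof: a finite-difference bound with increment $\rho_n$ exploiting the concavity of $\hln$ and Assumption~\ref{assump: L}, control of the squared log-difference via Lemma~\ref{lemma: Laha_Nilanjana 31} combined with a density lower bound of order $\eta_n$ near $\pm(a_n+\rho_n)$, and the choice $\delta=\rho_n$; the only cosmetic difference is that you lower-bound $g_0$ through $\|G_0-\tilde G_n\|_\infty\le\sqrt 2 H(\hn,g_0)=O_p(n^{-p})$, whereas the paper lower-bounds $\hn$ itself at the endpoints using Fact~\ref{fact: f grtr than F} and \eqref{inlemma: statement: os: FI: an xin}. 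One small patch for the \emph{in particular} part: Lemma~\ref{lemma: An inclusion} only yields the $\iint(\dom(\psi_0))$ half of \eqref{inlemma: statement: os: suport inclusion}; the $\iint(\dom(\hln))$ half follows from Lemma~\ref{lemma: xi: tilde xi n} with $y_n=\rho_n=o(\eta_n)$, exactly as the paper invokes it.
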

 
 \begin{proof}[Proof of Lemma~\ref{FI Lemma: Lemma 1}]
 
 Since $\hln$ is concave and $\rho_n>0$, any $z\in\dom(\hln)$ satisfies
 \begin{equation}\label{inlemma: os: FI lemma: tp inequality}
     \frac{\hln(z+\rho_n)-\hln(z)}{\rho_n}\leq \tp(z+)\leq \tp(z-)\leq \frac{\hln(z)-\hln(z-\rho_n)}{\rho_n}.
 \end{equation}
Now suppose \eqref{inlemma: statement: os: suport inclusion} holds.
Then 
the quantities
\[\Delta_n^{+}(z)=\frac{\hln(z+\rho_n)-\hln(z)}{\rho_n}-\frac{\psi_0(z+\rho_n)-\psi_0(z)}{\rho_n},\]
and
\[\Delta_n^{-}(z)=\frac{\hln(z)-\hln(z-\rho_n)}{\rho_n}-\frac{\psi_0(z)-\psi_0(z-\rho_n)}{\rho_n}\]
are well defined for all $z\in[-a_n,a_n]$.
 Recalling   $\hln'(z)=\hln'(z+)$ and $\psi_0'(z)=\psi_0'(z+)$
by our notation, we can then show that  under \eqref{inlemma: statement: os: suport inclusion},
\begin{align*}
    \hln'(z)-\psi_0'(z)\stackrel{(a)}\leq &\ \frac{\hln(z)-\hln(z-\rho_n)}{\rho_n}-\frac{\psi_0(z)-\psi_0(z-\rho_n)}{\rho_n}\\
    &\ +\lb\frac{\psi_0(z)-\psi_0(z-\rho_n)}{\rho_n} -\psi_0'(z)\rb
   \stackrel{(b)}{\leq} \Delta_n^{-}(z)+\kappa\rho_n
\end{align*}
for all $z\in[-a_n,a_n]$, where (a) follows by \eqref{inlemma: os: FI lemma: tp inequality}, and (b) follows because
\[\frac{\psi_0(z)-\psi_0(z-\rho_n)}{\rho_n} -\psi_0'(z)\leq \rho_n^{-1}\bl\dint_{z-\rho_n}^{z} \slb \psi_0'(t)-\psi_0'(z)\srb dt\bl\leq \kappa\rho_n/2\]
since Assumption \ref{assump: L} applies on the set $[z-\rho_n,z]\subset\iint(\dom(\psi_0))$. 
 Similarly, we can show that 
 \[ \Delta_n^{+}(z)-\kappa\rho_n\leq\hln'(z)-\psi_0'(z)\quad\text{for all } z\in[-a_n,a_n],\]
 provided \eqref{inlemma: statement: os: suport inclusion} holds. Thus we have established that
 \begin{equation}\label{inlemma: os : FI lemma: delta inequalities}
     |\tp(z)-\psi_0'(z)|\leq \max\{\Delta_n^{+}(z), \Delta_n^{-}(z)\}+\kappa \rho_n
 \end{equation}
whenever \eqref{inlemma: statement: os: suport inclusion} holds. 
 Now observe that  the integral $\int_{-a_n}^{a_n}\Delta^{+}_n(z)^2dz$ is well defined under \eqref{inlemma: statement: os: suport inclusion}, and equals
 \begin{align*}
  \MoveEqLeft   \dint_{-a_n}^{a_n}\lb\frac{\hln(z+\rho_n)-\hln(z)}{\rho_n}-\frac{\psi_0(z+\rho_n)-\psi_0(z)}{\rho_n} \rb^2dz\\
  \leq &\ 2\dint_{-a_n}^{a_n}\lb\frac{\hln(z+\rho_n)-\psi_0(z+\rho_n)}{\rho_n}\rb^2dz+2\dint_{-a_n}^{a_n}\lb\frac{\hln(z)-\psi_0(z)}{\rho_n}\rb^2dz\\
  \stackrel{(a)}{\leq} &\  \frac{2}{\rho_n^2\min(\hn(a_n+\rho_n),\hn(-a_n+\rho_n))}\dint_{-a_n}^{a_n}(\hln(z+\rho_n)-\psi_0(z+\rho_n))^2\hn(z+\rho_n)dz\\
  &\ +\frac{2}{\rho_n^2\min(\hn(a_n),\hn(-a_n))}\dint_{-a_n}^{a_n}(\hln(z)-\psi_0(z))^2\hn(z)dz\\
 \stackrel{(b)}{=} &\  \frac{O_p((\log n)^4n^{-2 p})}{\rho^{2}_n \min(\hn(a_n+\rho_n),\hn(-a_n-\rho_n))},
 \end{align*}
 where (a) follows because $\hn$ being log-concave, and hence unimodal, attains minimum over an interval at either of the endpoints; and (b) follows from Lemma~\ref{lemma: Laha_Nilanjana 31} and the fact that $a_n$ and $\rho_n$ are positive. 
 Let us define
 \begin{align}\label{inlemma: def: os: myen}
   \e_n(\rho_n)=\frac{(\log n)^4n^{-2 p}}{\rho^{2}_n \min(\hn(a_n+\rho_n),\hn(-a_n-\rho_n))
   }.
   \end{align}
 Since \eqref{inlemma: statement: os: suport inclusion} holds with probability tending to one by our assumption, we can write 
  $\int_{-a_n}^{a_n}\Delta^{+}_n(z)^2dz=O_p(\e_n(\rho_n))$. 
Similarly, we can show that
$\int_{-a_n}^{a_n}\Delta^{-}_n(z)dz$ is $O_p(\e_n(\rho_n))$.
The above, combined with \eqref{inlemma: os : FI lemma: delta inequalities}, leads to 
\begin{align*}
    \dint_{-a_n}^{a_n}(\tp(z)-\psi_0'(z))^2dz\leq &\ 2\dint_{-a_n}^{a_n}\Delta_n^{-}(z)^2dz+2\dint_{-a_n}^{a_n}\Delta_n^{+}(z)^2dz +4\kappa^2\rho_n^2a_n\\
    =&\ O_p(\e_n(\rho_n))+O_p(\rho_n^2a_n).
\end{align*}
Note that $\rho_n^2a_n=O(n^{-4p/5}/\log n)$ because $\rho_n=\eta_n/\log n$ and $a_n=O_p(\log n)$ by our assumption. Also, $\int_{-a_n}^{a_n}(\tp(z)-\psi_0'(z))^2\mu_n(z)dz$ can be bounded by
\begin{flalign*}
 2\|f\|_{\infty} \lb \dint_{-a_n}^{a_n}\Delta_n^{-}(z)^2dz+\dint_{-a_n}^{a_n}\Delta_n^{+}(z)^2dz\rb+2\kappa^2\rho^2_n   
 \end{flalign*}
 which is $O_p(\e_n(\rho_n))+O_p(n^{-4/5}/(\log n)^2)$ because $\|\mu_n\|_\infty$ is $O_p(1)$ and $\rho_n$ equals $\eta_n/\log n$.
 To prove the first part of the lemma, it only remains to show that
 \begin{equation}\label{inlemma: os: rate of e n}
     \e_n(\rho_n)=O_p( (\log n)^6n^{-4p/5}).
 \end{equation}
Since $a_n-\rho_n\in\iint(\dom(\hln))$ under \eqref{inlemma: statement: os: suport inclusion},
 Fact~\ref{fact: f grtr than F} implies 
 \[\hn(-a_n-\rho_n)\geq \omega_n \tilde{G}_n (-a_n-\rho_n)=\omega_n\tilde{G}_n(-a_n)-\omega_n\dint_{-a_n-\rho_n}^{-a_n}\hn(z)dz\]
 under \eqref{inlemma: statement: os: suport inclusion},
 where $\omega_n$ is as in fact~\ref{fact: f grtr than F}. 
Note that
 \[\bl\dint_{-a_n-\rho_n}^{-a_n}\hn(z)dz\bl\leq \rho_n\|\hn\|_{\infty}.\]
Also since $P(\tilde G_n(-a_n)>\eta_n/4)\to 1$ by our assumption, the following hold with probability tending to one,
 \[\hn(-a_n-\rho_n)\geq  \omega_n(\eta_n/4-\rho_n\|\hn\|_\infty)\stackrel{(a)}{\geq}\omega_n\eta_n/8 ,\]
  where (a) follows because $\rho_n=o(\eta_n)$ and  $\|\hn\|_\infty=O_p(1)$ by Condition~\ref{condition: on hn} and Fact~\ref{fact: Lemma 1 of theory paper}.
  However, since $\omega_n\to_p\omega_0$ by Fact~\ref{fact: f grtr than F},  the last display implies 
  $\hn(-a_n-\rho_n)^{-1}$ is $O_p(1/\eta_n)$.
Similarly,   we can show that $\hn(a_n+\rho_n)^{-1}$ is $O_p(1/\eta_n)$.
  Thus
 \[\epsilon_n(\rho_n)=\frac{O_p((\log n)^4n^{-2p})}{\eta_n\rho_n^2}\]
 follows.
Since $\eta_n=n^{-2p/5}$ and $\rho_n=\eta_n/\log n$,  \eqref{inlemma: os: rate of e n} follows, thus completing the proof of the first part of Lemma~\ref{FI Lemma: Lemma 1}.

Now suppose $\hn\in\mathcal{SLC}_0$ and  $a_n=\xi_n(\tilde G_n)=\tilde G_n^{-1}(1-\eta_n)$. They satisfy \eqref{inlemma: statement: os: nilanjana 31} by Lemma~\ref{lemma: Laha_Nilanjana 31}. Also $\xin=O_p(\log n)$ by Lemma~\ref{lemma: bound: xi n}. Noting $\rho_n=o(\eta_n)$, \eqref{inlemma: statement: os: suport inclusion} follows from  Lemma~\ref{lemma: xi: tilde xi n}
and Lemma~\ref{lemma: An inclusion}.
 Since \eqref{inlemma: statement: os: FI: an xin} trivially holds,  second part of Lemma~\ref{FI Lemma: Lemma 1} also follows.
 \end{proof}



 \begin{lemma}\label{lemma: consistency of FI: 2}
 Suppose $\hn$ satisfies Condition~\ref{condition: on hn} and Condition~\ref{cond: hellinger rate}. Let $a_n$ be a sequence of positive random variables  satisfying 
 \begin{equation}\label{inlemma: statement: FI: an}
    a_n=O_p(\log n),\quad 
P\slb a_n\in\iint(\dom(\psi_0))\srb\to 1,\quad\text{and}\quad
 G_0(a_n)\to_p 1.
 \end{equation}
 Further suppose that $\hn$ and $a_n$ satisfy
 \begin{align}\label{inlemma: statement: FI}
     \dint_{-a_n}^{a_n}(\tp(z)-\psi_0'(z))^2\hn(z)dz=O_p((\log n)^6n^{-4p/5}),
 \end{align}
where $p$ is as in Condition~\ref{cond: hellinger rate}.
Then 
 $\int_{-a_n}^{a_n}\tp(z)^2\hn(z)dz\to_p \I$. In addition, if $\hn\in\mathcal{SLC}_0$, then  $a_n=\xin(\tilde G_n)=(\tilde G_n)^{-1}(1-\eta_n)$ where $\eta_n=Cn^{-2p/5}$ for some $C>0$.
\end{lemma}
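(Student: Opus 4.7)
The plan is to reduce the claim to showing that $A_n := \int_{-a_n}^{a_n}\psi_0'(z)^2 \hn(z)\,dz \to_p \I$, then establish this via a truncation argument. I expand
\[
\int_{-a_n}^{a_n} \tp(z)^2 \hn(z)\,dz \;=\; A_n \;+\; 2\int_{-a_n}^{a_n}\psi_0'(z)\bigl(\tp(z)-\psi_0'(z)\bigr)\hn(z)\,dz \;+\; B_n,
\]
where $B_n := \int_{-a_n}^{a_n}(\tp-\psi_0')^2\hn\,dz$. The hypothesis \eqref{inlemma: statement: FI} gives $B_n = O_p((\log n)^3 n^{-4p/5}) = o_p(1)$, and by Cauchy--Schwarz the cross term is bounded by $2(A_n B_n)^{1/2}$. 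Hence once $A_n = O_p(1)$ is in hand (which follows a posteriori from $A_n \to_p \I$), the cross term is $o_p(1)$, and the conclusion reduces to $A_n \to_p \I$.

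To handle $A_n$, I write
\[
A_n - \I \;=\; \int_{-a_n}^{a_n}\psi_0'(z)^2\bigl(\hn(z)-g_0(z)\bigr)dz \;-\; \int_{|z|>a_n}\psi_0'(z)^2 g_0(z)\,dz.
\]
Since $\I<\infty$ and $a_n\to_p G_0^{-1}(1)$, the second term is $o_p(1)$ by dominated convergence along a.s.-convergent subsequences (via Fact~\ref{fact: Shorack}). For the first, I truncate at an arbitrary level $M>0$: because $g_0$ is log-concave, $\psi_0'$ is non-increasing, and by symmetry of $g_0$ it is odd, so $B_M := \{z:|\psi_0'(z)|\le M\} = [-z_M,z_M]$ is a symmetric interval. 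On $B_M$, the integrand is bounded, and $\int_{B_M}\psi_0'^2|\hn-g_0|\,dz \le M^2 \|\hn-g_0\|_1 = o_p(1)$ by Condition~\ref{condition: on hn}(A). The portion outside $B_M$ splits into a $g_0$ piece, bounded by $\int_{B_M^c}\psi_0'^2 g_0\,dz$, which vanishes as $M\to\infty$ since $\I<\infty$, and an $\hn$ piece $\int_{B_M^c\cap[-a_n,a_n]}\psi_0'^2\hn\,dz$.

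The principal obstacle is controlling that last $\hn$ tail integral. I plan to exploit the structure of $\hn$: in every application of the lemma, $\hn$ is either log-concave (as in the case $\hn\in\mathcal{SLC}_0$) or a symmetric mixture of two log-concave densities (as in the hypotheses satisfied by $\hnss$). In either case, the concavity of $\log\hn$, or of each mixture component's log-density, combined with the convergence $\hn(0),\hn(z_M) \to_p g_0(0), g_0(z_M)$ inherited from Condition~\ref{condition: on hn}(A), produces a uniform-in-$n$ exponential bound $\hn(z) \le C e^{-\alpha(|z|-z_M)}$ for $|z|\ge z_M$, by an argument in the spirit of Fact~\ref{fact: Lemma 1 of theory paper}. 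Since $|\psi_0'|$ grows at most polynomially on $\iint(\dom(\psi_0))$ (linearly under Assumption~\ref{assump: L} where available), this makes $\int_{B_M^c}\psi_0'^2\hn\,dz$ arbitrarily small in probability for $M$ large, uniformly in $n$. Taking $M\to\infty$ after $n\to\infty$ then yields $A_n\to_p\I$.

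For the concluding ``in particular'' claim with $\hn\in\mathcal{SLC}_0$ and $a_n = \xi_n$: the three requirements in \eqref{inlemma: statement: FI: an} are verified by Lemma~\ref{lemma: bound: xi n} ($a_n=O_p(\log n)$), Lemma~\ref{lemma: xi: xi is in dop psi knot} ($P(a_n\in\iint(\dom(\psi_0)))\to 1$), and Lemma~\ref{lemma: xi: xi goes to 1} ($a_n\to_p G_0^{-1}(1)$). The Hellinger-based bound \eqref{inlemma: statement: FI} is precisely the second conclusion of Lemma~\ref{FI Lemma: Lemma 1} applied with density $\mu_n = \hn$, whose uniform boundedness $\|\hn\|_\infty=O_p(1)$ comes from Condition~\ref{condition: on hn}(A) together with Fact~\ref{fact: Lemma 1 of theory paper}.
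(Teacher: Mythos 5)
Your reduction to $A_n\to_p\I$, the Cauchy--Schwarz handling of the cross term, and the verification of the ``in particular'' clause (via Lemmas~\ref{lemma: bound: xi n}, \ref{lemma: xi: xi is in dop psi knot}, \ref{lemma: xi: xi goes to 1} and \ref{FI Lemma: Lemma 1}) are all fine and match the paper. The gap is in the term $\int_{-a_n}^{a_n}\psi_0'(z)^2(\hn(z)-g_0(z))dz$. Because you only use the qualitative convergence $\|\hn-g_0\|_1=o_p(1)$ from Condition~\ref{condition: on hn}(A) on the truncated set $\{|\psi_0'|\le M\}$, you are forced to control the tail $\int_{B_M^c\cap[-a_n,a_n]}\psi_0'^2\hn$, and your proposed control has two problems. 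First, it invokes log-concavity (or a two-component log-concave mixture structure) of $\hn$, which is not among the lemma's hypotheses: the lemma must hold for any $\hn$ satisfying Conditions~\ref{condition: on hn}--\ref{cond: hellinger rate} plus \eqref{inlemma: statement: FI}, since that is exactly how it is applied to the non-log-concave $\hnss$ in Lemma~\ref{lemma: hnss: FI}. Second, the claim that $|\psi_0'|$ grows ``at most polynomially'' on $\iint(\dom(\psi_0))$ is false for a general symmetric log-concave $g_0$ (e.g.\ the symmetrized beta densities of Section~\ref{sec: simulation}, where $\psi_0'$ blows up at the boundary of the support); linear growth is precisely the content of Assumption~\ref{assump: L}, which you treat as optional (``where available'') but which is the load-bearing ingredient: it is in force in every application of the lemma (the setting of Theorems~\ref{theorem: main: one-step: full}--\ref{theorem: main: one-step: hnss} assumes $f_0$ satisfies it) and the paper's own proof of this lemma uses it. Even granting both extra inputs, your uniform-in-$n$ exponential envelope $\hn(z)\le Ce^{-\alpha(|z|-z_M)}$ with $\alpha$ independent of $n$ is only gestured at; it requires an argument through $\hln'(z)\to_p\psi_0'(z)<0$ at a continuity point beyond $z_M$, and a ``choose $M$ large, then $n$ large'' bookkeeping that is not spelled out.

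The fix, and the paper's route, avoids truncation entirely by playing the quantitative rate in Condition~\ref{cond: hellinger rate} (which your argument never uses for this term) against the merely logarithmic growth of $\psi_0'$: since $\psi_0'$ is non-increasing and odd, $\sup_{[-a_n,a_n]}|\psi_0'|=|\psi_0'(a_n)|\le|\psi_0'(0)|+\kappa a_n=O_p(\log n)$ by Assumption~\ref{assump: L} and $a_n=O_p(\log n)$, whence $\bigl|\int_{-a_n}^{a_n}\psi_0'^2(\hn-g_0)\bigr|\le O_p((\log n)^2)\,2d_{TV}(\hn,g_0)\lesssim O_p((\log n)^2)H(\hn,g_0)=O_p((\log n)^2n^{-p})=o_p(1)$ by Fact~\ref{fact: dTV and hellinger}; no tail analysis of $\hn$ and no structural assumption beyond the stated hypotheses is needed. (The same logarithmic bound gives the paper's cross-term estimate $|T_2|\le|\psi_0'(a_n)|\sqrt{T_1}$, though your $2\sqrt{A_nB_n}$ bound works equally well.)
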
 
 
 \begin{proof}[Proof of Lemma~\ref{lemma: consistency of FI: 2}]
 Note that
 \begin{align*}
  \MoveEqLeft    \dint_{-a_n}^{a_n}\slb\tp(z)^2\hn(z)-\psi_0'(z)^2g_0(z)\srb dz\\
  = &\ \dint_{-a_n}^{a_n}\slb(\tp(z)-\psi_0'(z))^2\hn(z)+2\psi_0'(z)\tp(z)\hn(z)-\psi_0'(z)^2\hn(z)-\psi_0'(z)^2g_0(z)\srb dz\\
  =&\ \underbrace{ \dint_{-a_n}^{a_n}(\tp(z)-\psi_0'(z))^2\hn(z)dz}_{T_1}+ 2\underbrace{\dint_{-a_n}^{a_n}\psi_0'(z)(\tp(z)-\psi_0'(z))\hn(z)dz}_{T_2}\\
  &\ +\underbrace{\dint_{-a_n}^{a_n}\psi_0'(z)^2(\hn(z)-g_0(z))dz}_{T_3},
 \end{align*}
 It is clear that by our assumption,
 $T_1=O_p((\log n)^6n^{-4p/5})$, which is $o_p(1)$.

Because $\psi_0'$ is a non-increasing odd function, on any interval, $|\psi_0'|$ attains its maximum at both end points. Therefore,
\begin{align*}
|T_2|
   \leq &\ |\psi_0'(a_n)|\dint_{-a_n}^{a_n}|\tp(z)-\psi_0'(z)|\hn(z)dz \\
  \stackrel{(a)}{\leq} &\ |\psi_0'(a_n)|\lb\dint_{-a_n}^{a_n}(\tp(z)-\psi_0'(z))^2\hn(z)dz \rb^{1/2},
\end{align*}
where (a) follows by the Cauchy-Schwarz inequality. Thus, $|T_2|\leq |\psi_0'(a_n)|\sqrt{T_1}$.
 However,  Assumption~\ref{assump: L} implies that $|\psi_0'(a_n)|\leq |\psi_0'(0)|+\kappa a_n=O_p(\log n)$ provided $a_n\in \iint(\dom(\psi_0))$. By our assumption on $a_n$, the latter holds with probability tending to one.  Hence, 
 \[T_2=O_p(\log n) \sqrt{T_1}=O_p((\log n)^{4}n^{-2p/5})=o_p(1).\]
 
 Finally,  using the fact  $|\psi_0'(a_n)|=O_p(\log n)$ again, we bound $|T_3|$ by
 \begin{align*}
   O_p((\log n)^2) d_{TV}(\hn, g_0)
    \stackrel{(a)}{\leq}  & O_p((\log n)^2)H(\hn,g_0)\stackrel{(b)}{=} O_p((\log n)^2 n^{-p})=o_p(1),
 \end{align*}
 where (a) follows from Fact~\ref{fact: dTV and hellinger} and (b) follows noting  $H(\hn,g_0)=O_p(n^{-p})$ by Condition~\ref{cond: hellinger rate}. Thus we have shown that
 \[\dint_{-a_n}^{a_n}\slb\tp(z)^2\hn(z)-\psi_0'(z)^2g_0(z)\srb dz=o_p(1).\]
 
Since $a_n\to_p G_0^{-1}(1)$ by our assumption, noting $G_0^{-1}(0)=-G_0^{-1}(1)$, we also obtain $-a_n\to_p G_0^{-1}(0)$. Hence,
 \[\dint_{-a_n}^{a_n}\psi_0'(z)^2g_0(z)dz\to_p\dint_{G_0^{-1}(0)}^{G_0^{-1}(1)}\psi_0'(z)^2g_0(z)dz=\I,\]
 which completes the proof of the first part of Lemma~\ref{lemma: consistency of FI: 2}.
 Second part of Lemma~\ref{lemma: consistency of FI: 2} follows noting $\xin=O_p(\log n)$ by Lemma~\ref{lemma: bound: xi n}, $\xin\in\iint(\dom(\psi_0))$ with probability tending to one by Lemma~\ref{lemma: xi: xi is in dop psi knot},  $\xin\to_p G_0^{-1}(1)$ by Lemma~\ref{lemma: xi: xi goes to 1}, and $\hn\in\mathcal{SLC}_0$ satisfies \eqref{inlemma: statement: FI}  by Lemma~\ref{FI Lemma: Lemma 1}.
 \end{proof}

 \begin{lemma}\label{lemma: tilde psi prime bound}
 Consider the set up of Theorem~\ref{theorem: main: one-step: full}.
 Suppose $\eta_n=Cn^{-2p/5}$, where $C>0$ and $p$ is as in Condition~\ref{cond: hellinger rate}. Let $y_n$ be a sequence of positive random variables such that
  $P(|y_n|\leq \eta_n/(2g_0(0)))\to 1$. Then 
  \[ \sup_{x\in[-\xi_n-y_n,\xi_n+y_n]}|\tp(x)|=O_p(\eta_n^{-1/2})=O_p(n^{p/5}).\]
 \end{lemma}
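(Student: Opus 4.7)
The plan is to reduce the supremum of $|\tp|$ on the symmetric interval to a single right endpoint, and then upgrade the $L^2$ control from Lemma~\ref{FI Lemma: Lemma 1} to a pointwise bound via a short-window averaging argument.

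Since $\hn\in\mathcal{SLC}_0$ is symmetric about $0$ and $\hln$ is concave, the right derivative $\tp$ is non-increasing on $\RR$ and satisfies $\tp(x)\le 0$ for $x\ge 0$ (by symmetry, $\tp(0+)\le 0$). Hence $|\tp|$ is non-decreasing on $[0,\infty)$, and using the symmetry identity $\tp(-x)=-\hln'(x-)\ge -\hln'(x+)=\tp(x)$ for $x>0$ (so $|\tp(-x)|\le|\tp(x)|$), the supremum over $[-\xi_n-y_n,\xi_n+y_n]$ is attained at $x_0:=\xi_n+y_n>0$. It therefore suffices to bound $|\tp(x_0)|$. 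Set $\check\xi_n:=\tilde G_n^{-1}(1-\eta_n/8)$ and $\rho_n:=\eta_n/(4g_0(0))$. Mimicking the mean-value calculation in Lemma~\ref{lemma: xi: tilde xi n}, $\check\xi_n-\xi_n=(7\eta_n/8)/\hn(z)$ for some $z\in(\xi_n,\check\xi_n)$; combined with $\|\hn\|_\infty\to_p g_0(0)$ from Condition~\ref{condition: on hn} and the hypothesis $|y_n|\le\eta_n/(2g_0(0))$, this yields $x_0+\rho_n\le\check\xi_n$ with probability tending to $1$, so the window $[x_0,x_0+\rho_n]$ is contained in $(-\check\xi_n,\check\xi_n)\subset\iint(\dom(\psi_0))$ by Lemma~\ref{lemma: xi: xi is in dop psi knot} applied with $\eta_n/8$.

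Combining the monotonicity of $|\tp|$ on $[0,\infty)$ with the split $\tp^2\le 2(\tp-\psi_0')^2+2(\psi_0')^2$,
\[
\rho_n\,\tp(x_0)^2 \;\le\; \int_{x_0}^{x_0+\rho_n}\!\tp(x)^2\,dx \;\le\; 2\!\int_{x_0}^{x_0+\rho_n}\!(\tp-\psi_0')^2\,dx + 2\!\int_{x_0}^{x_0+\rho_n}\!(\psi_0')^2\,dx.
\]
Lemma~\ref{FI Lemma: Lemma 1} applied with $\eta_n$ replaced by $\eta_n/8$ (which still has the form $Cn^{-2p/5}$, with associated $\check\xi_n=O_p(\log n)$ by Lemma~\ref{lemma: bound: xi n}) bounds the first integral by $O_p((\log n)^3 n^{-4p/5})$, while Assumption~\ref{assump: L} gives $|\psi_0'(x)|\le|\psi_0'(0)|+\kappa|x|=O_p(\log n)$ uniformly on $[-\check\xi_n,\check\xi_n]$, so the second integral is $\rho_n\cdot O_p((\log n)^2)$. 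Dividing by $\rho_n\asymp n^{-2p/5}$,
\[
\tp(x_0)^2 \;\le\; O_p\!\bigl((\log n)^3 n^{-2p/5}\bigr) + O_p\!\bigl((\log n)^2\bigr) \;=\; O_p\!\bigl((\log n)^2\bigr),
\]
since $(\log n)^3 n^{-2p/5}=o(1)$. Hence $|\tp(x_0)|=O_p(\log n)=O_p(n^{p/5})=O_p(\eta_n^{-1/2})$, which is the claimed bound (and in fact slightly stronger). The main obstacle is the bookkeeping for the auxiliary quantile in the second paragraph: the gap $\check\xi_n-\xi_n$ must strictly dominate $y_n+\rho_n$ so that both Lemma~\ref{FI Lemma: Lemma 1} and the linear bound on $\psi_0'$ apply over the whole short window, which is why $\check\xi_n$ is anchored at the $(1-\eta_n/8)$-quantile rather than the $(1-\eta_n/2)$-quantile already used for $\tilde\xi_n$.
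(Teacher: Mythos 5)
Your proposal is correct, but it takes a genuinely different route from the paper. The paper's own proof never leaves the weighted integral: it uses the quantile transform (Fact~\ref{fact: bobkov big}) to write $\int_{\tilde G_n^{-1}(q/2)}^{\tilde G_n^{-1}(q)}\tp(x)^2\hn(x)dx=\int_{q/2}^{q}\tp(\tilde G_n^{-1}(z))^2dz$, exploits monotonicity of $\tp\circ\tilde G_n^{-1}$ to lower bound this by $q\,\tp(\tilde G_n^{-1}(q))^2/2$ with $q=\eta_n/2$, and upper bounds the weighted integral by a quantity converging to $\I$ via Lemma~\ref{lemma: consistency of FI: 2}; this gives $|\tp(\tilde\xi_n)|=O_p(\eta_n^{-1/2})$ directly, and the set inclusion of Lemma~\ref{lemma: xi: tilde xi n} finishes. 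You instead average the \emph{unweighted} $L^2$ bound of Lemma~\ref{FI Lemma: Lemma 1} over a short Lebesgue window of length $\asymp\eta_n$ anchored just beyond $\xi_n+y_n$, add the Lipschitz bound on $\psi_0'$ from Assumption~\ref{assump: L}, and divide by the window length; this is not circular (Lemma~\ref{FI Lemma: Lemma 1} does not depend on the present lemma, and the paper itself routes through it via Lemma~\ref{lemma: consistency of FI: 2}), the quantile-gap bookkeeping at level $\eta_n/8$ is handled correctly, and you obtain the strictly sharper bound $O_p(\log n)$, at the cost of invoking Assumption~\ref{assump: L} directly and introducing an extra auxiliary quantile, whereas the paper's argument is shorter and stays at the advertised $O_p(\eta_n^{-1/2})$ rate. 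One cosmetic slip: in your symmetry chain the intermediate step should read $\tp(-x)=-\hln'(x-)\le-\hln'(x+)=-\tp(x)$ (concavity gives $\hln'(x-)\ge\hln'(x+)$, and $\tp(x)=\hln'(x+)$), which still yields the inequality $|\tp(-x)|\le|\tp(x)|$ for $x>0$ that you actually use, so the reduction to the right endpoint stands.
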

 
 \begin{proof}[Proof of Lemma~\ref{lemma: tilde psi prime bound}]
Let $q\in(0,1/2)$.
 Since $\hn$, being log-concave, is positive on $\iint(J(\tilde G_n))$, using Fact~\ref{fact: bobkov big} we obtain that
\[
\dint_{\tilde{G}^{-1}_n(q/2) }^{\tilde{G}^{-1}_n(q)} \tp(x)^2\hn(x)dx=\dint_{q/2}^q\tp(\tilde{G}^{-1}_n(z))^2dz.
\]
Note that $\tp$ is non-increasing and positive on $(-\infty, -x]$, and $\hn(-x)$ is positive and non-decreasing on $(-\infty, -x]$. Thus $\tp\circ \tilde{G}^{-1}_n$ is non-increasing. Therefore
 \[ q\tp(\tilde{G}^{-1}_n(q))^2/2 \leq \dint_{\tilde{G}^{-1}_n(q/2) }^{\tilde{G}^{-1}_n(q)} \tp(x)^2\hn(x)dx \stackrel{(a)}{\leq} \dint_{\tilde{G}^{-1}_n(q/2) }^{\tilde{G}^{-1}_n(1-q/2)} \tp(x)^2\hn(x)dx \]
where (a) follows because $q<1-q/2$ for all $q\in(0,1/2)$.
Suppose $q=\eta_n/2$. Note that $\tilde{\xi}_n=-\tilde{G}^{-1}_n(\eta_n/2)$ and $|\tp|$ is symmetric about zero. Then the last display leads to
 \[\tp(\tilde{\xi}_n)^2\leq \frac{4\dint_{\tilde{G}^{-1}_n(\eta_n/4) }^{\tilde{G}^{-1}_n(1-\eta_n/4)} \tp(x)^2\hn(x)dx}{\eta_n}.\]
 From Lemma~\ref{lemma: consistency of FI: 2} it follows that the integral converges in probability to $\I$. Therefore, $|\tp(\tilde{\xi}_n)|=O_p(\eta_n^{-1/2})$ which implies
 \begin{equation}\label{inlemma: tp bound}
     \sup_{x\in[-\tilde{\xi}_n,\tilde{\xi}_n]}|\tp(x)|=O_p(\eta_n^{-1/2}).
 \end{equation}
 
 The rest of the proof follows from \eqref{inlemma: tp bound} and Lemma~\ref{lemma: xi: tilde xi n}.
  
  \end{proof}

 \begin{lemma}\label{Lemma: L2 norm of hn}
 Consider the set up of Theorem~\ref{theorem: main: one-step: full}.
Then  $\|h_n\|_{P_0, 2}^2=O_p(n^{-4p/5}(\log n)^{3})$, where $h_n$ is as defined in \eqref{inlemma: def: main: hn}.
\end{lemma}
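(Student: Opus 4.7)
The plan is to reduce the bound on $\|h_n\|_{P_0,2}^2$ to an application of Lemma~\ref{FI Lemma: Lemma 1}. First I would perform the change of variable $z = x - \bar\theta_n$ in the integral defining $\|h_n\|_{P_0,2}^2$. Writing $\bar\delta_n = \theta_0 - \bar\theta_n$, the relations $\phi'_0(z+\bar\theta_n) = \psi'_0(z-\bar\delta_n)$ and $f_0(z+\bar\theta_n) = g_0(z-\bar\delta_n)$ transform the integral into
\[
\|h_n\|_{P_0,2}^2 = \int_{-\xi_n}^{\xi_n} \bigl(\tilde\psi'_n(z) - \psi'_0(z-\bar\delta_n)\bigr)^2 g_0(z-\bar\delta_n)\,dz.
\]

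Next I would apply $(a+b)^2 \le 2a^2+2b^2$ by inserting $\pm\,\psi'_0(z)$ inside the square, yielding $\|h_n\|_{P_0,2}^2 \le 2A_n + 2B_n$, where
\[
A_n = \int_{-\xi_n}^{\xi_n} \bigl(\tilde\psi'_n(z)-\psi'_0(z)\bigr)^2 g_0(z-\bar\delta_n)\,dz, \qquad B_n = \int_{-\xi_n}^{\xi_n} \bigl(\psi'_0(z)-\psi'_0(z-\bar\delta_n)\bigr)^2 g_0(z-\bar\delta_n)\,dz.
\]
For $B_n$, Lemma~\ref{lemma: xi: xi is in dop psi knot} and $\bar\delta_n = O_p(n^{-1/2})$ guarantee that, with probability tending to one, both $z$ and $z-\bar\delta_n$ lie in $\iint(\dom(\psi_0))$ throughout the integration range. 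Assumption~\ref{assump: L} then gives the pointwise Lipschitz bound $|\psi'_0(z)-\psi'_0(z-\bar\delta_n)| \le \kappa |\bar\delta_n|$, so that $B_n \le \kappa^2 \bar\delta_n^2 = O_p(n^{-1})$. Since $p\in(0,1)$, this is absorbed into $O_p(n^{-4p/5}(\log n)^3)$.

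For $A_n$, I would take $\mu_n(z) = g_0(z-\bar\delta_n)$; this is a density with $\|\mu_n\|_\infty = g_0(0) < \infty$, so $\|\mu_n\|_\infty = O_p(1)$. Because $\hn \in \mathcal{SLC}_0$ and $\eta_n = C n^{-2p/5}$ under the hypotheses of Theorem~\ref{theorem: main: one-step: full}, the ``in particular'' clause of Lemma~\ref{FI Lemma: Lemma 1} applies with $a_n = \xi_n$ and delivers $A_n = O_p\bigl((\log n)^3 n^{-4p/5}\bigr)$. Combining the bounds on $A_n$ and $B_n$ yields the claimed rate. The only step that requires any care is ensuring the domain condition for Assumption~\ref{assump: L} in the analysis of $B_n$, which is immediate from the $\sqrt n$-consistency of $\bar\theta_n$ and the already established fact that $[-\xi_n,\xi_n] \subset \iint(\dom(\psi_0))$ with high probability; the genuinely non-trivial estimate (the uniform control on $\tilde\psi'_n - \psi'_0$) is entirely handled by Lemma~\ref{FI Lemma: Lemma 1}, so no new difficulty arises here.
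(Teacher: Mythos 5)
Your proposal is correct and follows essentially the same route as the paper's proof: the same change of variable and splitting with $\pm\psi_0'(z)$, with the first term handled by the ``in particular'' clause of Lemma~\ref{FI Lemma: Lemma 1} (taking $\mu_n(\cdot)=g_0(\cdot-\bar\delta_n)$) and the second by the Lipschitz bound from Assumption~\ref{assump: L}, giving $O_p(n^{-1})$. The only small imprecision is the justification of the domain condition for the second term: since $z-\bar\delta_n$ can exit $[-\xi_n,\xi_n]$ near the endpoints, the correct reference is Lemma~\ref{lemma: An inclusion} applied with $y_n=|\bar\delta_n|=o_p(\eta_n)$ (as the paper does), rather than Lemma~\ref{lemma: xi: xi is in dop psi knot} alone.
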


\begin{proof}[Proof of Lemma~\ref{Lemma: L2 norm of hn}]
Note that $\|h_n\|_{P_0,2}^2$ equals
\begin{align*}
\MoveEqLeft \dint_{-\xi_n}^{\xi_n}(\hln'(z)-\psi_0'(z-\td))^2 f_0(z+\bth)dz\\
 \leq &\  2\dint_{-\xi_n}^{\xi_n}\slb\hln'(z)-\psi_0'(z)\srb^2f_0(z+\bth)dz+2\dint_{-\xi_n}^{\xi_n}\slb \psi_0'(z-\td)-\psi_0'(z)\srb^2f_0(z+\bth)dz\\
 \leq &\ 2\underbrace{\dint_{-\xi_n}^{\xi_n}\slb\hln'(z)-\psi_0'(z)\srb^2g_0(z-\td)dz}_{T_1}+2\underbrace{\dint_{-\xi_n}^{\xi_n}\slb \psi_0'(z-\td)-\psi_0'(z)\srb^2g_0(z-\td)dz}_{T_2}.
\end{align*}
$T_1$ is $O_p(n^{-4p/5}(\log n)^6)$ by Lemma~\ref{FI Lemma: Lemma 1}. Since Assumption~\ref{assump: L} implies $\psi_0'$ is Lipschitz with constant $\kappa$ on its domain, Lemma~\ref{lemma: An inclusion} entails that  $T_2$ is bounded by 
\[2\kappa^2\td^2\dint_{-\xi_n}^{\xi_n}g_0(z-\td)dz\]
which is $O_p(n^{-1})$ since $\td=O_p(n^{-1/2})$. Hence, the proof follows.
\end{proof}

\subsubsection{\textbf{Lemma on consistency of Fisher information $\hin(\eta_n)$:}}
\begin{lemma}\label{lemma: consistency of FI} 
Under the set up of Theorem~\ref{theorem: main: one-step: full}, $\hin(\eta_n)\to_p\I$ where $\hin(\eta_n)$ is as defined in \eqref{27indef}.
\end{lemma}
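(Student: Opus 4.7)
The plan is to decompose $\hin(\eta_n) - \I$ into three manageable pieces. After the change of variable $z=x-\bth$, writing $F_0(z+\bth) = G_0(z-\td)$ and noting that $\tilde{G}_n$ is the distribution function of $\hn$, I decompose
\begin{align*}
\hin(\eta_n) - \I
&= \underbrace{\int_{-\xin}^{\xin}\tp(z)^2\,d[\Fm(z+\bth)-F_0(z+\bth)]}_{A_1} \\
&\quad+\underbrace{\int_{-\xin}^{\xin}\tp(z)^2\bigl(g_0(z-\td)-\hn(z)\bigr)dz}_{A_2}
+\underbrace{\int_{-\xin}^{\xin}\tp(z)^2\hn(z)\,dz-\I}_{A_3}.
\end{align*}
The term $A_3$ is $o_p(1)$ directly by Lemma~\ref{lemma: consistency of FI: 2}, since $\hn\in\mathcal{SLC}_0$ and $a_n=\xin$ meets the stated hypotheses.

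For the bias term $A_2$, I would further split $g_0(z-\td)-\hn(z) = (g_0(z-\td)-g_0(z)) + (g_0(z)-\hn(z))$. On $[-\xin,\xin]$ the identity $g_0(z-\td) = g_0(z)\exp(\psi_0(z-\td)-\psi_0(z))$ combined with Lemma~\ref{lemma: bound: psi knot prime} gives $|\psi_0(z-\td)-\psi_0(z)|\leq |\td|\cdot O_p(\log n)$, which is $o_p(1)$; applying $|e^u-1|\leq 2|u|$ for $|u|\leq 1$ yields $|g_0(z-\td)-g_0(z)|\lesssim |\td|\log n\cdot g_0(z)$ uniformly. To finish, I need $\int_{-\xin}^{\xin}\tp^2 g_0\,dz = O_p(1)$; this follows by combining $A_3\to_p\I$ with
\[
\int_{-\xin}^{\xin}\tp^2|g_0-\hn|\,dz \leq \|\tp\|_\infty^2\cdot\|g_0-\hn\|_1 = O_p(n^{2p/5})\cdot O_p(n^{-p}) = o_p(1),
\]
where $\|\tp\|_\infty^2=O_p(n^{2p/5})$ on $[-\xin,\xin]$ comes from Lemma~\ref{lemma: tilde psi prime bound} and $\|g_0-\hn\|_1\lesssim H(\hn,g_0)$ by Fact~\ref{fact: dTV and hellinger} and Condition~\ref{cond: hellinger rate}. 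The first piece of $A_2$ is then $O_p(n^{-1/2}\log n)\cdot O_p(1)=o_p(1)$, and the second piece is the same display, also $o_p(1)$.

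For the empirical-process term $A_1$, I follow the maximal-inequality strategy from Step~1 of the proof of Theorem~\ref{theorem: main: one-step: full}. With $M_n=Cn^{p/5}$, define
\[
\mathcal{G}_n(C)=\bigl\{u(\cdot-\bth)^2\cdot 1_{[r_1,r_2]}(\cdot)\,:\,u\in\mathcal{U}_n(M_n),\ [r_1,r_2]\subset [\theta_0-C\log n,\theta_0+C\log n]\cap\iint(\dom(\phi_0))\bigr\}.
\]
By Lemmas~\ref{lemma: tilde psi prime bound} and~\ref{lemma: An inclusion} together with $\bth=\theta_0+O_p(n^{-1/2})$, the integrand defining $A_1$ lies in $\mathcal{G}_n(C)$ with probability tending to one for $C$ large enough. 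Since the square of a monotone function valued in $[-M_n,M_n]$ has total variation at most $2M_n^2$, Theorem~2.7.5 of \cite{wc} yields $\log N_{[\,]}(\e,\mathcal{G}_n(C),L_2(P_0))\lesssim M_n^2/\e$. With $K_n=\|\mathcal{G}_n(C)\|_{P_0,2}$ satisfying $K_n^2\leq \|\mathcal{G}_n\|_\infty\cdot\|\mathcal{G}_n\|_{P_0,1}=O_p(n^{2p/5})$ (using $\int\tp^2 f_0=O_p(1)$, proved as above), the bracketing integral is $O_p(M_n\sqrt{K_n})=O_p(n^{3p/10})$. Fact~\ref{fact: empirical process of m-p's} then gives $E\|\mathbb{G}_n\|_{\mathcal{G}_n(C)}=O_p(n^{3p/10})$, so $A_1=O_p(n^{3p/10-1/2})=o_p(1)$ since $p<1$.

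The main obstacle is the entropy control of the class of squared scores in $A_1$, since $\tp^2$ itself is not monotone. This is resolved by the observation that the square of a bounded monotone function is of bounded variation, which yields a BV-type bracketing bound of the same form as the one used in Step~1 of the proof of Theorem~\ref{theorem: main: one-step: full}; thereafter the calculation mirrors that earlier argument with $M_n^2$ in place of $M_n$.
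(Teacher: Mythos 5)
Your decomposition is, up to merging two terms, exactly the paper's: the paper splits $\hin(\eta_n)-\I$ into an empirical-process term, a shift-bias term $\int\tp^2(g_0(\cdot-\td)-g_0)$, a density-error term $\int\tp^2(g_0-\hn)$, and the term handled by Lemma~\ref{lemma: consistency of FI: 2}; your $A_2$ is the sum of the middle two. The only genuinely different choices are tactical. For the shift-bias piece the paper bounds it by $\|\tp\|_\infty^2\,d_{TV}(g_0(\cdot-\td),g_0)\lesssim n^{2p/5}|\td|$ via Facts~\ref{fact: dTV and hellinger} and \ref{fact: helli fknot}, whereas you use the multiplicative bound $|g_0(z-\td)-g_0(z)|\lesssim|\td|(\log n)g_0(z)$ from Assumption~\ref{assump: L} together with $\int_{-\xin}^{\xin}\tp^2g_0=O_p(1)$; both work (your route needs Lemma~\ref{lemma: An inclusion} to justify applying Lemma~\ref{lemma: bound: psi knot prime} at $z-\td$, which holds since $\td=o_p(\eta_n)$). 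For the empirical-process term the paper converts the bracketing bound to covering numbers and invokes the Glivenko--Cantelli-type Fact~\ref{fact: GC}, which only needs $M_n^4=o(n)$, while you use the bracketing maximal inequality (Fact~\ref{fact: empirical process of m-p's}); either suffices since only $o_p(1)$ is needed, not a rate.

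One step of yours is not justified as written: the claim $K_n^2\leq\|\mathcal G_n(C)\|_\infty\,\|\mathcal G_n(C)\|_{P_0,1}=O_p(n^{2p/5})$ uses $\int\tp^2f_0=O_p(1)$, but that controls only the single random element of the class, not the envelope $\sup_{h\in\mathcal G_n(C)}\|h\|_{P_0,1}$; since your class carries no $P_0$-norm constraint (unlike the paper's $\mathcal H_n(C)$, which builds one in), constant functions of size $M_n$ give $\|\mathcal G_n(C)\|_{P_0,1}\asymp M_n^2$, so $K_n$ can be of order $n^{2p/5}$ rather than $n^{p/5}$. The slip is harmless: either add the constraint $\|h\|_{P_0,1}\leq C$ to the class definition (your own $O_p(1)$ bound shows the relevant integrand satisfies it with high probability), or simply take the worst case $K_n\lesssim M_n^2$, in which case Fact~\ref{fact: empirical process of m-p's} still yields $E\|\mathbb G_n\|_{\mathcal G_n(C)}\lesssim M_n^2=n^{2p/5}$ and hence $A_1=O_p(n^{2p/5-1/2})=o_p(1)$ because $p<1$. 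With that repair your argument is complete and delivers the lemma.
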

\begin{proof}[of Lemma~\ref{lemma: consistency of FI} ]
Denoting $\td=\theta_0-\bth$, we observe that
\begin{align}\label{inlemma: consistency of FI}
\MoveEqLeft|\hin(\eta_n)-\Ig(\eta_n)|\nn\\
\leq &\ \underbrace{\bl\dint_{\bth-\xi_n}^{\bth+\xi_n}\hln'(x-\bth)^2d(\Fm-F_0)(x)\bl}_{T_1}
 +\underbrace{\bl\dint_{-\xi_n}^{\xi_n} \hln'(x)^2 \slb g_0(x-\td)-g_0(x)\srb dx\bl }_{T_2}\nn\\
 &\ + \underbrace{\bl\dint_{-\xi_n}^{\xi_n} \hln'(x)^2 \slb g_0(x)-\hn(x)\srb dx\bl}_{T_3}+\underbrace{\bl\dint_{-\xi_n}^{\xi_n} \hln'(x)^2 \hn(x) dx-\I\bl}_{T_4}
\end{align}
Let us consider the term $T_1$ first. Denoting  $M_n=C n^{p/5}$ as in the 
   the proof of the first step of Theorem~\ref{theorem: main: one-step: full}, we recall the class of functions $\mathcal U_n(M_n)$  defined in \eqref{intheorem: def: U n}. 
 
In the same way we showed that $h_n\in\mathcal{H}_n(C)$ with high probability in the proof of the first step of Theorem~\ref{theorem: main: one-step: full},
we can show that the function defined by 
\[\tilde {h}_n(x)=\tp(x-\bth)^21_{[\bth-\xin,\bth+\xin]}(x)\]
 is a member of the class
\begin{align*}
 \mathcal{V}_n(C)=\bigg\{h:\RR\mapsto & \RR\ \bl \ h(x)=u(x)^2 1_{[r_1,r_2]}(x),\ u\in\mathcal{U}_{n}(M_n),\\
 &\
 [r_1,r_2]\subset [\th_0-C\log n,\th_0+C\log n]\cap\iint(\dom(\phi_0)) \bigg \}
 \end{align*}
 with high probability for all  large $n$ provided $C>0$ is sufficiently large. Using \eqref{inlemma: finite entropy increasing}, \eqref{inlemma: finite entropy indicator functions} and following some standard calculations,
  we can show that
\[\sup_Q\log N_{[\ ]}(\e,\mathcal{V}_n(C),L_2(Q)\lesssim M_n^2\e^{-1},\]
 where the supremum is over all probability measures on $\RR$.
 Because bracketing number is larger than covering number, it also follows that
 \[\sup_Q\log N(\e,\mathcal{V}_{n}(C),L_2(Q))\lesssim M_n^2\e^{-1}.\] 
 The definition of $\mathcal U_n(M_n)$ in \eqref{intheorem: def: U n} implies that the functions in $\mathcal{V}_n(C)$ are uniformly bounded by $M_n^2$.
Since for any fixed $\e>0$,
 \[M_n^2\sup_Q\log N(\e,\mathcal{V}_{n}(C),L_2(Q))\lesssim M_n^4\e^{-1}=O(n^{-4p/5}),\] 
Fact~\ref{fact: GC} leads to $E\|\mathbb F_n-F_0\|_{\mathcal{V}_n(C)}=o(1)$. Thus  Markov's inequality yields that $\|\mathbb F_n-F_0\|_{\mathcal{V}_n(C)}=o_p(1)$. Since for large $C$, $P(\tilde{h}_n\in\mathcal{V}_n(C))$ with high probability,  it can be shown that $\int \tilde {h}_n d(\mathbb F_n-F_0)\to_p 0$, which establishes $T_1=o_p(1)$.
 
 Since the supremum of $|\tp|$ over  $[-\xin,\xin]$ is $O_p(n^{p/5})$  by Lemma~\ref{lemma: tilde psi prime bound}, 
 we obtain that
  \[T_2\leq O_p(n^{2p/5})d_{TV}(g_0(\cdot-\td),g_0)\stackrel{(a)}{\leq }O_p(n^{2p/5}) H(g_0(\cdot-\td),g_0)\stackrel{(b)}{=}O_p(\td n^{2p/5}),\]
  which is $o_p(1)$ because $\td=O_p(n^{-1/2})$ and $p\in(0,1)$. Here (a) and (b) follow from Fact~\ref{fact: dTV and hellinger} and Fact~\ref{fact: helli fknot}, respectively.
 In a similar way we can show that
  \[T_3\leq O_p(n^{2p/5})d_{TV}(\hn,g_0)\leq O_p(n^{2p/5})H(\hn,g_0),\]
  which is $O_p(n^{-3p/5})$ by Condition~\ref{cond: hellinger rate}.
   Finally, noting $T_4$ is also $o_p(1)$ by Lemma~\ref{lemma: consistency of FI: 2}, the proof follows from \eqref{inlemma: consistency of FI}.
\end{proof}



 
\section{Proof of proposition~\ref{prop: partial and geo suffices}}
\label{app: proof of proposition 1}

We will first show that $\widehat{g}_{\bth}$ and $\hn^{geo, sym}$ satisfy Condition~\ref{condition: on hn}. Then using this result, we will show in  Lemma~\ref{lemma: hellinger: charles} and Lemma~\ref{lemma: hellinger: geometric} that $\widehat{g}_{\bth}$ and $\hn^{geo, sym}$ satisfy Condition~\ref{cond: hellinger rate}, respectively. To show that Condition~\ref{condition: on hn} holds for these two densities, we prove a general Proposition which states that Condition~\ref{condition: on hn} holds for all the density estimators of $g_0$ we have discussed so far. 

\begin{proposition}\label{prop: the L1 convergence of the density estimators of one-step estimators}
Suppose $f_0\in\mathcal{P}_1$ and $\bth$ is a  consistent estimator of $\th_0$. Then   $\hn=$ $\widehat{g}_{\bth}$, $\hf(\bth\pm\mathord{\cdot})$, $\hn^{geo,sym}$,  $\hts(\bth\pm\mathord{\cdot})$,  and $\hnss$
 satisfy Condition~\ref{condition: on hn}.
\end{proposition}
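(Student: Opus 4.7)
My plan is to reduce all five estimators to the log-concave MLE $\hf$ and its symmetric/partial analogue $\widehat f_{\bth}$ (the MLE in $\mathcal{SLC}_{\bth}$), for which the relevant consistency results are available in the literature of \cite{2009rufi}, \cite{theory}, \cite{dosssymmetric}, and \cite{smoothed}. The three parts of Condition~\ref{condition: on hn} have a common structure: an $L_1$/uniform statement about the density, an uniform-on-compacts statement about its log, and a pointwise statement about the derivative of the log. In each case the random shift $y_n=o_p(1)$ can be absorbed into a slightly enlarged compact set, provided the underlying convergence is already uniform on compacts in $\iint(\dom(\psi_0))$.

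First I would establish the statement for $\hf(\bth\pm\,\cdot\,)$. By the results of \cite{2009rufi} and \cite{theory}, $\hf$ is strongly $L_1$ and Hellinger consistent for $f_0$, and $\hf\to_p f_0$ uniformly on every compact $K\subset\iint(\supp(f_0))$; moreover, on every such $K$, $\log\hf\to_p\log f_0$ uniformly and $(\log\hf)'(x)\to_p\phi_0'(x)$ at every continuity point of $\phi_0'$. Translating by $\bth$ and using $\bth\to_p\theta_0$ together with continuity of $g_0$ on $\iint(J(G_0))$ (a consequence of log-concavity and Theorem~10.1 of \cite{rockafellar}), I would transfer all three statements to $g_0$. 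For the shift $y_n$ in (B) and (C), the argument is that for any compact $K\subset\iint(\dom(\psi_0))$ there is a slightly larger compact $K'$ with $K+y_n\subset K'$ with probability tending to one; uniform convergence on $K'$ then yields (B), and pointwise convergence of $\hln'$ (obtained from concavity; see Theorem~25.7 of \cite{rockafellar}) combined with continuity of $\psi_0'$ at $x$ yields (C).

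Next, for the partial MLE $\widehat g_{\bth}$, the same argument applies verbatim using Theorem~2.1 and Theorem~4.1 of \cite{dosssymmetric}, which provide the analogous Hellinger, uniform-on-compacts, and derivative consistency results for the MLE in $\mathcal{SLC}_{\bth}$. The geometric-mean symmetrization $\hn^{geo,sym}$ then follows by an algebraic manipulation: on any compact $K\subset\iint(\dom(\psi_0))$, $\log\hn^{geo,sym}(z)=\tfrac12(\log\hf(\bth+z)+\log\hf(\bth-z))+\log C_n^{geo}$, and the symmetry of $\psi_0$ gives $\tfrac12(\psi_0(z)+\psi_0(-z))=\psi_0(z)$; the normalizer $C_n^{geo}$ tends to one because $\hn^{geo,sym}/(\hf(\bth+\,\cdot\,)\hf(\bth-\,\cdot\,))^{1/2}$ integrates to one and the Cauchy--Schwarz inequality bounds the denominator by one up to a vanishing error.

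For the smoothed variants $\hts(\bth\pm\,\cdot\,)$ and $\hnss$, I would invoke \cite{smoothed}, where it is shown that $\widehat\lambda_n\to_p 0$ and that $\hts$ inherits the consistency properties of $\hf$ including uniform-on-compacts convergence of $\log\hts$ and pointwise convergence of $(\log\hts)'$ to $\phi_0'$ at continuity points. Since $\hnss(z)=\tfrac12(\hts(\bth+z)+\hts(\bth-z))$ is the average of two densities that both converge uniformly on compacts to $g_0$, part (A) is immediate; for (B) and (C) I would write $\log\hnss=\log g_0+\log(1+\epsilon_n)$ with $\epsilon_n\to_p 0$ uniformly on compacts, and differentiate using the facts that $\hts$ is smooth and the two summands share a common limit $g_0$. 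The main obstacle is the interaction between the random shift $y_n$ and the smoothing bandwidth $\widehat\lambda_n$ in the derivative statement (C) for $\hnss$; I would handle it by first showing uniform convergence of $(\log\hts)'$ on each compact subset of $\iint(\dom(\psi_0))$ at which $\psi_0'$ is continuous (which suffices since $\psi_0'$ is continuous Lebesgue-a.e.\ on $\iint(\dom(\psi_0))$) and then absorbing $y_n$ into a slightly enlarged compact as before.
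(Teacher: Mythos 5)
There is a genuine gap at the partial MLE $\widehat{g}_{\bth}$. You claim that the argument for $\hf$ ``applies verbatim using Theorem~2.1 and Theorem~4.1 of \cite{dosssymmetric}.'' Those results establish consistency of the symmetric log-concave MLE when the data are symmetric about the centering point, i.e.\ for $\widehat{g}_{\th_0}$. Here the centering is at the random, data-dependent $\bth$, so the shifted observations $X_i-\bth$ have density $g_0(\cdot-(\th_0-\bth))$, which is \emph{not} symmetric about zero; the class $\mathcal{SLC}_0$ is (slightly) misspecified for them, and the well-specified consistency theorems do not transfer verbatim. This is exactly the point where the paper has to do real work: it interprets $\widehat{g}_{\bth}$ as the Kullback--Leibler projection of the empirical distribution of the $X_i-\bth$ onto $\mathcal{SLC}_0$ and invokes the continuity of this projection in Wasserstein distance (Proposition~6 of \cite{xuhigh}, after Lemma~\ref{Lemma: projection theory for a distribution function F} reduces the symmetric projection to an ordinary log-concave projection of a symmetrized distribution), so that $L_1$ consistency follows from $d_W(\mathbb F_{n,X-\bth},G_0)\as 0$, which in turn is checked via Glivenko--Cantelli plus first-moment convergence. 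Some stability-under-perturbation argument of this kind (the paper uses another one, via \cite{barber2020}, for the Hellinger rate in Lemma~\ref{lemma: hellinger: charles}) is indispensable; simply citing \cite{dosssymmetric} does not cover it.

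Two smaller points. First, Condition~\ref{condition: on hn}(A) requires $\sup_{x\in\RR}|\hn(x+y_n)-g_0(x)|\to_p0$ over the whole line, not just over compacts; your plan only asserts uniform convergence on compacta. The paper closes this by combining $L_1$ consistency with Proposition~2(c) of \cite{theory} (log-concavity plus continuity of the limit upgrades $L_1$ to global sup-norm convergence), and you would need the same device. Second, for $\hnss$ your step ``write $\log\hnss=\log g_0+\log(1+\epsilon_n)$ and differentiate'' is not valid as stated, since uniform smallness of $\epsilon_n$ gives no control of its derivative; the correct route, which you partly gesture at and which the paper uses, is the explicit convex-combination identity \eqref{definition of tpz by tpc}, $(\hln^{sym,sm})'(x)=\varrho_n(x)\hlnsp(\bth+x)-(1-\varrho_n(x))\hlnsp(\bth-x)$ with $0\le\varrho_n\le1$, so that (C) for $\hnss$ follows from (C) for the log-concave pieces $\hts(\bth\pm\mathord{\cdot})$ regardless of the behavior of $\varrho_n$. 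The remaining parts of your outline (the $L_1\Rightarrow$ uniform $\Rightarrow$ uniform-on-compacts-for-logs $\Rightarrow$ derivative chain via Theorems~10.8 and 25.7 of \cite{rockafellar}, the Scheff\'e/Cauchy--Schwarz treatment of $C_n^{geo}$, and absorbing $y_n$ into enlarged compacts) match the paper's proof in substance.
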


The key step in proving Proposition~\ref{prop: the L1 convergence of the density estimators of one-step estimators} is showing that the $L_1$ consistency  in Condition~\ref{condition: on hn}(A) holds, which is established by Lemma~\ref{lemma: L1 convergence: one step density estimators}. The proof of 
Lemma~\ref{lemma: L1 convergence: one step density estimators} can be found in  Appendix~\ref{sec: adlemma: os: conditions}.

 \begin{lemma}\label{lemma: L1 convergence: one step density estimators}
Suppose $\bth\to_p \th_0$, and $\hn$ is one among 
$\hts(\bth\pm\mathord{\cdot})$, $\hf(\bth\pm\mathord{\cdot})$, $\hnss$, $\hn^{geo,sym}$, and $\widehat{g}_{\bth}$. Then $\|\hn-g_0\|_1\to_p 0$.  
\end{lemma}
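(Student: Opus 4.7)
The plan is to reduce everything to two known $L_1$-consistency results for the standard log-concave MLE $\widehat f_n$ and its smoothed version $\widetilde f_n^{sm}$, and then handle the various constructions on top. Specifically, from the log-concave MLE literature (e.g.\ \cite{2009rufi} or \cite{theory}) one has $\|\widehat f_n - f_0\|_1 \to_{a.s.} 0$, and from \cite{smoothed} one has $\|\widetilde f_n^{sm} - f_0\|_1 \to_{a.s.} 0$. Combined with continuity of translation in $L_1$, namely $\|h(\cdot + t_n) - h\|_1 \to 0$ whenever $t_n \to 0$ and $h \in L_1(\RR)$, these give the backbone of the argument.

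For $\widehat f_n(\bth \pm \cdot)$ and $\widetilde f_n^{sm}(\bth \pm \cdot)$, I would apply the triangle inequality
\[
\|\widehat f_n(\bth + \cdot) - g_0\|_1 \leq \|\widehat f_n - f_0\|_1 + \|f_0(\cdot + \bth) - f_0(\cdot + \theta_0)\|_1,
\]
where the first term is $o_p(1)$ by the cited consistency and the second is $o_p(1)$ by continuity of translation in $L_1$ together with $\bth \to_p \theta_0$ (applied along subsequences via Fact~\ref{fact: convergence in probability to convergence almost surely} if needed). The case of $\bth - \cdot$ is identical after replacing $f_0$ by $f_0(-\cdot)$. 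For $\hnss(z) = (\widetilde f_n^{sm}(\bth + z) + \widetilde f_n^{sm}(\bth - z))/2$, another triangle inequality and the symmetry $g_0(z) = g_0(-z)$ immediately give $\|\hnss - g_0\|_1 \to_p 0$. For the geometric symmetrization $\hn^{geo,sym}$, the pointwise AM--GM inequality $(ab)^{1/2} \leq (a+b)/2$ bounds the unnormalized geometric mean by the arithmetic one, so the former is uniformly integrable along subsequences; combining this with the pointwise convergence $\widehat f_n(\bth \pm z) \to_p g_0(z)$ (at continuity points of $g_0$) and Scheffé's lemma yields $L_1$ convergence of the unnormalized version, and hence $C_n^{geo} \to_p 1$ and $\|\hn^{geo,sym} - g_0\|_1 \to_p 0$.

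The genuinely delicate case is the partial MLE $\widehat g_{\bth}$, since it is defined as a maximizer over $\mathcal{SLC}_0$ applied to the shifted data $X_i - \bth$, whose density $g_0(\cdot - (\theta_0 - \bth))$ is log-concave but not symmetric about $0$ for $\bth \neq \theta_0$. My plan is to invoke the misspecified log-concave MLE theory of \cite{dumbreg} and \cite{dosssymmetric}: for fixed $\bth$ close to $\theta_0$, the symmetric log-concave MLE of the shifted data converges almost surely (in Hellinger, hence in $L_1$) to the symmetric log-concave projection of $g_0(\cdot - (\theta_0 - \bth))$ onto $\mathcal{SLC}_0$. Continuity of this projection in the Wasserstein (or weak) topology at $g_0$ itself, combined with $\bth \to_p \theta_0$, then gives $\|\widehat g_{\bth} - g_0\|_1 \to_p 0$ after a subsequence argument via Fact~\ref{fact: convergence in probability to convergence almost surely}.

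The main obstacle I anticipate is this last step: making the passage from ``consistency of the misspecified symmetric log-concave MLE for each fixed $\bth$'' to ``uniform'' consistency as $\bth$ varies with $n$ rigorous. The cleanest route is probably to condition on $\bth$, work almost surely along a subsequence on which $\bth \to \theta_0$, and invoke a continuity-of-projection lemma (analogous to Theorem 2.15 of \cite{dumbreg}) to conclude. The other estimators are conceptually easier because they rest directly on the well-studied unconstrained MLE $\widehat f_n$.
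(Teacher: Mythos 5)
Your treatment of $\hts(\bth\pm\mathord{\cdot})$, $\hf(\bth\pm\mathord{\cdot})$, $\hnss$, and $\hn^{geo,sym}$ is essentially the paper's argument (the paper bounds the translation term by $H(g_0(\mathord{\cdot}-\td),g_0)\lesssim|\td|$ instead of $L_1$-continuity of translation, and handles $C_n^{geo}$ via pointwise a.e.\ convergence plus Scheff\'e rather than your AM--GM domination; both variants are fine, and your uniform-integrability argument for the geometric mean is if anything more careful). The genuine problem is the partial MLE $\widehat{g}_{\bth}$. Your plan -- fixed-$\bth$ consistency of the misspecified symmetric log-concave MLE toward the projection of $g_0(\mathord{\cdot}-(\th_0-\bth))$, followed by conditioning on $\bth$ and continuity of the projection in $\bth$ -- does not go through as stated: $\bth$ is computed from the same sample, so conditioning on it destroys the i.i.d.\ structure under which any fixed-shift consistency theorem is proved, and since the shift changes with $n$ there is no fixed-$\bth$ asymptotic regime in which "the MLE of the shifted data converges a.s.\ to the projection of the shifted population density" applies. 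The intermediate object (the projection of $g_0(\mathord{\cdot}-(\th_0-\bth))$ onto $\mathcal{SLC}_0$) is exactly the step your outline leaves unproved, and it is where the circularity sits.

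The repair -- and what the paper actually does -- is to drop the population-level intermediate entirely and apply the continuity-of-projection result at the level of the \emph{empirical} input: $\widehat{g}_{\bth}$ is by definition the symmetric log-concave (Kullback--Leibler) projection of \Fn, the empirical distribution of the $X_i-\bth$, and Proposition 6 of \cite{xuhigh} gives $\|\widehat{g}_{\bth}-g_0\|_1\as 0$ once $d_W(\Fn,G_0)\as 0$ (with $G_0$ non-degenerate and integrable). That Wasserstein convergence is then elementary: along a subsequence where $\bth\as\th_0$, weak convergence of \Fn\ to $G_0$ follows from the Glivenko--Cantelli theorem, and convergence of first absolute moments follows from a Glivenko--Cantelli argument uniform over $\bth$ in a compact neighborhood of $\th_0$ together with dominated convergence, so Theorem 6.9 of \cite{villani2009} applies; Fact~\ref{fact: convergence in probability to convergence almost surely} and Fact~\ref{fact: Shorack} convert this back to convergence in probability. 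So your closing instinct (subsequences plus a continuity-of-projection lemma) is the right one, but the lemma must be invoked with the shifted empirical distribution as the argument, not the shifted population density; as written, your route for $\widehat{g}_{\bth}$ has a gap.
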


Now we are ready to prove  Proposition~\ref{prop: the L1 convergence of the density estimators of one-step estimators}.

\begin{proof}[Proof of Proposition~\ref{prop: the L1 convergence of the density estimators of one-step estimators}]
As in the proof of Lemma~\ref{lemma: L1 convergence: one step density estimators}, one can show that it suffices to prove   Proposition~\ref{prop: the L1 convergence of the density estimators of one-step estimators} when $\bth\as\th_0$, and $y_n\as 0$.  Hence, in what follows,  we assume that $\bth\as\th_0$, and $y_n\as 0$.
First we will verify Condition~\ref{condition: on hn} when $\hn\in\mathcal{LC}$. Note that, this covers the case of $\widehat{g}_{\bth}$, $\hf(\bth\pm\mathord{\cdot})$, $\hn^{geo,sym}$, and $\hts(\bth\pm\mathord{\cdot})$. We will consider the case of $\hnss$ separately because the latter is not log-concave.

Assuming $\hn\in\mathcal{LC}$, to verify part A of Condition~\ref{condition: on hn}, we first note that
\begin{align*}
 \|\hn(\cdot+y_n)-\q\|_1\leq  \|\hn(\cdot +y_n)-\q(\cdot +y_n)\|_1+
\|\q(\cdot+y_n)-\q\|_1,
\end{align*}
whose first term converges to zero almost surely by 
 Lemma~\ref{lemma: L1 convergence: one step density estimators}.
Also, since $\q$ is continuous, $\q(x+y_n)$ converges to $\q(x)$ for each $x\in\RR$.
Therefore the second term also converges to zero almost surely by Glick's Theorem \citep[Theorem 2.6,][]{devroye1987}.
   Thus we obtain that $\|\hn(\cdot+y_n)-\q\|_1\as 0$.
   Since $\hn(\mathord{\cdot}+y_n)$ is log-concave, the above, combined with  Proposition 2(c) of \cite{theory}, yields that
 $\|\hn(\cdot+y_n)-\q\|_{\infty}\as 0$ which completes the verification of part A of Condition~\ref{condition: on hn}.
  As a consequence,
  \[\hln(x+y_n)=\log(\hn(x+y_n))\as\psp(x),\quad \text{ for each }x\in\iint(\dom(\psp)).\]
 Since  $\hln$ is concave for $\hn\in\mathcal{LC}$, Theorem~10.8 of \cite{rockafellar} entails that the above 
 pointwise convergence translates to uniform convergence on all compact sets inside $\iint(\dom(\psp))$, which leads to
 $$\sup_{x\in K}|\hln(x+y_n)-\psp(x)|\as 0,$$
 proving part B of Condition~\ref{condition: on hn}.
Since $\hln$ is concave, Part C follows directly  from Part B  by Theorem 25.7 of \cite{rockafellar}. 
Thus we  have established Condition~\ref{condition: on hn} for $\hn=\widehat{g}_{\bth}$, $\hf(\bth\pm\mathord{\cdot})$, $\hn^{geo,sym}$, 
and $\hts(\bth\pm\mathord{\cdot})$.


Now we verify Condition~\ref{condition: on hn} for $\hnss$.
Part A of Condition~\ref{condition: on hn} can be verified  noting  \eqref{representation of htsm}  implies
  \begin{align*}
\MoveEqLeft 2\sup_{x\in \RR}|\htsm(x+y_n)-\qsm(x)|\\
\leq  &\  \sup_{x\in \RR}|\hf^{sm}(\bth+x+y_n)-\qsm(x)| +\sup_{x\in \RR}|\hf^{sm}(\bth-x+y_n)-\qsm(x)|,
\end{align*}
which converges to zero almost surely because, as we have already shown, the log-concave density $\hts$ satisfies Condition~\ref{condition: on hn}(A).

  To prove part B,
we observe that
 \begin{align*}
 \sup_{x\in K}\bl\hlnsm(x+y_n)-\psp(x)\bl\leq \dfrac{\sup_{x\in K}|\htsm(x+y_n)-\q(x)|}{\min\lb\inf\limits_{x\in K}\htsm(x+y_n),\inf\limits_{x\in K}\q(x)\rb},
 \end{align*}
whose numerator converges to zero almost surely by part A of Condition~\ref{condition: on hn}.  Thus, to verify part B of Condition~\ref{condition: on hn} for $\htsm$, we only need to show that the denominator of the term on the right hand side of last display is bounded away from zero. To this end, notice that 
\[\inf_{x\in K}\htsm(x+y_n)\geq \inf_{x\in K}\min(\hts(\bth+x+y_n),\hts(\bth-x-y_n))\stackrel{(a)}{\as} \inf_{x\in K}g_0(x),\]
where (a) follows because we just showed that  $\hts(\bth\pm\mathord{\cdot})$ satisfy  Condition~\ref{condition: on hn}. Now $ \inf_{x\in K}g_0(x)>0$ because  $K$ is a  subset of  $\iint(\dom(\psp))$. Thus we have verified  part B of Condition~\ref{condition: on hn}  for $\htsm$.

Next note that  $\hts$ is a smooth function, and it is also positive on  $\RR$. Therefore $\hlnsm$ and  $\hlns$ are differentiable on $\RR$. Therefore, for any $x\in\RR$,
 \begin{equation}\label{definition of tpz by tpc}
(\hln^{sym,sm})'(x)=\varrho_n(x)\lb\hlnsp(\bth+x)\rb-(1-\varrho_n(x))\lb\hlnsp(\bth-x)\rb,
\end{equation}
where  $\varrho_n(x)=\hts(\bth+x)/2\hnss(x)<1$.
Thus 
\begin{align*}
\MoveEqLeft|\hlnspm(x+y_n)-\psp'(x)| \leq  \varrho_n(x+y_n)|\hlnsp(\bth+x+y_n)-\psp'(x)|\nonumber\\
&\ +\ (1-\varrho_n(x+y_n))|\hlnsp(\bth-x-y_n)-\psp'(-x)|.
\end{align*}
 Since $ \varrho_n$ is uniformly bounded by one,  Condition~\ref{condition: on hn}(C)  applied on the concave function $\hlns(\bth\pm\mathord{\cdot})$ completes the verification of part C for $\hlnsm$.
\end{proof}

\subsection{Auxiliary lemmas for the proof of proposition~\ref{prop: partial and geo suffices}}
\label{sec: adlemma: os: conditions}
\begin{proof}[Proof of Lemma~\ref{lemma: L1 convergence: one step density estimators}]
First we show that  it suffices to prove the current lemma when $\bth\as \th_0$.  Since $\bth$ is  consistent, Fact~\ref{fact: convergence in probability to convergence almost surely} implies
given any subsequence of $\{\bth\}_{n\geq 1}$, there exists a further subsequence $\{\overline\theta_{n_k}\}_{k\geq 1}$ such that $\overline\theta_{n_k}\as \th_0$ as $k\to\infty$.  If we can show Therefore, along this subsequence $\{n_k\}_{k\geq 1}$, the $L_1$ distance between $\hn$ and $g_0$  approaches zero almost surely.  In that case,  Fact~\ref{fact: Shorack} implies  that $\|\hn-g_0\|_1$ converges in probability to zero. Therefore, in what follows, we assume that $\bth\as \th_0$. 

We begin with the case of $\hts$. 
Theorem~1 of  \cite{smoothed} implies that when $f_0$ has finite second central moment, we have
\begin{equation}\label{convergence: L1: hts}
\edint|\hts(x)-\psm(x)|dx\as 0.
\end{equation}
That $f_0$ has second central moment is immediate by Fact~\ref{fact: Lemma 1 of theory paper}.
Note that
\[\|\hts(\bth+\cdot)-g_0\|_1\leq \|\hts-f_0\|_1+\|f_0(\bth+\cdot)-g_0\|_1,\]
whose first term converges to zero almost surely by \eqref{convergence: L1: hts}, and the second term 
\[\|f_0(\bth+\cdot)-g_0\|_1=\|g_0(-\td+\cdot)-g_0\|_1\stackrel{(a)}{\leq} \sqrt 2 H(g_0(-\td+\cdot),g_0)\stackrel{(b)}{\lesssim}\td\as 0\]
where (a) and (b) follow from Fact~\ref{fact: dTV and hellinger} and Fact~\ref{fact: helli fknot}, respectively. Thus we have established that $\|\hts(\bth+\cdot)-g_0\|_1\as 0$. Since $g_0$ is symmetric about zero,   $\|\hts(\bth-\cdot)-g_0\|_1\as 0$ follows.
Because $\|\hf-f_0\|_1\as 0$ by Theorem~$4$ of \cite{theory}, the proof of 
 $\|\hf(\bth\pm\mathord{\cdot})-g_0\|_1\as 0$ follows in the same way.
 
The  $L_1$ consistency of $\htsm$  also follows noting  \eqref{representation of htsm} implies  
   \begin{align}\label{inlemma: L1 convergence: smoothed}
2 \|\htsm-\qsm\|_1\leq \|\hts(\bth+\cdot)-\qsm\|_1+
\|\hts(\bth-\cdot)-\qsm\|_1\as 0.
\end{align}

Next, we consider the  geometric mean estimator $\hg^{geo,sym}$. 
 We have already established
  \begin{equation}\label{L1 convergence: MLE: centered}
   \edint|\hf(\bth\pm x)-\q( x)|dx\as 0,
  \end{equation}
     which entails that  the distribution functions of $\hf(\bth\pm\mathord{\cdot})$ converge weakly to  $G_0$. The above, combined with 
     Proposition $2$(b) of \cite{theory} shows that \eqref{L1 convergence: MLE: centered} leads to almost sure convergence of
 $\hf(\bth\pm x)$ to  $\q( x)$ almost everywhere on $\RR$ with respect to the Lebesgue measure. As a consequence, it follows that
 \[\hg^{geo,sym}(x)C^{geo}_{n}\as\sqrt{\q(x)\q(-x)}=\q(x)\quad a.e.\ \ x.\]
Recall from \eqref{est 5} that
 \[C^{geo}_{n}=\edint\sqrt{\hf(\bth+x)\hf(\bth-x)}dx.\]
   From Scheff\'{e}'s Lemma it follows that $C^{geo}_{n}\as \int d\tilde{G}_0=1$.
 We have thus established that $\hg^{geo,sym}$ converges almost everywhere to $\q$ almost surely. Therefore, $\Gs$ converges weakly to $G_0$ almost surely. The desired strong $L_1$ consistency then follows from Proposition 2(c) of \cite{theory}.

To establish the $L_1$ consistency of the partial MLE estimator $\widehat{g}_{\bth}$, we appeal to  the  projection  theory developed in  \cite{xuhigh}. According to this theory, $\widehat{G}_{\bth}$ can be interpreted  as the  the projection (w.r.t. Kullback-Leibler divergence)  of \Fn,  the empirical distribution function of the $X_i-\bth$'s, onto the space of the distribution functions with density in $\mathcal{SLC}_0$. This projection operator has some continuity properties. In particular, if we can show that
\begin{equation}\label{convergence of empirical: wasserstein metric : partial mle estimator}
 d_W(\Fn, G_0)\as 0,
 \end{equation} 
the desired $L_1$ consistency $\|\widehat{g}_{\bth}-\q\|_1\as 0$ follows from Proposition $6$ of \cite{xuhigh} provided $G_0$ is non-degenerate and it has first finite moment. The non-degeneracy is trivial and the existence of first moment follows from Fact~\ref{fact: Lemma 1 of theory paper}. Hence, it is enough to prove \eqref{convergence of empirical: wasserstein metric : partial mle estimator} holds,  for which, by Theorem~$6.9$ of \cite{villani2009}, 
 it suffices to show 
 \begin{equation}\label{condition: dw: first condition}
  \edint|x|d\Fn(x)\as \edint|x|dG_0(x),
  \end{equation} 
  and that \Fn\ converges to $G_0$ weakly with probability one.  
Since $\bth$ is strongly consistent for $\th_0$, and
\[
 \edint|x|d\Fn(x)=\edint |x-\bth|d\Fm(x), 
\]
 for any $d>0$,  an application of Glivenko-Cantelli Theorem  \citep[for example, see Theorem~$2.4.1$ of][]{wc} yields
\begin{align*}
\sup\limits_{\bth\in[\th_0-d,\th+d]}\bl\edint |x-\bth|d(\Fm-F_0)(x)\bl\as 0.
\end{align*}
 On the other hand,  strong consistency of $\bth$ implies  $|x-\bth|\leq|x-\th_0|+1$  with probability one for all sufficiently large $n$,  where the latter is integrable with respect to $f_0$. Therefore, the dominated convergence theorem  leads to
\[\edint |x-\bth|dF_0(x)\as \edint |x-\th_0|dF_{0}(x)=\edint |x|dG_0(x),\]
which proves \eqref{condition: dw: first condition}.

Our next step is to prove the weak convergence of \Fn to $G_0$. To this end, we note that
\[
\Fn(x)=\Fm(x+\bth)=\edint 1_{(-\infty,x+\bth]}(z)d\Fm(z),
\]
which converges almost surely to
\begin{align*}
\edint 1_{(-\infty,x+\th_0]}(z)dF_0(z)=G_0(x)
\end{align*}
 by an application of basic Glivenko-Cantelli Theorem \citep[see Theorem~$2.4.1$ of][]{wc}, and the fact that $F_0(x+\bth)\as F_0(x+\th_0)$ for all $x\in\RR$. This establishes \eqref{convergence of empirical: wasserstein metric : partial mle estimator}, which proves the strong $L_1$ consistency of $\widehat{g}_{\bth}$, thus finishing the proof of the current lemma.
 

 \end{proof}
 \subsubsection{\textbf{Lemmas on Hellinger error of $\widehat g_{\bth}$ and $\tilde{g}_n^{geo, sym}$:}}
 
 \begin{lemma}\label{lemma: hellinger: charles}
 Suppose $f_0=g_0(\mathord{\cdot}-\theta_0)$where $g_0\in\mathcal{SLC}_0$ and $\bth-\theta_0=O_p(n^{-1/2})$. Then
 $H(\widehat g_{\bth},g_0)=O_p(n^{-1/4})$.
 \end{lemma}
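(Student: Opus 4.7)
The plan is to introduce an intermediate target in $\mathcal{SLC}_0$ and split the Hellinger error via the triangle inequality. Let $\tilde P$ denote the law of $Y_1=X_1-\bth$, which has density $g_0(\mathord{\cdot}-\td)$ with $\td=\theta_0-\bth=O_p(n^{-1/2})$, and let $g^*\in\mathcal{SLC}_0$ denote the Kullback--Leibler projection of $\tilde P$ onto $\mathcal{SLC}_0$. Then
\[
H(\widehat g_{\bth},g_0)\le H(\widehat g_{\bth},g^*)+H(g^*,g_0).
\]
The MLE error $H(\widehat g_{\bth},g^*)$ decays at the classical log-concave rate $O_p(n^{-2/5})$ by the standard basic-inequality-plus-bracketing-entropy argument applied with reference density $g^*$ in place of $g_0$ (cf.\ Theorem~4.1(c) of \cite{dosssymmetric}); this term is subleading. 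The bottleneck will be the projection error $H(g^*,g_0)$, which yields the $n^{-1/4}$ rate.

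For the projection error, the key input is the second-order bound $KL(\tilde P,g_0)=O(\td^2)=O_p(n^{-1})$. Writing
\[
KL(\tilde P,g_0)=\int g_0(z)\bigl[\psi_0(z)-\psi_0(z+\td)\bigr]\,dz=-\int_0^{\td}\!\!\int g_0(z)\bigl[\psi_0'(z+s)-\psi_0'(z)\bigr]\,dz\,ds,
\]
the first-order term $\td\int g_0\psi_0'$ vanishes by the symmetry of $g_0$ (since $g_0\psi_0'$ is odd), and the remaining integral is bounded by $C\td^2$ via the integration-by-parts identity $\int g_0(\psi_0')^2=\mathcal{I}_{g_0}<\infty$, inherited from the ambient hypothesis $f_0\in\mathcal{P}_0$ of Proposition~\ref{prop: partial and geo suffices}. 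By definition of $g^*$, $KL(\tilde P,g^*)\le KL(\tilde P,g_0)=O_p(n^{-1})$, so Pinsker's inequality yields $d_{TV}(\tilde P,g^*)$ and $d_{TV}(\tilde P,g_0)$ both $O_p(n^{-1/2})$. The pointwise bound $(\sqrt a-\sqrt b)^2\le|a-b|$ gives $H\le\sqrt{d_{TV}}$, and a final triangle inequality produces
\[
H(g^*,g_0)\le\sqrt{d_{TV}(g^*,\tilde P)}+\sqrt{d_{TV}(\tilde P,g_0)}=O_p(n^{-1/4}),
\]
which is the claimed rate.

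The main technical obstacle is justifying $KL(\tilde P,g_0)=O(\td^2)$ without invoking the Lipschitz assumption on $\psi_0'$, since Assumption~\ref{assump: L} is not part of the hypothesis of Lemma~\ref{lemma: hellinger: charles}: one must carry out the integration by parts and establish the identity $\int g_0(\psi_0')^2=\mathcal{I}_{g_0}$ using only the absolute continuity of $g_0$ guaranteed by Huber's characterization of finite Fisher information recalled after \eqref{definition: the symmetric model Ps}. A secondary, more routine issue is that Theorem~4.1(c) of \cite{dosssymmetric} is stated for the correctly-specified symmetric log-concave MLE; its proof is adapted to the present misspecified setting by observing that the bracketing entropy bound depends only on the model class $\mathcal{SLC}_0$ and that the exponential envelope in Fact~\ref{fact: Lemma 1 of theory paper} continues to control the effective support of the likelihood ratio.
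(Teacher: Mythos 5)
Your route breaks down at the step $\mathrm{KL}(\tilde P,g_0)=O_p(\td^2)$, and this is not a repairable technicality: under the lemma's hypotheses $g_0\in\mathcal{SLC}_0$ may have bounded support (e.g.\ the symmetrized beta density with $r>2$, which is symmetric, log-concave, has $\I<\infty$ and is used in the paper's own simulations), and then $\tilde P$, whose support is $\supp(g_0)+\td$, is not absolutely continuous with respect to $g_0$, so $\mathrm{KL}(\tilde P,g_0)=+\infty$ for every $\td\neq 0$. Consequently the chain projection-optimality $\Rightarrow$ Pinsker $\Rightarrow$ $H\le\sqrt{d_{TV}}$ yields nothing. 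Even in the full-support case, the bound you need, namely $\int g_0(z)\bigl[\psi_0'(z)-\psi_0'(z+s)\bigr]dz\lesssim |s|$, is a smoothness statement about $\psi_0'$ of exactly the type excluded by dropping Assumption~\ref{assump: L}; the identity $\int g_0(\psi_0')^2=\I$ controls the size of $\psi_0'$, not its modulus of continuity, so it does not deliver the second-order bound. The quadratic shift bound that \emph{is} available under finite Fisher information is in Hellinger distance (Fact~\ref{fact: helli fknot}), not in Kullback--Leibler, and the paper's proof is built precisely to avoid KL: it uses the Wasserstein-continuity of the symmetric log-concave projection (Theorem 1 of \cite{barber2020}), for which the perturbation enters only through $d_W\bigl(\mathbb F^{sym}_{n,Z},\mathbb F^{sym}_{n,Z+\td}\bigr)\le|\td|$, proved deterministically by Kantorovich--Rubinstein duality, together with a lower bound on the mean absolute deviation functional $\epsilon_{\mathbb F^{sym}_{n,Z}}$.

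There is a second gap in your first term. The data entering $\widehat g_{\bth}$ are $Z_i-\td$ with $\td$ a function of the same $Z_i$'s, so conditional on $\bth$ they are not an i.i.d.\ sample from $g_0(\mathord{\cdot}-\td)$, and the target $g^*$ is itself random; invoking the correctly specified rate theorem of \cite{dosssymmetric} ``with reference density $g^*$'' therefore requires either a uniform-in-shift empirical process argument or a genuine misspecified-MLE rate result, neither of which follows merely from the bracketing entropy of $\mathcal{SLC}_0$ and the envelope of Fact~\ref{fact: Lemma 1 of theory paper}. The paper sidesteps both difficulties by never introducing a population-level pseudo-truth: it writes $H(\widehat g_{\bth},g_0)\le H(\widehat g_{\bth},\widehat g_{\theta_0})+H(\widehat g_{\theta_0},g_0)$, gets $O_p(n^{-2/5})$ for the second (correctly specified) term from Theorem 4.1 of \cite{dosssymmetric}, and bounds the first term by $O_p(|\td|^{1/2})=O_p(n^{-1/4})$ via the projection-stability inequality applied to the two symmetrized empirical distributions. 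If you want to salvage your decomposition, you would have to replace the KL/Pinsker step by a Hellinger- or Wasserstein-based stability bound for the projection map, which is essentially the paper's argument.
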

 
 \begin{proof}[Proof of Lemma~\ref{lemma: hellinger: charles}]
 From Theorem 4.1 of \cite{dosssymmetric} it follows that $H(\widehat g_{\th_0},g_0)=O_p(n^{-2/5})$. The result will therefore follow by triangle inequality if we can show that 
 $H(\widehat g_{\th_0},\widehat g_{\bth})=O_p(n^{-1/4})$. 
 To that end, for any function $\ph:\RR\mapsto\RR$, and distribution function $G$, we define the functional $\om:(\ph,G)\mapsto\RR$ by
 \begin{equation}\label{definition: new criterion function }
     \om(\ph,G)=\edint \ph(x)dG(x)-\edint e^{\ph(x)}dx. 
     \end{equation}
     Recall that we defined $\widehat \psi_{\th}$ to be $\log \widehat g_{\th}$ for any $\th>0$. 
Denoting $\mathbb F_{n,Y}$ to be the empirical distribution function of random variables $Y_1,\ldots,Y_n$,  we observe that for any $\th>0$, $\widehat\psi_{\th}$ writes as \citep[see (2.4) of ][]{dosssymmetric}  
\[ \widehat \psi_{\th}= \argmax_{\phi\in\mathcal{SC}_0} \Phi(\phi, \mathbb F_{n,X-\th})
     = \argmax_{\phi\in\mathcal{SC}_0} \Phi(\phi, \mathbb F_{n,Z+\th_0-\th}),\]
     where $Z=X-\theta_0$. Let us denote $\dd=\th_0-\th$.
Using Lemma~\ref{Lemma: projection theory for a distribution function F} we obtain that
 \[\argmax_{\phi\in\mathcal{SC}_0} \Phi(\phi, \mathbb F_{n,Z+\dd})=\argmax_{\phi\in\mathcal{C}} \Phi(\phi, \mathbb F^{sym}_{n, Z+\dd}),\quad\text{ where }\]
 \[\mathbb F^{sym}_{n, Z+\dd}(x)=\frac{\mathbb F_{n,Z+\dd}(x)+1-\mathbb F_{n,Z+\dd}(-x)}{2}\stackrel{(a)}{=}\frac{\mathbb F_{n,Z}(x-\dd)+1-\mathbb F_{n,Z}(-x-\dd)}{2}\]
 is the symmetrized version of $\mathbb F_{n, Z+\dd}$. Here (a) follows because $\mathbb F_{n,Z+\dd}(x)$ equals $\mathbb F_{n, Z}(x-\dd)$. 
In particular, the choice  $\th=\th_0$ yields $\dd=0$, which leads to\\
 $\widehat\psi_{\th_0}=\argmax_{\phi\in\mathcal{C}} \Phi(\phi, \mathbb F^{sym}_{n, Z})$, 
 where
 $\mathbb F^{sym}_{n, Z}(x)=(\mathbb F_{n, Z}(x)+1-\mathbb F_{n, Z}(-x))/{2}.$
When $\th=\bth$, on the other hand, $\dd=\td$, which yields
 \[ \widehat \psi_{\bth}
     = \argmax_{\phi\in\mathcal{C}} \Phi(\phi, \mathbb F^{sym}_{n,Z+\td}),\ \mathbb F^{sym}_{n, Z+\td}(x)=\frac{\mathbb F_{n, Z}(x-\td)+1-\mathbb F_{n, Z}(-x-\td)}{2}.\]

If we can show that $\mathbb F^{sym}_{n, Z}(x)$ and $\mathbb F^{sym}_{n, Z+\td}$ are non-degenerate with finite first moment, then  Theorem 2 of \cite{barber2020} would imply that 
\begin{equation}\label{intheorem: Rina barbara statement}
   H(\widehat g_{\bth},\widehat{g}_{\th_0})\leq C \lb\frac{d_W(\mathbb F^{sym}_{n, Z},\mathbb F^{sym}_{n, Z+\td})}{\epsilon_{\mathbb F^{sym}_{n, Z}}}\rb^{1/2} ,
\end{equation}
 where $C>0$ is an absolute constant and for any distribution function $F$, $\e_F$ is defined by
 \[\epsilon_{F}=E_{ F}[|Y-E_{F}[Y]|].\]
 Here $E_F$ is the expectation with respect to $F$. 
  Since $G_0$ is non-degenerate, $\mathbb F_{n, Z}$ is non-degenerate with probability one. Therefore both $\mathbb F^{sym}_{n, Z}(x)$ and $\mathbb F^{sym}_{n, Z+\td}$ are non-degenerate with probabilty one. Also, because $\mathbb F_{n, Z}$ has finite first moment for all $n\geq 1$, both $\mathbb F^{sym}_{n, Z}(x)$ and $\mathbb F^{sym}_{n, Z+\td}$ have finite first moment for all $n\geq 1$. Therefore \eqref{intheorem: Rina barbara statement} holds.
 
 Next, we show that $\epsilon_{\mathbb F^{sym}_{n, Z}}$ is bounded away from zero almost surely. To that end, we first prove the side result that $ d_W(\mathbb F_{n,Z}^{sym},G_0)\as 0$.
 By \eqref{def: Wasserstein distance},
 \begin{align*}
   d_W(\mathbb F_{n,Z}^{sym},G_0)=&\ \edint |\mathbb F_{n,Z}^{sym}(x)-G_0(x)|dx  \\
   = &\ \edint \left | \frac{\mathbb F_{n, Z}(x)+1-\mathbb F_{n, Z}(-x)}{2}-G_0(x)\right |,
 \end{align*}
which, due to the symmetry of $g_0$  about the origin,  equals
\begin{align*}
    \MoveEqLeft \edint \left | \frac{\mathbb F_{n, Z}(x)+1-\mathbb F_{n, Z}(-x)}{2}-\frac{G_0(x)+1-G_0(-x)}{2}\right |\\
    \leq &\ \frac{1}{2}\lb\edint\left | \mathbb F_{n, Z}(x)-G_0(x)\right |dx+\edint\left | \mathbb F_{n, Z}(-x)-G_0(-x)\right |dx\rb\\
   \leq &\ \edint\left | \mathbb F_{n, Z}(x)-G_0(x)\right |dx,
\end{align*}
  which equals $d_W(\mathbb F_{n, Z}, G_0)$. The latter converges to zero almost surely by Varadarajan's Theorem \citep[][Theorem 11.4.1]{dudley} and the strong law of large numbers. Therefore,  $ d_W(\mathbb F_{n,Z}^{sym},G_0)\as 0$ follows.

 Proposition 1 of \cite{barber2020} implies that if $F$ and $F'$ are  distribution functions  with finite first moment, then $\epsilon_F>0$, and  $|\epsilon_{F}-\epsilon_{F'}|$ is bounded by $ 2d_W(F,F')$. Now $G_0$ being log-concave, has finite first moment. Therefore we have $\epsilon_{G_0}>0$. Also since $G_0$ and $\mathbb F_{n,Z}^{sym}$ are non-degenerate,  it follows that 
 \[\abs{\epsilon_{\mathbb F_{n,Z}^{sym}}-\epsilon_{G_0}}\leq d_W({\mathbb F_{n,Z}^{sym}},G_0),\]
 which implies 
 $\epsilon_{\mathbb F_{n,Z}^{sym}}>\epsilon_{G_0}-2d_W({\mathbb F_{n,Z}^{sym}},G_0) $. We have just shown $d_W(\mathbb F_{n,Z}^{sym},G_0)$ converges to zero almost surely. Therefore $\epsilon_{\mathbb F_{n,Z}^{sym}}>\epsilon_{G_0}/2$ for sufficiently large $n$ almost surely.
 
  If we can show that $d_W(\mathbb F_{n,Z}^{sym},\mathbb F_{n,Z+\td}^{sym})=O_p(\td)$, the proof of lemma~\ref{lemma: hellinger: charles} follows from \eqref{intheorem: Rina barbara statement} because $\td=O_p(n^{-1/2})$.  To that end, we use an alternative representation of $d_W$ which is due to the Kantorovich-Rubinstein duality theorem \citep[cf. Theorem 2.5]{bobkovbig}. For distribution functions $F_1$ and $F_2$ with finite first moment, it holds that
 \[
    d_W(F_1,F_2)=\sup_{h\in\text{Lip}_1}\abs{\edint h(x)d(F_1-F_2)} 
\]
 where $\text{Lip}_1$ is the set of all real-valued functions $h: \RR\mapsto\RR$ with Lipschitz constant one.
Therefore,
\begin{align*}
   d_W(\mathbb F_{n,Z}^{sym},\mathbb F_{n,Z+\td}^{sym})= &\ \sup_{h\in \text{Lip}_1}\frac{1}{2}\bl\edint h(x) d(\mathbb F_{n, Z}(x-\td)+\mathbb F_{n, Z}(-x-\td))\\
   &\ -\edint h(x) d(\mathbb F_{n, Z}(x)+\mathbb{G}_n(-x)\bl\\
   =&\ \sup_{h\in {\text{Lip}}_1}\frac{1}{2}\bl\edint \slb h(x+\td)-h(x)\srb  d(\mathbb F_{n, Z}(x)+\mathbb{G}_n(-x))\bl\\
   \leq &\ \sup_{h\in {\text{Lip}}_1}\frac{1}{2}\edint | h(x+\td)-h(x)| d(\mathbb F_{n, Z}(x)+\mathbb F_{n, Z}(-x))\\
   \stackrel{(a)}{\leq} &\ |\td| \edint d(\mathbb F_{n, Z}(x)+\mathbb F_{n, Z}(-x))dx/2,
\end{align*}
which equals $|\td|$. Here (a) uses the fact that $h$ is Lipschitz with Lipschitz constant one. Therefore, the proof follows.
 \end{proof}
 
 \begin{lemma}\label{lemma: hellinger: geometric}
 Suppose $f_0\in\mP_0$ and $\bth-\theta_0=O_p(n^{-1/2})$. Then the geometric mean estimator $\hn(x)=\sqrt{\hf(\bth+x)\hf(\bth-x)}/C_n^{geo}$ satisfies $H(\hn, g_0)=O_p(n^{-2/5})$.
 \end{lemma}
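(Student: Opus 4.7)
The plan is to apply the triangle inequality with a deterministic intermediate density. Let $m_n(x) = D_n^{-1}\sqrt{f_0(\bth+x)f_0(\bth-x)}$ denote the normalized geometric symmetrization of $f_0$ about $\bth$, where $D_n = \int\sqrt{f_0(\bth+y)f_0(\bth-y)}\,dy$. Then
\[H(\hn, g_0) \leq H(\hn, m_n) + H(m_n, g_0),\]
and it suffices to bound each summand by $O_p(n^{-2/5})$.

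For $H(m_n, g_0)$, symmetry of $g_0$ gives $\sqrt{f_0(\bth+x)f_0(\bth-x)} = \sqrt{g_0(x-\td)g_0(x+\td)}$, which by log-concavity of $g_0$ is pointwise bounded above by $g_0(x)$. Combined with $D_n \leq 1$, this implies $\sqrt{m_n(x)g_0(x)} \geq D_n^{-1/2}\sqrt{g_0(x-\td)g_0(x+\td)}$ and hence $\int\sqrt{m_n g_0}\,dx \geq \sqrt{D_n}$, so $H^2(m_n, g_0) \leq 1-\sqrt{D_n}$. Since $D_n = 1 - H^2(g_0(\cdot-\td), g_0(\cdot+\td))$ and Fact~\ref{fact: helli fknot} together with $\td = O_p(n^{-1/2})$ yields $1-D_n = O_p(n^{-1})$, we obtain $H(m_n, g_0) = O_p(n^{-1/2}) = o_p(n^{-2/5})$.

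The main work lies in bounding $H(\hn, m_n)$, where both densities are normalized geometric symmetrizations of (respectively) $\hf$ and $f_0$ about the common center $\bth$. Writing $\hn = q_n/C_n^{geo}$ and $m_n = q_0/D_n$ with $q_n, q_0$ the unnormalized symmetrizations and applying $(a+b)^2 \leq 2a^2+2b^2$ reduces the problem to bounding (i) the main integral $\int[(\hf_+\hf_-)^{1/4} - (f_{0+}f_{0-})^{1/4}]^2\,dx$ (with $\hf_\pm = \hf(\bth\pm x)$ and $f_{0\pm} = f_0(\bth\pm x)$) and (ii) the normalization gap $(\sqrt{C_n^{geo}} - \sqrt{D_n})^2$. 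A telescoping bound $|\sqrt{\hf_+\hf_-} - \sqrt{f_{0+}f_{0-}}| \leq \sqrt{\hf_+}|\sqrt{\hf_-}-\sqrt{f_{0-}}| + \sqrt{f_{0-}}|\sqrt{\hf_+}-\sqrt{f_{0+}}|$ combined with Cauchy--Schwarz yields $|C_n^{geo} - D_n| \lesssim H(\hf, f_0) = O_p(n^{-2/5})$, so (ii) is $O_p(n^{-4/5})$. For (i), I would further decompose
\[(\hf_+\hf_-)^{1/4}-(f_{0+}f_{0-})^{1/4} = \hf_+^{1/4}(\hf_-^{1/4}-f_{0-}^{1/4}) + f_{0-}^{1/4}(\hf_+^{1/4}-f_{0+}^{1/4})\]
and use the identity $(a^{1/4}-b^{1/4})^2 = (\sqrt a - \sqrt b)^2/(a^{1/4}+b^{1/4})^2$ to convert the resulting cross terms into weighted $L^2$-integrals of $(\sqrt{\hf}-\sqrt{f_0})^2$. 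The key obstacle is that the weights, of the form $\hf_+^{1/2}/(\hf_-^{1/4}+f_{0-}^{1/4})^2$, are not uniformly bounded in the tails where $f_0$ decays, so a direct estimate only gives $O_p(H(\hf,f_0)) = O_p(n^{-2/5})$ and hence the weaker rate $H(\hn,m_n) = O_p(n^{-1/5})$. I would recover the sharp rate by splitting the integration region into a compact set $K$---on which uniform convergence of log-concave MLEs (cf.\ Proposition~2 of \cite{theory}) makes the weights uniformly $O_p(1)$ with high probability, yielding a contribution of $O_p(H^2(\hf,f_0)) = O_p(n^{-4/5})$---and its complement, where the exponential tail bound of Fact~\ref{fact: Lemma 1 of theory paper} applied to both $\hf$ and $f_0$ renders the contribution negligible. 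This gives $H(\hn, m_n) = O_p(n^{-2/5})$; combining via the triangle inequality completes the argument.
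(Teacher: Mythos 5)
Your overall skeleton (intermediate density $m_n$, triangle inequality) is a legitimate variant of the paper's argument, and two of your three pieces are essentially fine: the bound $H(m_n,g_0)=O_p(n^{-1/2})$ via midpoint log-concavity $\sqrt{g_0(x-\td)g_0(x+\td)}\le g_0(x)$ is correct and clean, and the normalization term is also fine once you note (as you implicitly must) that $C_n^{geo}$ and $D_n$ are bounded away from zero so that $(\sqrt{C_n^{geo}}-\sqrt{D_n})^2\lesssim |C_n^{geo}-D_n|^2\lesssim H(\hf,f_0)^2$. The genuine gap is in your term (i), which is exactly the hard part of the lemma. Your compact-set/tail split does not deliver the rate as described: if $K$ is a \emph{fixed} compact set, the complement contributes a quantity of order $\int_{K^c}\bigl(\sqrt{\hf(\bth+x)\hf(\bth-x)}+\sqrt{f_0(\bth+x)f_0(\bth-x)}\bigr)dx$, which is a constant depending on $K$ (roughly $e^{-cR}$ for $K=[-R,R]$) and does not tend to zero with $n$ at all, let alone at rate $n^{-4/5}$. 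To make the tail negligible you must let $K=K_n$ grow with radius at least of order $\log n$; but then the weights $\hf(\bth+x)^{1/2}/\bigl(\hf(\bth-x)^{1/4}+f_0(\bth-x)^{1/4}\bigr)^2$ are no longer uniformly $O_p(1)$: on $|x|\lesssim\log n$ the denominator involves $f_0(\bth-x)$, which can be exponentially small, and bounding the weight amounts to controlling $\sup_y \hf(y)/f_0(y+2\td)$ up to the boundary of $\supp(\hf)=[X_{(1)},X_{(n)}]$. Neither uniform convergence on compacts (Proposition 2 of \cite{theory}) nor the envelope bound of Fact~\ref{fact: Lemma 1 of theory paper} gives such ratio control near the boundary, and it is a delicate, uncited claim (it is precisely where the log-concave MLE is least well behaved). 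So as written, your estimate of (i) only yields the weaker $O_p(n^{-1/5})$ rate that you yourself flag, and the proposed repair does not close the gap.

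The paper avoids this issue entirely by never comparing fourth roots with fourth roots through weighted integrals. It writes $(\hf(\bth+x)\hf(\bth-x))^{1/4}=\bigl(\hf(\bth+x)/\hf(\bth-x)\bigr)^{1/4}\sqrt{\hf(\bth-x)}$, compares with $\sqrt{g_0(x)}$ to produce the term $H(\hf,f_0(\cdot+\td))^2$ plus an asymmetry term
\begin{equation*}
\dint \Bigl(\bigl(\hf(\bth+x)/\hf(\bth-x)\bigr)^{1/4}-1\Bigr)^2\hf(\bth-x)\,dx,
\end{equation*}
and then uses the elementary inequality $(u-1)^2\le(u^2-1)^2$ for $u>0$ to bound the latter by $\int\bigl(\sqrt{\hf(\bth+x)}-\sqrt{\hf(\bth-x)}\bigr)^2dx\lesssim H(\hf,f_0)^2+H(f_0,f_0(\cdot+2\td))^2$. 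Everything reduces to unweighted Hellinger quantities, each $O_p(n^{-4/5})$ by the $n^{-2/5}$ rate of $\hf$ and Fact~\ref{fact: helli fknot}, with no tail splitting and no density ratios. To complete your proof you should either import this device into your step (i), or supply a genuine (currently missing) argument controlling the ratio-type weights on sets of radius growing like $\log n$.
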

 
 \begin{proof}[Proof of Lemma~\ref{lemma: hellinger: geometric}]
We first decompose $H(\hn,g_0)^2$ as follows:
 \begin{align}\label{inlemma: helli: geo: decomposition 1}
     H(\hn,g_0)^2\leq &\ \edint \left(\left(\sqrt{\hf(\bth+x)\hf(\bth-x)}/C_n^{geo}\right)^{1/2}-\sqrt{g_0(x)}\right)^2dx\nn\\
     \leq &\ 2(C_n^{geo} )^{-1}\underbrace{\edint \left(\left(\hf(\bth+x)\hf(\bth-x)\right)^{1/4}-\sqrt{g_0(x)}\right)^2dx}_{T_1}\nn\\
     &\ +2(C_n^{geo})^{-1}\underbrace{(\sqrt{C_n^{geo}}-1)^2}_{T_2}.
 \end{align}
 We focus on $T_1$ first. Note that
 \begin{align}\label{inlemma: helli: geo: T1 first decomp}
   T_1=&\ \edint \left(\left(\hf(\bth+x)\hf(\bth-x)\right)^{1/4}-\sqrt{g_0(x)}\right)^2dx\nn\\
  =&\ \edint \left(\left(\frac{\hf(\bth+x)}{\hf(\bth-x)}\right)^{1/4}\sqrt{\hf(\bth-x)}-\sqrt{g_0(x)}\right)^2\\
  \lesssim &\ \underbrace{H(\hf,f_0(\mathord{\cdot}+\td))^2}_{T_{11}}+\underbrace{\edint\lb \left(\frac{\hf(\bth+x)}{\hf(\bth-x)}\right)^{1/4}-1\rb^2\hf(\bth-x)dx}_{T_{12}}.\nn
  \end{align}
  $T_{11}$ is bounded by $2H(\hf,f_0)^2+2H(f_0,f_0(\mathord{\cdot}+\td))^2$. Thus $T_{11}=O_p(n^{-4/5})$ follows noting (a) $H(\hf, f_0)=O_p(n^{-2/5})$ by Theorem 3.2 of \cite{dossglobal},  and (b) $H(f_0,f_0(\mathord{\cdot}+\td))=O_p(\td)$ by Fact~\ref{fact: helli fknot}.
 Since   $(x-1)^2\leq (x^2-1)^2$ for  $x>0$, the term $T_{12}$ can be bounded by 
  \begin{equation*}
      \edint\lb \sqrt{\hf(\bth+x)}-\sqrt{\hf(\bth-x)}\rb^2dx=  \edint\lb \sqrt{\hf(x)}-\sqrt{\hf(2\bth-x)}\rb^2dx.
  \end{equation*}
Since $f_0(2\theta_0-x)=f_0(x)$, we can further bound $T_{12}$  by a constant multiple of
 \begin{align*}
 \MoveEqLeft 
  \edint\slb \sqrt{\hf(x)}-\sqrt{f_0(x)}\srb^2dx+\edint\slb\sqrt{\hf(2\bth-x)}-\sqrt{f_0(2\bth-x)}\srb^2dx\\
  &\ +\edint \slb \sqrt{f_0(2\bth-x)}-\sqrt{f_0(2\th_0-x)}\srb^2dx\\
  \leq &\ 4H(\hf, f_0)^2+\edint \slb \sqrt{f_0(x-2\td)}-\sqrt{f_0(x)}\srb^2dx\\
  =&\  8H(\hf, f_0)^2+4H(f_0(\mathord{\cdot}+2\td),f_0)^2,
 \end{align*}
 whose first term is $O_p(n^{-4/5})$, and the second term, by Fact~\ref{fact: helli fknot}, is of order $O_p(\td^2)$. Thus similar to $T_{11}$,  $T_{12}$ is $O_p(n^{-4/5})$ as well. Therefore from \eqref{inlemma: helli: geo: T1 first decomp} it follows that  $T_1=O_p(n^{-4/5})$.
 
  Using the fact that $(x-1)^2\leq (x^2-1)^2$ for non-negative $x$, we obtain that
  $T_2\leq (C_n^{geo}-1)^2$, which equals
  \begin{align*}
 \MoveEqLeft    \lb \edint \sqrt{\hf(\bth+x)\hf(\bth-x)}dx-1\rb^2\\
      =&\ \lb\edint \sqrt{\hf(\bth+x)}\slb \sqrt{\hf(\bth-x)}-\sqrt{\hf(\bth+x)}\srb dx\rb^2.
  \end{align*}
 The Cauchy-Schwarz inequality implies that the term on the right hand side of the above display is bounded by 
 \begin{align*}
 &\edint \slb \sqrt{\hf(\bth-x)}-\sqrt{\hf(\bth+x)}\srb^2 dx
     \lesssim \edint \slb \sqrt{\hf(\bth-x)}-\sqrt{f_0(\bth-x)}\srb^2 dx\\
     &+\edint \slb \sqrt{\hf(\bth+x)}-\sqrt{f_0(\bth+x)}\srb^2 dx
     +\edint \slb \sqrt{f_0(\bth-x)}-\sqrt{f_0(\bth+x)}\srb^2 dx.
     \end{align*}
   Clearly, the first two terms equal $4H(\hf, f_0)^2$, which is $O_p(n^{-4/5})$. Since $f_0$ is symmetric about $\th_0$, we can show that $f_0(\bth-x)=f_0(2\td+\bth+x)$, which implies the third term equals $H(f_0, f_0(\mathord{\cdot}+2\td))^2$, which, 
 by Fact~\ref{fact: helli fknot}, is of order $O_p(\td^2)$. Thus we have established  that $T_2$ is $O_p(n^{-4/5})$ as well, which also implies that $C_n^{geo}\to_p 1$. Therefore, by Slutskey's Theorem and \eqref{inlemma: helli: geo: decomposition 1}, the proof follows.
\end{proof}


  \begin{lemma} \label{Lemma: projection theory for a distribution function F}
 Suppose $F$ is non-degenerate and $F$ has finite first moment.
 Define
\begin{equation*}
F^{sym}_{\th}(x)=2^{-1}\lb F(x)+1-F(2\th-x)\rb.
\end{equation*}
Then it follows that
$\argmax_{\ph\in\mathcal{SC}_{\th}}\om(\ph,F)=\argmax_{\ph\in\mathcal{C}}\om(\ph,F^{sym}_{\th})$
where $\om$ is as defined in \eqref{definition: new criterion function }.
 \end{lemma}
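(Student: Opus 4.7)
The plan is to reduce the two optimization problems to one another by exploiting two symmetrization identities and the strict concavity of the criterion $\om$. First I would unpack the definition of $F^{sym}_{\th}$: writing $F^{sym}_\theta=(F+F^R)/2$, where $F^R(x)=1-F(2\th-x)$ is the distribution function of $2\theta-X$ for $X\sim F$, a change of variables gives $\int \ph(x)\,dF^R(x)=\int\ph(2\theta-x)\,dF(x)$ for any integrable $\ph$. If $\ph\in\mathcal{SC}_\theta$ then $\ph(2\theta-x)=\ph(x)$, so $\int\ph\,dF^R=\int\ph\,dF$ and therefore $\int\ph\,dF^{sym}_\theta=\int\ph\,dF$. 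Since the Lagrange term $\int e^{\ph}dx$ does not depend on the data distribution, this yields $\om(\ph,F)=\om(\ph,F^{sym}_\theta)$ for every $\ph\in\mathcal{SC}_\theta$, so
\[\argmax_{\ph\in\mathcal{SC}_{\th}}\om(\ph,F)=\argmax_{\ph\in\mathcal{SC}_{\th}}\om(\ph,F^{sym}_\theta).\]

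The remaining task is to show that the right-hand side of the lemma — the unconstrained maximizer over $\mathcal{C}$ — automatically lies in $\mathcal{SC}_\theta$. Given any $\ph\in\mathcal{C}$, define its symmetrization $\bar\ph(x)=(\ph(x)+\ph(2\theta-x))/2$. The reflection $x\mapsto \ph(2\theta-x)$ is closed and proper concave, so $\bar\ph\in\mathcal{SC}_\theta$. Because $F^{sym}_\theta$ is itself symmetric about $\theta$, the same change of variables as above gives $\int\ph(2\theta-x)\,dF^{sym}_\theta(x)=\int\ph(y)\,dF^{sym}_\theta(y)$, hence $\int\bar\ph\,dF^{sym}_\theta=\int\ph\,dF^{sym}_\theta$. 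For the Lagrange term, convexity of the exponential (AM--GM) gives $e^{\bar\ph(x)}\leq \tfrac12 e^{\ph(x)}+\tfrac12 e^{\ph(2\theta-x)}$; integrating and applying the change of variables $y=2\theta-x$ to the second summand yields $\int e^{\bar\ph}\,dx\leq \int e^{\ph}\,dx$. Together, these two inequalities produce $\om(\bar\ph,F^{sym}_\theta)\geq \om(\ph,F^{sym}_\theta)$ for every $\ph\in\mathcal{C}$.

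To close the argument, I would invoke existence and uniqueness of the unconstrained log-concave maximizer of $\om(\cdot,F^{sym}_\theta)$: non-degeneracy of $F$ forces non-degeneracy of $F^{sym}_\theta$, and $F$ having a finite first moment forces the same for $F^{sym}_\theta$, so the general log-concave MLE theory (e.g.\ \cite{exist}, \cite{2009rufi}) guarantees a unique maximizer in $\mathcal{C}$. Uniqueness together with the symmetrization inequality $\om(\bar\ph,F^{sym}_\theta)\geq \om(\ph,F^{sym}_\theta)$ forces the maximizer to equal its own symmetrization, hence to belong to $\mathcal{SC}_\theta$. Combined with the identity from the first paragraph, this produces the claimed equality of argmaxes.

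There is no serious obstacle; the only place requiring a little care is the measure-theoretic bookkeeping when $F$ has atoms (so that $F^R$ is defined via $2\theta-X$ rather than the formal derivative of $1-F(2\th-\cdot)$), and the verification that the unconstrained maximizer exists, which is where the non-degeneracy and finite-first-moment hypotheses on $F$ are used. Strict concavity of $\ph\mapsto -\int e^{\ph}dx$ (on the set where the linear part is finite) is what converts the inequality $\om(\bar\ph,F^{sym}_\theta)\geq \om(\ph,F^{sym}_\theta)$ into the needed uniqueness conclusion.
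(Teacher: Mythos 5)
Your proposal is correct and follows essentially the same route as the paper: equality of the criteria on $\mathcal{SC}_\theta$, then symmetrization of an arbitrary $\ph\in\mathcal{C}$ combined with convexity of the exponential to show $\om(\bar\ph,F^{sym}_\theta)\geq\om(\ph,F^{sym}_\theta)$, and finally uniqueness of the unconstrained maximizer to conclude it lies in $\mathcal{SC}_\theta$. The only quibble is bibliographic: for existence and uniqueness of the maximizer over $\mathcal{C}$ for a general non-degenerate $F^{sym}_\theta$ with finite first moment, the appropriate reference is the log-concave projection theory of \cite{dumbreg} (as the paper uses), rather than the empirical-MLE results of \cite{exist} and \cite{2009rufi}.
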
 
 
 \begin{proof}[Proof of Lemma~\ref{Lemma: projection theory for a distribution function F}]
 First we will show that
 \begin{align}\label{inlemma: symmetry: 2}
\argmax_{\ph\in\mathcal{SC}_{\th}}\om(\ph,F)=\argmax_{\ph\in\mathcal{SC}_{\th}}\om(\ph,F^{sym}_{\th}).
\end{align}
 Recall the definition of $\Psi$ from \eqref{criterion function: xu samworth}. 
For any distribution function $F$ and $\psi\in\mathcal{SC}_0$, the following holds:
\begin{align*}
\Psi(0,\ps,F)=&\ \dint_{-\infty}^{0}\ps(x)dF(x)+\dint_{0}^{\infty}\ps(x)dF(x)-\edint e^{\ps(x)}dx\\
=&\ -\dint_{0}^{\infty}\ps(-x)dF(-x)+\dint_{0}^{\infty}\ps(x)dF(x)-\edint e^{\ps(x)}dx\\
=&\ \dint_{0}^{\infty}\ps(x)d(F(x)-F(-x))-\edint e^{\ps(x)}dx.
\end{align*}
where the last step uses $\psi(x)=\psi(-x)$. By symmetry, it also follows that
\begin{align*}
\dint_{0}^{\infty}\ps(x)d(F(x)-F(-x))=\dint_{-\infty}^{0}\ps(x)d(F(x)-F(-x)).
\end{align*}
\[\text{Therefore,}\quad\Psi(0,\ps,F)= 2^{-1}\edint\ps(x)d(F(x)-F(-x))-\edint e^{\ps(x)}dx.\]
Equation \ref{criterion function: xu samworth} implies
$\Psi(\theta,\psi,F)=\Psi(\theta, \psi, F(\cdot+\theta))$. Therefore,
\begin{align}\label{inlemma: symmetry}
\Psi(\th,\ps,F)=&\ 2^{-1}\edint\ps(x)d(F(\th+x)-F(\th-x))-\edint e^{\ps(x)}dx\nn\\
\stackrel{(a)}{=}&\  2^{-1}\edint\ps(z-\th)d(F(z)-F(2\th-z))-\edint e^{\ps(z)}dz\nn\\
=&\ \edint\ps(z-\th)dF^{sym}_{\th}(z)-\edint e^{\ps(z)}dz
= \Psi(\th,\ps,F^{sym}_{\th})
\end{align}
where (a) follows substituting $\theta+x$ by $z$. 
Suppose $\psi\in\mathcal{SC}_0$ and $\phi=\psi(\cdot-\th)$. Equation \ref{definition: new criterion function }  implies that for any $\phi\in\mathcal{SC}_\theta$, 
$\om(\ph,F)=\Psi(\th,\ps,F)$, where $\ps=\phi(\cdot+\theta)$.
This, in conjunction with \eqref{inlemma: symmetry}, yields  that
$\om(\ph,F)=\om(\ph,F^{sym}_{\th})$ for any $\ph\in\mathcal{SC}_{\th}$.
Therefore, \eqref{inlemma: symmetry: 2} follows.

Proposition 4(iii) of \cite{xuhigh} entails that  $\argmax_{\ph\in\mathcal{SC}_{\th}}\om(\ph,F)$  exists and  is  unique for a degenerate $F$ with finite first moment. Under similar conditions on $F$, $\argmax_{\ph\in\mathcal{C}}\om(\ph,F^{sym}_{\th})$ also exists and it is unique by Theorem 2.7 of \cite{dumbreg}. Therefore, it suffices to prove $\argmax_{\ph\in\mathcal{C}}\om(\ph,F^{sym}_{\th})$ is in $\mathcal{SC}_{\theta}$ because the latter implies \[\argmax_{\ph\in\mathcal{SC}_{\th}}\om(\ph,F^{sym}_{\th})=\argmax_{\ph\in\mathcal{C}}\om(\ph,F^{sym}_{\th}),\]
which, in conjuction with \eqref{inlemma: symmetry: 2}, completes the proof of the current lemma.

Without loss of generality, we will assume $\theta=0$.
In that case, $F^{sym}_0(x)=(F(x)+1-F(-x))/2$, which implies
\begin{equation}\label{inlemma: symmetry: dF sym}
    dF^{sym}_0(x)=(dF(x)-dF(-x))/2=-dF^{sym}_0(-x).
\end{equation}
   For any concave function $\phi\in\mathcal C$, note that  $\om(\ph,F^{sym}_{0})$ can be written as
 \begin{align*}
\MoveEqLeft \dint_{-\infty}^{0}\ph(x)dF^{sym}_{0}(x)+\dint_{0}^{\infty}\ph(x)dF^{sym}_{0}(x)-\edint e^{\ph(x)}dx\\
 =&\ -\dint_{0}^{\infty}\ph(-x)dF^{sym}_{0}(-x)+\dint_{0}^{\infty}\ph(x)dF^{sym}_{0}(x)-\edint e^{\ph(x)}dx\\
 \stackrel{(a)}{=}&\  \dint_{0}^{\infty}\ph(-x)dF^{sym}_{0}(x)+\dint_{0}^{\infty}\ph(x)dF^{sym}_{0}(x)-\edint e^{\ph(x)}dx\\
\stackrel{(b)}{=}&\ \edint \dfrac{\ph(-x)+\ph(x)}{2}dF^{sym}_{0}(x)-\edint e^{\ph(x)}dx,
 \end{align*}
 where (a) uses \eqref{inlemma: symmetry: dF sym}, and (b) follows since 
 \[\dint_{0}^{\infty} \dfrac{\ph(-x)+\ph(x)}{2}dF^{sym}_{0}(x)=\dint_{-\infty}^0 \dfrac{\ph(-x)+\ph(x)}{2}dF^{sym}_{0}(x)\]
 by symmetry.
Moreover, since exponential function is convex, we obtain
 \[\Phi(\phi, F_0^{sym})\leq \edint\dfrac{\ph(x)+\ph(-x)}{2}dF^{sym}_{0}(x)-\edint e^{(\ph(x)+\ph(-x))/2}dx,\]
which proves that a $\phi\in\mathcal{SC}_0$ maximizes $\om(\phi,F^{sym}_{0})$ over $\mathcal{C}$, as speculated. 
 Therefore, the proof follows.

 \end{proof}

 \section{Proof of Theorem~\ref{theorem: main: one-step: hnss}}
 \label{app: proof of Theorem 2}
Similar to Theorem~\ref{theorem: main: one-step: full}, we can argue that it suffices to prove Theorem~\ref{theorem: main: one-step: hnss} for the case when $\eta_n$ is $C n^{-2p/5}$.  For the rest of the proof, we will denote $\xi_n=(\Gs)^{-1}(1-\eta_n)$.
 First of all note that $\hts(\bth\pm\mathord{\cdot})$ and $\hnss$ satisfy Condition~\ref{condition: on hn} by Proposition~\ref{prop: the L1 convergence of the density estimators of one-step estimators}. Lemma~\ref{lemma: os: helli: hnss} in Appendix~\ref{adlemma: os: hnss} implies that these densities also satisfy Condition \ref{cond: hellinger rate} with $p=1/5$. Since the proof of Theorem~\ref{theorem: main: one-step: hnss} closely follows the proof of Theorem~\ref{theorem: main: one-step: full}, we will only  highlight the differences. Following the arguments in Theorem~\ref{theorem: main: one-step: full}, we can represent $-(\hth-\bth)$ as the sum of the three terms $T_{1n}$, $T_{2n}$, and $T_{5n}$, where
 \[T_{1n}=\dint_{-\xin}^{\xin}\dfrac{\hlnspm(z)-\psp'(z-\td)}{\hin(\eta_n)}d(\Fm(z+\bth)-F_0(z+\bth)),\]
 \[T_{2n}=\dint_{-\xin}^{\xin}\dfrac{\hlnspm(z)}{\hin(\eta_n)}\lb f_0(z+\bth)-g_0(z)\rb dz
 ,\]
 and $T_{5n}$ is as in \eqref{theorem: main: one-step: first split}. The treatment of $T_{5n}$ in this case will be identical to that in Theorem~\ref{theorem: main: one-step: full}. Hence it suffices to  redo step one and step two of Theorem~\ref{theorem: main: one-step: full} only in the context of $\hnss$.
 
 \subsubsection*{\textbf{Step one: showing $T_{1n}=o_p(1)$:}}
 The main difference in the analysis of $T_{1n}$ between Theorem~\ref{theorem: main: one-step: full} and here stems from the fact that $\hlnspm$ is no longer guaranteed to be monotone since $\hnss$ is not log-concave. So  one needs to be more careful before applying the Donsker theorem to control the $T_{1n}$ term here. By construction, $\hts$ and $\hnss$ are positive on the entire real line, and differentiable everywhere. Using 
  \eqref{definition of tpz by tpc}, we obtain the formula
  \begin{equation*}
\hlnspm(x)=\varrho_n(x)\lb\hlnsp(\bth+x)\rb-(1-\varrho_n(x))\lb\hlnsp(\bth-x)\rb,
\end{equation*}
where $\varrho_n(x)=\hts(\bth+x)/2\hnss(x)$.
 Note that $\hlnsp(\bth\pm\mathord{\cdot})$ is  non-increasing because $\hts$ is log-concave.  On the other hand, because $\hts$ is smooth, and $\hts>0$ on $\RR$, $\varrho_n$ is differentiable with derivative
 \[\varrho'_n(x)=\frac{(\hts)'(\bth-x)\hts(\bth+x)+\hts(\bth-x)(\hts)'(\bth+x)}{(\hts(\bth-x)+\hts(\bth+x))^2},\]
 which is less than $|\hlnsp(\bth-x)|+|\hlnsp(\bth+x)|$ in absolute value. However, Lemma \ref{lemma: hnss: tilde psi prime bound} implies that
\begin{align}\label{intheorem: Theorem 2: bound on phi-sm}
 \sup_{x\in[-\xin,\xin]}\slb|\hlnsp(\bth-x)|+|\hlnsp(\bth+x)|\srb =O_p(n^{p/5}).
\end{align}
 Therefore, on $[-\xin,\xin]$, the derivative of  $\varrho_n$ is uniformly bounded by an $O_p(n^{p/5})$ term. The same bound can be proved for $1-\varrho_n$ as well. 
 Noting $\varrho_n$ is a fraction, we also deduce that $\|\varrho_n\|_\infty$ and $\|1-\varrho_n\|_\infty$ are bounded by one.
 For a convex set $\mathcal X\subset \RR$ and a number $M>0$, define the class of functions $\mathcal D_{n,M}(\mathcal X)$ by
 \[\mathcal{D}_{n,M}(\mathcal X)=\lbs h:\mathcal X\mapsto\RR \ \bl\  h\text{ is differentiable  on }\mathcal X,\ \sup_{x\in\mathcal X}|h(x)|+\sup_{x\in\mathcal X}|h'(x)|\leq M\rbs\]
 As in the proof of Theorem~\ref{theorem: main: one-step: full}, we let $M_n=Cn^{p/5}$ where $C>0$ is a  constant. Our earlier discussion on $\varrho_n$ indicates that
for sufficiently large $C>0$,  $\varrho_n$ and $1-\varrho_n$ restricted to $[-\xin,\xin]$ belongs to $\mathcal D_{n,M_n}([-\xin,\xin])$ 
 with high probability as $n\to\infty$. Note also that \eqref{intheorem: Theorem 2: bound on phi-sm} implies $\hlnsp(\bth\pm\mathord{\cdot})\in\mathcal U_n(M_n)$ with high probability for sufficiently large $C>0$, where $\mathcal U_n(M_n)$ is as defined in \eqref{intheorem: def: U n}. 
 Therefore it is not hard to see that for sufficiently large $C>0$, $\hlnspm 1_{[-\xin,\xin]}\in \mathcal{U}^{sym}_n(M_n, -\xin, \xin)$ with high probability as $n\to\infty$, where for $-\infty\leq r_1< r_2\leq \infty$ and $C>0$, the class  $\mathcal{U}^{sym}_n(M_n, r_1, r_2)$ is defined by
 \begin{align}\label{intheorem: def: Un C sym}
   & \mathcal{U}^{sym}_n (M_n,  r_1, r_2)=\lbs  h:\RR\mapsto[-M_n,M_n]\ \bl\   h(x)=q_1(x)f_1(x)+q_2(x)f_2(x)\text{ for }\nn\\
   &\ x\in[-r,r],\text{ and }0\text{ o.w. where }\   q_1,q_2\in  \mathcal D_{n,M_n}([r_1,r_2]),\ f_1,f_2\in \mathcal U_n(M_n)\rbs
 \end{align} 
  It must be noted that in case of Theorem~\ref{theorem: main: one-step: full}, we had $\tp\in \mathcal U_n(M_n)$. Thus in Theorem~\ref{theorem: main: one-step: hnss}, $\mathcal U_n(M_n)$ is replaced by  $\mathcal{U}^{sym}_n(M_n,-\xin,\xin)$.

Corollary 2.7.2 of \cite{wc} implies  
\[
   \sup_{Q} \log N_{[\  ]}(\e, \mathcal D_{n,M_n}([r_1,r_2]),L_2(Q))\lesssim \frac{(r_2-r_1)M_n}{\e},
\]
where the supremum is over all probability measure $Q$ on real line. 
On the other hand, \eqref{inlemma: finite entropy increasing} implies   $\sup_Q\log N_{[\  ]}(\e, \mathcal U_n(M_n), L_2(Q))\lesssim M_n/\e $. Furthermore, \eqref{inlemma: finite entropy indicator functions} entails that the bracketing entropy of the function-class $\mathcal F_{I}$, consisting of indicator functions of the form $1_{[r_1,r_2]}$, is of the order $\e^{-1}$. Therefore we can  show that
\begin{equation}\label{intheorem: bracketing entropy: U sym n}
   \sup_{Q} \log N_{[\  ]}(\e, \mathcal{U}^{sym}_n(M_n,r_1,r_2),L_2(Q)) \lesssim\frac{(r_2-r_1 )M_n}{\e}.
\end{equation}

 Next, we replace the class $\mathcal{H}_n(C)$ in the proof of Theorem~\ref{theorem: main: one-step: full}  by the class
 \begin{align*}
 \mathcal{H}^{sym}_n(C)=\bigg\{h:\RR\mapsto  \RR\ \bl & \ h(x)=(u(x)-{\phi}_0'(x))1_{[r_1,r_2]}(x),\ u\in\mathcal{U}^{sym}_n(M_n, r_1, r_2),\\
 &\ \|h\|_{P_0,2}\leq Cn^{-2p/5}(\log n)^{3},\quad \|h\|_\infty\leq M_n,\\
 &\
 [r_1,r_2]\subset [\th_0-C\log n,\th_0+C\log n]\cap\iint(\dom(\phi_0)) \bigg \},
 \end{align*}
 where we substituted the class $\mathcal U_n(C)$ in $\mathcal H_n(C)$  by the class $\mathcal U_n^{sym}(C, r_1, r_2)$. Although the dependence of $M_n$ on $C$ is suppressed by its notation, the former is a function of $C$ and $n$. This  validates that the set $\mathcal{H}^{sym}_n(C)$ depends only on $C$  and $n$, as indicated by the notation.
 Note  that $[-\xin,\xin]\subset\iint(\dom(\psi_0))$ by Lemma~\ref{lemma: xi: xi is in dop psi knot},  and $\xin$ is $O_p(\log n)$ by Lemma~\ref{lemma: bound: xi n}.
 Therefore proceeding  as in Theorem~\ref{theorem: main: one-step: full}, but replacing Lemma~\ref{Lemma: L2 norm of hn} by Lemma~\ref{Lemma: L2 norm of hnss}, we can also show that the function
  \begin{equation}\label{inlemma: def: main: hnss}
     h_n(x)=(\hlnspm(x-\bth)-\phi_0'(x))1_{[\bth-\xi_n,\bth+\xi_n]}(x),\quad x\in\RR,
 \end{equation}
 is a member of $\mathcal H^{sym}_n(C)$ with high probability for sufficiently large $n$.
 Using \eqref{intheorem: bracketing entropy: U sym n} in conjuction with \eqref{inlemma: finite entropy indicator functions} we can show that
 \begin{equation}
   \sup_{Q} \log N_{[\  ]}(\e, \mathcal{H}^{sym}_n(C),L_2(Q)) \lesssim\frac{C(\log n )M_n}{\e}.
\end{equation}
Since the bracketing entropy of $\mathcal{H}^{sym}_n(C)$ differs from that of $\mathcal H_n(C)$ only by a poly-log term, so does the entropy integral.  Also, noting $\hnss$  yields a consistent $\hin(\eta_n)$ (see Lemma~\ref{lemma: hnss: FI}) analogous to the log-concave $\hn$'s,  rest of the proof of $T_1=o_p(1)$ follows in a similar fashion as that of  Theorem~\ref{theorem: main: one-step: full}.

\subsubsection*{\textbf{Step two: showing $T_{2n}\to_p -1$:}}
Recall the  function  $b_n$ defined in \eqref{intheorem:t2n:representation}. Because $T_{2n}=-\int_{\RR}b_n(t)dt$, it suffices to show that $\mathbb Y_n=\int_{\RR}b_n(t)dt\as 1$. The proof is not much different from the proof of Lemma~\ref{lemma: main: key lemma for T2}. We will only point out where the current proof differs  from the proof of Lemma~\ref{lemma: main: key lemma for T2}.
Suppose $\mathcal A_n$ and $\mathcal A_n'$ are as defined in the proof of Lemma~\ref{lemma: main: key lemma for T2}.
Let us also introduce the integrals
\[\mathcal I^+_{1n}=\dint_{\mathcal A_n}\tilde \phi^{sm}_{n}(\bth+t)^2\widehat h^{sm}_{n}(\bth+t)dt,\quad \mathcal I^+_{2n}=\dint\limits_{\mathcal A_n+\td}\tilde \phi^{sm}_{n}(\bth+t)^2\widehat h^{sm}_{n}(\bth+t)dt,\]
 \[\mathcal I^-_{1n}=\dint_{\mathcal A_n}\tilde \phi^{sm}_{n}(\bth-t)^2\widehat h^{sm}_{n}(\bth-t)dt,\quad\ \mathcal I^-_{2n}=\dint\limits_{\mathcal A_n+\td}\tilde \phi^{sm}_{n}(\bth-t)^2\widehat h^{sm}_{n}(\bth-t)dt.\]
 The above integrals replace the integrals $\mathcal I_{1n}$ and $\mathcal I_{2n}$ in the proof of Lemma~\ref{lemma: main: key lemma for T2}.
 We also define
 \[\mathcal{J}^{+}_n=\frac{(\log n)^2H(\hts(\bth+\mathord{\cdot}), g_0)^2}{\inf_{x\in\mathcal A_n'}\hts(\bth+x)},\quad \mathcal{J}^{-}_n=\frac{(\log n)^2H(\hts(\bth-\mathord{\cdot}), g_0)^2}{\inf_{x\in\mathcal A_n'}\hts(\bth-x)}\]
 Similar to  Lemma~\ref{lemma: main: key lemma for T2}, it can be shown that it suffices to show that every subsequence has a further subsequence $n_k$, along which, $\mathbb Y_{n_k}\as 1$. We claim that given any sequence, there exists a subsequence $n_k$ such that the set $\mathcal M^{sym}$ has probability one, where we define $\mathcal M^{sym}$ to be the set on which the following hold:\\
 (a) $\overline{\theta}_{n_k}\to_k\th_0$, (b) $\widehat{\mathcal{I}}_{n_k}(\eta_{n_k})\to_k\I$, (c) $\xi_{n_k}\to_k G_0^{-1}(1)$, (d) $\omega_{n_k}\to \omega_0$, (e)$\mathcal J_{n_k}^+$, $\mathcal J_{n_k}^-\to_k 0$,
 (f) $\mathcal{I}^+_{in_k}$, $\mathcal{I}^-_{in_k}\to_k\I$ for $i=1,2$, (g)$\|\widehat h_{n_k}^{sm}(\overline{\theta}_{n_k}\pm \mathord{\cdot})-g_0\|_\infty\to 0$,  (h) $\|\tilde{g}_{n_k}^{sym,sm}-g_0\|_\infty\to 0$, (i) $\mathcal A_{n_k}'\subset \iint(\dom(\psi_0))$ for all sufficiently large $k$.\\
 Note that $\mathcal M^{sym}$ is similar to the good set $\mathcal M$ in the proof of Lemma~\ref{lemma: main: key lemma for T2}.
 The claim that there exists a sequence $n_k$ so that $P(\mathcal M^{sym})=1$  can be verified using Fact~\ref{fact: convergence in probability to convergence almost surely} in the same  way  we verified  a similar claim for $\mathcal M$. The only difference is that here we require Lemma~\ref{lemma: hnss: FI} for (b), Lemma~\ref{lemma: hnss: tilde psi prime bound} instead of Lemma~\ref{lemma: hn: lower bound  on hn} for (e), and Lemma~\ref{lemma: hnss: FI} instead of Lemma~\ref{lemma: consistency of FI: 2}  for (f). As in Lemma~\ref{lemma: main: key lemma for T2}, we will show that  $\mathbb Y_{n_k}\to _k 1$ on $\mathcal M^{sym}$. For the sake of simplicity, we drop $k$ from the subscripts. 
 
 The pointwise converges of $b_n$ can be proved along the lines of \eqref{intheorem: T2: pointwise in dct}. However, Lemma~\ref{Prop: the L1 convergence of the density estimators of one-step estimators: model: strong} can not be directly applied this time because $\hnss$ is not log-concave. On the other hand, Lemma~\ref{Prop: the L1 convergence of the density estimators of one-step estimators: model: strong} does apply to $\hts(\bth\pm\mathord{\cdot})$, because the latter is  log-concave. 
 Exploiting the connection between $\hlnspm$ and $\hlnsp$ as given by  \eqref{definition of tpz by tpc}, and arguing as in the proof of Proposition~\ref{prop: the L1 convergence of the density estimators of one-step estimators}, we can show that the assertions of Lemma~\ref{Prop: the L1 convergence of the density estimators of one-step estimators: model: strong} still hold for $\hnss$  on $\mathcal M^{sym}$. Thus  \eqref{intheorem: T2: pointwise in dct} holds for $b_n$ in case  of $\hnss$.
 
 However, we can not bound $b_n$ using \eqref{inlemma: T2: DCT} because $\hlnspm$ is not monotone. However,
 using \eqref{definition of tpz by tpc}, we can still bound 
 \begin{align*}
   \bl\dint_{t}^{t+\td}\hlnspm(z)dz\bl\leq &\  \dint_{t}^{t+\td}\slb |\hlnsp(\bth+ z)|+|\hlnsp(\bth- z)|\srb dz  \\
   \leq &\ \td\slb \max\{|\hlnsp(\bth+ t+\td)|, |\hlnsp(\bth+ t)|\}\\
   &\ + \max\{|\hlnsp(\bth- t-\td)|, |\hlnsp(\bth- t)|\} \srb.
 \end{align*}
Using the above, it can be shown that $|b_n(t)|\leq |b_n^{+}(t)|+|b_n^{-}(t)|$, where
\begin{align*}
    b_n^{+}(t)=&\  1_{ A_n}(t)|\psi_0'(t)|g_0(t)\slb |\hlnsp(\bth+ t)| + |\hlnsp(\bth+ t+\td)|\srb/\hin(\eta_n),\\
     b_n^{-}(t)=&\  1_{ A_n}(t)|\psi_0'(t)|g_0(t)\slb |\hlnsp(\bth- t)| + |\hlnsp(\bth- t-\td)|\srb/\hin(\eta_n).
\end{align*}
The proof will be complete by Pratt's Lemma (Fact~\ref{fact: pratt's lemma}) if we can show that there exists integrable functions $c_n^+$, $c_n^-$, $c^+$ and $c^-$ so that $|b_{n}^{+}|\leq c_n^+$, $|b_n^{-}|\leq c_n^{-}$, $\int_{\RR}c_n^+(t)dt\to_n \int_{\RR}c^+(t)dt$, $\int_{\RR}c_n^-(t)dt\to_n \int_{\RR}c^-(t)dt$,  and $c_n^+\to_n c^+$ and $c_n^-\to_n c^-$ almost everywhere Lebesgue on $\mathcal{M}^{sym}$. The functions $c_n^+$ and $c_n^-$ can be constructed in the same way we constructed $c_n$ for bounding $b_n$ in the proof of Lemma~\ref{lemma: main: key lemma for T2}. Since the proof follows in a similar manner by replacing $\mathcal M$ by  $\mathcal M^{sym}$, and  $\hn$ by $\hts(\bth\pm\mathord{\cdot})$, it is skipped.  \hfill $\Box$



 \subsection{Auxiliary lemma for Theorem~\ref{theorem: main: one-step: hnss}}
 \label{adlemma: os: hnss}
 In this subsection, $\xi_n$ will generally refer to $\xin(\Gs)\equiv (\Gs)^{-1}(1-\eta_n)$. Although $\hn$ can be either $\hts(\bth\pm\mathord{\cdot})$ or $\hnss$, its definition should be clear from the context.
 
 \subsubsection{\textbf{Lemmas on  Hellinger error of $\hnss$:}}
 \begin{lemma}\label{lemma: os: helli: hnss}
   Under the conditions of Theorem~\ref{theorem: main: one-step: hnss}, $H(\hts(\bth\pm\mathord{\cdot}),g_0)=O_p(n^{-1/5})$ and $H(\hnss, g_0)=O_p(n^{-1/5})$.
 \end{lemma}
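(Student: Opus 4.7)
The plan is to control the first claim by reducing to a Hellinger smoothing error for $f_0$, and then to upgrade to the symmetrized density $\hnss$ by joint convexity of $H^{2}$. Translation invariance of $H$, combined with $f_0(\bth+\cdot) = g_0(\cdot - \td)$ where $\td = \theta_0 - \bth = O_p(n^{-1/2})$, gives
\begin{equation*}
H\bigl(\hts(\bth+\cdot),\, g_0\bigr) \le H(\hts,\, f_0) + H\bigl(g_0(\cdot - \td),\, g_0\bigr),
\end{equation*}
and Fact~\ref{fact: helli fknot} (applicable because $g_0$ inherits $\mathcal{I}(g_0)=\mathcal{I}(f_0)<\infty$ from $f_0$) bounds the second summand by $O_p(|\td|) = O_p(n^{-1/2})$. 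The same decomposition works for $\hts(\bth-\cdot)$ by the symmetry of $g_0$.

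To bound $H(\hts,f_0)$ I would insert $f_0\ast\varphi_{\widehat\lambda_n}$ as an intermediate density:
\begin{equation*}
H(\hts, f_0) \le H\bigl(\hf \ast \varphi_{\widehat\lambda_n},\, f_0 \ast \varphi_{\widehat\lambda_n}\bigr) + H\bigl(f_0 \ast \varphi_{\widehat\lambda_n},\, f_0\bigr).
\end{equation*}
The first summand is at most $H(\hf,f_0) = O_p(n^{-2/5})$ by the Hellinger data-processing inequality (convolving both arguments with the same kernel is a common Markov operation) and Theorem~3.2 of \cite{dossglobal}. For the second summand I would use the smoothing bound $H(f, f\ast \varphi_\lambda) \lesssim \lambda \sqrt{\mathcal{I}(f)}$, valid whenever $\mathcal{I}(f)<\infty$: from $\sqrt{f(\cdot+y)} - \sqrt{f} = \int_0^y \frac{f'}{2\sqrt{f}}(\cdot + t)\,dt$, Cauchy--Schwarz and Fubini give $H\bigl(f, f(\cdot+y)\bigr) \lesssim |y|\sqrt{\mathcal{I}(f)}$, and representing $f\ast\varphi_\lambda$ as the Gaussian mixture of the shifts $\{f(\cdot - \lambda z):z\in\RR\}$ together with joint convexity of $H^2$ promotes this to the desired bound with $\lambda$ in place of $|y|$. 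Applied to $f_0$, this yields $H(f_0\ast\varphi_{\widehat\lambda_n}, f_0) = O_p(\widehat\lambda_n)$.

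The remaining piece is $\widehat\lambda_n = O_p(n^{-1/5})$. From $\widehat\lambda_n^2 = \hs - \ts$,
\begin{equation*}
\widehat\lambda_n^2 \le |\hs - \sigma_0^2| + |\ts - \sigma_0^2|,\qquad \sigma_0^2 = \mathrm{Var}(f_0) < \infty,
\end{equation*}
with the first term $O_p(n^{-1/2})$ by the CLT. The main technical obstacle is the second term: it asks us to translate the $n^{-2/5}$ Hellinger rate of $\hf$ into a rate for the unbounded second-moment functional. The natural route is to use the universal subexponential tail bound of Fact~\ref{fact: Lemma 1 of theory paper} to establish uniform integrability of $\{x^2 \hf(x)\}_n$, and then combine this with $L_1$-consistency of $\hf$ on compacts (obtained from the $n^{-2/5}$ Hellinger rate together with boundedness of $\hf$ and $f_0$) to conclude $|\ts - \sigma_0^2| = O_p(n^{-2/5})$. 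Hence $\widehat\lambda_n = O_p(n^{-1/5})$, so $H(\hts,f_0) = O_p(n^{-1/5})$, which proves the first claim.

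For the second claim, writing $\hnss = \tfrac{1}{2}\hts(\bth+\cdot) + \tfrac{1}{2}\hts(\bth-\cdot)$ and $g_0 = \tfrac{1}{2}g_0 + \tfrac{1}{2}g_0$, joint convexity of $H^{2}$ gives
\begin{equation*}
H^2(\hnss, g_0) \le \tfrac{1}{2}H^2\bigl(\hts(\bth+\cdot), g_0\bigr) + \tfrac{1}{2}H^2\bigl(\hts(\bth-\cdot), g_0\bigr),
\end{equation*}
which is $O_p(n^{-2/5})$ by the first claim, so $H(\hnss, g_0) = O_p(n^{-1/5})$.
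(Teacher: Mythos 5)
Your overall architecture coincides with the paper's: translation invariance of $H$ plus Fact~\ref{fact: helli fknot} to pass from $f_0$ to $g_0$, a bound of the form $H(\hts,f_0)\lesssim H(\hf,f_0)+\widehat{\lambda}_n$, the rate $\widehat{\lambda}_n=O_p(n^{-1/5})$, and a convexity-type argument for $\hnss$ (your joint-convexity step is equivalent to the paper's elementary inequality for $(\sqrt{a+b}-\sqrt{c+d})^2$). Two of your intermediate steps are genuinely different and correct, in fact slightly cleaner than the paper's: bounding $H(\hf\ast\varphi_{\widehat{\lambda}_n},f_0\ast\varphi_{\widehat{\lambda}_n})\le H(\hf,f_0)$ by data processing, and $H(f_0\ast\varphi_{\lambda},f_0)\lesssim \lambda\sqrt{\I}$ by viewing the convolution as a Gaussian mixture of shifts and using joint convexity of $H^2$, give a bound at the Hellinger level, whereas the paper goes through $\|\hts-f_0\|_1$ and only obtains $H^2(\hts,f_0)\lesssim H(\hf,f_0)+\widehat{\lambda}_n$.

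The gap is in the step $\widehat{\lambda}_n=O_p(n^{-1/5})$, specifically the claim $|\ts-\sigma_0^2|=O_p(n^{-2/5})$. First, Fact~\ref{fact: Lemma 1 of theory paper} applied to $\hf$ produces constants $\alpha,\beta$ that depend on the random density, hence on $n$ and the sample; it does not by itself give a uniform-in-$n$ envelope, so uniform integrability of $\{x^2\hf(x)\}_n$ requires a separate argument (the paper gets the needed tail control from Theorem~4 of \cite{theory}, i.e. $\int e^{a|x|}|\hf(x)-f_0(x)|dx\as 0$). Second, and more importantly, uniform integrability plus $L_1$-consistency on compacts is qualitative: it yields $|\ts-\sigma_0^2|=o_p(1)$, not a rate. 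If you quantify your sketch by truncating at $M_n\asymp\log n$ (which the exponential tails force in order to make the tail contribution $\ll n^{-2/5}$), the compact part is only bounded by $M_n^2\|\hf-f_0\|_1\lesssim(\log n)^2H(\hf,f_0)$, so you end up with $\widehat{\lambda}_n=O_p(n^{-1/5}\log n)$ and hence $H(\hts,f_0)=O_p(n^{-1/5}\log n)$, short of the stated $O_p(n^{-1/5})$. The paper avoids the logarithm by a Cauchy--Schwarz bound without truncation: since the mean of $\hf$ is the sample mean, $\widehat{\lambda}_n^2=\int z^2 d(\Fm-\widehat F_n)$ with $\widehat F_n$ the distribution function of $\hf$; the empirical part is $O_p(n^{-1/2})$ by the CLT, and $\bigl|\int z^2 d(\widehat F_n-F_0)\bigr|\le 2H(\hf,f_0)\bigl(\int z^4(\hf(z)+f_0(z))dz\bigr)^{1/2}$, with $\int z^4\hf(z)dz=O_p(1)$ following from the exponentially weighted $L_1$ convergence above. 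Replacing your uniform-integrability step by this bound (and noting the mean term in $\ts$ is handled the same way, or via the sample-mean identity) closes the gap; the rest of your argument then goes through as written.
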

 
 \begin{proof}[Proof of Lemma~\ref{lemma: os: helli: hnss}]
 First of all note that $ 2H^2(\hts, f_0)^2$ is bounded by $ \|\hts-f_0\|_1$ which is not larger than
 \begin{align*}
  \MoveEqLeft \widehat{\lambda}_n^{-1}\edint \abs{\edint \slb\hf(x-t)-f_0(x-t)\srb\varphi(t/\smbn)dt} dx\\
  &\ + \widehat{\lambda}_n^{-1}\edint \abs{ \edint \slb f_0(x)-f_0(x-t)\srb \varphi(t/\smbn)dt} dx\\
  \leq &\ \|\hf-f_0\|_1+\widehat{\lambda}_n^{-1}\edint \abs{\edint \varphi(t/\smbn)\dint_{x-t}^x f_0'(z)dzdt }dx,
 \end{align*}
whose first term can be bounded using Fact~\ref{fact: dTV and hellinger}, which yields
  \[ \|\hf-f_0\|_1\leq \sqrt{2} H(\hf, f_0)=O_p(n^{-2/5})\]
  by Theorem 3.2 of \cite{dossglobal}. The  second term
 \begin{align*}
 \MoveEqLeft  \widehat{\lambda}_n^{-1}\edint \abs{\edint \varphi(t/\smbn)\dint_{x-t}^x f_0'(z)dzdt }dx\\
   \leq &\ \widehat{\lambda}_n^{-1}\edint\lb \dint_{-\infty}^0 \dint_{x}^{x-t}  \varphi(t/\smbn)\abs{f_0'(z)}dzdt +\dint_{0}^\infty \dint_{x-t}^{x}  \varphi(t/\smbn)\abs{f_0'(z)}dzdt\rb dx\\
   = &\ \widehat{\lambda}_n^{-1}\edint\lb \dint_{0}^\infty \dint_{x}^{x+t}  \varphi(-t/\smbn)\abs{f_0'(z)}dzdt +\dint_{0}^\infty \dint_{x-t}^{x}  \varphi(t/\smbn)\abs{f_0'(z)}dzdt\rb dx\\
   \leq &\ \widehat{\lambda}_n^{-1}\edint \dint_{-\infty}^\infty \dint_{x-|t|}^{x+|t|}  \varphi(t/\smbn)\abs{f_0'(z)}dzdtdx\\
   =&\ \widehat{\lambda}_n^{-1}\edint \varphi(t/\smbn)\dint_{-\infty}^\infty \dint_{x-|t|}^{x+|t|}  \abs{f_0'(z)}dzdxdt\\
   =&\ 2\edint |t| \widehat{\lambda}_n^{-1}\varphi(t/\smbn)\edint |f_0'(z)|dz dt\\
   =&\ 4\widehat{\lambda}_n E[|\mathbb{Z}|] f_0(\th_0)
 \end{align*}
 where $\mathbb Z\sim N(0,1)$. In the last step we used the fact $\int_{\RR} |f_0'(z)|dz=2f_0(\th_0)$ which follows because $f_0\in\mP_0$.
 Thus
 \begin{equation}\label{inlemma: os: sym: helli: hts}
   H^2(\hts,f_0)=O_p(n^{-2/5})+O_p(\widehat{\lambda}_n).  
 \end{equation}
 Our next step is finding the rate of $\smbn$. To that end, note that because $\int_R x\hf(x)dx$ is the sample average \citep[Corollary 2.3 of][]{2009rufi}, \eqref{definition bn} implies that
  $\widehat{\lambda}^2_n =\int_{\RR} z^2 d(\Fm -\hnn)$ where $\hnn$ is the distribution function of $\hf$.
Therefore, 
\[\widehat{b}^2_n\leq \abs{\edint z^2d(\mathbb{F}_n-F_0)}+\abs{\edint z^2d(\hnn-F_0)},\]
whose first term is $O_p(n^{-1/2})$ by the central limit theorem because $F_0$ has finite second central moment. On the other hand, since the second term equals
\[\abs{\edint z^2(\sqrt{\hf(z)}-\sqrt{f_0(z)})(\sqrt{\hf(z)}+\sqrt{f_0(z)})dz}, \]
The Cauchy-Schwarz inequality indicates that 
its square is bounded by
\[4H(\hf, f_0)^2\edint z^4(\hf(z)+f_0(z))dz.  \]
The fourth moment of $f_0$ is finite by Fact~\ref{fact: Lemma 1 of theory paper}. On the other hand,  Theorem 4 of \cite{theory} implies that there exists $a>0$ so that
\[\edint e^{a|z|}|\hf(z)-f_0(z)|dz\as 0.\]
Therefore it  follows that $\int_{\RR}z^4\hf(z)dz=O_p(1)$. Thus, we conclude $\smbn$ is\\ $O_p(1)H(\hf,f_0)$, which is $O_p(n^{-1/5})$.
 Therefore, \eqref{inlemma: os: sym: helli: hts} yields that $H(\hts, f_0)$ is $O_p(n^{-1/5})$. Since the  Hellinger distance is translation invariant, 
 \[H(\hts(\bth+\mathord{\cdot}),g_0)=H(\hts, g_0(-\bth+\mathord{\cdot}))\leq H(\hts,f_0)+H(g_0(-\bth+\mathord{\cdot}),f_0),\]
 whose first term is $O_p(n^{-1/5})$, and second term is $O_p(|\bth-\th_0|)$ by Fact~\ref{fact: helli fknot}. Because $\bth-\th_0=O_p(n^{-1/2})$, $H(\hts(\bth+\mathord{\cdot}),g_0)=O_p(n^{-1/5})$ follows. Since $g_0$ is symmetric about zero, we can show that  $H(\hts(\bth-\mathord{\cdot}),g_0)=O_p(n^{-1/5})$ as well. Since $2g_0(x)=g_0(x)+g_0(-x)$, and
 \[(\sqrt{a+b}-\sqrt{c+d})^2\lesssim (\sqrt a-\sqrt c)^2+(\sqrt b-\sqrt d)^2\quad \text{ for }a,b,c,d>0,\]
it follows that 
 \[H(\htsm,g_0)\lesssim H(\hf(\bth+ \mathord{\cdot}), g_0)+H(\hf(\bth- \mathord{\cdot}), g_0)=O_p(n^{-1/5}).\]
 \end{proof}
 
 \subsubsection{\textbf{Lemmas on distance between $\Gs$ and $\Gsm$}:}
 \begin{lemma}
   \label{lemma: os: distribution function differnece}
   Under the set up of Theorem~\ref{theorem: main: one-step: hnss}, \begin{align*}
       \text{(A) }&\ \ \  \|\Gs-\Gsm\|_{\infty}=O_p(n^{-p})\\
     \text{(B)  }&\ \ \  \sup_{x\in[-\xin,\xin]}\hts(\bth\pm x)^{-1}=O_p(\eta_n^{-1}),  
   \end{align*}
   where $p=1/5$ and $\xin=(\Gs)^{-1}(1-\eta_n)$. 
 \end{lemma}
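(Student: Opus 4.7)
The plan is to establish (A) by reducing the uniform distance between the two distribution functions to an $L_1$ distance between the underlying densities, and then to deduce (B) by applying the log-concave lower bound of Fact~\ref{fact: f grtr than F: base} to $\hts(\bth\pm\mathord{\cdot})$ and transferring the resulting lower bound from $\Gsm$ to $\Gs$ via part (A).

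For (A), I interpret $\Gsm$ as the distribution function of $\hts(\bth+\mathord{\cdot})$. The definition of $\hnss$ in \eqref{representation of htsm} then gives
\[\Gs(x)-\Gsm(x)=\frac{1}{2}\dint_{-\infty}^{x}(\hts(\bth-t)-\hts(\bth+t))dt,\]
so $\|\Gs-\Gsm\|_\infty\leq \frac{1}{2}\|\hts(\bth-\mathord{\cdot})-\hts(\bth+\mathord{\cdot})\|_1$. The triangle inequality through $g_0$ together with Fact~\ref{fact: dTV and hellinger} bound the right-hand side by a constant multiple of $H(\hts(\bth+\mathord{\cdot}),g_0)+H(\hts(\bth-\mathord{\cdot}),g_0)$, both of which are $O_p(n^{-1/5})$ by Lemma~\ref{lemma: os: helli: hnss}. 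This completes (A) with $p=1/5$.

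For (B), I would first observe that $\hts$ is log-concave as the convolution of the log-concave MLE $\hf$ with a Gaussian kernel, so $\hts(\bth+\mathord{\cdot})$ is a log-concave density with distribution function $\Gsm$. Since Proposition~\ref{prop: the L1 convergence of the density estimators of one-step estimators} verifies Condition~\ref{condition: on hn} for $\hts(\bth\pm\mathord{\cdot})$, Fact~\ref{fact: f grtr than F: base} supplies a random $\omega_n\to_p\omega_0>0$ with $\hts(\bth+x)\geq\omega_n\min(\Gsm(x),1-\Gsm(x))$ for every $x\in\RR$. For $x\in[-\xin,\xin]$, the definition of $\xin$ forces $\min(\Gs(x),1-\Gs(x))\geq \eta_n$, and combining this with part (A) gives
\[\min(\Gsm(x),1-\Gsm(x))\geq \eta_n-\|\Gs-\Gsm\|_\infty\geq \eta_n-O_p(n^{-1/5})\geq \eta_n/2\]
uniformly on $[-\xin,\xin]$ with probability tending to one, because the choice $\eta_n=Cn^{-2p'/5}$ with $p'\in(0,1/5]$ ensures $n^{-1/5}=o(\eta_n)$. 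Consequently $\hts(\bth+x)^{-1}\leq 2/(\omega_n\eta_n)=O_p(\eta_n^{-1})$ uniformly on $[-\xin,\xin]$, and the bound for $\hts(\bth-x)^{-1}$ is obtained identically by running the argument with $\hts(\bth-\mathord{\cdot})$, which is also log-concave, in place of $\hts(\bth+\mathord{\cdot})$.

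I do not anticipate any serious obstacle; the only point that requires care is the numerical comparison of the Hellinger rate $n^{-1/5}$ from Lemma~\ref{lemma: os: helli: hnss} with the truncation scale $\eta_n$. In the prescribed regime $p'\in(0,1/5]$, even the slowest admissible $\eta_n\asymp n^{-2/25}$ strictly dominates $n^{-1/5}$, so the error in part (A) is always absorbed into a multiplicative constant inside the bound for (B).
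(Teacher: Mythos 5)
Your proposal is correct and follows essentially the same route as the paper: part (A) rests on converting the sup-distance to an $L_1$/Hellinger bound via Fact~\ref{fact: dTV and hellinger} and the rates in Lemma~\ref{lemma: os: helli: hnss}, and part (B) combines the log-concave lower bound of Fact~\ref{fact: f grtr than F: base} with part (A) and the comparison $n^{-1/5}=o(\eta_n)$. The only cosmetic differences are that the paper bounds $d_{TV}(\Gs,\Gsm)$ through $H(\hnss,\hts(\bth+\mathord{\cdot}))$ rather than through your mixture-representation $L_1$ bound, and in (B) it first reduces the infimum over $[-\xin,\xin]$ to the endpoints by unimodality of $\hts$ instead of bounding uniformly; both variants are valid.
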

 
 \begin{proof}[Proof of Lemma~\ref{lemma: os: distribution function differnece}]
 From the definition of total variation distance, it follows that\\ $\|\Gs-\Gsm\|_{\infty}\leq d_{TV}(\Gs,\Gsm)$, which equals
 \begin{align*}
    \MoveEqLeft 2\|\htsm-\gts\|_1
    \leq 2\sqrt{2} H(\htsm, \gts),
 \end{align*}
 where the last step follows by Fact~\ref{fact: dTV and hellinger}. The proof of part  (A)  then follows noting
 \[H(\htsm,\gts)\leq H(\htsm,g_0)+H(\gts,g_0)=O_p(n^{-p}) \]
 by Lemma~\ref{lemma: os: helli: hnss}.
 
  For the proof of part (B), note that since
  $\hts$ is log-concave, it attains its minimum on any interval at one of the endpoints.
  Therefore
  \[\inf_{x\in[-\xin,\xin]}\hts(\bth\pm x)= \min(\hts(\bth+\xin) ,\hts(\bth-\xin)).\]
   Using  Fact~\ref{fact: f grtr than F} in step (a), and part A of the current lemma in step (b), we can show that
   \[\hts(\bth-\xin)\stackrel{(a)}{\geq} \omega_n\Hsm(\bth-\xin)\stackrel{(b)}{\geq} \omega_n(\Gs(-\xin)-O_p(n^{-p})),\]
  which, by definition of $\xin$, equals $\omega_n\eta_n-O_p(n^{-p})$. Since $\omega_n\to_p\omega_0>0$ by Fact~\ref{fact: f grtr than F}, and $\eta_n=Cn^{-2p/5}$, it follows that $\hts(\bth-\xin)^{-1}=O_p(\eta_n^{-1})$. In a similar way, it can be shown that $\hts(\bth+\xin)^{-1}=O_p(\eta_n^{-1})$. Therefore, the proof follows.
 \end{proof}
 
  \subsubsection{\textbf{Lemmas on $\hlnsp$ and $\hlnspm$:}}
 \begin{lemma}\label{lemma: FI lemma: lemma 1: hnss}
   The conditions of
  Lemma~\ref{lemma: Laha_Nilanjana 31} and  Lemma~\ref{FI Lemma: Lemma 1}  hold for \\ $a_n=\xin\equiv\xin(\Gs)=(\Gs)^{-1}(1-\eta_n)$ and $\hn=\hts(\bth\pm\mathord{\cdot})$.
 \end{lemma}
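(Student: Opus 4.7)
I would verify the hypotheses of Lemma~\ref{lemma: Laha_Nilanjana 31} and Lemma~\ref{FI Lemma: Lemma 1} in turn for $a_n=\xi_n$ and $\hn=\hts(\bth\pm\mathord{\cdot})$. First note that $\hts(\bth\pm\mathord{\cdot})$ is log-concave with support $\RR$, being a shift of the convolution of the log-concave MLE with a Gaussian; it satisfies Condition~\ref{condition: on hn} by Proposition~\ref{prop: the L1 convergence of the density estimators of one-step estimators} and Condition~\ref{cond: hellinger rate} with $p=1/5$ by Lemma~\ref{lemma: os: helli: hnss}. The identity $\hnss(x)=\{\hts(\bth+x)+\hts(\bth-x)\}/2$ therefore exhibits $\hnss$ as a symmetric mixture of two full-support log-concave densities, each satisfying Conditions~\ref{condition: on hn}--\ref{cond: hellinger rate} with the common rate $p=1/5$; hence $\hnss$ itself satisfies Condition~\ref{cond: hnss condition}. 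Lemma~\ref{lemma: bound: xi n} applied to $\hnss$ then yields $\xi_n=O_p(\log n)$.

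The next step is to verify \eqref{inlemma: statement: os: nilanjana 31}. The bound $\sup_{|x|\leq \xi_n}|\psi_0(x)|=O_p((\log n)^2)$ follows from Lemma~\ref{lemma: g:  bound on psi} applied to $\hnss$. For $\hln=\log\hts(\bth\pm\mathord{\cdot})$, the upper bound uses $\|\hts\|_\infty\leq\|f_0\|_\infty+\|\hts-f_0\|_\infty=O_p(1)$, combining Fact~\ref{fact: Lemma 1 of theory paper} with Condition~\ref{condition: on hn}(A); the lower bound is supplied by Lemma~\ref{lemma: os: distribution function differnece}(B), which gives $\inf_{|x|\leq\xi_n}\hts(\bth\pm x)\gtrsim\eta_n$ up to a factor bounded away from zero in probability, so $\inf_{|x|\leq\xi_n}\hln(x)\geq -O_p(\log n)$.

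For Lemma~\ref{FI Lemma: Lemma 1}, what remains is \eqref{inlemma: statement: os: suport inclusion} and \eqref{inlemma: statement: os: FI: an xin}. The support inclusion $[-\xi_n-\rho_n,\xi_n+\rho_n]\subset\iint(\dom(\psi_0))\cap\iint(\dom(\hln))$ is trivial on the $\hln$ side, since $\dom(\hln)=\RR$; on the $\psi_0$ side it follows from Lemma~\ref{lemma: An inclusion} applied to $\hnss$ with $y_n=\rho_n=\eta_n/\log n=o(\eta_n)$.

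The main obstacle is \eqref{inlemma: statement: os: FI: an xin}, because the cutoff $\xi_n$ is the $(1-\eta_n)$-quantile of $\Gsm$ rather than of the distribution function $\tilde H_n$ of $\hn=\hts(\bth\pm\mathord{\cdot})$. Since $\hnss>0$ on $\RR$ the function $\Gsm$ is continuous, so $\Gsm(-\xi_n)=\eta_n$; it therefore suffices to bound the Kolmogorov distance between $\tilde H_n$ and $\Gsm$. I would estimate
\[\|\tilde H_n-\Gsm\|_\infty\leq d_{TV}(\tilde H_n,\Gsm)\leq \sqrt{2}\,H(\hts(\bth\pm\mathord{\cdot}),\hnss),\]
and bound the right-hand side by the triangle inequality through $g_0$ using Lemma~\ref{lemma: os: helli: hnss}, obtaining $\|\tilde H_n-\Gsm\|_\infty=O_p(n^{-1/5})$. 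Since $n^{-1/5}=o(\eta_n)=o(n^{-2/25})$, it follows that $\tilde H_n(-\xi_n)\geq \eta_n-O_p(n^{-1/5})\geq \eta_n/2$ with probability tending to one, and the bound $1-\tilde H_n(\xi_n)\geq \eta_n/2$ is symmetric, completing the verification.
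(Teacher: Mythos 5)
Your proposal is correct and takes essentially the same route as the paper: the same verification of \eqref{inlemma: statement: os: nilanjana 31} via Lemma~\ref{lemma: g:  bound on psi} and Lemma~\ref{lemma: os: distribution function differnece}(B), the same use of Lemma~\ref{lemma: bound: xi n} and Lemma~\ref{lemma: An inclusion} for $a_n=O_p(\log n)$ and the support inclusion \eqref{inlemma: statement: os: suport inclusion}. For \eqref{inlemma: statement: os: FI: an xin} you simply re-derive inline (via total variation, Hellinger, and the triangle inequality through $g_0$) the Kolmogorov-distance bound that the paper packages as Lemma~\ref{lemma: os: distribution function differnece}(A), so the argument is the same.
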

 
 \begin{proof}[Proof of Lemma~\ref{lemma: FI lemma: lemma 1: hnss}]
 Note that since  $\hnss$ satisfies Condition~\ref{cond: hnss condition}, Lemma~\ref{lemma: g:  bound on psi} entails that $a_n=\xin$ satisfies \eqref{inlemma: os: Nilanjana 31}. Moreover, Lemma~\ref{lemma: os: distribution function differnece} (B) indicates that the supremum of  $-\hlns(\bth\pm\mathord{\cdot})$ over  $[-a_n,a_n]$ is  $O_p(\log n)$  for the above choice of $a_n$. Also because $\hts$ satisfies Condition~\ref{condition: on hn}, $\hlns$ is bounded above. Therefore we obtain that the supremum of  $|\hlns(\bth\pm\mathord{\cdot})|$  on $[-a_n,a_n]$ is  $O_p(\log n)$. Thus, we conclude that \eqref{inlemma: statement: os: nilanjana 31} holds for our choice of $a_n$ and $\hn$. As a result, this $(a_n,\hn)$  pair satisfies the conditions of Lemma~\ref{lemma: Laha_Nilanjana 31}.

 For Lemma~\ref{FI Lemma: Lemma 1}, first note that $a_n=O_p(\log n)$ by Lemma~\ref{lemma: bound: xi n}. Noting  $\dom(\hlns)=\supp(\hts)=\RR$, we also obtain that \eqref{inlemma: statement: os: suport inclusion} holds with probability tending to one  because 
 \[P\slb [-\xin-\eta_n/\log n,\xin+\eta_n/\log_n]\subset\iint(\dom(\psi_0))\srb\to 1\] 
 by Lemma~\ref{lemma: An inclusion}.
 Thus the conditions of Lemma~\ref{FI Lemma: Lemma 1} are satisfied if 
 \eqref{inlemma: statement: os: FI: an xin}  holds for $(\xin,\hts(\bth\pm\mathord{\cdot}))$. 
 Now
 by Lemma~\ref{lemma: os: distribution function differnece}, with probability tending to one,
 \[ \Hsm(\bth-a_n)\geq \Gs(-a_n)-o_p(n^{-p})= \Gs((\Gs)^{-1}(\eta_n))-o_p(n^{-p}),\]
 which is $\eta_n-o_p(n^{-p})$. Since $\eta_n$ is $O(n^{-2p/5})$, it follows that
 \[P\slb \Hsm(\bth-a_n)\geq \eta_n/4\srb \to 1.\]
Similarly we can show that 
\[P\slb 1-\Hsm(\bth+a_n)\geq \eta_n/4\srb\to 1,\]
which implies  \eqref{inlemma: statement: os: FI: an xin}
 holds for  $a_n=\xin$  when $\hn=\hts(\bth+\mathord{\cdot})$. The proof for $\hn=\hts(\bth-\mathord{\cdot})$ follows in a similar way, which completes the proof of the current lemma.
 \end{proof}
 
 \begin{lemma}\label{lemma: Laha_Nilanjana 31: hnss}
 Suppose $\mu_n$ is a density so that $\|\mu_n\|_\infty=O_p(1)$. 
 Let \\
 $\xi_n=(\Gs)^{-1}(1-\eta_n)$. Then $p=1/5$ satisfies
\begin{align*}
    \dint_{-\xi_n}^{\xi_n} (\hlnspm(x)-\psi'_0(x))^2 \mu_n(x)dx=  O_p((\log n)^6n^{-4p/5}).
\end{align*}
 \end{lemma}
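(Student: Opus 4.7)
The plan is to reduce the claim to Lemma~\ref{FI Lemma: Lemma 1} applied separately to the two log-concave pieces $\hts(\bth+\cdot)$ and $\hts(\bth-\cdot)$. The starting point is the convex-combination representation \eqref{definition of tpz by tpc}, namely
\begin{equation*}
\hlnspm(x)=\varrho_n(x)\hlnsp(\bth+x)-(1-\varrho_n(x))\hlnsp(\bth-x),\quad \varrho_n(x)\in[0,1].
\end{equation*}
Since $\psi_0\in\mathcal{SC}_0$ is even, its right derivative $\psi_0'$ is odd, and so
\begin{equation*}
\psi_0'(x)=\varrho_n(x)\psi_0'(x)-(1-\varrho_n(x))\psi_0'(-x).
\end{equation*}
Subtracting yields the matched decomposition
\begin{equation*}
\hlnspm(x)-\psi_0'(x)=\varrho_n(x)\bigl[\hlnsp(\bth+x)-\psi_0'(x)\bigr]-(1-\varrho_n(x))\bigl[\hlnsp(\bth-x)+\psi_0'(x)\bigr].
\end{equation*}
Squaring, using $(a-b)^2\leq 2a^2+2b^2$, and exploiting $0\leq\varrho_n\leq 1$ gives the pointwise bound
\begin{equation*}
(\hlnspm(x)-\psi_0'(x))^2\leq 2(\hlnsp(\bth+x)-\psi_0'(x))^2+2(\hlnsp(\bth-x)+\psi_0'(x))^2.
\end{equation*}

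The next step is to apply Lemma~\ref{FI Lemma: Lemma 1} to each of the two summands on the right. Both $\hts(\bth+\cdot)$ and $\hts(\bth-\cdot)$ are log-concave densities (Gaussian smoothing preserves log-concavity by Pr\'ekopa--Leindler), they satisfy Condition~\ref{condition: on hn} by Proposition~\ref{prop: the L1 convergence of the density estimators of one-step estimators}, and Lemma~\ref{lemma: os: helli: hnss} verifies Condition~\ref{cond: hellinger rate} with $p=1/5$. Moreover, Lemma~\ref{lemma: FI lemma: lemma 1: hnss} shows that the choice $a_n=\xi_n=(\Gs)^{-1}(1-\eta_n)$ and $\hn=\hts(\bth\pm\cdot)$ satisfies all of the standing hypotheses of Lemma~\ref{FI Lemma: Lemma 1}. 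The chain rule identifies the $\tp$ of that lemma as $\hlnsp(\bth+\cdot)$ in the first case and as $-\hlnsp(\bth-\cdot)$ in the second; in both instances the conclusion of Lemma~\ref{FI Lemma: Lemma 1} with the density $\mu_n$ gives
\begin{equation*}
\int_{-\xi_n}^{\xi_n}(\hlnsp(\bth+x)-\psi_0'(x))^2\mu_n(x)\,dx=O_p((\log n)^3 n^{-4p/5}),
\end{equation*}
\begin{equation*}
\int_{-\xi_n}^{\xi_n}(-\hlnsp(\bth-x)-\psi_0'(x))^2\mu_n(x)\,dx=O_p((\log n)^3 n^{-4p/5}).
\end{equation*}
The second integrand equals $(\hlnsp(\bth-x)+\psi_0'(x))^2$, so summing the two bounds and integrating the pointwise inequality against $\mu_n$ finishes the proof.

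The only real subtlety is the sign bookkeeping: the chain rule forces a minus sign when passing $x\mapsto\bth-x$ through the logarithm, and this is compensated precisely by rewriting $\psi_0'(x)=-\psi_0'(-x)$ in the convex decomposition, so the ``plus'' version of the difference is what actually matches the conclusion of Lemma~\ref{FI Lemma: Lemma 1}. Once this is arranged, the whole argument is a two-line reduction and carries no analytic difficulty beyond what is already encoded in Lemma~\ref{FI Lemma: Lemma 1} and Lemma~\ref{lemma: FI lemma: lemma 1: hnss}.
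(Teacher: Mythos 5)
Your proposal is correct and follows essentially the same route as the paper: the convex-combination identity \eqref{definition of tpz by tpc}, the rewriting of $\psi_0'(x)$ as $\varrho_n(x)\psi_0'(x)-(1-\varrho_n(x))\psi_0'(-x)$ via oddness, the elementary bound $(a-b)^2\le 2a^2+2b^2$ with $\varrho_n\in[0,1]$, and then Lemma~\ref{FI Lemma: Lemma 1} applied to $\hts(\bth\pm\mathord{\cdot})$ with hypotheses checked by Lemma~\ref{lemma: FI lemma: lemma 1: hnss}. Your explicit sign bookkeeping (identifying the score of $\hts(\bth-\mathord{\cdot})$ as $-\hlnsp(\bth-\mathord{\cdot})$ so that the matched difference is $\hlnsp(\bth-x)+\psi_0'(x)$, a.e.\ equal to the paper's $\hlnsp(\bth-x)-\psi_0'(-x)$) is exactly the point the paper leaves implicit, so nothing is missing.
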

 
 \begin{proof}[Proof of lemma~\ref{lemma: Laha_Nilanjana 31: hnss}]
  Using the representation of $\hlnspm$  given by \eqref{definition of tpz by tpc}, we obtain that
  \begin{align*}
  \MoveEqLeft \dint_{-\xi_n}^{\xi_n} \slb \hlnspm(x)-\psi'_0(x)\srb^2 \mu_n(x)dx\\
  =&\ \dint_{-\xi_n}^{\xi_n} \slb \hlnspm(x)-\varrho_n(x)\psi'_0(x)+(1-\varrho_n(x))\psi_0'(-x)\srb^2 \mu_n(x)dx\\
  \leq &\  2\dint_{-\xi_n}^{\xi_n} \varrho_n(x)^2\slb\hlnsp(\bth+x)-\psi'_0(x)\srb^2 \mu_n(x)dx\\
  &\ + 2\dint_{-\xi_n}^{\xi_n}(1-\varrho_n(x))^2 \slb\hlnsp(\bth-x)-\psi'_0(-x)\srb^2 \mu_n(x)dx\\
  \stackrel{(a)}{\leq} &\ 2\dint_{-\xi_n}^{\xi_n} \lbs \slb\hlnsp(\bth+x)-\psi'_0(x)\srb^2 +\slb\hlnsp(\bth-x)-\psi'_0(-x)\srb^2 \rbs \mu_n(x)dx
  \end{align*}
  which is $O_p((\log n)^6n^{-4p/5})$ by  Lemma~\ref{lemma: FI lemma: lemma 1: hnss} and Lemma~\ref{FI Lemma: Lemma 1}. Here
 (a) follows  because $\varrho_n$ is a fraction.
 \end{proof}

 
 \begin{lemma}\label{lemma: hnss: FI}
 Suppose $\eta_n=Cn^{-2p/5}$, where $p=1/5$ and $C>0$.
 Let\\ $\xi_n=(\Gs)^{-1}(1-\eta_n)$. Then under the set up of Theorem~\ref{theorem: main: one-step: hnss}, 
 \begin{equation}\label{statement: lemma: hlnsp}
     \dint_{-\xin}^{\xin}\hlnsp(\bth\pm x)^2\hts(\bth\pm x)\to_p\I,
 \end{equation}
 and
 \begin{equation}\label{statement: lemma: hlnspm}
     \dint_{-\xin}^{\xin}\hlnspm( x)^2\hnss(\bth\pm x)\to_p\I.
 \end{equation}
 \end{lemma}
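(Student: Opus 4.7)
The plan is to invoke Lemma~\ref{lemma: consistency of FI: 2} in its general form twice: once with $\hn = \hts(\bth \pm \mathord{\cdot})$ to obtain \eqref{statement: lemma: hlnsp}, and once with $\hn = \hnss$ to obtain \eqref{statement: lemma: hlnspm}. In both applications $p = 1/5$ is the Hellinger rate supplied by Lemma~\ref{lemma: os: helli: hnss}, and the truncation level is $a_n = \xi_n = (\Gs)^{-1}(1 - \eta_n)$. Since $\xi_n$ is defined through the distribution function of $\hnss$ rather than through the distribution function of $\hts(\bth \pm \mathord{\cdot})$, the specialized second half of Lemma~\ref{lemma: consistency of FI: 2} cannot be used directly for \eqref{statement: lemma: hlnsp}; one must verify the hypotheses of the general first half by hand.

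First I check the three scalar conditions \eqref{inlemma: statement: FI: an} on $a_n = \xi_n$. Because $\hnss$ satisfies Condition~\ref{cond: hnss condition}, Lemma~\ref{lemma: bound: xi n} gives $\xi_n = O_p(\log n)$; Lemma~\ref{lemma: xi: xi is in dop psi knot} (applied to $\hnss$ through its distribution function $\Gs$) yields $P([-\xi_n, \xi_n] \subset \iint(\dom(\psi_0))) \to 1$; and Lemma~\ref{lemma: xi: xi goes to 1} delivers $\xi_n \to_p G_0^{-1}(1)$. These three conclusions serve both applications, regardless of which density plays the role of $\hn$.

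Next, for each candidate density $\hn$, Conditions~\ref{condition: on hn} and \ref{cond: hellinger rate} are supplied by Proposition~\ref{prop: the L1 convergence of the density estimators of one-step estimators} and Lemma~\ref{lemma: os: helli: hnss} respectively. The only non-trivial remaining hypothesis is the weighted squared-error bound \eqref{inlemma: statement: FI}. For \eqref{statement: lemma: hlnsp} I take $\mu_n = \hts(\bth \pm \mathord{\cdot})$, which is a density with $\|\mu_n\|_\infty = O_p(1)$ by Fact~\ref{fact: Lemma 1 of theory paper} combined with the uniform consistency in Condition~\ref{condition: on hn}; Lemma~\ref{FI Lemma: Lemma 1}, made applicable at $a_n = \xi_n$ by Lemma~\ref{lemma: FI lemma: lemma 1: hnss}, then delivers
\[
\int_{-\xi_n}^{\xi_n}\bigl(\hlnsp(\bth \pm z) - \psi_0'(z)\bigr)^2 \hts(\bth \pm z)\,dz = O_p\bigl((\log n)^3 n^{-4p/5}\bigr).
\]
For \eqref{statement: lemma: hlnspm} I take $\mu_n = \hnss$, which is again $O_p(1)$ in sup norm by Condition~\ref{condition: on hn}; Lemma~\ref{lemma: Laha_Nilanjana 31: hnss} supplies the analogous bound with $\hlnspm$ and $\hnss$ in place of $\hlnsp(\bth \pm \mathord{\cdot})$ and $\hts(\bth \pm \mathord{\cdot})$.

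With every hypothesis verified, Lemma~\ref{lemma: consistency of FI: 2} produces both conclusions. The only delicate point — and the main thing worth checking carefully rather than routinely — is the mismatch for \eqref{statement: lemma: hlnsp} between the density $\hts(\bth \pm \mathord{\cdot})$ and the truncation $\xi_n$ inherited from a foreign distribution function $\Gs$. This is reconciled inside Lemma~\ref{lemma: FI lemma: lemma 1: hnss}, whose internal appeal to Lemma~\ref{lemma: os: distribution function differnece} shows that $\xi_n$ still sits deep enough inside the bulk of $\hts(\bth \pm \mathord{\cdot})$ for the analytic machinery of Lemma~\ref{FI Lemma: Lemma 1} to be deployed without modification.
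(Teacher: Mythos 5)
Your proposal is correct and follows essentially the same route as the paper: both arguments reduce the claim to the general (first) part of Lemma~\ref{lemma: consistency of FI: 2} applied to the pairs $(\xi_n,\hts(\bth\pm\mathord{\cdot}))$ and $(\xi_n,\hnss)$, with the hypotheses on $a_n=\xi_n$ checked via Lemmas~\ref{lemma: bound: xi n}, \ref{lemma: xi: xi is in dop psi knot}/\ref{lemma: An inclusion}, and \ref{lemma: xi: xi goes to 1}, and the weighted squared-error hypothesis \eqref{inlemma: statement: FI} supplied by Lemma~\ref{FI Lemma: Lemma 1} through Lemma~\ref{lemma: FI lemma: lemma 1: hnss} for $\hts(\bth\pm\mathord{\cdot})$ and by Lemma~\ref{lemma: Laha_Nilanjana 31: hnss} for $\hnss$. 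Your extra care about the truncation point $\xi_n$ being defined through $\Gs$ rather than through $\Hsm$ is exactly the point the paper handles inside Lemma~\ref{lemma: FI lemma: lemma 1: hnss} via Lemma~\ref{lemma: os: distribution function differnece}, so no gap remains.
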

 
 \begin{proof}[Proof of Lemma~\ref{lemma: hnss: FI}]
 It suffices to show that the pairs $(\xin,\hts(\bth\pm\mathord{\cdot}))$ and $(\xin,\hnss)$ satisfy the conditions of Lemma~\ref{lemma: consistency of FI: 2}.
   By Lemma~\ref{lemma: FI lemma: lemma 1: hnss}, $a_n=\xin$ satisfies the conditions of  Lemma~\ref{FI Lemma: Lemma 1}, which entails that (a) $a_n$ is $O_p(\log n)$ and (b) \eqref{inlemma: statement: os: suport inclusion} holds for $a_n$ with probability tending to one, where \eqref{inlemma: statement: os: suport inclusion} implies $[-a_n,a_n]\subset \iint(\dom(\psi_0))$. 
  Next, the condition $a_n=\xin\to_p G_0^{-1}(1)$ holds by Lemma~\ref{lemma: xi: xi goes to 1}. Finally, \eqref{inlemma: statement: FI} holds for $\hts(\bth\pm\mathord{\cdot})$ and $\hnss$ by Lemma \ref{lemma: FI lemma: lemma 1: hnss} and Lemma~\ref{lemma: Laha_Nilanjana 31: hnss}, respectively.
   Therefore the proof follows from 
    Lemma~\ref{lemma: consistency of FI: 2}.
 \end{proof}
 
  \begin{lemma}
   \label{lemma: hnss: tilde psi prime bound}
 Let $\xi_n=(\Gs)^{-1}(1-\eta_n)$ where $\eta_n=Cn^{-2p/5}$ for some $C>0$ and  $p=1/5$.
Suppose $y_n$ is a sequence of random variables such that
  $P(|y_n|\leq \eta_n/(2g_0(0)))\to 1$.
  Then under the conditions of Theorem~\ref{theorem: main: one-step: hnss}, we have
  \[ \sup_{x\in[-\xi_n-y_n,\xi_n+y_n]}\lbs|\hlnspm(x)|+|\hlnsp(\bth\pm x)|\rbs =O_p(n^{p/5}).\]
 \end{lemma}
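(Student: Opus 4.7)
The plan is to reduce, via the representation \eqref{definition of tpz by tpc} and the log-concavity of $\hts$, to bounding $|\hlnsp(\bth+\xin+y_n)|$ and $|\hlnsp(\bth-\xin-y_n)|$, and then to apply a quantile-based monotonicity argument analogous to the one used in Lemma~\ref{lemma: tilde psi prime bound}. Since $\varrho_n(x)\in[0,1]$, the identity \eqref{definition of tpz by tpc} gives $|\hlnspm(x)|\leq|\hlnsp(\bth+x)|+|\hlnsp(\bth-x)|$, so it suffices to control $\sup_{x\in[-\xin-y_n,\xin+y_n]}|\hlnsp(\bth\pm x)|$. Because $\hts$ is log-concave and smooth on $\RR$, $\hlnsp$ is non-increasing, and hence $|\hlnsp(\bth+\cdot)|$ (and similarly $|\hlnsp(\bth-\cdot)|$) attains its supremum on this compact interval at one of the two endpoints; by symmetry it suffices to bound $|\hlnsp(\bth+\xin+y_n)|$.

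Let $G_n^+(x):=\Hsm(\bth+x)$ denote the distribution function of $\hts(\bth+\cdot)$ and set $a_n:=(G_n^+)^{-1}(1-\eta_n/4)$. By Lemma~\ref{lemma: os: distribution function differnece}(A), $\Hsm(\bth+\xin)=\Gs(\xin)+O_p(n^{-p})=1-\eta_n+o_p(\eta_n)$; combining this with $\|\hts\|_\infty=O_p(1)$ and $|y_n|\leq\eta_n/(2g_0(0))$, a mean-value-theorem argument (in the spirit of Lemma~\ref{lemma: xi: tilde xi n}) shows that with probability tending to one, $\xin+y_n<a_n$ and $G_n^+(a_n)-G_n^+(\xin+y_n)\geq c\eta_n$ for some constant $c>0$. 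Since the mode of $\hts(\bth+\cdot)$ converges in probability to $0$ while $\xin+y_n$ tends to the right endpoint of $\supp(g_0)$, the interval $[\xin+y_n,a_n]$ lies to the right of this mode with high probability, and there $\hlnsp(\bth+x)<0$ with $|\hlnsp(\bth+x)|$ non-decreasing in $x$. Monotonicity then yields
\[
c\eta_n\,|\hlnsp(\bth+\xin+y_n)|^2\leq\dint_{\xin+y_n}^{a_n}\hlnsp(\bth+x)^2\hts(\bth+x)\,dx\leq\dint_{-a_n}^{a_n}\hlnsp(\bth+x)^2\hts(\bth+x)\,dx.
\]

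The right-hand integral is controlled by applying Lemma~\ref{lemma: consistency of FI: 2} to the log-concave density $\hts(\bth+\cdot)$ with $a_n$ in place of $\xin$: the required conditions on $a_n$---namely $a_n=O_p(\log n)$, $a_n\in\iint(\dom(\psi_0))$ with high probability, and $a_n\to_p G_0^{-1}(1)$---all follow because $a_n$ lies within $O_p(\eta_n)$ of $\xin$, which itself satisfies these (Lemmas~\ref{lemma: bound: xi n}, \ref{lemma: An inclusion}, and \ref{lemma: xi: xi goes to 1}, respectively); while the $L_2$-error condition \eqref{inlemma: statement: FI} follows by re-running Lemma~\ref{FI Lemma: Lemma 1} with $a_n$ in place of $\xin$, as the hypotheses \eqref{inlemma: statement: os: suport inclusion} and \eqref{inlemma: statement: os: FI: an xin} remain valid by the same $\|\Gs-G_n^+\|_\infty=o_p(\eta_n)$ bound. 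Consequently the right-hand integral converges in probability to $\I$, so $|\hlnsp(\bth+\xin+y_n)|=O_p(\eta_n^{-1/2})=O_p(n^{p/5})$. A symmetric argument handles the left endpoint $\bth-\xin-y_n$ and the density $\hts(\bth-\cdot)$, and the reduction from the first paragraph completes the proof. The main obstacle throughout is the asymmetry of $\hts(\bth+\cdot)$, which forces the passage between the quantiles of $\Gs$ (defining $\xin$) and those of $G_n^+$; Lemma~\ref{lemma: os: distribution function differnece}(A) provides exactly the needed $O_p(n^{-p})=o_p(\eta_n)$ control to make all the quantile comparisons go through.
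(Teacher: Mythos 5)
Your proposal follows essentially the same route as the paper: reduce via \eqref{definition of tpz by tpc} and $\varrho_n\in[0,1]$ to the log-concave pieces $\hlnsp(\bth\pm\mathord{\cdot})$, exploit monotonicity of $\hlnsp$ so that the supremum sits at an endpoint, lower-bound the probability mass of a short tail interval by a constant multiple of $\eta_n$ using the $O_p(n^{-p})$ quantile comparison of Lemma~\ref{lemma: os: distribution function differnece}(A), and upper-bound the resulting weighted integral of $\hlnsp(\bth\pm x)^2$ by a quantity converging to $\I$ via the Fisher-information machinery (Lemmas~\ref{FI Lemma: Lemma 1}, \ref{lemma: consistency of FI: 2}, \ref{lemma: hnss: FI}), yielding the $O_p(\eta_n^{-1/2})=O_p(n^{p/5})$ bound. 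The paper does the same thing with slightly different bookkeeping: it keeps the anchor quantiles in terms of $\Gs$ (namely $(\Gs)^{-1}(\eta_n/4)$, $(\Gs)^{-1}(\eta_n/2)$ and $\tilde\xi_n=(\Gs)^{-1}(1-\eta_n/2)$), absorbs $y_n$ via Lemma~\ref{lemma: xi: tilde xi n}, and invokes Lemma~\ref{lemma: hnss: FI} directly, whereas you anchor at the quantile $a_n=(G_n^+)^{-1}(1-\eta_n/4)$ of the non-symmetric $G_n^+=\Hsm(\bth+\mathord{\cdot})$ and re-run the supporting lemmas at that point; you are also more explicit than the paper about the fact that the relevant interval lies to the right of the mode of $\hts(\bth+\mathord{\cdot})$, which is the correct way to make the monotonicity of $|\hlnsp|$ usable there.

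One justification in your write-up is wrong, though the conclusion it is meant to support is fine. You claim that the conditions $a_n=O_p(\log n)$, $P(a_n\in\iint(\dom(\psi_0)))\to 1$, $a_n\to_p G_0^{-1}(1)$, and the hypotheses of Lemma~\ref{FI Lemma: Lemma 1} transfer from $\xin$ to $a_n$ ``because $a_n$ lies within $O_p(\eta_n)$ of $\xin$.'' That proximity claim is false in general: the gap between the $(1-\eta_n)$- and $(1-\eta_n/4)$-quantiles of a log-concave distribution is of the order of the reciprocal of the tail decay rate (for an exponential-type tail it is $\asymp\log 4/\alpha$), hence $O_p(1)$, not $O_p(\eta_n)$. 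The repair is to verify the conditions for $a_n$ directly rather than by proximity: $\|G_n^+-G_0\|_\infty\leq\sqrt2 H(\hts(\bth+\mathord{\cdot}),g_0)=O_p(n^{-p})=o_p(\eta_n)$ together with $1-G_n^+(a_n)=\eta_n/4$ gives $P(a_n\in\iint(\dom(\psi_0)))\to1$ and, via Fact~\ref{fact: consistency of the quantiles}, $a_n\to_p G_0^{-1}(1)$, while Fact~\ref{fact: f grtr than F: base} applied to $\hts(\bth+\mathord{\cdot})$ gives $a_n=O_p(\log n)$ and the density lower bound of order $\eta_n$ needed in Lemma~\ref{FI Lemma: Lemma 1}; this is exactly the kind of verification the paper carries out in Lemma~\ref{lemma: FI lemma: lemma 1: hnss}, just at $\Gs$-quantiles instead of $G_n^+$-quantiles. (A cosmetic point in the same step: with your choice of $a_n$ the condition \eqref{inlemma: statement: os: FI: an xin} holds only with $\eta_n/4$ replaced by, say, $\eta_n/8$, since $1-G_n^+(a_n)$ equals $\eta_n/4$ exactly and $G_n^+(-a_n)\geq\eta_n/4-o_p(\eta_n)$; this changes nothing in the proof of that lemma.) With these fixes your argument is correct and coincides in substance with the paper's.
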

 
 \begin{proof}[Proof of Lemma~\ref{lemma: hnss: tilde psi prime bound}]
  Note that \eqref{definition of tpz by tpc} implies
  \[|\hlnspm(x)|\leq \max\lbs|\hlnsp(\bth+ x)|,|\hlnsp(\bth-x)|\rbs .\]
  Therefore it suffices to  bound $|\hlnsp(\bth\pm )|$ only. Since the proof  of $\hlnsp(\bth+\mathord{\cdot})$ and $\hlnsp(\bth-\mathord{\cdot})$ are similar, we  only show the proof for $\hlnsp(\bth+\mathord{\cdot})$. 
  Denoting $\hn=\hts(\bth+\mathord{\cdot})$, $\hln=\hlnsp(\bth+\mathord{\cdot})$, and $\tilde G_n=\Hsm(\bth+\mathord{\cdot})$, note that
the following holds for any $q\in(0,1/2)$ by Fact~\ref{fact: bobkov big} because $\hn$ is positive on $J(\tilde G_n)$:
  \[\dint_{(\Gs)^{-1}(q/2)}^{(\Gs)^{-1}(q)}\tp(x)^2\hn(x)dx=\dint_{\tilde G_n((\Gs)^{-1}(q/2))}^{\tilde G_n((\Gs)^{-1}(q))}\tp(\tilde G_n^{-1}(z))^2 dz.\]
  Because $(\tp)^2$ is non-increasing on $(-\infty,0]$, the above yields
  \[\tp((\Gs)^{-1}(q))^2\leq \frac{\dint_{(\Gs)^{-1}(q/2)}^{(\Gs)^{-1}(q)}\tp(x)^2\hn(x)dx}{\tilde G_n((\Gs)^{-1}(q))-\tilde G_n((\Gs)^{-1}(q/2))}.\]
  Letting $q=\eta_n/2$, and denoting $\tilde {\xi}_n=(\Gs)^{-1}(1-\eta_n/2)$, we obtain that
  \[\tp(\tilde{\xi}_n)^2\leq \dint_{(\Gs)^{-1}(\eta_n/4)}^{(\Gs)^{-1}(\eta_n/2)}\tp(x)^2\hn(x)dx \Big /\slb \eta_n/4 -2\|\tilde G_n-\Gs\|_{\infty}\srb.\]
  Now Lemma~\ref{lemma: os: distribution function differnece} implies 
  \[\eta_n/4 -2\|\tilde G_n-\Gs\|_{\infty}=\eta_n/4-O_p(n^{-p}),\]
  whose dominating term is $\eta_n/4$. Also  
  \[\dint_{(\Gs)^{-1}(\eta_n/4)}^{(\Gs)^{-1}(\eta_n/2)}\tp(x)^2\hn(x)dx\leq \dint_{(\Gs)^{-1}(\eta_n/4)}^{(\Gs)^{-1}(1-\eta_n/4)}\tp(x)^2\hn(x)dx\]
  which converges in probability to $\I$ by Lemma~\ref{lemma: hnss: FI}.
  Therefore $\tp({\tilde \xi_n})$ is $O_p(\eta_n^{-1/2})$. The rest of the proof follows similar to the proof of  Lemma~\ref{lemma: tilde psi prime bound}.
 \end{proof}
 
 
  
 \begin{lemma}\label{Lemma: L2 norm of hnss}
 Suppose $\hn$ satisfies Condition~\ref{cond: hellinger rate}.
  Then under the set up of Theorem~\ref{theorem: main: one-step: hnss},
\[\dint_{-\xin}^{\xin}(\hlnspm(x)-\psi_0'(z-\td))^2f_0(z+\bth)dz=O_p(n^{-4p/5}(\log n)^{6}).\]
\end{lemma}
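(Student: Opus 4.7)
The plan is to follow the structure of Lemma~\ref{Lemma: L2 norm of hn} almost verbatim, with the single substitution of Lemma~\ref{lemma: Laha_Nilanjana 31: hnss} in place of Lemma~\ref{FI Lemma: Lemma 1} to accommodate the non-monotone score $\hlnspm$ produced by the smoothed symmetrized estimator. All the heavy lifting for the smoothed symmetrized case has already been done upstream, so this lemma should be essentially a bookkeeping exercise.

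First, I would use the identity $f_0(z+\bth)=g_0(z-\td)$, where $\td=\theta_0-\bth$, and insert $\psi_0'(z)$ as an intermediate comparison point via $(a-b)^2\le 2(a-c)^2+2(c-b)^2$. This splits the integral into
\[
T_1=\dint_{-\xin}^{\xin}(\hlnspm(z)-\psi_0'(z))^2 g_0(z-\td)\,dz\quad\text{and}\quad T_2=\dint_{-\xin}^{\xin}(\psi_0'(z)-\psi_0'(z-\td))^2 g_0(z-\td)\,dz,
\]
with the target bounded by $2T_1+2T_2$.

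The dominant piece $T_1$ is controlled by invoking Lemma~\ref{lemma: Laha_Nilanjana 31: hnss} with $\mu_n(z)=g_0(z-\td)$. This choice is admissible because $\|\mu_n\|_\infty=g_0(0)<\infty$ by Fact~\ref{fact: Lemma 1 of theory paper}, and the lemma immediately delivers $T_1=O_p((\log n)^3 n^{-4p/5})$ with $p=1/5$. The residual piece $T_2$ is disposed of by Assumption~\ref{assump: L}: because $\td=O_p(n^{-1/2})=o_p(\eta_n)$, Lemma~\ref{lemma: An inclusion} places $[-\xin-|\td|,\xin+|\td|]\subset\iint(\dom(\psi_0))$ with probability tending to one, on which event Assumption~\ref{assump: L} gives $|\psi_0'(z)-\psi_0'(z-\td)|\le \kappa|\td|$ uniformly on $[-\xin,\xin]$. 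Integrating, $T_2\le \kappa^2\td^2\int g_0(z-\td)\,dz=O_p(n^{-1})$, which is absorbed into the bound for $T_1$ since $n^{-1}\ll n^{-4p/5}(\log n)^3$ for $p=1/5$.

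There is no genuine obstacle here: the key analytical work, namely controlling $\int(\hlnspm-\psi_0')^2\mu_n$ through the decomposition \eqref{definition of tpz by tpc} that expresses $\hlnspm$ as a convex combination of $\hlnsp(\bth+\cdot)$ and $-\hlnsp(\bth-\cdot)$, was already carried out in Lemma~\ref{lemma: Laha_Nilanjana 31: hnss} (and, one level deeper, in Lemma~\ref{lemma: FI lemma: lemma 1: hnss}). The only care required is verifying the two side conditions above, namely that $g_0(\cdot-\td)$ is a density with uniformly bounded sup-norm and that the shifted interval $[-\xin-|\td|,\xin]$ lies inside $\iint(\dom(\psi_0))$ with high probability, both of which are essentially immediate from the tools already set up.
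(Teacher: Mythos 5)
Your proposal is correct and follows essentially the same route as the paper: the paper's proof of this lemma simply repeats the $T_1$/$T_2$ decomposition of Lemma~\ref{Lemma: L2 norm of hn} (using $f_0(z+\bth)=g_0(z-\td)$ and the intermediate point $\psi_0'(z)$), bounds $T_1$ by Lemma~\ref{lemma: Laha_Nilanjana 31: hnss} in place of Lemma~\ref{FI Lemma: Lemma 1}, and handles $T_2$ via Assumption~\ref{assump: L} and Lemma~\ref{lemma: An inclusion}, exactly as you do. Your explicit verification of the side conditions (boundedness of $g_0(\cdot-\td)$ and the domain inclusion since $\td=o_p(\eta_n)$) is a slightly more careful spelling-out of the same argument.
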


\begin{proof}[Proof of Lemma~\ref{Lemma: L2 norm of hnss}]
 The proof   is similar to the proof of Lemma~\ref{Lemma: L2 norm of hn}. The only difference is that  one needs to use  use Lemma~\ref{lemma: Laha_Nilanjana 31: hnss} instead of Lemma~\ref{FI Lemma: Lemma 1} to bound $T_1$
\end{proof}

\subsubsection{\textbf{Lemmas on consistency of Fisher information:}}

\begin{lemma}\label{lemma: hnss: consistency of FI} 
Under the set up of Theorem~\ref{theorem: main: one-step: full}, $\hin(\eta_n)\to_p\I$.
\end{lemma}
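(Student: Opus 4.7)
The plan is to mirror the proof of Lemma~\ref{lemma: consistency of FI}, decomposing $|\hin(\eta_n)-\I|$ into an empirical process term $T_1$, a bias term $T_2$ accounting for the use of $\bth$ in place of $\th_0$, a density-approximation term $T_3$, and an oracle-type term $T_4$:
\begin{align*}
 T_1 &= \Bigl|\int_{\bth-\xin}^{\bth+\xin}\hlnspm(x-\bth)^2\,d(\Fm-F_0)(x)\Bigr|, \\
 T_2 &= \Bigl|\int_{-\xin}^{\xin}\hlnspm(x)^2(g_0(x-\td)-g_0(x))\,dx\Bigr|, \\
 T_3 &= \Bigl|\int_{-\xin}^{\xin}\hlnspm(x)^2(g_0(x)-\hnss(x))\,dx\Bigr|, \\
 T_4 &= \Bigl|\int_{-\xin}^{\xin}\hlnspm(x)^2\hnss(x)\,dx-\I\Bigr|,
\end{align*}
where $\xin=(\Gs)^{-1}(1-\eta_n)$ and $p=1/5$.

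For $T_1$ I expect the real work. Since $\hlnspm$ is no longer monotone, the class $\mathcal U_n(M_n)$ used in Lemma~\ref{lemma: consistency of FI} must be replaced by the symmetrized class $\mathcal U^{sym}_n(M_n,r_1,r_2)$ introduced in \eqref{intheorem: def: Un C sym}. Using \eqref{definition of tpz by tpc} together with Lemma~\ref{lemma: hnss: tilde psi prime bound} and the bounds on $\varrho_n$, $1-\varrho_n$ and their derivatives established in the proof of Theorem~\ref{theorem: main: one-step: hnss}, I would argue that for sufficiently large $C>0$, the map $x\mapsto\hlnspm(x-\bth)^2 1_{[\bth-\xin,\bth+\xin]}(x)$ belongs, with probability tending to one, to
\begin{align*}
 \widetilde{\mathcal V}_n(C) = \Bigl\{h:\mathbb R\to\mathbb R\ \Bigl|\ &\ h(x)=u(x-\bth')^2 1_{[r_1,r_2]}(x),\ u\in\mathcal U^{sym}_n(M_n,r_1-\bth',r_2-\bth'),\\
 &\ [r_1,r_2]\subset[\th_0-C\log n,\th_0+C\log n]\cap\iint(\dom(\phi_0)),\\
 &\ \bth'\in\mathbb R\Bigr\},
\end{align*}
with envelope $M_n^2$, where $M_n = Cn^{p/5}$. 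Squaring preserves the bracketing-entropy bound up to a constant (because the functions are uniformly bounded by $M_n$), so combining \eqref{intheorem: bracketing entropy: U sym n} with \eqref{inlemma: finite entropy indicator functions} yields $\sup_Q\log N(\e,\widetilde{\mathcal V}_n(C),L_2(Q))\lesssim (\log n)M_n^2/\e$. The condition of Fact~\ref{fact: GC} reduces to $M_n^2\cdot M_n^2(\log n)=O(n^{4p/5}\log n)=o(n)$, which holds for $p=1/5$, giving $T_1=o_p(1)$ via Markov's inequality exactly as in Lemma~\ref{lemma: consistency of FI}.

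The remaining terms are routine variants of their log-concave analogues. For $T_2$ and $T_3$, Lemma~\ref{lemma: hnss: tilde psi prime bound} gives $\sup_{x\in[-\xin,\xin]}|\hlnspm(x)|=O_p(n^{p/5})$, so both terms are at most $O_p(n^{2p/5})$ times a total variation distance. For $T_2$, Fact~\ref{fact: dTV and hellinger} and Fact~\ref{fact: helli fknot} yield $d_{TV}(g_0(\cdot-\td),g_0)=O_p(|\td|)=O_p(n^{-1/2})$, so $T_2=o_p(1)$ for $p<5/4$. For $T_3$, Fact~\ref{fact: dTV and hellinger} together with Lemma~\ref{lemma: os: helli: hnss} gives $d_{TV}(\hnss,g_0)\leq\sqrt 2 H(\hnss,g_0)=O_p(n^{-p})$, so $T_3=O_p(n^{-3p/5})=o_p(1)$. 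Finally, $T_4=o_p(1)$ is precisely \eqref{statement: lemma: hlnspm} of Lemma~\ref{lemma: hnss: FI}. Combining these four bounds completes the proof. The main obstacle is the empirical-process step: one must carefully verify that $\tilde h_n^2$, constructed from the non-monotone $\hlnspm$, lies in a class whose bracketing entropy is still controlled at the correct rate, which hinges on the representation \eqref{definition of tpz by tpc} and the uniform Lipschitz control of $\varrho_n$ on $[-\xin,\xin]$.
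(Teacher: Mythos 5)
Your proposal is correct and follows essentially the same route as the paper: the paper's own proof of this lemma is exactly to repeat the argument of Lemma~\ref{lemma: consistency of FI}, replacing the monotone class $\mathcal U_n(M_n)$ by $\mathcal{U}^{sym}_n(M_n,r_1,r_2)$ from \eqref{intheorem: def: Un C sym} and substituting Lemma~\ref{lemma: hnss: tilde psi prime bound} and Lemma~\ref{lemma: hnss: FI} for Lemma~\ref{lemma: tilde psi prime bound} and Lemma~\ref{lemma: consistency of FI: 2}, which is precisely your $T_1$--$T_4$ decomposition. Your write-up merely makes explicit the entropy and envelope bookkeeping that the paper leaves implicit.
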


\begin{proof}[Proof of Lemma~\ref{lemma: hnss: consistency of FI}]
 The proof follows in the same way as Lemma~\ref{lemma: consistency of FI}  by replacing Lemma~\ref{lemma: consistency of FI: 2} and Lemma~\ref{lemma: tilde psi prime bound} by Lemma~\ref{lemma: hnss: FI} Lemma~\ref{lemma: hnss: tilde psi prime bound}, respectively, and replacing the class of monotone functions $\mathcal U_n(M_n)$ by the class $\mathcal{U}^{sym}_n(M_n,r_1,r_2)$ defined in \eqref{intheorem: def: Un C sym}.
\end{proof}

\section{Proof of Lemma~\ref{lemma: mle does not exist degenerate}}
\label{app: mle lemma}

\begin{proof}[Proof of Lemma~\ref{lemma: mle does not exist degenerate}]
 For $k\geq 1$, we denote $A_k$ to be the set $[-1/(2k),1/(2k)]$, and consider
 the sequence of  functions $\{\ps_k\}_{k\geq 1}\in\mathcal{SC}_0$ defined by
 \[\ps_k(x)=\begin{cases}
 \begin{matrix}
 \log k, & \quad x\in A_k\\
 -\infty, &\quad  \text{o.w.}
 \end{matrix}
 \end{cases}\]
   Observe that
 \[\Psi_n(x_0,\ps_k)= \log k-1\to\infty,\quad\text{ as }k\to\infty.\]
 Therefore, $x_0$ indeed is  a candidate for the MLE of $\th_0$. However,  the MLE of $\ps_0$, i.e. $\hpm$, does not exist in this case. To verify, observe that if \hpm\ does exist for some $\hthm\in\RR$, we also have
 \[\hpm(x_0-\hthm)-\edint e^{\hpm(x)}dx= \Psi_n(x_0,\hpm)\geq \lim_{k\to\infty}\Psi_n(x_0,\ps_k)=\infty,\]
 leading to $\hpm(x_0-\hthm)=\infty$,
 which contradicts the fact that $\hpm$ is a proper concave function. 
 Hence, we conclude that the MLE of $(\th_0,\ps_0)$ does not exist when  $\Fm$ is degenerate.
\end{proof}

\section{Proof of Theorem~\ref{theorem: existence}}
\label{app: existence theorem}
To prove Theorem~\ref{theorem: existence}, it will be beneficial to prove a general result first. We begin by stating a condition.
\begin{condition}(Existence of log-concave projection.)
 \label{cond: A}
 $F$ is a non-degenerate distribution function with finite first moment.
 \end{condition}
 Any $F$ satisfying Condition~\ref{cond: A} has a well-defined log-concave projection, i.e. its projection (with respect to the KL divergence) onto the space of all distributions with density in $\mathcal{LC}$ is a unique distribution function\citep[Theorem 2.2,][]{dumbreg}.
 Note that $\Fm$ satisfies Condition~\ref{cond: A} with probability one.
 We will show that for any distribution function $F$ satisfying  Condition~\ref{cond: A},
 \begin{equation}\label{MLE: maximizer of criterion}
(\th^*(F),\ps^*(F))=\argmax_{\th\in\RR,\ps\in\mathcal{SC}_0}\Psi(\th,\ps,F)
\end{equation}
exists where $\Psi$ is the criterion function  defined in \eqref{criterion function: xu samworth}.
  \begin{proposition}\label{Prop: existence of maximizers}
 If $F$ satisfies condition~\ref{cond: A}, then   $\th^*(F)$ and $\ps^*(F)$  exist.
 \end{proposition}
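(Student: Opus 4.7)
The plan is to reduce the joint maximization to a one-dimensional optimization over $\theta$ via an inner log-concave projection, then combine coercivity of the profiled objective with upper semicontinuity to produce a maximizer.

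First, fix $\theta\in\RR$ and consider the inner problem $\sup_{\psi\in\mathcal{SC}_0}\Psi(\theta,\psi,F)$. By Lemma~\ref{Lemma: projection theory for a distribution function F}, its maximizer coincides with the maximizer of $\psi\mapsto\omega(\psi,F^{sym}_\theta)$ over $\psi\in\mathcal{C}$, where the translated symmetrization $F^{sym}_\theta$ inherits non-degeneracy and finite first moment from $F$. Theorem~$2.2$ of \cite{dumbreg} then produces a unique $\psi^*_\theta\in\mathcal{SC}_0$ attaining the supremum and satisfying $\int e^{\psi^*_\theta(y)}\,dy=1$. Setting $g^*_\theta:=e^{\psi^*_\theta}\in\mathcal{SLC}_0$ and $M(\theta):=\Psi(\theta,\psi^*_\theta,F)=\int\psi^*_\theta(x-\theta)\,dF(x)-1$, the problem is reduced to showing that $M$ attains its supremum on $\RR$.

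Second, I would prove the coercivity bound $M(\theta)\to-\infty$ as $|\theta|\to\infty$. The elementary ingredient is the fact that every $g\in\mathcal{SLC}_0$ satisfies $g(y)\le 1/(2|y|)$ for $y\neq 0$: $g$ is non-increasing on $[0,\infty)$ by symmetry and unimodality at $0$, hence $1=\int g\ge\int_{-|y|}^{|y|}g\ge 2|y|\,g(y)$. Applied to $g^*_\theta$ this gives
\[
M(\theta)+1\le -\log 2-\int\log|x-\theta|\,dF(x),
\]
and the right-hand side tends to $-\infty$ as $|\theta|\to\infty$ by monotone convergence since $F$ has finite first moment.

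Third, I would take a maximizing sequence $(\theta_n,\psi_n)$ with $\psi_n=\psi^*_{\theta_n}$; coercivity confines $\{\theta_n\}$ to a compact interval, so after extraction $\theta_n\to\theta_*\in\RR$. The symmetrized measures satisfy $d_W(F^{sym}_{\theta_n},F^{sym}_{\theta_*})\le|\theta_n-\theta_*|$, so by continuity of the log-concave projection in Wasserstein distance (Proposition $6$ of \cite{xuhigh}), $\|g^*_{\theta_n}-g^*_{\theta_*}\|_1\to 0$. Proposition~$2$ of \cite{theory} upgrades this to uniform convergence of $g^*_{\theta_n}$ to $g^*_{\theta_*}$ on any compact subset of $\iint(\dom(\psi^*_{\theta_*}))$. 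Using the uniform envelope $\psi^*_{\theta_n}(y)\le -\log(2|y|)$ on the tails and the integrability of $\log^+|x-\theta_*|$ under $F$, a split-and-Fatou argument yields $\limsup_n M(\theta_n)\le M(\theta_*)$. Since $(\theta_n)$ is a maximizing sequence, $\theta_*\in\argmax M$, and $(\theta_*,\psi^*_{\theta_*})$ is a joint maximizer of $\Psi(\cdot,\cdot,F)$.

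The main obstacle will be the upper-semicontinuity step: justifying passage to the limit in $\int\psi^*_{\theta_n}(x-\theta_n)\,dF(x)$. The integrand can become very large where $F$ places mass close to $\theta_n$ because the mode height $g^*_{\theta_n}(0)$ is not uniformly bounded, so I would split the integral into a bounded window around $\theta_*$, handled by uniform convergence on compacts in the interior of the support, and a tail piece controlled by the universal bound $-\log(2|y|)$ together with the finite first moment of $F$. Everything else in the argument is a fairly standard application of the log-concave projection toolkit cited throughout the paper.
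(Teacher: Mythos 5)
Your overall strategy is the same as the paper's: profile out the infinite-dimensional parameter using the inner symmetric log-concave projection (the paper gets the inner maximizer from Proposition 4(iii) of \cite{xuhigh} rather than via Lemma~\ref{Lemma: projection theory for a distribution function F} plus \cite{dumbreg}, but these are interchangeable), then show the profiled criterion $L(\th;F)$ is coercive via the bound $\ps(y)\le -\log(2|y|)$ for $e^{\ps}\in\mathcal{SLC}_0$, and finally pass to a convergent maximizing sequence. The coercivity step is literally the paper's argument, except that your appeal to monotone convergence should be Fatou's lemma, since $\log|x-\th_k|$ need not be monotone along an arbitrary sequence $|\th_k|\to\infty$; this is cosmetic. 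The substantive deviation is the last step: the paper proves full continuity of $\th\mapsto L(\th;F)$ (Lemma~\ref{lemma: continuity: general}) by noting $d_W(F(\mathord{\cdot}+\th_k),F(\mathord{\cdot}+\th))\to 0$ and invoking Proposition 6 of \cite{xuhigh}, which already yields convergence of the \emph{maximized criterion value}, not just $L_1$ convergence of the projected densities. You use that proposition only for the densities and then rebuild upper semicontinuity of $\int\psi^*_{\th_n}(x-\th_n)\,dF(x)$ by hand, which is exactly where your sketch is thinnest: locally uniform convergence on compacts interior to $\supp(g^*_{\th_*})$ does not by itself give $\limsup_n\psi^*_{\th_n}(x-\th_n)\le\psi^*_{\th_*}(x-\th_*)$ at points $x$ for which $x-\th_*$ sits on or outside the boundary of that support, and $F$ may carry atoms there; you would need an extra argument (e.g., monotonicity of $\psi^*_{\th_n}$ on $[0,\infty)$ together with closedness of $\psi^*_{\th_*}$, or the tail/pointwise convergence results for log-concave sequences in \cite{theory}) plus a reverse-Fatou with the integrable envelope to make the split rigorous. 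So your route works after patching, but the detour buys nothing: citing the Wasserstein continuity of the projected log-likelihood value, as the paper does, collapses your hardest step into one line, and your bound $d_W(F^{sym}_{\th_n},F^{sym}_{\th_*})\le|\th_n-\th_*|$ (provable as in Lemma~\ref{lemma: hellinger: charles}) is then not even needed.
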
  
 
 Observe that   Proposition \ref{Prop: existence of maximizers}
implies the first part of Theorem~\ref{theorem: existence} because
if $F=\Fm$, $(\th^*(F),\ps^*(F))$ corresponds to the MLE $(\hthm,\hgm)$.
The second part of Theorem~\ref{theorem: existence} follows from Lemma \ref{lem: support of projection map}, which is proved in Appendix~\ref{subsec: addlemma: MLE: existence}. 
\begin{lemma}\label{lem: support of projection map}
Suppose $F$ is such that $J(F)=(a,b)$ where $a,b\in\RR$, and $J(F)=\{0<F<1\}$. Then
Under condition~\ref{cond: A}, there exists at least one $\th^*(F)$ so that $\th^*(F)\in[a,b]$. 
\end{lemma}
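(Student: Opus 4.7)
The plan is to exploit the unimodality of every $\psi \in \mathcal{SC}_0$ together with the fact that $F$ places all of its mass in $[a,b]$. Let $(\theta^*,\psi^*)$ be any maximizer guaranteed by Proposition~\ref{Prop: existence of maximizers}, and define $\theta^{**} := \max(a,\min(b,\theta^*))$, the projection of $\theta^*$ onto the compact interval $[a,b]$. I would show that $\Psi(\theta^{**},\psi^*,F) \geq \Psi(\theta^*,\psi^*,F)$, from which $(\theta^{**},\psi^*)$ must itself be a maximizer (by optimality of $(\theta^*,\psi^*)$), yielding a value of $\theta^*(F)$ lying in $[a,b]$.

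The comparison relies on two elementary observations. First, any $\psi \in \mathcal{SC}_0$ is non-increasing on $[0,\infty)$: given $0 \leq y_1 \leq y_2$, write $y_1 = \lambda y_2 + (1-\lambda)(-y_2)$ with $\lambda = (y_1+y_2)/(2y_2) \in [1/2,1]$, and apply concavity along with $\psi(y_2)=\psi(-y_2)$ to get $\psi(y_1) \geq \psi(y_2)$. Consequently $\psi(y) = \psi(|y|)$ is a non-increasing function of $|y|$. Second, for every $x \in [a,b]$ a short case analysis (splitting on whether $\theta^* \in [a,b]$, $\theta^* > b$, or $\theta^* < a$) gives $|x-\theta^{**}| \leq |x-\theta^*|$; e.g.\ if $\theta^* > b$ then $|x-b| = b-x \leq \theta^* - x = |x-\theta^*|$. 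Combining these yields $\psi^*(x-\theta^{**}) \geq \psi^*(x-\theta^*)$ for every $x \in [a,b]$, and since $F$ assigns zero mass to $\RR \setminus [a,b]$, integration gives $\int \psi^*(x-\theta^{**})\,dF(x) \geq \int \psi^*(x-\theta^*)\,dF(x)$.

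Finally, by translation invariance of Lebesgue measure, $\int e^{\psi^*(x-\theta)}\,dx$ does not depend on $\theta$. Adding this contribution to both sides of the previous inequality gives $\Psi(\theta^{**},\psi^*,F) \geq \Psi(\theta^*,\psi^*,F)$, and since the right-hand side equals the supremum, the inequality is in fact an equality, completing the argument. I do not foresee any real obstacle here, as the proof uses only symmetry, concavity, and translation invariance; the mildly delicate point is verifying the monotonicity of $\psi$ on $[0,\infty)$ via the convex-combination identity, which is immediate.
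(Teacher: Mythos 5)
Your argument is correct and rests on exactly the same ingredients as the paper's own proof: the monotonicity of any $\psi\in\mathcal{SC}_0$ on each half-line (non-increasing on $[0,\infty)$, non-decreasing on $(-\infty,0]$), the fact that $J(F)=(a,b)$ confines the mass of $F$ to $[a,b]$, and the $\theta$-invariance of the Lagrange term $\int e^{\psi(x-\theta)}dx$. The only difference is packaging: the paper uses the same pointwise comparison to show that the profiled criterion $L(\theta;F)=\sup_{\psi\in\mathcal{SC}_0}\Psi(\theta,\psi,F)$ is non-decreasing on $(-\infty,a]$ and non-increasing on $[b,\infty)$ and then locates a maximizer in $[a,b]$, whereas you apply the inequality directly to a maximizer $(\theta^*,\psi^*)$ supplied by Proposition~\ref{Prop: existence of maximizers} and project $\theta^*$ onto $[a,b]$; both routes are valid and essentially equivalent.
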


Thus it remains to prove Proposition~\ref{Prop: existence of maximizers}. To that end, we will need a continuity result on the partially maximized criterion function
\begin{equation}\label{def: L(th,F)}
L(\th;F)=\sup_{\psi\in\mathcal{SC}_0}\Psi(\th,\ps,F).
\end{equation}

\begin{lemma}\label{lemma: continuity: general}
Suppose the distribution function $F$ satisfies condition~\ref{cond: A}. Then the map $\th\mapsto L(\th;F)$
is continuous on $\RR$, where  $L(\th;F)$ is as  defined in \eqref{def: L(th,F)}.
\end{lemma}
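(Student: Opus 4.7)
The plan is to recast $L(\theta; F)$ in a form that cleanly exposes its $\theta$-dependence, and then separately establish upper and lower semi-continuity. By Lemma~\ref{Lemma: projection theory for a distribution function F},
\[L(\theta; F) = \sup_{\phi \in \mathcal{C}} \omega(\phi, F^{sym}_\theta), \qquad F^{sym}_\theta(x) = \frac{F(x) + 1 - F(2\theta - x)}{2},\]
so $L(\theta; F)$ is the value of the (unconstrained) log-concave projection of $F^{sym}_\theta$. A direct calculation from \eqref{def: Wasserstein distance}, combined with the elementary bound $\int_{\mathbb{R}} |F(y - c) - F(y)|\, dy \leq |c|$, gives $d_W(F^{sym}_{\theta_n}, F^{sym}_\theta) \leq |\theta_n - \theta|$, so $\theta \mapsto F^{sym}_\theta$ is $1$-Lipschitz into Wasserstein distance. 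Since $F^{sym}_\theta$ inherits non-degeneracy and finite first moment from Condition~\ref{cond: A}, by Theorem~2.7 of \cite{dumbreg} the supremum above is attained uniquely at the log-concave projection $\hat{\phi}^*_\theta \in \mathcal{C}$, satisfying $\int e^{\hat{\phi}^*_\theta} = 1$ and $L(\theta; F) = \int \hat{\phi}^*_\theta \, dF^{sym}_\theta - 1$.

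For lower semi-continuity at a fixed $\theta_0$, I would approximate $\hat{\phi}^*_{\theta_0}$ by the smooth test function $\phi_\delta := \log(e^{\hat{\phi}^*_{\theta_0}} * \varphi_\delta)$, where $\varphi_\delta$ is the centered Gaussian density of variance $\delta^2$. Log-concavity is preserved under convolution, so $\phi_\delta \in \mathcal{C}$ with $\dom(\phi_\delta) = \mathbb{R}$, $\int e^{\phi_\delta} = 1$, and an $L_1$ convergence argument shows $\omega(\phi_\delta, F^{sym}_{\theta_0}) \uparrow L(\theta_0; F)$ as $\delta \downarrow 0$. For each fixed $\delta$, $\phi_\delta$ is continuous on $\mathbb{R}$ with at most linear growth (Fact~\ref{fact: Lemma 1 of theory paper} applied to $e^{\phi_\delta}$), so dominated convergence combined with Wasserstein convergence of $F^{sym}_{\theta_n}$ gives $\omega(\phi_\delta, F^{sym}_{\theta_n}) \to \omega(\phi_\delta, F^{sym}_{\theta_0})$. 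Hence $\liminf_n L(\theta_n; F) \geq \omega(\phi_\delta, F^{sym}_{\theta_0})$, and sending $\delta \downarrow 0$ yields $\liminf_n L(\theta_n; F) \geq L(\theta_0; F)$.

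For upper semi-continuity, I would denote the maximizer at $\theta_n$ by $\hat{\phi}^*_n := \hat{\phi}^*_{\theta_n}$ and use a stability argument for the log-concave projection. Theorem~1 of \cite{barber2020} (together with the Wasserstein convergence $F^{sym}_{\theta_n} \to F^{sym}_{\theta_0}$) gives $H(e^{\hat{\phi}^*_n}, e^{\hat{\phi}^*_{\theta_0}}) \to 0$; Fact~\ref{fact: Lemma 1 of theory paper}, applied uniformly thanks to the uniformly bounded first moments of $F^{sym}_{\theta_n}$, upgrades this to $L_1$ density convergence and to uniform convergence $\hat{\phi}^*_n \to \hat{\phi}^*_{\theta_0}$ on compact subsets of $\iint(\dom(\hat{\phi}^*_{\theta_0}))$. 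Writing
\[L(\theta_n; F) - L(\theta_0; F) = \int (\hat{\phi}^*_n - \hat{\phi}^*_{\theta_0})\, dF^{sym}_{\theta_n} + \int \hat{\phi}^*_{\theta_0} \, d(F^{sym}_{\theta_n} - F^{sym}_{\theta_0}),\]
the second piece vanishes by Wasserstein convergence (the integrand is Lipschitz on its domain and controlled at infinity), and the first is handled via the truncation-plus-tail method used elsewhere in the paper.

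The main obstacle is controlling the first piece above: $\hat{\phi}^*_n$ is unbounded below and its (knot-based) support varies with $n$. The crucial inputs are a uniform linear lower bound $\hat{\phi}^*_n(x) \geq -\alpha|x| - \beta$ for constants independent of $n$ (valid because $e^{\hat{\phi}^*_n}$ is a log-concave density whose moments are uniformly controlled by those of $F^{sym}_{\theta_n}$), together with uniform integrability of $|x|$ against $F^{sym}_{\theta_n}$ (immediate from Wasserstein convergence). These allow a standard truncation argument to close the limit passage in the first term, completing the proof.
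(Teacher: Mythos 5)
Your reduction step is sound and in fact parallels the paper's own: you rewrite $L(\theta;F)$, via Lemma~\ref{Lemma: projection theory for a distribution function F}, as the optimal value of the unconstrained log-concave criterion at the symmetrized distribution $F^{sym}_\theta$, and your computation that $\theta\mapsto F^{sym}_\theta$ is $1$-Lipschitz in $d_W$ is correct (the paper instead uses $\Psi(\th,\ps,F)=\Psi(0,\ps,F(\mathord{\cdot}+\th))$ and verifies $d_W(F(\mathord{\cdot}+\th_k),F(\mathord{\cdot}+\th))\to 0$ via Theorem 6.9 of \cite{villani2009}). The difference is what happens next: the paper simply invokes Proposition 6 of \cite{xuhigh}, which states precisely that the maximal criterion value is continuous under Wasserstein convergence for non-degenerate distributions with finite first moment, whereas you attempt to re-derive that continuity from scratch, and the re-derivation has genuine gaps.

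In your lower-semicontinuity step, Fact~\ref{fact: Lemma 1 of theory paper} only provides an upper bound $\phi_\delta(x)\le -\alpha|x|+\beta$; it says nothing about the negative part of $\phi_\delta$, which is what must be integrated. If $\mathrm{dom}(\hat\phi^*_{\theta_0})$ is bounded on one side, $\phi_\delta$ decays quadratically there, and since Condition~\ref{cond: A} guarantees only a first moment for $F$, neither the dominated-convergence claim $\om(\phi_\delta,F^{sym}_{\theta_n})\to\om(\phi_\delta,F^{sym}_{\theta_0})$ nor the (unjustified, and in general non-monotone) claim $\om(\phi_\delta,F^{sym}_{\theta_0})\uparrow L(\theta_0;F)$ follows as written; the natural Fatou inequality, using the constant upper envelope, points in the wrong direction. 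In your upper-semicontinuity step, the ``uniform linear lower bound'' $\hat\phi^*_n(x)\ge -\alpha|x|-\beta$ is false in general: a concave log-density can tend to $-\infty$ at a finite endpoint of its domain, and uniform moment control does not rule this out. Relatedly, your decomposition of $L(\theta_n;F)-L(\theta_0;F)$ into two integrals is not even well defined when $F^{sym}_{\theta_n}$ charges the (up to $2|\theta_n-\theta_0|$-wide) region outside $\mathrm{dom}(\hat\phi^*_{\theta_0})$, where $\hat\phi^*_{\theta_0}=-\infty$. These boundary and tail effects are exactly the delicate content of the known stability results (Theorem 2.15 of \cite{dumbreg}, Proposition 6 of \cite{xuhigh}, Theorem 1 of \cite{barber2020}); once you have established Wasserstein continuity of $\theta\mapsto F^{sym}_\theta$, citing one of them closes the argument and brings your proof essentially back to the paper's.
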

The proof of Lemma~\ref{lemma: continuity: general} can be found in Appendix~\ref{subsec: addlemma: MLE: existence}.
Now we are ready to prove Proposition~\ref{Prop: existence of maximizers}.

\begin{proof}[Proof of Proposition~\ref{Prop: existence of maximizers}]
Let us define 
\begin{equation}\label{maximum of criterion function}
L(F)=\sup_{\th\in\RR,\ps\in\mathcal{SC}_0}\Psi(\th,\ps,F).
\end{equation}
Our first step is to show that  $L(F)$  is finite. 
From the definition of $\Psi$ in \eqref{criterion function: xu samworth}, it is not hard to see that
\[L(F)\leq \sup_{\ps\in\mathcal{C}}\lb\edint\ps(x) dF(x)-\edint e^{\ps(x)}dx\rb,\]
where $\mathcal{C}$ denotes the set of all real-valued concave functions. Theorem 2.2 of \cite{dumbreg} entails that under condition~\ref{cond: A}, the term on the right hand side of the above display  is finite. Therefore, $L(F)<\infty$ follows. To show that $L(F)>-\infty$, we note that the map $x\mapsto-|x|\in\mathcal{SC}_{0}$. Therefore, \eqref{criterion function: xu samworth} and \eqref{maximum of criterion function} lead to 
\[L(F)\geq -\edint |x|dF(x)-\edint e^{-|x|}dx>-\infty,\]
which follows from condition~\ref{cond: A}. Hence, we  conclude that $L(F)\in\RR$.

Now we have to show that there exist $\th^*(F)\in\RR$ and $\ps^*(F)\in\mathcal{SLC}_0$ such that
\[\Psi(\th^*(F),\ps^*(F),F)=\sup_{\th\in\RR,\ps\in\mathcal{SLC}_0}\Psi(\th,\ps,F)=\sup_{\th\in\RR}L(\th;F)=L(F).\]
 Now there exists a sequence $\{\th_k\}_{k\geq 1}$ such that $L(\th_k;F)\uparrow L(F)$ as $k\to\infty$. Suppose 
the sequence $\{\th_k\}_{k\geq 1}$ is bounded. Then we can find a  subsequence $\{\th_{k_r}\}_{r\geq 1}$  converging to some $\th'\in\RR$. Since the map $L(\th;F)$ is continuous in $\th$ by Lemma~\ref{lemma: continuity: general},  we  also have 
\[L(\th';F)=\lim_{r\to\infty}L(\th_{k_r};F)=L(F),\]
which implies that $\th'$ is a maximizer of $L(\th;F)$. Now we invoke Proposition 4(iii) of \cite{xuhigh}, which
   states that for each $\th\in\RR$,  there exists a unique log-density ${\ps}_{\th}$, which maximizes  $\Psi(\th,\ps,F)$ in $\ps\in\mathcal{SC}_{0}$ provided $F$ satisfies condition~\ref{cond: A}. It is not hard to see that $(\th',\ps_{\th'})$ will be a candidate for $(\th^*(F),\ps^*(F))$. Thus, to complete the proof, it remains to show that  $\{\th_k\}_{k\geq 1}$ is bounded. We will show that
$\th_k\to_k\pm\infty$ leads to  $L(\th_k;F)\to_k-\infty$, which contradicts the fact that $L(\th_k;F)\to_k L(F)\in\RR$, thus completing the proof. 
 
 Consider $\th_k\to_k\pm\infty$. By Proposition 4(iii) of \cite{xuhigh}, for each $\th_k$, there exists a log-density $\ps_{\th_k}\in\mathcal{SC}_0$ such that $L(\th_k;F)=\Psi(\th,\ps_{\th_k},F)$.
Now note that if $e^{\psi}\in\mathcal{SLC}_0$, then $\psi$ satisfies
\[2xe^{\psi(x)}\leq \dint_{-x}^xe^{\psi(z)}dz\leq 1\quad\text{ for any }x\geq 0,\]
which implies $|\psi(x)|\leq -\log|2x|$. Noting $\psi_{\th_k}\in\mathcal{SC}_0$ for each $k\geq 1$, we obtain that
  \begin{align*}
  \Psi(\th,\ps_{\th_k},F)=\edint \ps_{\th_k}(x-\th)dF(x)-1\leq -\edint\log \lb 2|x-\th_k|\rb dF(x)-1.
  \end{align*}
 Now if $\th_k\to_k\pm\infty$, using Fatou's Lemma, we derive that
 \begin{align*}
 \limsup_{k\to\infty} L(\th_k;F)\leq -\edint\liminf_{k\to\infty}\lb\log |x-\th_k|\rb dF(x)-(\log 2+1),
 \end{align*}
 which is $-\infty$.
 This leads to the desired contradiction, which completes the proof. 
\end{proof}
 
\subsection{Auxilliary Lemmas for Theorem~\ref{theorem: existence}}
\label{subsec: addlemma: MLE: existence}

\begin{proof}[Proof of Lemma~\ref{lemma: continuity: general}]
Observe that  \eqref{criterion function: xu samworth} implies $\Psi(\th,\ps,F)$ can also be written as
$\Psi(\th,\ps,F)=\Psi(0,\ps,F(\mathord{\cdot}+\th))$.
 Hence, to prove Lemma~\ref{lemma: continuity: general}, it suffices to show that as  $\th_k\to_k\th\in\RR$,
\begin{equation*}
L(\th_k;F)=\sup_{\psi\in\mathcal{SC}_0}\Psi(0,\ps,F(\mathord{\cdot}+\th_k))\to_k\sup_{\psi\in\mathcal{SC}_0}\Psi(0,\ps,F(\mathord{\cdot}+\th))=L(\th,F).
\end{equation*}
Proposition $6$ of \cite{xuhigh} implies  that under condition~\ref{cond: A}, the convergence in the above display holds if the Wasserstein distance
\begin{equation}\label{inlemma: continuity: convergence of }
d_W(F(\mathord{\cdot}+\th_k),F(\mathord{\cdot}+\th))\to_k 0.
\end{equation}
Now by Theorem $6.9$ of \cite{villani2009} \citep[see also Theorem $7.12$ of][\  ]{villani2003}, \eqref{inlemma: continuity: convergence of } follows  if  (a) $F(\mathord{\cdot}+\th_k)$ converges weakly to $F(\mathord{\cdot}+\th)$ as $k\to\infty$, and 
\begin{flalign*}
(b)\quad\quad \edint|x|dF(x+\th_k)\to_k \edint|x|dF(x+\th).
\end{flalign*}
Now (a) follows noting that for any bounded continuous function $h$,
\[\edint h(x-\th_k)dF(x)\to_k\edint h(x-\th)dF(x)\]
by the dominated convergence theorem since $\th_k\to_k\th$.
For proving (b), first notice that $F$ has finite first moment  by condition~\ref{cond: A}. Therefore, another application of the dominated convergence  yields that as $\th_k\to_k\th$,
\[\edint|x|dF(x+\th_k)=\edint|x-\th_k|dF(x)\to_k \edint|x|dF(x+\th),\]
which proves (b), and thus completes the proof.
\end{proof}

\begin{proof}[Proof of Lemma \ref{lem: support of projection map}]
  We will show that if $J(F)=(a,b)$, the functional $\th\mapsto L(\th;F)$ defined in \eqref{def: L(th,F)} is non-decreasing in $\th$ on $(-\infty,a]$, and non-increasing in $\th$ on $[b,\infty)$. Suppose the above claim holds. Then clearly either $L(\theta,F)$ attains its maximum  in $[a,b]$ or  $L(\theta,F)=L(F)$ over an interval with nonempty overlap with $[a,b]$. Here $L(F)$ is as defined in \eqref{maximum of criterion function}.
  In either cases, one can find a $\theta^*(F)\in[a,b]$, which  completes the proof of Lemma \ref{lem: support of projection map}

 To show that $L(\th;F)$  is non-decreasing in $\th$ on $(-\infty,a]$, we first note that for   $\th<\th'\leq a$, and $\ps\in\mathcal{SC}_0$,
  \begin{align*}
 \dint_{a}^{b} \psi(x-\th)dF(x) \leq  \dint_{a}^{b} \psi(x-\th')dF(x),
  \end{align*}
 since $\ps$ is non-increasing on $[0,\infty)$, and 
$0\leq x-\th'<x-\th$
 for $x\geq a$.
Therefore, from  \eqref{def: L(th,F)}, it is not hard to see that $L(\th;F)\leq L(\th';F)$. Similarly we can show that for $\th>\th'\geq b$,
 \begin{align*}
 \dint_{a}^{b} \psi(x-\th)dF(x)\leq \dint_{a}^{b} \psi(x-\th')dF(x),
  \end{align*}
since $\psi$ is non-decreasing on $(-\infty,0]$, and $x-\th<x-\th'\leq 0$ for $x\leq b$. Therefore, $L(\th;F)\leq L(\th';F)$, which completes the proof.
\end{proof}


\section{Proof of Theorem \ref{MLE: Rate results}}
\label{app: rate results}
Before going into the proof, we will introduce some new notations and state some lemmas that will be required later in the proof.
We let $\hFm$ and $\hgF$ denote the  distribution functions corresponding to $\hgm$ and $\hgf$, respectively.  Also, we let $\hpfm$ denote the  log-density $\log\hgf$. Also, we let $\delta_n=\th_0-\hthm$.

Now we state a lemma which basically says that $\log\hgf$ is uniformly bounded above for sufficiently large $n$ with probability one. This lemma is proved in Appendix~\ref{subsec: aux: mle: rate}.
 \begin{lemma}\label{lemma: Pal's paper Lemma}
  Under the hypotheses of Theorem \ref{MLE: Rate results},
 \[P\lb\sup_n\sup_{x\in\RR}\log \hgf(x)<\infty\rb =1.\]
 \end{lemma}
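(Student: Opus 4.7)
Reading the supremum as being over $x\in\RR$ (so the claim is that the mode of $\hgf$ stays bounded, with $\log \hgf(0)$ being the specific instance obtained by evaluating at $x=0$ in the statement), write $M_n=\|\hgf\|_\infty$. Since $\hgf=\hgm(\cdot-\hthm)$ with $\hgm\in\mathcal{SLC}_0$ symmetric and log-concave about $0$, $\hgm$ is maximised at $0$ and $M_n=\hgm(0)$. I would prove $\sup_n M_n<\infty$ almost surely by combining a lower and an upper bound for $\edint\hpfm\,d\Fm$, arising respectively from the optimality of the MLE and from log-concavity of $\hgm$.

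\emph{Lower bound via the MLE inequality.} Take $\theta^*=0$ and $\psi^*(z)=-|z|-\log 2\in\mathcal{SC}_0$, for which $\int e^{\psi^*}=1$. The inequality $\Psi_n(\hthm,\hpm)\geq \Psi_n(\theta^*,\psi^*)$ combined with the identity $\int e^{\hpm}=1$ recorded below \eqref{criterion function: Doss} yields
\[
\edint\hpfm(x)\,d\Fm(x)-1 \ \geq\ -\frac{1}{n}\sum_{i=1}^{n}|X_i|-\log 2-1.
\]
By the strong law of large numbers together with $E|X|<\infty$ (Fact~\ref{fact: Lemma 1 of theory paper}), the right-hand side is eventually bounded below by some finite constant $-C'$ on a set of probability one.

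\emph{Upper bound from log-concavity.} The main step is to convert largeness of $M_n$ into rapid decay of $\hgm$. Let $r_n=\sup\{r\geq 0:\hgm(r)\geq M_n/e\}$; monotonicity of $\hgm$ on $[0,\infty)$ and $\int_0^{\infty}\hgm=1/2$ give $r_n\leq e/(2M_n)$. Concavity of $\log\hgm$ on $[0,\infty)$ together with the secant slope $-1/r_n$ (from $(0,\log M_n)$ to $(r_n,\log M_n-1)$) yields $\log\hgm(r)\leq \log M_n+1-2M_n r/e$ for $r\geq r_n$, and by symmetry $\log\hgm(z)\leq \log M_n+1-2M_n|z|/e$ whenever $|z|\geq r_n$; the trivial inequality $\log\hgm\leq\log M_n$ covers $|z|<r_n$. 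Applying these at $z=X_i-\hthm$, summing, and absorbing the contribution $(2M_n/e)\cdot r_n\leq 1$ coming from points within $r_n$ of $\hthm$,
\[
\edint\hpfm(x)\,d\Fm(x) \ \leq\ \log M_n+2-\frac{2M_n}{e}\cdot\frac{1}{n}\sum_{i=1}^{n}|X_i-\hthm|.
\]

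\emph{Conclusion.} Since the sample median $m_n$ minimises $\theta\mapsto\sum_i|X_i-\theta|$, one has $n^{-1}\sum_i|X_i-\hthm|\geq n^{-1}\sum_i|X_i-m_n|$, and the latter converges almost surely to $E|X-\theta_0|>0$ by the SLLN and consistency of $m_n$ for $\theta_0$ (valid because $g_0$ is continuous and strictly positive near its median). Combining the two displays, on a set of probability one and for all sufficiently large $n$,
\[
-C' \ \leq\ \log M_n+2-c\, M_n
\]
for a deterministic $c>0$, which forces $\sup_n M_n<\infty$ almost surely. The principal difficulty is the log-concave tail bound that converts the mode value $M_n$ into the linear-in-$M_n$ decay of $\log\hgm$; once this is in hand, the rest is the SLLN together with the MLE inequality.
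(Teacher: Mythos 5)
Your proof is correct, and it takes a genuinely different route from the paper's. The paper follows the argument of Theorem 3.2 of \cite{exist}: since $\hgf$ is piecewise log-linear, its maximum is attained at an order statistic $X_{(m)}$, and the proof combines the spacing $|X_{(m)}-X_{(m_q)}|$ between empirical quantiles, the log-concavity inequality $f(y)\le (1+\log(f(y)/f(x)))/|y-x|$ (Lemma 3 of \cite{exist}), the likelihood inequality against the true $(\th_0,\psi_0)$, and the algebraic lemma ``$x\le c_1\log x+c_2$ implies $x\le 2c_1\log(2c_1)+2c_2$'' (Lemma 4 of \cite{exist}). You instead exploit the symmetry of the constrained MLE (so $M_n=\|\hgf\|_\infty=\hgm(0)$) and the normalization $\int e^{\hpm}=1$: the constraint $\int\hgm=1$ forces $r_n\le e/(2M_n)$, concavity converts a large mode into decay of $\log\hgm$ at rate $2M_n/e$, and the empirical log-likelihood is then at most $\log M_n+2-(2M_n/e)\,n^{-1}\sum_i|X_i-\hthm|$, which you play against the lower bound coming from the Laplace competitor $\psi^*(z)=-|z|-\log 2\in\mathcal{SC}_0$ and the a.s.\ positive limit of the mean absolute deviation about the sample median. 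Both proofs have the same overall shape (optimality bounds the normalized log-likelihood from below, while a huge mode would force it to $-\infty$), but your quantitative ingredients are different and more self-contained: you avoid the two lemmas imported from \cite{exist} and any reference to order-statistic positions, at the price of using the symmetric structure, which the paper's argument does not need since it handles a mode located at an arbitrary order statistic. Two small points to tighten: the point $(r_n,\log M_n-1)$ need not lie on the graph of $\log\hgm$ when $r_n$ is the right endpoint of $\supp(\hgm)$, but there $\log\hgm=-\infty$ beyond $r_n$, so your displayed tail bound still holds in all cases (your trivial bound covers $|z|<r_n$, and at $z=\pm r_n$ the slack $+1$ absorbs $(2M_n/e)r_n\le 1$); and your reading of the statement, namely $\sup_n\sup_x\log\hgf(x)<\infty$ a.s., is the intended one — it is exactly what is invoked in the proof of Theorem \ref{MLE: Rate results}, where the bound $\limsup_n\hpm(0)<\infty$ is what is needed.
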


We first show that $\hthm\as\th_0$.
In their proof of Theorem $3.1$, \cite{exist} show that if a sequence of log-concave functions $\{f_n\}_{n\geq 1}$ (which can be stochastic) satisfies
 \begin{equation}\label{mle: Hellinger consistency: likelihood is larger}
  \si\log f_n(X_i)\geq \si\log f_0(X_i) 
 \end{equation}
with probability one, we have $H(f_n,f_0)\as 0$,  provided 
 \begin{equation*}
 P\lb\sup_x\log f_n(x)=o\lb\dfrac{\sqn}{\log n}\rb\rb =1.
 \end{equation*}
 If we take $f_n=\hgf$, we have 
 $\sup_x\log \hgf(x)=\hpfm(\hthm)=\hpm(0)$.
 Lemma~\ref{lemma: Pal's paper Lemma} entails that 
 $P(\limsup_n\hpm(0)<\infty)=1$.
 Also, note that being the MLE of $f_0$,  $\hgf$ automatically satisfies \eqref{mle: Hellinger consistency: likelihood is larger},
 which implies
 \begin{equation}\label{inlemma: hellinger consistency of MLE of fnot}
  H(\hgf,f_0)\as 0.
  \end{equation} 
  Denote by $\widehat F_n$ the distribution of $\hgf$.
  Because $d_{TV}(\widehat F_n, F_0)\leq \sqrt 2H(\hgf,f_0)$ by Fact~\ref{fact: helli fknot}, \eqref{inlemma: hellinger consistency of MLE of fnot} implies $d_{TV}(\widehat F_n, F_0)\as 0$, which indicates  $F_n\to_d F_0$ almost surely. In that case,
  Proposition 2 of \cite{theory} implies that there exists $\alpha>0$ so that
 \[\edint e^{\alpha |x|}\abs{\hgf(x)-f_0(x)}dx\as 0.\]
 Therefore the moments of $\hgf$ converges almost surely to that of $f_0$. Notably, the first moment of $f_0$ is $\theta_0$, and because $\hgm$ is symmetric about zero, we also have
 \[\edint x\hgf(x)dx=\hthm+\edint (x-\hthm)\hgm(x-\hthm)dx=\hthm.\]
 Thus $\hthm\as\theta_0$ follows.
  Since   $\I<\infty$, the density  $f_0$ is absolutely continuous  \citep[ ][Theorem~3]{huber}. Because  $f_0$ is continuous, Proposition 2 of \cite{theory} yields another useful result which will be required later:
  \begin{equation}\label{intheorem: MLE: ks distance}
      \sup_{x\in\RR}\abs{\hgf(x)-f_0(x)}\as 0.
  \end{equation}

Next we  show that $H(\hgm,g_0)\as 0$, which completes the proof of part A. To that end, note that
 \begin{align*}
 2H^2(\hgm,g_0) =&\ \edint\lb\sqrt{\hgm(z-\hthm)}-\sqrt{g_0(z-\hthm)}\rb^2dz\\
 \leq &\ 4H(\hgf,f_0)^2 +2\edint\lb\sqrt{g_0(z-\hthm)}-\sqrt{g_0(z-\th_0\vphantom{\hthm})}\rb^2dz,
 \end{align*}
 where the first term on the right hand side  of the last display approaches zero almost surely by \eqref{inlemma: hellinger consistency of MLE of fnot}. The integrand in the second term  is also bounded above by a constant multiple of $g_0(z-\hthm)+g_0(z-\th_0)$, which converges to $2f_0(z)$, and is integrable. Therefore, using Pratt's lemma (Fact~\ref{fact: pratt's lemma}),  we deduce that the second  term also converge to zero almost surely.  Hence $H^2(\hgm,g_0)\as 0$ follows. 
 
 Now we turn to the proof of part B, where
we first establish  that  $H(\hgf, f_0)=O_p(n^{-4/5})$. To that end, we first introduce the class of functions 
 \begin{align*}
 \mathcal{P}_{M,0}=\bigg\{f\in\mathcal{LC}\ \bl\ &\ \sup\limits_{x\in\RR}f(x)<M,\ \inf_{|x|>1}f(x)>1/M, \ \supp(f)\subset\supp(f_0)\bigg\}.
 \end{align*}
 We will show  that without loss of generality, one can assume that $f_0\in\mP_{M,0}$ for some  $M>0.$ To this end, we translate and rescale the data letting $\tilde{X}_i=\aalpha X_i+\abeta$, where $\alpha>0$ and $\abeta\in\RR$. Observe that the rescaled data has density $\tilde{f}_0(x)=\aalpha^{-1}f_0((x-\abeta)/\aalpha)$. Denote by $\tilde{f}_{0,n}$ the MLE of $f_0$ based on the rescaled data. Note that the  MLE is affine-equivalent, which  entails that $\tilde{f}_{0,n}(x)=\aalpha^{-1}\hgf((x-\abeta)/\aalpha).$ Noting Hellinger distance  is invariant under affine transformations, we observe that $H(\hgf,f_0)=H(\tilde{f}_{0,n},\tilde{f}_0).$  
   Therefore, it suffices to show that $H(\tilde{f}_{0,n},\tilde{f}_0)\as 0$. Note that since $f_0$ is log-concave,  $\iint(\dom(f_0))$ contains an interval. We can choose $\aalpha$ and $\abeta$ in a way such that  $( x-\abeta)/\aalpha$ lie inside that interval for $x=\pm 1$. Then it is possible to find $M>0$ large enough such that
 \[f_0((x-\abeta)/\aalpha)>\aalpha/M,\ \text{ for } x=\pm 1,\quad{ \text{yielding} }\quad \min (\tilde{f}_0(-1),\tilde{f}_0(1))>1/M.\]
The above implies
 $\inf_{x\in[-1,1]}\tilde{f}_0(x)>1/M$,
 since $f_0,$ or equivalently $\tilde{f}_0$ is unimodal. Hence, without loss of generality, we can assume that there exists $M>0$ such that $f_0(x)>1/M$ for $x\in[-1,1].$ 
 We can choose $M$ large enough such that additionally, $\sup\limits_{x\in\RR} f_0(x)<M$.   On the other hand,  \eqref{intheorem: MLE: ks distance} implies 
  \[\limsup_n\sup\limits_{x\in\RR} \hgf(x)<M,\quad\text{and}\quad\lim_n\hgf(\pm 1)>1/M.\]
    Therefore, $f_0\in\mathcal{P}_{M,0}$, and with probability one,  $\hgf\in\mathcal{P}_{M,0}$ as well for all sufficiently large $n$. \cite{dossglobal} obtained the bracketing entropy of the class $\mathcal{P}_{M,0}$. They showed that for any $\e>0$,
 \[\log N_{[\ ]}(\e,\mathcal{P}_{M,0},H)\lesssim \e^{-1/2}.\] 
The rest of the proof for $H(\hgf,f_0)=O_p(n^{-2/5})$ now follows from an application of Theorem $3.4.1$ and $3.4.4$ of \cite{wc}.
  
   Now we turn to establishing the rate of convergences of $\hthm$ and $\hgm$. If $x-\th_0$ is a continuity point of $g_0'$, using the fact that $\hthm\as\th_0$, we obtain that 
  \[\dfrac{\sqrt{g_0(x-\hthm)}-\sqrt{\vphantom{\hthm}g_0(x-\th_0)}}{(\hthm-\th_0)}\as \dfrac{g_0'(x-\th_0)}{2\sqrt{g_0(x-\th_0)}}.\]
 Noting $g_0'$ is continuous almost everywhere with respect to Lebesgue measure, and using  Fatou's lemma and part A of the current theorem, we obtain that
  \begin{align}\label{inlemma: inequality: hellinger distance and distance between theta}
  &\liminf_n \dfrac{\edint \lb\sqrt{g_0(x-\hthm)}-\sqrt{\vphantom{\hthm} g_0(x-\th_0)}\rb^2dx}{(\hthm-\th_0)^2}
  \geq  \lb\dfrac{g_0'(x-\th_0)}{2\sqrt{g_0(x-\th_0)}}\rb^2dx=\dfrac{\I}{4}
  \end{align} 
 with probability one. Now observe that
  \begin{align*}
  2H(\hgf,f_0)^2
 = &\ \edint\lb\sqrt{\hgm(x-\hthm)}-\sqrt{\vphantom{\hgm}g_0(x-\th_0)}\rb^2dx\\
 =&\ 2H(\hgm,g_0)^2 +
 \edint\lb\sqrt{g_0(x-\hthm)}-\sqrt{\vphantom{\hthm}g_0(x-\th_0)}\rb^2dx+T_c,
   \end{align*}
   where
   \[T_c=2\edint\lb\sqrt{\hgm(x-\hthm)}-\sqrt{g_0(x-\hthm)}\rb\lb\sqrt{g_0(x-\hthm)}-\sqrt{\vphantom{\hgm}g_0(x-\th_0)}\rb dx.\]
      The inequality in \eqref{inlemma: inequality: hellinger distance and distance between theta} entails that  for all sufficiently large $n$,
   \begin{equation}\label{inlemma: lower bound on hellinger dist between f and hgf}
  2H(\hgf,f_0)^2\nonumber\geq  2H(\hgm,g_0)^2+\dfrac{(\hthm-\th_0)^2\I}{4}-|T_c|\quad a.s.
   \end{equation}
 We aim to show that the cross-term $|T_c|$ is small. In fact, we show that 
 \begin{equation}\label{claim: cross term small for rough rate calculations}
 \dfrac{|T_c|}{|\hthm-\th_0|^2+H(\hgm,g_0)^2}=o_p(1).
 \end{equation}
 Suppose \eqref{claim: cross term small for rough rate calculations} holds. Then  it follows that 
 \begin{align*}
 2H(\hgf,f_0)^2 \geq &\ 2H(\hgm,g_0)^2\\
 &\ +\dfrac{(\hthm-\th_0)^2\I}{4}-o_p(1)H(\hgm,g_0)^2-o_p(1)(\hthm-\th_0)^2,
 \end{align*}
  which completes the proof because $\I>0$. 
  
 Hence, it remains to prove \eqref{claim: cross term small for rough rate calculations}. To this end, notice that $T_c$ can be written as
  \begin{align*}
  T_c= 2\edint\lb\sqrt{\hgm(x)}-\sqrt{\vphantom{\hgm}g_0(x)}\rb\lb\sqrt{\vphantom{\hgm}g_0(x)}-\sqrt{\vphantom{\hgm}g_0(x+\hthm-\th_0)}\rb dx.
  \end{align*}
  Recalling $\d=\th_0-\hthm$, and noting $g_0$ is absolutely continuous because  $f_0\in\mP_0$, we can write
  \begin{align*}
 |T_c|= \MoveEqLeft \bl 2\edint\lb\sqrt{\hgm(x)}-\sqrt{\vphantom{\hgm}g_0(x)}\rb\lb\dint_{-\d}^{0}\dfrac{g_0'(x+t)}{2\sqrt{g_0(x+t)}}dt\rb dx\bl.
  \end{align*}
  Since $g_0\in\mathcal{S}_0$, we have
  \begin{align*}
 |T_c|=&\ 2\bl \dint_{0}^{\infty} \lb\sqrt{\hgm(x)}-\sqrt{\vphantom{\hgm}g_0(x)}\rb\lb\dint_{-\d}^{0}\dfrac{g_0'(x+t)}{2\sqrt{g_0(x+t)}}dt\rb dx\\
&\ -\ \dint_{-\infty}^{0} \lb\sqrt{\hgm(-x)}-\sqrt{\vphantom{\hgm}g_0(-x)}\rb\lb\dint_{-\d}^{0}\dfrac{g_0'(-x-t)}{2\sqrt{g_0(-x-t)}}dt\rb dx\bl\\
=&\ 2\bl \dint_{0}^{\infty} \lb\sqrt{\hgm(x)}-\sqrt{\vphantom{\hgm}g_0(x)}\rb\lb\dint_{-\d}^{0}\dfrac{g_0'(x+t)}{2\sqrt{g_0(x+t)}}dt\rb dx\\
&\ -\dint_{0}^{\infty} \lb\sqrt{\hgm(x)}-\sqrt{\vphantom{\hgm}g_0(x)}\rb\lb\dint_{-\d}^{0}\dfrac{g_0'(x-t)}{2\sqrt{g_0(x-t)}}dt\rb dx\bl,
  \end{align*}
  yielding
  \begin{equation*}
  |T_c|=2\bl \dint_{0}^{\infty} \lb\sqrt{\hgm(x)}-\sqrt{\vphantom{\hgm}g_0(x)}\rb\lb \dint_{-\d}^{0}\lb\dfrac{g_0'(x+t)}{2\sqrt{g_0(x+t)}}-\dfrac{g_0'(x-t)}{2\sqrt{g_0(x-t)}}\rb dt\rb dx\bl.
  \end{equation*}
  Using the Cauchy-Schwarz inequality, we obtain that
  \begin{align*}
\dfrac{|T_c|}{2|\d|}
\leq &\ \lb\dint_{0}^{\infty} \lb\sqrt{\hgm(x)}-\sqrt{\vphantom{\hgm}g_0(x)}\rb^2dx\rb^{1/2}\\
&\ \lb\dint_{0}^{\infty}\lb\dint_{-\d}^{0}\dfrac{1}{|\d|}\lb\dfrac{g_0'(x+t)}{2\sqrt{g_0(x+t)}}-\dfrac{g_0'(x-t)}{2\sqrt{g_0(x-t)}}\rb dt\rb^2 dx\rb^{1/2}.
  \end{align*}
  Since $\sqrt{\hgm(x)}-\sqrt{\vphantom{\hgm}g_0(x)}$ is an even function, the first term on the right hand side of the last inequality is $\sqrt{2}H(\hgm,g_0).$ Hence,
  \begin{align*}
  \dfrac{T_c^2}{8H(\hgm,g_0)^2\d^2}\leq &\ \dint_{0}^{\infty}\lb\dint_{-\d}^{0}\dfrac{1}{|\d|}\lb\dfrac{g_0'(x+t)}{2\sqrt{g_0(x+t)}}-\dfrac{g_0'(x-t)}{2\sqrt{g_0(x-t)}}\rb dt\rb^2 dx,
  \end{align*}
  which, noting 
  \[t\mapsto \dfrac{g_0'(x+t)}{2\sqrt{g_0(x+t)}}-\dfrac{g_0'(x-t)}{2\sqrt{g_0(x-t)}}\]
  is an even function for each $x>0$, can be bounded above by  
 \[  \dint_{0}^{\infty}|\d|\lb\dint_{0}^{|\d|}\dfrac{1}{(\d)^2}\lb\dfrac{g_0'(x+t)}{2\sqrt{g_0(x+t)}}-\dfrac{g_0'(x-t)}{2\sqrt{g_0(x-t)}}\rb ^2 dt\rb dx\]
  using the Cauchy-Schwarz inequality. Therefore, we obtain 
  \begin{align}\label{inlemma: rough rate: intermediate limsup}
   \dfrac{T_c^2}{2H(\hgm,g_0)^2\d^2}\leq&\ \dfrac{1}{|\d|}\dint_{0}^{|\d|}\bigg[ \dint_{0}^{\infty}\lb\dfrac{g_0'(x+t)}{\sqrt{g_0(x+t)}}\rb^2dx\nonumber+\dint_{0}^{\infty}\lb\dfrac{g_0'(x-t)}{\sqrt{g_0(x-t)}}\rb^2dx\\
      &\ -2\dint_{0}^{\infty}\dfrac{g_0'(x-t)}{\sqrt{g_0(x-t)}}\dfrac{g_0'(x+t)}{\sqrt{g_0(x+t)}} dx\bigg ]dt.
  \end{align}
  For $t\geq 0,$ 
   \begin{align*}
  \dint_{0}^{\infty}\lb\dfrac{g_0'(x+t)}{\sqrt{g_0(x+t)}}\rb^2dx
 =  \dint_{t}^{\infty}\lb\dfrac{g_0'(x)}{\sqrt{g_0(x)}}\rb^2dx\leq \dfrac{\I}{2}.
  \end{align*}
  Now observe that for  $z\in(-|\d|,0),$
  \begin{equation}\label{intheorem: bound of ps'}
  |g_0'(z)/\sqrt{g_0(z)}|=|\ps_0'(z)|\sqrt{g_0(z)}\leq |\ps_0'(\d)|\sqrt{g_0(0)}=O_p(1),
  \end{equation}
  since $\ps_0\in\mathcal{SC}_0$, and $\d\as 0$. 
    Hence, for $t\in(0,|\d|)$,
  \begin{align*}
  \dint_{0}^{\infty}\lb\dfrac{g_0'(x-t)}{\sqrt{g_0(x-t)}}\rb^2dx \ =&\ \dint_{-t}^{\infty}\lb\dfrac{g_0'(z)}{\sqrt{g_0(z)}}\rb^2dz \\
 =&\ \dint_{-t}^{0}\lb\dfrac{g_0'(x)}{\sqrt{g_0(z)}}\rb^2dz+\dint_{0}^{\infty}\lb\dfrac{g_0'(z)}{\sqrt{g_0(z)}}\rb^2dz\\
 \leq &\ |\d| \ps_0'(\d)^2\sqrt{g_0(0)}+\I/2\\
 =&\ |\d| O_p(1)+\I/2,
  \end{align*}
  where the last step follows from \eqref{intheorem: bound of ps'}.
  Hence, for any $t\in(0,|\d|),$
  \begin{equation}\label{inlemma: rough rate: sum of sums}
   \dint_{0}^{\infty}\lb\dfrac{g_0'(x+t)}{\sqrt{g_0(x+t)}}\rb^2dx+\dint_{0}^{\infty}\lb\dfrac{g_0'(x-t)}{\sqrt{g_0(x-t)}}\rb^2dx
  = |\d| O_p(1)+\I.
  \end{equation}
 
  Our objective is to apply Fatou's lemma on the  third term on the right hand side of \eqref{inlemma: rough rate: intermediate limsup}. Therefore,  we want to ensure that the integrand is non-negative.
   Note that when $x\geq|\d|$ and $t\in(0, |\d|)$, we have $x>t$, which leads to
  \begin{equation}\label{intheorem: positivity of cross term}
  g_0'(x-t)g_0'(x+t)\geq 0.
  \end{equation}
   Keeping that in mind, we partition the term 
  \begin{align*}
  \MoveEqLeft - \dint_{0}^{\infty}\dfrac{g_0'(x-t)}{\sqrt{g_0(x-t)}}\dfrac{g_0'(x+t)}{\sqrt{g_0(x+t)}} dx\\
  =&\ - \dint_{|\d|}^{\infty}\dfrac{g_0'(x-t)}{\sqrt{g_0(x-t)}}\dfrac{g_0'(x+t)}{\sqrt{g_0(x+t)}} dx -\dint_{0}^{|\d|}\dfrac{g_0'(x-t)}{\sqrt{g_0(x-t)}}\dfrac{g_0'(x+t)}{\sqrt{g_0(x+t)}} dx\\
  \leq &\ - \dint_{|\d|}^{\infty}\dfrac{g_0'(x-t)}{\sqrt{g_0(x-t)}}\dfrac{g_0'(x+t)}{\sqrt{g_0(x+t)}} dx+|\d| O_p(1),
  \end{align*}
  where the last step follows from \eqref{intheorem: bound of ps'}.
The above combined with \eqref{inlemma: rough rate: intermediate limsup} and \eqref{inlemma: rough rate: sum of sums} leads to
  \begin{align}\label{intheorem: the bound on the ratio}
 \MoveEqLeft \limsup_n  \dfrac{T_c^2}{2H(\hgm,g_0)^2\d^2}\nonumber\\
  \leq &\ \limsup_n\dfrac{1}{|\d|}\dint_{0}^{|\d|}\bigg[ |\d| O_p(1)+\I- 2\dint_{|\d|}^{\infty}\dfrac{g_0'(x-t)}{\sqrt{g_0(x-t)}}\dfrac{g_0'(x+t)}{\sqrt{g_0(x+t)}} dx\bigg]dt\nonumber\\
 =  &\ O_p(1)\limsup_n|\d|+\I\nonumber\\
 &\ -2\liminf_n\dfrac{1}{|\d|}\dint_{0}^{|\d|}\dint_{|\d|}^{\infty}\dfrac{g_0'(x-t)}{\sqrt{g_0(x-t)}}\dfrac{g_0'(x+t)}{\sqrt{g_0(x+t)}} dxdt\nonumber\\
 = &\ 0+\I -2\liminf_n \dint_{|\d|}^{\infty}\dfrac{\dint_{0}^{|\d|}\dfrac{g_0'(x+t)}{\sqrt{g_0(x+t)}}\dfrac{g_0'(x-t)}{\sqrt{g_0(x-t)}}dt}{|\d|}dx.
   \end{align}
  Therefore, an application of Fatou's Lemma and \eqref{intheorem: positivity of cross term} yield
  \[\liminf_n \dint_{|\d|}^{\infty}\dfrac{\dint_{0}^{|\d|}\dfrac{g_0'(x+t)}{\sqrt{g_0(x+t)}}\dfrac{g_0'(x-t)}{\sqrt{g_0(x-t)}}dt}{|\d|}dx\geq \dint_{0}^{\infty}\dfrac{g_0'(x)^2}{g_0(x)}dx=\dfrac{\I}{2}.\]
  Thus \eqref{intheorem: the bound on the ratio} leads to
  \[    \dfrac{2T_c^2}{4H(\hgm,g_0)^2\d^2}=o_p(1).\]
  from which it is obvious that
  \[  \dfrac{\sqrt{2}|T_c|}{|\d|^2+H(\hgm,g_0)^2}\leq  \dfrac{\sqrt{2}|T_c|}{2H(\hgm,g_0)|\d|}=o_p(1),\]which proves \eqref{claim: cross term small for rough rate calculations} and thus completes the proof of part B of Theorem \ref{MLE: Rate results}.                                \hfill $\Box$
 
\subsection{Auxilliary lemmas for Theorem~\ref{MLE: Rate results}}
\label{subsec: aux: mle: rate}

 \begin{proof}[Proof of Lemma~\ref{lemma: Pal's paper Lemma}]
 The proof exactly follows the proof of Theorem $3.2$ of \cite{exist}.
 Since $\hgf$ is piecewise linear, $\hgf$ attains its maxima at some order statistic, say $X_{(m)}$. If $m>n/2,$ set $m_q=[n/4]$ where $[x]$ is the greatest integer less than or equal to $x.$ For $m\leq n/2,$ we let $m_q=[3n/4]+1.$ Set $K_n=m_q$ or $n-m_q,$ accordingly as $m>n/2$ or $\leq n/2.$ It is easy to see that $n/K_n\to 4$ as $n\to\infty$. Also, 
 \begin{equation}\label{wtf1}
 \hgf(X_{(m)})\leq\dfrac{1}{|X_{(m)}-X_{(m_q)}|}\lb1+\log \dfrac{\hgf(X_{(m)})}{\hgf(X_{(m_q)})}\rb
 \end{equation}
 by Lemma $3$ of \cite{exist} (see our Lemma \ref{Lemma: Pal: 3}). Now since
 \[\sum_{i=1}^n\psi_0(X_i-\th_0)\leq\sum_{i=1}^n\hpm(X_i-\hthm)\leq K_n\log(\hgf(X_{(m_q)}))+(n-K_n)\log (\hgf(X_{(m)})),\]
 \begin{equation}\label{wtf2}
 K_n\log \dfrac{\hgf(X_{(m)})}{\hgf(X_{(m_q)})}\leq n\lb\log\hgf(X_{(m)})-l_n(\th_0,\ps_0)/n \rb.
 \end{equation}
Combining \eqref{wtf1} and \eqref{wtf2} we obtain that
\begin{align*}
\hgf(X_{(m)})\leq&\ \dfrac{1}{|X_{(m)}-X_{(m_q)}|}\lb 1+\dfrac{n}{K_n}\lb\log\hgf(X_{(m)})-l_n(\th_0,\ps_0)/n \rb\rb
\\
=&\ \dfrac{n/K_n}{|X_{(m)}-X_{(m_q)}|}\log\hgf(X_{(m)})+\dfrac{1}{|X_{(m)}-X_{(m_q)}|}\lb1-\dfrac{l_n(\th_0,\ps_0)}{K_n}\rb.
\end{align*}
Therefore by Lemma $4$ of \cite{exist} (see our Lemma \ref{Lemma: Pal: 4}),
\[\hgf(X_{(m)})\leq \dfrac{2n/K_n}{|X_{(m)}-X_{(m_q)}|}\log\lb \dfrac{2n/K_n}{|X_{(m)}-X_{(m_q)}|}\rb+\dfrac{2}{|X_{(m)}-X_{(m_q)}|}\lb1-\dfrac{l_n(\th_0,\ps_0)}{K_n}\rb\]which is finite by our choices of $m$, $m_q$ and $K_n.$
 \end{proof} 
 
 The following lemmas appear in \cite{exist} as Lemma $3$ and $4$ respectively.
  \begin{lemma}\label{Lemma: Pal: 3}
  Suppose $f$ is a log-concave density. If $0<f(x)\leq f(y)$ for $x,y\in\RR,$ then
  \[f(y)\leq \dfrac{1+\log(f(y)/f(x))}{|y-x|}.\]
  \end{lemma}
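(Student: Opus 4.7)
The plan is to combine log-concavity of $f$ on the interval between $x$ and $y$ with the fact that $f$ integrates to one. Without loss of generality assume $x < y$; the case $x > y$ follows by the symmetric argument, and the absolute value $|y-x|$ in the conclusion accommodates both. Set $\lambda = \log(f(y)/f(x))/(y-x)$, which is nonnegative since $f(y) \geq f(x) > 0$. For every $s \in [x,y]$, concavity of $\log f$ yields
\[
\log f(s) \ \geq\ \tfrac{y-s}{y-x}\log f(x) + \tfrac{s-x}{y-x}\log f(y),
\]
from which I would extract two complementary lower bounds: exponentiating gives the chord bound $f(s) \geq f(x)\exp(\lambda(s-x))$, and since $\log f(y) \geq \log f(x)$ the right-hand side above is itself at least $\log f(x)$, so additionally $f(s) \geq f(x)$ throughout $[x,y]$.

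Next I would integrate these two bounds against Lebesgue measure on $[x,y]$ and use $\int f \leq 1$. The level-set bound yields $f(x)(y-x) \leq \int_x^y f \leq 1$, so $f(x)(y-x) \leq 1$. The chord bound, when $\lambda>0$, gives
\[
1 \ \geq\ \int_x^y f(s)\,ds \ \geq\ f(x)\cdot \frac{e^{\lambda(y-x)}-1}{\lambda} \ =\ \frac{f(y)-f(x)}{\lambda},
\]
so $f(y)-f(x) \leq \lambda$. The degenerate case $\lambda = 0$ (i.e.\ $f(x)=f(y)$) needs no separate argument since the desired inequality $f(y)|y-x|\leq 1$ then follows immediately from the level-set bound alone.

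Combining these two estimates produces
\[
f(y)(y-x) \ =\ f(x)(y-x) + (f(y)-f(x))(y-x) \ \leq\ 1 + \lambda(y-x) \ =\ 1 + \log\bigl(f(y)/f(x)\bigr),
\]
which is exactly the claim after dividing by $y-x = |y-x|$. The argument is short and I do not expect any real obstacle; the two points to double-check are the $\lambda = 0$ limit, handled as above, and the mirror case $x > y$, which is identical after swapping the roles of the two points.
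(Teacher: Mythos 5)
Your proof is correct. The two ingredients are exactly what is needed: the chord inequality $f(s)\geq f(x)e^{\lambda(s-x)}$ on $[x,y]$ (valid because $f(x),f(y)>0$ forces $\log f$ to be finite, hence genuinely concave, on the whole segment), and the trivial level bound $f(s)\geq f(x)$. Integrating the first against $\int_x^y f\leq 1$ gives $f(y)-f(x)\leq\lambda$ via the identity $f(x)\bigl(e^{\lambda(y-x)}-1\bigr)/\lambda=(f(y)-f(x))/\lambda$, integrating the second gives $f(x)(y-x)\leq 1$, and the final splitting $f(y)(y-x)=f(x)(y-x)+(f(y)-f(x))(y-x)\leq 1+\log(f(y)/f(x))$ closes the argument; the degenerate case $\lambda=0$ is handled correctly by the level bound alone. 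Note that the paper itself does not prove this statement: it is quoted as Lemma 3 of \cite{exist}, so there is no in-paper proof to compare against. Your derivation is essentially the standard one behind that cited lemma (integrate the log-linear interpolant of $f$ between $x$ and $y$ against the constraint $\int f\leq 1$), so it serves as a complete, self-contained verification of the borrowed result.
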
 
   \begin{lemma}\label{Lemma: Pal: 4}
  If   $x,c_1,c_2>0$ and 
  $x\leq c_1\log x+c_2,$
    then
  $x\leq 2c_1\log (2c_1)+2c_2$.
  \end{lemma}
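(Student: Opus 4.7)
The plan is to exploit the concavity of the logarithm via a well-chosen tangent line. For any $a>0$, concavity of $\log$ gives the supporting-line inequality
\[
\log x \;\leq\; \log a + \frac{x-a}{a}, \qquad x>0.
\]
The natural choice here is $a = 2c_1$, motivated by the fact that the coefficient $c_1/a$ in front of the linear term then becomes $1/2$, which will leave a term $x/2$ that can be absorbed to the left-hand side of the hypothesis $x \leq c_1 \log x + c_2$.

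Concretely, first I would multiply the tangent-line inequality at $a=2c_1$ by $c_1>0$ to obtain
\[
c_1 \log x \;\leq\; c_1 \log(2c_1) + \frac{x}{2} - c_1.
\]
Then, plugging this into the hypothesis yields
\[
x \;\leq\; c_1\log x + c_2 \;\leq\; c_1 \log(2c_1) + \frac{x}{2} - c_1 + c_2.
\]
Rearranging gives $x/2 \leq c_1\log(2c_1) - c_1 + c_2$, and since $c_1 > 0$ the term $-c_1$ can be dropped to yield $x/2 \leq c_1\log(2c_1) + c_2$, i.e., $x \leq 2c_1\log(2c_1) + 2c_2$ as claimed.

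There is essentially no obstacle beyond identifying the correct linearization point; the whole argument is a one-line tangent-line bound followed by rearrangement. The only subtlety worth noting is that $\log(2c_1)$ may be negative when $2c_1<1$, but this causes no problem because the inequality derived is still valid (the tangent-line bound holds for every $a>0$ regardless of the sign of $\log a$), and the bound $2c_1 \log(2c_1) + 2c_2$ need not itself be positive to serve as an upper bound on $x$.
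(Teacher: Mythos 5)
Your argument is correct. Note that the paper itself does not prove this statement: it is quoted verbatim as Lemma 4 of the cited reference \cite{exist}, so there is no in-paper proof to compare against. Your tangent-line (supporting hyperplane) bound $\log x \leq \log(2c_1) + \frac{x-2c_1}{2c_1}$, multiplied by $c_1$ and inserted into the hypothesis, gives $x/2 \leq c_1\log(2c_1) - c_1 + c_2$, which is even slightly stronger than the stated conclusion (you may drop the $-c_1$ term since $c_1>0$); the observation that the sign of $\log(2c_1)$ is irrelevant is also correct. This is a clean, self-contained elementary derivation of the quoted fact.
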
 

\section{Technical facts}
\label{sec: technical facts}

Below we  list some facts which have been used repeatedly in our proofs. We begin with a well-known fact on total variation distance.

\begin{fact}\label{fact: dTV and hellinger}
Suppose $F$ and $G$ are two distribution functions with densities $f$ and $g$, respectively. Then 
$d_{TV}(F,G)\leq \sqrt{2}H(f,g).$
\end{fact}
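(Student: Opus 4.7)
The plan is the standard Cauchy--Schwarz factorization argument. Writing $d_{TV}(F,G) = \tfrac{1}{2}\int |f-g|\,dx$ and factoring $f - g = (\sqrt{f}-\sqrt{g})(\sqrt{f}+\sqrt{g})$, I would apply the Cauchy--Schwarz inequality to obtain
\[
d_{TV}(F,G) \leq \tfrac{1}{2}\left(\int (\sqrt{f}-\sqrt{g})^2\,dx\right)^{1/2}\left(\int (\sqrt{f}+\sqrt{g})^2\,dx\right)^{1/2}.
\]
The first factor is, up to the normalization in the paper's definition of $H$, equal to $\sqrt{2}\,H(f,g)$.

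For the second factor, I would expand $(\sqrt{f}+\sqrt{g})^2 = f + g + 2\sqrt{fg}$ and integrate term by term. Since $f$ and $g$ are densities, the first two terms contribute $2$. For the cross term, another application of Cauchy--Schwarz gives $\int \sqrt{fg}\,dx \leq \left(\int f\,dx\right)^{1/2}\left(\int g\,dx\right)^{1/2} = 1$, so the second factor is bounded by $\sqrt{4} = 2$. Combining these two bounds yields $d_{TV}(F,G) \leq \tfrac{1}{2}\cdot \sqrt{2}\,H(f,g)\cdot 2 = \sqrt{2}\,H(f,g)$, which is the claim.

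There is no real obstacle here --- this is a one-line computation once the factorization $|f-g| = |\sqrt{f}-\sqrt{g}|\,|\sqrt{f}+\sqrt{g}|$ is made. The only mild subtlety is to handle measurability/integrability carefully on the set where $f$ or $g$ vanishes, but this is immediate since $\sqrt{f}$ and $\sqrt{g}$ are nonnegative and square-integrable with respect to Lebesgue measure.
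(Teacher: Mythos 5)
Your argument is correct and is the standard Cauchy--Schwarz proof of this inequality; the paper itself states the fact without proof (it is cited as well known), so there is nothing to compare against. One small point worth noting: your computation uses the usual Hellinger distance $H^2(f,g)=\tfrac12\int(\sqrt f-\sqrt g)^2\,dx$, whereas the paper's notation section literally writes $H^2(f_1,f_2)=\tfrac12\int(f_1-f_2)^2\,dx$ (an apparent typo, since the square roots are clearly intended given how $H$ is used throughout); under the intended definition your constants check out exactly, giving $d_{TV}(F,G)\leq\tfrac12\cdot\sqrt2\,H(f,g)\cdot 2=\sqrt2\,H(f,g)$.
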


\begin{fact}[Theorem 5.7 (ii) of \cite{shorack2000}]\label{fact: convergence in probability to convergence almost surely}
Suppose  $\{X_n\}_{n\geq 1}$ is a random sequence. If $X_n$ satisfies $X_n\to_p X$ for some random variable $X$, then there exists a  subsequence $n_k$ such that $X_{n_k}\as X$. 
\end{fact}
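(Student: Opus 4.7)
The plan is to prove this classical result by extracting a subsequence along which the deviation probabilities are summable, and then applying the first Borel--Cantelli lemma.

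First I would use the definition of convergence in probability to select the subsequence. By hypothesis, for every $\epsilon > 0$, $P(|X_n - X| > \epsilon) \to 0$ as $n \to \infty$. In particular, for each $k \in \mathbb{N}$, applying this with $\epsilon = 1/k$, I can pick an index $n_k$ (chosen strictly larger than $n_{k-1}$ so that $\{n_k\}$ is a genuine subsequence) satisfying $P(|X_{n_k} - X| > 1/k) < 2^{-k}$. This construction is possible because the tail probabilities can be made arbitrarily small.

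Next I would invoke Borel--Cantelli. Define the events $A_k = \{|X_{n_k} - X| > 1/k\}$. Since $\sum_{k \geq 1} P(A_k) \leq \sum_{k \geq 1} 2^{-k} < \infty$, the first Borel--Cantelli lemma implies $P(A_k \text{ i.o.}) = 0$. Equivalently, with probability one, there exists a (random) $K$ such that $|X_{n_k} - X| \leq 1/k$ for all $k \geq K$. Since $1/k \to 0$, this gives $X_{n_k}(\omega) \to X(\omega)$ for almost every $\omega$, which is precisely $X_{n_k} \to_{\text{a.s.}} X$.

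There is no real obstacle here; the argument is entirely standard. The only subtlety worth mentioning is that one must ensure the indices $n_k$ are strictly increasing, which is easy to arrange because at each step the probability $P(|X_n - X| > 1/k)$ is eventually less than $2^{-k}$, so we may always pick $n_k > n_{k-1}$. Since the paper merely cites this as Theorem~5.7(ii) of \cite{shorack2000}, no further elaboration is needed.
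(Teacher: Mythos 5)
Your argument is correct: selecting $n_k$ with $P(|X_{n_k}-X|>1/k)<2^{-k}$ and applying the first Borel--Cantelli lemma is exactly the standard proof of this result, which the paper itself does not reprove but simply cites as Theorem~5.7(ii) of \cite{shorack2000}. Nothing further is needed.
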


\begin{fact}[Theorem 5.7 (vii) of \cite{shorack2000}]
\label{fact: Shorack}
Suppose $X_n$ is a sequence of random variables.
Then for some random variable $X$,  $X_n\to_p X$ if and only if every subsequence $\{n_k\}_{k\geq 1}$ contains
a further subsequence $\{n_{r}\}_{r\geq 1}$ for which $X_{n_r}\as X$.
\end{fact}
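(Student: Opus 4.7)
The plan is to prove the two directions separately, leveraging the already stated Fact~\ref{fact: convergence in probability to convergence almost surely} for the forward direction and proceeding by contradiction for the reverse direction.

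For the forward direction, suppose $X_n \to_p X$. Given any subsequence $\{n_k\}_{k \geq 1}$, the subsequence $\{X_{n_k}\}$ also converges in probability to $X$, since convergence in probability is inherited along subsequences (the defining event $P(|X_{n_k} - X| > \epsilon)$ is just a subsequence of a sequence tending to zero). Applying Fact~\ref{fact: convergence in probability to convergence almost surely} to $\{X_{n_k}\}$ then yields a further subsequence $\{n_{k_r}\}$ with $X_{n_{k_r}} \as X$, which is exactly what is required.

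For the reverse direction, I argue by contradiction. Suppose the subsequence criterion holds but $X_n \not\to_p X$. Then the failure of convergence in probability gives some $\epsilon > 0$, some $\delta > 0$, and some subsequence $\{n_k\}_{k \geq 1}$ along which $P(|X_{n_k} - X| > \epsilon) \geq \delta$ for every $k$. By hypothesis applied to this subsequence, there exists a further subsequence $\{n_{k_r}\}_{r \geq 1}$ such that $X_{n_{k_r}} \as X$. Almost sure convergence implies convergence in probability, so $P(|X_{n_{k_r}} - X| > \epsilon) \to 0$ as $r \to \infty$. This directly contradicts the lower bound $P(|X_{n_{k_r}} - X| > \epsilon) \geq \delta > 0$ inherited from the original subsequence, completing the proof.

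The argument is entirely standard and presents no real obstacle; the only subtle point is making sure to observe that the subsequence criterion from the hypothesis is applied to the \emph{bad} subsequence $\{n_k\}$ constructed from the negation, rather than to the original sequence, so that the further subsequence inherits the inequality $P(|X_{n_{k_r}} - X| > \epsilon) \geq \delta$ and thereby produces the contradiction.
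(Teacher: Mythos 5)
Your proof is correct and is the standard argument for the subsequence criterion; the paper itself offers no proof, simply citing Theorem 5.7(vii) of \cite{shorack2000}, and your two directions (extracting an a.s.\ convergent further subsequence via Fact~\ref{fact: convergence in probability to convergence almost surely}, and the contradiction argument applied to the bad subsequence along which $P(|X_{n_k}-X|>\epsilon)\geq\delta$) match the textbook proof. Nothing further is needed.
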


\begin{fact}[Proposition A.18 of \cite{bobkovbig}]
 \label{fact: bobkov big}
 Suppose the density $f$ is supported on an open  interval (possibly unbounded). Then $F^{-1}$ is strictly increasing, and  $F^{-1}(q_2)-F^{-1}(q_1)=\int_{q_1}^{q_2}dt/f(F^{-1}(t))$ for all $0<q_1<q_2<1$.
 \end{fact}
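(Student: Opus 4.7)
The plan is to first establish strict monotonicity and continuity of $F^{-1}$ on $(0,1)$, and then derive the integral identity by a change-of-variables argument.

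For monotonicity, let $I$ denote the open support interval of $f$. Since $f$ is a density with $\int f = 1$ and $f = 0$ outside $I$, the distribution function $F(x) = \int_{-\infty}^{x} f(y)\,dy$ is constant outside $I$ and strictly increasing on $I$: for any $x_1 < x_2$ in $I$, the interval $(x_1,x_2)$ lies in the support, so $F(x_2)-F(x_1) = \int_{x_1}^{x_2} f(y)\,dy > 0$ (otherwise one would obtain a nontrivial subinterval of the support on which $f$ vanishes a.e., contradicting $I = \supp(f)$). Thus $F$ maps $I$ continuously and bijectively onto $(0,1)$, and the inverse $F^{-1}:(0,1)\to I$ is strictly increasing and continuous.

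For the integral identity, the heuristic is the substitution $u = F^{-1}(t)$, which formally yields $dt = f(u)\,du$ and transforms $\int_{q_1}^{q_2} dt/f(F^{-1}(t))$ into $\int_{F^{-1}(q_1)}^{F^{-1}(q_2)} du = F^{-1}(q_2) - F^{-1}(q_1)$. To make this rigorous, I would invoke the change-of-variables theorem for absolutely continuous functions. Concretely, $F$ is absolutely continuous on every compact subinterval of $I$ with $F' = f$ a.e., and since $F$ is strictly monotone with derivative essentially positive on $I$, a standard real-analysis lemma (see e.g.\ Royden's \emph{Real Analysis}, Chap.\ 5) guarantees that $F^{-1}$ is absolutely continuous on every compact subinterval of $(0,1)$ with $(F^{-1})'(t) = 1/f(F^{-1}(t))$ for a.e.\ $t \in (0,1)$. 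The formula $F^{-1}(q_2)-F^{-1}(q_1) = \int_{q_1}^{q_2} (F^{-1})'(t)\,dt$ then follows immediately from absolute continuity.

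The main obstacle is justifying absolute continuity of $F^{-1}$: one must rule out ``flat spots'' of $F$ (handled by strict monotonicity on $I$) and show that the inverse sends Lebesgue-null sets to Lebesgue-null sets, which uses that $f > 0$ essentially everywhere on $I$. Once this technical point is in hand, both conclusions follow routinely.
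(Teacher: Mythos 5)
Your argument is correct, and it is worth noting that the paper itself offers no proof of this fact at all: it is imported verbatim as Proposition A.18 of Bobkov and Ledoux. Your write-up essentially reconstructs the standard proof of that proposition. The first half (strict monotonicity of $F$ on the open support interval $I$, hence a continuous strictly increasing bijection $F:I\to(0,1)$ with continuous strictly increasing inverse) is fine, and even simpler than you make it: with the paper's definition of support, $f>0$ pointwise on $I$, so $F(x_2)-F(x_1)=\int_{x_1}^{x_2}f>0$ needs no contradiction argument. The only genuinely delicate step is the one you flag, absolute continuity of $F^{-1}$ on compact subintervals of $(0,1)$; the ``standard lemma'' you invoke is Zaretsky's theorem (for a continuous strictly increasing $F$, the inverse is absolutely continuous iff $F'>0$ a.e.), and the hypothesis is met here because $F'=f>0$ a.e.\ on $I$. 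One small point you should make explicit if you write this out in full: the a.e.\ identity $(F^{-1})'(t)=1/f(F^{-1}(t))$ requires that the exceptional null set $E\subset I$ where $F$ fails to be differentiable with $F'=f$ is mapped by $F$ to a null set, which follows since $m(F(E))=\int_E f\,dx=0$. Alternatively, you can bypass the absolute-continuity machinery entirely with a direct Lebesgue--Stieltjes change of variables: for $x_i=F^{-1}(q_i)$,
\begin{equation*}
\int_{q_1}^{q_2}\frac{dt}{f(F^{-1}(t))}=\int_{x_1}^{x_2}\frac{f(x)}{f(F^{-1}(F(x)))}\,dx=\int_{x_1}^{x_2}dx=F^{-1}(q_2)-F^{-1}(q_1),
\end{equation*}
using that $F$ is continuous and strictly increasing on $[x_1,x_2]$ and $F^{-1}\circ F=\mathrm{id}$ there; this shorter route gives the stated identity without first proving differentiability of $F^{-1}$.
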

 
  
  
  \begin{fact}
 \label{fact: empirical process of m-p's}
 Suppose $\mathcal F$ is a class of measurable functions $h$ such that $\int h^2dP_0<\epsilon^2$ where $\|h\|_\infty\leq M$ for some constant $M>0$. Then 
 \[E\|\mathbb{G}_n\|_{\mathcal F}\lesssim J_{[\  ]}(\epsilon,\mathcal F, L_2(P_0))\lb 1+\frac{M J_{[\  ]}(\epsilon,\mathcal F, L_2(P_0))}{\epsilon^2\sqn}\rb\]
 where
 \[J_{[\  ]}(\epsilon,\mathcal F, L_2(P_0))=\dint_0^\epsilon\sqrt{ 1+\log N_{[\  ]}(\epsilon',\mathcal F,L_2(P_0))}d\epsilon'.\]
 \end{fact}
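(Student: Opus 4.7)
The fact is a version of the standard bracketing maximal inequality (essentially Lemma~3.4.2 of \cite{wc}). My plan would be to prove it by the usual chaining argument on brackets, combined with a Bernstein-type bound for the chain increments. The presence of the additive correction $MJ_{[\,]}^2/(\epsilon^2\sqrt n)$ is exactly the ``Bernstein-correction'' term that arises because we only have a sup-norm bound $M$ rather than a subgaussian tail for the increments.

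First I would set up the chain. For every integer $q\geq 0$, choose a minimal $L_2(P_0)$-bracketing cover of $\mathcal F$ at scale $\epsilon_q:=2^{-q}\epsilon$, say with cardinality $N_q:=N_{[\,]}(\epsilon_q,\mathcal F,L_2(P_0))$, and for each $h\in\mathcal F$ let $\pi_q h$ be a chosen lower bracket containing $h$ with upper$-$lower gap $\Delta_q h$ satisfying $\|\Delta_q h\|_{P_0,2}\leq\epsilon_q$ and $\|\Delta_q h\|_\infty\leq 2M$. Truncate the chain at level $q=q_n$ chosen so that $2^{-q_n}\epsilon\asymp M/\sqrt n$ (past this level the $L_2$ scale is finer than what the empirical process can resolve, and we will bound the tail separately by $\sqrt n\,\|\Delta_{q_n} h\|_{P_0,1}\lesssim\sqrt n\cdot \epsilon_{q_n}^2/M$). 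Decompose
\[
\mathbb G_n h \;=\; \mathbb G_n(\pi_0 h) \;+\; \sum_{q=1}^{q_n}\mathbb G_n(\pi_q h-\pi_{q-1}h) \;+\; \mathbb G_n(h-\pi_{q_n}h).
\]
The first summand is controlled by the trivial bound $\sup_{h}|\mathbb G_n \pi_0 h|\leq \sup_h \|\pi_0 h\|_{P_0,2}\lesssim \epsilon$, and the last summand is handled directly via the $L_1$-bound above.

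The heart of the proof is the sum of chain-increments. Since $\pi_q h-\pi_{q-1}h$ is dominated pointwise by $\Delta_{q-1}h$, there are at most $N_q N_{q-1}\leq N_q^2$ possible increments at level $q$, each with $L_2(P_0)$-norm $\leq 2\epsilon_{q-1}$ and sup-norm $\leq 2M$. Bernstein's inequality gives, for any $t>0$,
\[
P\bigl(|\mathbb G_n g|\geq t\bigr)\;\leq\; 2\exp\!\lb-\frac{t^2/2}{\|g\|_{P_0,2}^{2}+Mt/\sqrt n}\rb.
\]
Applying a union bound over the $N_q^2$ increments and optimizing the tail level at scale $q$ as $t_q\asymp \epsilon_{q-1}\sqrt{\log N_q}+M\log N_q/\sqrt n$, a standard computation yields (after summing in $q$)
\[
E\sup_{h\in\mathcal F}\sum_{q=1}^{q_n}|\mathbb G_n(\pi_q h-\pi_{q-1}h)|
\;\lesssim\; \sum_{q\geq 1}\epsilon_{q-1}\sqrt{\log N_q}\;+\;\frac{M}{\sqrt n}\sum_{q\geq 1}\log N_q.
\]
The first sum is comparable to $J_{[\,]}(\epsilon,\mathcal F,L_2(P_0))$, while the second can be bounded by $J_{[\,]}(\epsilon,\mathcal F,L_2(P_0))^2/\epsilon^2$ using the elementary inequality $\epsilon_q^2\log N_q\leq (\int_{\epsilon_{q+1}}^{\epsilon_q}\sqrt{\log N_{[\,]}(u)}\,du)^2\cdot\epsilon_q^{-2}\cdot\epsilon_q^2$ and telescoping. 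Combining the three pieces gives the stated bound.

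The main technical obstacle is the last step: converting the $\sum_q \log N_q$ term arising from Bernstein into the $J_{[\,]}^2/\epsilon^2$ correction. One has to be careful to choose $q_n$ so that the truncation error is absorbed into the $J_{[\,]}$ term, and to handle the discrete-versus-continuous nature of the entropy integral, which is typically done by monotonicity of $\log N_{[\,]}(\cdot)$ and bracketing together consecutive scales. Everything else is a routine application of chaining.
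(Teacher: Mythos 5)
The paper does not prove this fact by chaining at all: its entire proof is a one-line citation (Theorem 1.7.6 of \cite{vdv}; the statement is Lemma 3.4.2 of \cite{wc}), so you are attempting to reprove a quoted maximal inequality from scratch. Your overall route --- bracket chaining plus Bernstein --- is indeed the standard one, but the step that is supposed to produce the correction term is exactly where the sketch breaks down. Write $\epsilon_q=2^{-q}\epsilon$, $N_q=N_{[\,]}(\epsilon_q,\mathcal F,L_2(P_0))$ and $J=J_{[\,]}(\epsilon,\mathcal F,L_2(P_0))$. If Bernstein is applied at every level with the crude sup-norm bound $M$, the accumulated second-order terms are of order $(M/\sqrt{n})\sum_{q\le q_n}\log N_q$. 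The inequality you invoke only gives $\epsilon_q\sqrt{\log N_q}\lesssim J$, i.e.\ $\log N_q\lesssim J^2/\epsilon_q^2$, and summing over $q\le q_n$ yields $\sum_{q\le q_n}\log N_q\lesssim J^2/\epsilon_{q_n}^2$: the geometric factor $4^{q}$ makes the sum concentrate at the finest scale, and no telescoping removes it. With your cutoff $\epsilon_{q_n}\asymp M/\sqrt n$ the resulting correction is of order $J^2\sqrt n/M$, which exceeds the claimed $MJ^2/(\epsilon^2\sqrt n)$ by the (typically enormous) factor $n\epsilon^2/M^2$. A secondary slip: the coarsest level is not handled by the ``trivial bound,'' since $|\mathbb{G}_n(\pi_0h)|$ is not almost surely bounded by $\|\pi_0h\|_{P_0,2}$; one must pay for a maximum over the $N_0$ coarse brackets, again with a Bernstein-type term.

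The missing idea is the adaptive truncation used in the proof of Lemma 3.4.2 of \cite{wc}: the bracket widths at level $q$ are truncated at level-dependent heights $a_q\asymp\epsilon_q/\sqrt{1+\log N_{q+1}}$, so that Bernstein's second term at level $q$ is $a_q\log N_q/\sqrt n\le \epsilon_q\sqrt{1+\log N_q}/\sqrt n$, which is absorbed into the entropy integral rather than accumulating; the parts of the increments exceeding $a_q$ are controlled by $L_1$-bounds exploiting the nesting of the brackets, and the envelope $M$ is used only once, at the coarsest level, which is precisely what produces the single $MJ^2/(\epsilon^2\sqrt n)$ correction. So either reproduce that truncation argument in full, or do what the paper does and simply cite the known inequality.
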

 
 \begin{proof}
 Follows from Lemma 3.4.2, pp. 324 of \cite{vdv}.
 \end{proof}
 
 The next fact is Pratt's lemma \citep[][Theorem 1]{Pratt1960}.
 We state it here for convenience.
 \begin{fact}\label{fact: pratt's lemma}
 Suppose $(\Omega,\mathcal{F},\mu)$ is a measure space and $a_n,b_n,c_n$ are sequences of functions on $\Omega$ converging almost everywhere to functions $a,b,c$ respectively. Also, all functions are integrable and
 $\int a_n d\mu\to \int ad\mu$ and $\int c_n d\mu\to \int cd\mu.$ Moreover, $a_n\leq b_n\leq c_n.$ Then 
 $\int b_n d\mu\to\int b d\mu$.
\end{fact}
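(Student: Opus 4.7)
My plan is to deduce Pratt's lemma as a two-sided application of Fatou's lemma to the nonnegative sequences $b_n - a_n$ and $c_n - b_n$. The hypothesis $a_n \le b_n \le c_n$ together with almost-everywhere convergence guarantees that all these differences are well defined as integrable, measurable functions; the pointwise limits $b-a$ and $c-b$ are likewise nonnegative almost everywhere. The assumptions that $\int a_n\,d\mu \to \int a\,d\mu$ and $\int c_n\,d\mu \to \int c\,d\mu$ will be what let us convert liminf statements about these differences into matching liminf/limsup statements about $\int b_n\,d\mu$ alone.

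First I would apply Fatou's lemma to $b_n - a_n \ge 0$. Since $b_n - a_n \to b - a$ almost everywhere, Fatou gives
\[
\int (b-a)\,d\mu \;\le\; \liminf_n \int (b_n - a_n)\,d\mu \;=\; \liminf_n \int b_n\,d\mu \;-\; \int a\,d\mu,
\]
using the assumed convergence of $\int a_n\,d\mu$. Rearranging yields $\int b\,d\mu \le \liminf_n \int b_n\,d\mu$. Symmetrically, applying Fatou to $c_n - b_n \ge 0$, which converges a.e.\ to $c - b$, gives
\[
\int (c-b)\,d\mu \;\le\; \liminf_n \int (c_n - b_n)\,d\mu \;=\; \int c\,d\mu \;-\; \limsup_n \int b_n\,d\mu,
\]
so $\limsup_n \int b_n\,d\mu \le \int b\,d\mu$.

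Chaining these two inequalities gives $\int b\,d\mu \le \liminf_n \int b_n\,d\mu \le \limsup_n \int b_n\,d\mu \le \int b\,d\mu$, which forces equality throughout and hence $\int b_n\,d\mu \to \int b\,d\mu$. There is really no hard step here: the only thing to be careful about is that $b_n$, $b$, and the differences are all genuinely integrable so that the rearrangements $\liminf \int (b_n - a_n)\,d\mu = \liminf \int b_n\,d\mu - \int a\,d\mu$ (and similarly for $c_n - b_n$) are legitimate; this is immediate from the blanket integrability assumption in the statement.
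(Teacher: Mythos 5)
Your argument is correct: the two-sided application of Fatou's lemma to the nonnegative sequences $b_n-a_n$ and $c_n-b_n$, combined with the assumed convergence of $\int a_n\,d\mu$ and $\int c_n\,d\mu$ to finite limits, is exactly the classical proof of Pratt's lemma, and your handling of the rearrangements (splitting $\liminf\int(b_n-a_n)\,d\mu$ using the convergent sequence $\int a_n\,d\mu$, and likewise for $c_n-b_n$) is legitimate under the blanket integrability hypothesis. The paper itself offers no proof of this statement --- it is recorded as a known fact with a citation to Pratt (1960, Theorem~1) --- so there is nothing to compare against beyond noting that your write-up supplies the standard argument that the cited reference contains; as a small remark, integrability of $b$ need not even be assumed separately, since $a\le b\le c$ almost everywhere with $a,c$ integrable already forces it.
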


 \begin{fact}\label{fact: consistency of the quantiles}
 Suppose $(F_n)_{n\geq 1}$ and $F$ are  distribution functions satisfying $\|F_n-F\|_{\infty}\to 0$. Further suppose $F$ has density $f$ and  $t\in\iint(\supp(f))$. Then \\ $|F_n^{-1}(t)-F^{-1}(t)|\to 0$.
 \end{fact}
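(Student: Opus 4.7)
The plan is to convert the uniform convergence $\|F_n-F\|_\infty\to 0$ into convergence of the $t$-th quantile by exploiting that $F$ is strictly increasing on an open neighborhood of $F^{-1}(t)$. The hypothesis should be read as ensuring $F^{-1}(t)\in\iint(\supp(f))$, so that $f>0$ on an open interval around $F^{-1}(t)$, which in turn forces strict monotonicity of $F$ there.

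Concretely, I would fix an arbitrary $\epsilon>0$ small enough that the closed interval $[F^{-1}(t)-\epsilon,F^{-1}(t)+\epsilon]$ lies in $\iint(\supp(f))$, and set
\[
\delta:=\min\big\{t-F(F^{-1}(t)-\epsilon),\; F(F^{-1}(t)+\epsilon)-t\big\}.
\]
By the strict monotonicity of $F$ across $F^{-1}(t)$, both quantities inside the minimum are strictly positive, so $\delta>0$. For every $n$ large enough that $\|F_n-F\|_\infty<\delta/2$, this yields the two-sided bound
\[
F_n(F^{-1}(t)-\epsilon)<t<F_n(F^{-1}(t)+\epsilon).
\]
The right-hand inequality, together with the convention $F_n^{-1}(t)=\inf\{x:F_n(x)\geq t\}$ and right-continuity of $F_n$, forces $F_n^{-1}(t)\leq F^{-1}(t)+\epsilon$, while the left-hand inequality forces $F_n^{-1}(t)\geq F^{-1}(t)-\epsilon$. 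Hence $|F_n^{-1}(t)-F^{-1}(t)|\leq\epsilon$ for all sufficiently large $n$, and since $\epsilon$ was arbitrary, the claim follows.

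This is essentially a routine quantile-continuity argument and I do not anticipate any real obstacle. The only care needed is in aligning the one-sided inequalities with the $\inf$-definition of the generalized inverse and with the right-continuity of $F_n$, and in being explicit about the reading of ``$t\in\iint(\supp(f))$'' as referring to $F^{-1}(t)$ lying in the interior of the support (without which $F$ need not be strictly increasing at $F^{-1}(t)$, and the conclusion can fail).
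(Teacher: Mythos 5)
Your argument is correct: the positivity of $f$ on a neighborhood of $F^{-1}(t)$ gives strict monotonicity of $F$ there, so $\delta>0$, and the two-sided bound $F_n(F^{-1}(t)-\epsilon)<t<F_n(F^{-1}(t)+\epsilon)$ combined with the $\inf$-definition of the generalized inverse (monotonicity of $F_n$ is all that is really needed; right-continuity is not essential) pins $F_n^{-1}(t)$ within $\epsilon$ of $F^{-1}(t)$. The paper, by contrast, does not argue this from scratch: it simply observes that $F^{-1}$ is continuous at $t$ under the stated hypothesis and cites Lemma A.5 of Bobkov and Ledoux, which delivers quantile convergence at continuity points of $F^{-1}$ from distributional convergence. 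So you have replaced an external citation by a self-contained elementary proof of the same standard quantile-continuity fact; your route in fact uses less than the hypothesis supplies (only convergence of $F_n$ at the two points $F^{-1}(t)\pm\epsilon$, not uniform convergence), while the paper's route is shorter and leans on a known lemma. Your remark about how to read ``$t\in\iint(\supp(f))$'' is also well taken: as written the hypothesis conflates the quantile level with a point of the support, and the intended (and, in the paper's applications to log-concave $\hn$ and $g_0$, satisfied) condition is precisely the one you use, namely that $F^{-1}(t)$ lies in the interior of $\supp(f)$ so that $F$ is strictly increasing, equivalently $F^{-1}$ is continuous, there.
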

 
 \begin{proof}
Since $F^{-1}$ is continuous at $t$, this is essentially Lemma A.5 of \cite{bobkovbig}.
 \end{proof}
 
 The following is a property of integrable functions.
 \begin{fact}[Exercise 16.18, pp. 223 of \cite{billingsley}]
 \label{fact: condition for integrability}
 Suppose $P$ is a finite measure on $\RR$ and $\int_{\RR} |h| dP<\infty$ for some measurable function $h$. Then for each $\epsilon>0$, there exists $\sigma>0$ so that any $P$-measurable set $\mathcal B$ with $P(\mathcal B)<\sigma$ satisfies 
 $\int_{\mathcal B}|h|dP<\epsilon$.
 \end{fact}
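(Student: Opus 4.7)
The plan is to prove this via the standard truncation argument, which establishes what is often called the absolute continuity of the integral. The key idea is that although $|h|$ may be unbounded, one can bound it by a large constant on most of its mass, and then the integral over a set of small measure is controlled by the constant times the measure of the set, plus a tail contribution.

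More precisely, I would begin by truncating. For each $M>0$, define $h_M(x)=\min(|h(x)|,M)$, which is bounded by $M$ and satisfies $h_M \uparrow |h|$ pointwise as $M\to\infty$. Since $\int|h|dP<\infty$, the monotone convergence theorem (or dominated convergence, using $|h|$ as the dominating function) yields
\[
\int_{\RR}(|h|-h_M)\,dP \;\longrightarrow\; 0 \quad\text{as } M\to\infty.
\]
Hence, given $\epsilon>0$, choose $M=M(\epsilon)$ large enough that $\int_{\RR}(|h|-h_M)\,dP<\epsilon/2$.

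The second step is the direct bound. For any $P$-measurable set $\mathcal{B}\subset\RR$,
\[
\int_{\mathcal{B}}|h|\,dP \;=\; \int_{\mathcal{B}}h_M\,dP + \int_{\mathcal{B}}(|h|-h_M)\,dP \;\leq\; M\cdot P(\mathcal{B}) + \frac{\epsilon}{2}.
\]
Setting $\sigma=\epsilon/(2M)$, any set $\mathcal{B}$ with $P(\mathcal{B})<\sigma$ then satisfies $\int_{\mathcal{B}}|h|\,dP<\epsilon$, which is exactly the claim.

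There is no real obstacle here; the argument is textbook and uses only monotone convergence and the trivial bound $h_M\leq M$. The only subtlety worth flagging is ensuring one has the right convergence theorem available (monotone convergence suffices since $h_M$ is a non-negative increasing sequence converging to $|h|\in L^1(P)$, and $P$ being finite is not actually needed beyond having $|h|$ integrable, although the statement invokes finiteness of $P$ for convenience).
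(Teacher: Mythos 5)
Your proof is correct. The paper does not prove this fact at all: it is stated as a known result with a citation to Exercise 16.18 of Billingsley, so there is no in-paper argument to compare against. Your truncation argument (cut $|h|$ off at a level $M$ chosen via monotone convergence so that $\int(|h|-\min(|h|,M))\,dP<\epsilon/2$, then take $\sigma=\epsilon/(2M)$) is the standard proof of absolute continuity of the integral and is precisely what the cited exercise intends; your remark that finiteness of $P$ is not essential beyond the integrability of $|h|$ is also accurate.
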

 
 The following is a sufficient (and necessary) condition for uniform integrability.
 \begin{fact}[Exercise 16.19, pp. 223 of  \cite{billingsley}]
 \label{fact: condition for UI}
 Suppose $P$ is a finite measure on $\RR$ and $(h_n)_{n\geq 1}$ is a sequence of $P$-measurable functions. Then $(h_n)_{n\geq 1}$ is uniformly integrable if and only if (i) $\sup_{n\geq 1}\int |h_n|dP<\infty$ (ii) given any  $\epsilon>0$, there exists $\sigma>0$ so that any $P$-measurable set $\mathcal B$ with $P(\mathcal B)<\sigma$ satisfies 
 $\sup_{n\geq 1}\int_{\mathcal B}|h_n|dP<\epsilon$.
 \end{fact}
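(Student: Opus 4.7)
The plan is to handle the two directions of the characterization separately, each via a routine Markov-inequality splitting argument.

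For the implication ``(i) and (ii) $\Rightarrow$ uniform integrability'', I would fix $\epsilon>0$ and set $M=\sup_n\int|h_n|\,dP$, which is finite by (i). Markov's inequality then gives $P(|h_n|>K)\leq M/K$ uniformly in $n$ for every $K>0$. Taking the $\sigma>0$ furnished by (ii) at target level $\epsilon$, and choosing $K$ so large that $M/K<\sigma$, the sets $\{|h_n|>K\}$ all have $P$-measure below $\sigma$; hypothesis (ii) then yields $\int_{\{|h_n|>K\}}|h_n|\,dP<\epsilon$ for every $n$, which is exactly the definition of uniform integrability.

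For the converse, assume $(h_n)_{n\geq 1}$ is uniformly integrable. To recover (i), pick $K$ with $\sup_n\int_{\{|h_n|>K\}}|h_n|\,dP\leq 1$ and split
\[\int|h_n|\,dP\leq K\,P(\RR)+1,\]
which is finite since $P$ is a finite measure, giving the uniform $L_1$ bound. To recover (ii), given $\epsilon>0$ choose $K$ with $\sup_n\int_{\{|h_n|>K\}}|h_n|\,dP<\epsilon/2$ and set $\sigma=\epsilon/(2K)$. For any measurable $\mathcal B$ with $P(\mathcal B)<\sigma$, splitting the integrand at $K$ yields
\[\int_{\mathcal B}|h_n|\,dP\leq K\,P(\mathcal B)+\int_{\{|h_n|>K\}}|h_n|\,dP<K\sigma+\epsilon/2=\epsilon,\]
uniformly in $n$.

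There is no genuine obstacle; this is a standard measure-theoretic exercise. The only substantive point worth flagging is that the finiteness of $P$ is essential in the direction UI $\Rightarrow$ (i): the bound $K\,P(\RR)$ would diverge if $P$ were merely $\sigma$-finite, and the classical counterexamples (for instance, translates of a fixed integrable function under Lebesgue measure on $\RR$) show that the equivalence genuinely breaks down without that hypothesis. Accordingly, the forward direction uses only that $P$ is a measure, whereas the reverse direction crucially uses $P(\RR)<\infty$.
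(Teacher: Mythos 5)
Your proof is correct: both directions are the standard Markov-inequality and truncation-splitting arguments, and your remark that finiteness of $P$ is needed only for the direction from uniform integrability to the uniform $L_1$ bound is accurate. The paper does not prove this fact at all---it simply cites Exercise 16.19 of \cite{billingsley2013}---and your argument is exactly the canonical solution to that exercise, so there is nothing further to reconcile.
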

 
 The following fact is a Glivenko-Cantelli type result for a class of functions $\mathcal{F}_n$ changing with $n$.
 \begin{fact}\label{fact: GC}
 Suppose 
 $\mathcal{F}_n$ 
 is a class of functions such that $\sup_{f\in \mathcal{F}_n}\|f\|_{\infty}\leq M_n$. Further suppose for any fixed $\e>0$,
 $M_n^2\sup_{Q}\log N(\e,\mathcal{F}_n,L_2(Q))=o(n)$  where the supremum is over all probability measures on $\RR$. Then
 $E\|\mathbb P_n-P\|_{\mathcal F_n}\to 0$ as $n\to\infty$.
 \end{fact}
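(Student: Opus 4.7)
The plan is to apply the classical symmetrization-plus-covering argument, adapted to the fact that $\mathcal{F}_n$ varies with $n$. First, by the symmetrization lemma (Lemma~2.3.1 of \cite{wc}),
$$E\|\mathbb{P}_n - P\|_{\mathcal{F}_n} \leq 2\, E\|\mathbb{P}_n^0\|_{\mathcal{F}_n},$$
where $\mathbb{P}_n^0 f := n^{-1}\sum_{i=1}^n r_i f(X_i)$ and $r_1, \ldots, r_n$ are i.i.d.\ Rademacher signs independent of the data. It therefore suffices to show $E\|\mathbb{P}_n^0\|_{\mathcal{F}_n} \to 0$.

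Fix $\delta > 0$ and work conditionally on $X_1, \ldots, X_n$. Let $\mathcal{N}_\delta$ be a minimal $\delta$-net of $\mathcal{F}_n$ in the random metric $L_2(\mathbb{P}_n)$, so that $|\mathcal{N}_\delta| \leq N_n := \sup_Q N(\delta, \mathcal{F}_n, L_2(Q))$. For each $f \in \mathcal{F}_n$ pick $f^\delta \in \mathcal{N}_\delta$ with $\|f - f^\delta\|_{L_2(\mathbb{P}_n)} \leq \delta$. By the Cauchy--Schwarz inequality, $|\mathbb{P}_n^0(f - f^\delta)| \leq \|f - f^\delta\|_{L_1(\mathbb{P}_n)} \leq \|f - f^\delta\|_{L_2(\mathbb{P}_n)} \leq \delta$, so
$$\sup_{f \in \mathcal{F}_n}|\mathbb{P}_n^0 f| \leq \delta + \max_{g \in \mathcal{N}_\delta} |\mathbb{P}_n^0 g|.$$
Conditional on the sample, each $\mathbb{P}_n^0 g$ is a sum of independent bounded Rademacher-weighted terms, hence sub-Gaussian with variance proxy at most $\|g\|_{L_2(\mathbb{P}_n)}^2/n \leq M_n^2/n$. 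The standard sub-Gaussian maximal inequality gives
$$E_r \max_{g \in \mathcal{N}_\delta} |\mathbb{P}_n^0 g| \leq C M_n \sqrt{(1 + \log N_n)/n},$$
which is $o(1)$ by the hypothesis $M_n^2 \log N_n = o(n)$. Taking expectation over $X$ yields $\limsup_n E\|\mathbb{P}_n^0\|_{\mathcal{F}_n} \leq \delta$, and letting $\delta \downarrow 0$ finishes the argument.

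There is no conceptual obstacle here: the statement is a routine consequence of symmetrization, a one-level covering bound, and Hoeffding. The only care needed is a measurability technicality: because $\|\mathbb{P}_n - P\|_{\mathcal{F}_n}$ need not be measurable, one works throughout with outer expectations and invokes the outer-expectation form of symmetrization, as is standard in the empirical-process literature.
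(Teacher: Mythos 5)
Your proof is correct and follows essentially the same route as the paper's: symmetrization, a single-scale $\delta$-net with the approximation error controlled via $L_2(\mathbb{P}_n)$, a conditional sub-Gaussian (Hoeffding) maximal inequality over the net, and then the hypothesis $M_n^2\log N_n=o(n)$ to kill the net term. The only cosmetic difference is that you bound the random entropy by the uniform entropy before integrating over the data, giving a deterministic $o(1)$ bound, whereas the paper keeps the random entropy and finishes with bounded/dominated convergence; both are fine.
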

 
 \begin{proof}[Proof of Fact~\ref{fact: GC}]
 The proof is similar to the proof of Theorem 2.4.3 of \cite{wc}. Therefore we only highlight the differences. Suppose $X_1,\ldots,X_n\iid P$. Consider also $n$ independent Rademacher random variables $\e_1,\ldots,\e_n$.  Using the symmetrization inequality   \citep[cf. Lemma 2.3.1 of ][  ]{wc} and Fubini's theorem, one can show that
 \[E\|\mathbb P_n-P\|_{\mathcal F_n}\leq 2 E_X\underbrace{E_{\e}\norm{\frac{1}{n}\sum_{i=1}^n\e_i f(X_i)}_{\mathcal F_n}}_{\mathbb Y_n(X)\equiv\mathbb Y_n(X_1,\ldots,X_n)},\]
 where $E_X$ and $E_{\e}$ denote the expectations with respect to $P$ and the law of $\e_1$, respectively. Fixing $\delta>0$, and using the argument in the proof of Theorem 2.4.3 of \cite{wc}, we can show that
 \begin{equation}\label{infact: GC}
     \mathbb Y_n(X)\leq(1+\log N(\delta, \mathcal F_n,L_2(\mathbb F_n))^{1/2}M_n\sqrt{6/n}+\delta
 \end{equation}
 where $\mathbb F_n$ is the empirical distribution function of $X_1,\ldots,X_n$.
 Taking $\delta=1/2$, for sufficiently large $n$, we have
 $\mathbb Y_n(X)\leq 1$
 for any realizations of $X_1,\ldots,X_n$. Therefore $\mathbb Y_n(X)$ is a bounded sequence. For any $\delta>0$ , \eqref{infact: GC} also implies that $\lim_{n\to\infty}\mathbb Y_n(X)\leq \delta$. Since $\delta$ is arbitrary, this implies $\mathbb{Y}_n(X)\to 0$ as $n\to\infty$ for any realization of $X\equiv X_1,\ldots,X_n$.  Therefore, using dominated convergence theorem we conclude that $E_X[\mathbb Y_n(X)]\to_n 0$.
 \end{proof}

 \begin{fact}\label{fact: helli fknot}
 Suppose $f_0$ is a log-concave density with $\I<\infty$. Then $H(f_0(\mathord{\cdot}+y),f_0)=O(|y|)$.
 \end{fact}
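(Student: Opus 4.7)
The plan is to use the classical identity linking the Hellinger distance of translates to the Fisher information. Since $\mathcal I_{f_0}<\infty$, Huber's theorem (cited already in the introduction) gives that $f_0$ is absolutely continuous on $\RR$ with an $L_1$-derivative $f_0'$ satisfying $\int (f_0')^2/f_0\,dx=\mathcal I_{f_0}$. In particular, $t\mapsto\sqrt{f_0(t)}$ is absolutely continuous with derivative $f_0'(t)/(2\sqrt{f_0(t)})$ (interpreted as $0$ where $f_0=0$).

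Without loss of generality take $y>0$ (the case $y<0$ is symmetric via a change of variables). For each $x$,
\[
\sqrt{f_0(x+y)}-\sqrt{f_0(x)}=\dint_0^y\frac{f_0'(x+t)}{2\sqrt{f_0(x+t)}}\,dt.
\]
An application of the Cauchy-Schwarz inequality yields
\[
\slb\sqrt{f_0(x+y)}-\sqrt{f_0(x)}\srb^2\le \frac{y}{4}\dint_0^y\frac{f_0'(x+t)^2}{f_0(x+t)}\,dt.
\]
Integrating in $x$ and applying Tonelli's theorem (the integrand is non-negative), together with the translation invariance of Lebesgue measure,
\[
\edint\slb\sqrt{f_0(x+y)}-\sqrt{f_0(x)}\srb^2 dx\le \frac{y}{4}\dint_0^y\edint\frac{f_0'(u)^2}{f_0(u)}\,du\,dt=\frac{y^2\,\mathcal I_{f_0}}{4}.
\]
Recalling the definition of the Hellinger distance used in this paper (via $(\sqrt{f_0(\cdot+y)}-\sqrt{f_0})^2$ after the standard identification), we conclude
\[
H(f_0(\cdot+y),f_0)\le \tfrac{1}{2\sqrt 2}\,|y|\,\sqrt{\mathcal I_{f_0}}=O(|y|),
\]
which is the claim.

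The only delicate point is the absolute continuity of $\sqrt{f_0}$ and the validity of the fundamental theorem of calculus for $\sqrt{f_0}$; this follows directly from $\mathcal I_{f_0}<\infty$ (so $f_0'/\sqrt{f_0}\in L^2$) combined with the absolute continuity of $f_0$. Log-concavity is not really used beyond ensuring that $f_0$ belongs to the model $\mP_0$; the bound is in fact a classical fact about any density with finite Fisher information.
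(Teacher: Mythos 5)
Your proposal is correct and follows essentially the same route as the paper: write $\sqrt{f_0(x+y)}-\sqrt{f_0(x)}$ as an integral of $f_0'/(2\sqrt{f_0})$, apply Cauchy--Schwarz, and interchange the order of integration to bound $H^2$ by a constant multiple of $y^2\I$. Your remark about the absolute continuity of $\sqrt{f_0}$ (guaranteed by $\I<\infty$ via Huber's theorem) is a fine point of care, and your slightly sharper constant comes only from integrating over $[x,x+y]$ rather than the symmetric window $[x-|y|,x+|y|]$ used in the paper.
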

 \begin{proof}[Proof of Fact~\ref{fact: helli fknot}]
  Note that
\[H(f_0(\mathord{\cdot}+y),f_0)^2= \edint (\sqrt{f_0(x+y)}-\sqrt{f_0(x)})^2dx\leq \edint \lb\dint_{x-|y|}^{x+|y|}\frac{|f_0'(z)|}{2 \sqrt{f_0(z)}}dz \rb^2dx,\]
which, by the Cauchy-Schwarz inequality, is bounded above by
     \[
    \frac{|y|}{4} \edint \dint_{x-|y|}^{x+|y|}\frac{f_0'(z)^2}{f_0(z)}dz dx
    \stackrel{(a)}{=} \frac{|y|^2}{2}\edint \phi'_0(z)^2f_0(z)dz= |y|^2\I/2,\]
    where (a) follows by Fubini's Theorem.
     Since $\I<\infty$, the above is of order $O(|y|^2)$.
 \end{proof}
 
 \begin{fact}\label{fact: concave-convex}
 Suppose $0\leq u,v\leq 1/2$. Then it holds that
 \[u(1-u)+v(1-v)\geq \min\slb \frac{u+v}{2}, 1-\frac{u+v}{2}\srb.\]
 \end{fact}
 
 \begin{proof}[Proof of Fact \ref{fact: concave-convex}]
 Suppose  $0 \le u, v \le 1/2$. Since $u(1-u) \ge \min \{ u, 1-u \} /2$, it follows that 
\begin{eqnarray*}
u(1-u) + v(1-v) 
& \ge & \frac{1}{2} \min \{ u, 1-u \} + \frac{1}{2} \min \{ v, 1-v \}  \\
& \stackrel{(a)}{\ge} & \min \lb \frac{u}{2}, \frac{1}{4} \rb  +  \min \lb \frac{v}{2}, \frac{1}{4} \rb \\
& = & \frac{u}{2} + \frac{v}{2} = \frac{u+v}{2} 
\end{eqnarray*}
where (a) follows because $1-u, 1-v\geq 1/2$ for $0\leq u,v\leq 1/2$. 
Similarly, for $1/2 \le u,v \le 1$, 
\begin{eqnarray*}
u(1-u) + v(1-v) 
& \ge & \frac{1}{2} \min \{ u, 1-u \} + \frac{1}{2} \min \{ v, 1-v \} \\
& \stackrel{(a)}{\ge} & \min \lb \frac{1}{4}, \frac{1-u }{2} \rb  +  \min \lb \frac{1}{2}, \frac{1-v}{2} \rb \\
& = & \frac{1-u}{2} + \frac{1-v}{2} = 1- \frac{u+v}{2} 
\end{eqnarray*}
where (a) follows because $u,v\geq 1/2$. Hence, the proof follows.
 \end{proof}
 
 \section{ Tuning parameters for \citeauthor{stone} and \citeauthor{beran}'s estimators}
\label{app: stone}
\citeauthor{stone}'s estimator has two tuning parameters $d_n$ and $t_n$. To find the optimal $(d_n,t_n)$ pair, we implement a grid search on a two dimensional grid. Each point on the grid is of the form $(d,t)$ where $d\in\{10,20,30,\ldots, 80\}$, and $t_n\in\{0.10, 0.20,\ldots, 0.60\}$. For each distribution and each sample size, we estimate the efficiency of each  pair using one hundred Monte Carlo samples. The optimal pair is the one which maximizes the estimated efficiency.
  Since \citeauthor{beran}'s estimator also uses two tuning parameters $b_{c,n}$ and $\rho_n$, we repeat the same  procedure for finding the optimal tuning parameters.  The only difference is that in this case,  the scaling parameter is chosen from the grid $\{0.10,0.20,\ldots,1.50\}$, and the number of basis functions is allowed to vary within the set $\{10, 20,\ldots, 50\}$.
 Table~\ref{Table: app: stone} and \ref{Table: app: beran}  tabulate the optimal tuning parameters that we obtained following the above-mentioned procedure.
 
\begin{table*}[ht]
\caption{The optimal $(d_n,t_n)$ pair for \citeauthor{stone}'s estimattor
  }\label{Table: app: stone}
     \begin{tabular}{@{}llllll@{}}
\toprule
 n  &  Gaussian & Laplace & Symmetric beta  & Symmetric beta & Logistic\\ 
  & & & $(r=2.1)$ & $(r=4.5)$ & \\
 \midrule
40 & (10, 0.80) & (20, 0.60) & (20, 0.60) & (40, 0.80) & (10, 0.80)\\
100 & (50, 0.80) & (20, 0.50) & (40, 0.50) & (30, 0.60) & (10, 0.80)\\
200 & (50, 0.80) & (20, 0.50) & (40, 0.50) & (50, 0.60) & (10, 0.80)\\
500 & (60, 0.80) & (10, 0.50) & (20, 0.30) & (30, 0.40) & (30, 0.50)\\
   \bottomrule
 \end{tabular} 
   \end{table*}
   \vspace*{-5mm}
   \begin{table*}[ht]
\caption{The optimal $(b_{c,n},\rho_n)$ pair for \citeauthor{beran}'s estimattor
  }\label{Table: app: beran}
     \begin{tabular}{@{}llllll@{}}
\toprule
 n  &  Gaussian & Laplace & Symmetric beta  & Symmetric beta & Logistic\\ 
  & & & $(r=2.1)$ & $(r=4.5)$ & \\
 \midrule
40 & (10, 1.00) & (40, 0.40) & (10, 0.80) & (40, 1.40) & (10, 1.40)\\
100 & (10, 1.00) & (40, 0.20) & (10, 0.40) & (40, 1.20) & (20, 1.40)\\
200 & (10, 1.00) & (40, 0.20) & (40, 0.60) & (40, 1.00) & (25, 1.00)\\
500 & (10, 0.60) & (40, 0.20) & (40, 0.60) & (35, 0.80) & (30, 1.00)\\
   \bottomrule
 \end{tabular} 
   \end{table*}

As  mentioned previously, we consider another set of tuning parameters for these nonparametric estimators. These tuning prameters, i.e. the non-optimal tuning parameters, are provided in Table~\ref{Table: app: stone worst} and \ref{Table: app: beran worst}.

\begin{table*}[ht]
\caption{The non-optimal $(d_n,t_n)$ pair for \citeauthor{stone}'s estimattor
  }\label{Table: app: stone worst}
     \begin{tabular}{@{}llllll@{}}
\toprule
 n  &  Gaussian & Laplace & Symmetric beta  & Symmetric beta & Logistic\\ 
  & & & $(r=2.1)$ & $(r=4.5)$ & \\
 \midrule
40 & (30, 0.50) & (50, 0.50) & (40, 0.50) & (50, 0.50) & (50, 0.50)\\
100 & (30, 0.50) & (50, 0.50) & (50, 0.50) & (50, 0.50) & (50, 0.50)\\
200 & (30, 0.50) & (50, 0.50) & (50, 0.50) & (50, 0.50) & (50, 0.50)\\
500 & (30, 0.50) & (50, 0.50) & (40, 0.50) & (50, 0.50) & (50, 0.50)\\
   \bottomrule
 \end{tabular} 
   \end{table*}
   \vspace*{-5mm}
 \begin{table*}[ht]
\caption{The non-optimal $(b_{c,n},\rho_n)$ pair for \citeauthor{beran}'s estimattor
  }\label{Table: app: beran worst}
     \begin{tabular}{@{}llllll@{}}
\toprule
 n  &  Gaussian & Laplace & Symmetric beta  & Symmetric beta & Logistic\\ 
  & & & $(r=2.1)$ & $(r=4.5)$ & \\
 \midrule
40 & (40, 0.20) & (10, 0.40) & (40, 0.20) & (30, 0.20) & (40, 0.20)\\
100 & (40, 0.20) & (10, 1.20) & (40, 0.20) & (35, 0.20) & (40, 0.20)\\
200 & (40, 0.20) & (10, 1.20) & (40, 0.20) & (40, 0.20) & (40, 0.20)\\
500 & (40, 0.20) & (10, 1.20) & (40, 0.20) & (40, 0.20) & (40, 0.20)\\
   \bottomrule
 \end{tabular} 
   \end{table*}

   \newpage
\section{Corrections from the previous arxived version}
\begin{enumerate}
\item Previously, the proof of Fact \ref{fact: f grtr than F} incorrectly stated that the function mapping $(x,y)\mapsto xy$ is convex on the set $\{(x,y): x\geq 0,\ y\geq 0\}$. However, this function is not convex. We have removed the part of the proof that relied on this assertion. Now we use the following algebraic inequality:
\[u(1-u)+v(1-v)\geq \min\left(\frac{u+v}{2},1-\frac{u+v}{2}\right)\quad\text{for}\quad u,v\in[0,1/2]\]
 to prove Fact 4 instead. The above algebraic fact  
 is now given in Fact \ref{fact: concave-convex}. 
\item The term $(\log n)^2$ in the statement of Lemma B.12 was incorrect; it is now corrected to $(\log n)^4$. This correction resulted in changes to the power of the $\log(n)$ terms in the statements of several auxiliary lemmas for proving Theorems \ref{theorem: main: one-step: full} and \ref{theorem: main: one-step: hnss}, namely Lemma B.12, Lemma B.14, Lemma B.16, Lemma D.4, and Lemma D.7. Some minor corrections were also made in the proof of the first step of Theorems \ref{theorem: main: one-step: full} and \ref{theorem: main: one-step: hnss} due to these changes.  In the proof of Theorem \ref{theorem: main: one-step: full}, the $h_n$ defined in \eqref{inthm: 1: def: hn in } now has an $L_2(P_0)$ norm $\|h_n\|_{P_0,2}$ of the order $O_p(n^{-2p/5}(\log n)^{3})$, which previously was of the order $O_p(n^{-2p/5}(\log n)^{3/2})$. Similarly, the corrected order of the $L_2(P_0)$ norm of $h_n$ defined in \eqref{inlemma: def: main: hnss} in the proof of Theorem \ref{theorem: main: one-step: hnss} is $O_p(n^{-2p/5}(\log n)^{3})$. 
However, these minor alterations did not affect the statements of Theorems \ref{theorem: main: one-step: full} and \ref{theorem: main: one-step: hnss}.
\item In the previous version, we let $p\in(0,1)$ in its definition in Condition \ref{cond: hellinger rate}. However, since $O_p(n^{-p})$ is the rate of Hellinger decay for nonparametric estimation of $g_0$ in $\mathcal P_0$, $p$ is expected to be less than  $1/2$.  When $\theta_0$ is known, the conjectured minimax rate of Hellinger error decay for nonparametrically estimating a symmetric log-concave density is $O_p(n^{-2/5})$ \citep{dosssymmetric}. Therefore, in the current version, we set $p\in(0,1/2]$ when we define it in Condition \ref{cond: hellinger rate}.
\item  There was an algebraic mistake in Step 1 of the proof of Theorem \ref{theorem: main: one-step: full} while bounding $E\left[\|\mathbb G_n\|_{\mathcal H_n(C)}\right]$ (page 30). This has now been corrected. To elaborate, previously, we incorrectly deduced $K_n^{-1}M_n^2n^{-1/2}$ to be of the order $O((\log n)^{-3/2}n^{-1/2})$. However, after all corrections, the rate turns out to be 
\[K_n^{-1}M_n^2n^{-1/2}=O((\log n)^{-3}n^{(8p-5)/10}),\]
which goes to zero because $p\leq 1/2$ as per its current definition in Condition \ref{cond: hellinger rate}.  
\item Several typographical mistakes were corrected, including the definition of the Hellinger distance on page 4. 
\item Some references to books or other papers were made more explicit by adding page numbers to them.

\end{enumerate}
\end{document}